\tikzset{
    bvertex/.style = {
        circle,
        fill=black,
        outer sep=2.5pt,
        inner sep=2pt
    }
}
\tikzset{
    bfrvertex/.style = {
       inner sep=2.5pt, outer sep=3pt, fill=blue, draw=none}
    }
\mathchardef\mhyphen="2D
\def\on{\operatorname}
\providecommand{\leftsquigarrow}{%
  \mathrel{\mathpalette\reflect@squig\relax}%
}
\newcommand{\reflect@squig}[2]{%
  \reflectbox{$\m@th#1\rightsquigarrow$}%
}
\definecolor{ao}{rgb}{0.0, 0.5, 0.0}
\newtheorem{theorem}{Theorem}[section]
\newtheorem{lemma}[theorem]{Lemma}
\newtheorem{proposition}[theorem]{Proposition}
\newtheorem{corollary}[theorem]{Corollary}
\theoremstyle{definition}
\newtheorem{construction}[theorem]{Construction}
\newtheorem{definition}[theorem]{Definition}
\newtheorem{notation}[theorem]{Notation}
\newtheorem{remark}[theorem]{Remark}
\newtheorem{example}[theorem]{Example}
\newcommand\noloc{%
  \nobreak
  \mspace{6mu plus 1mu}
  {:}
  \nonscript\mkern-\thinmuskip
  \mathpunct{}
  \mspace{2mu}
}
\newcommand\cocolon{%
  \nobreak
  \mspace{6mu plus 1mu}
  {:}
  \nonscript\mkern-\thinmuskip
  \mathpunct{}
  \mspace{2mu}
}
\newcommand{\rgraph}{{\bf G}}
\newcommand{\A}{\mathcal{A}}
\newcommand{\B}{\mathcal{B}}
\newcommand{\C}{\mathcal{C}}
\newcommand{\D}{\mathcal{D}}
\newcommand{\V}{\mathcal{V}}
\newcommand{\N}{\mathcal{N}}
\newcommand{\T}{\mathcal{T}}
\newcommand{\QT}{{\bf Q}_{\Delta,\ADE}}
\newcommand{\GT}{\mathscr{G}_{\Delta,\ADE}}
\newcommand{\ADE}{I}
\newcommand{\GS}{\mathscr{G}_{\rgraph,\ADE}}
\newcommand{\HH}{\on{HH}}
\newcommand{\glsec}{R^1{\bf \Gamma}}
\title{Cluster theory of topological Fukaya categories. Part II: Higher Teichm\"uller theory}
\author{Merlin Christ}
\date{\today}
\begin{document}
\maketitle

\abstract{We construct relative $3$-Calabi--Yau categories related with higher Teichm\"uller theory. We further study their corresponding cosingularity categories and the additive categorification of the corresponding cluster algebras. 

The input for our constructions is a marked surface with boundary and a Dynkin quiver $I$. In the case of the triangle, these categories have been described in recent work of Keller--Liu. For general surfaces, the categories are constructed via gluing along a perverse schober, categorifying the amalgamation of cluster varieties. The case $I=A_1$ was subject of the prequel paper. We show that the cosingularity category is equivalent to the corresponding Higgs category and to the topological Fukaya category of the marked surface valued in the $1$-Calabi--Yau cluster category of type $I$.}

\tableofcontents

\section{Introduction}

This paper and its prequel \cite{Chr22b} concern the study of additive categorifications of cluster algebras of surfaces in terms of topological Fukaya categories. The prequel focuses on the cluster algebras associated with marked surfaces that yield coordinates on the decorated Teichm\"uller space. This paper concerns more general cluster algebras giving coordinates on higher Teichm\"uller spaces arising from a marked surface and a choice of simply-laced simple Lie group $G$. The prequel thus corresponds to the case $G=\on{SL}_2/\on{PSL}_2$. 

Cluster algebras are a class of commutative algebras equipped with special generators called clusters that are related to each other via a combinatorial rule called mutation \cite{FZ02}. Cluster algebras admit a rich theory of categorification in terms of triangulated or extriangulated categories equipped with cluster tilting objects. The cluster tilting objects can be mutated and play the role of the clusters. In this categorification, the direct sum corresponds to the product in the cluster algebra, hence it is also called an additive categorification. There is also a different kind of categorification called monoidal categorification. Families of examples of additive categorifications of cluster algebras arise from triangulated cluster categories \cite{BMRRT06,Ami09} and more recently from extriangulated Higgs categories \cite{Wu21}.

In this paper, we establish an equivalence between a canonical class of Higgs categories and a class of $2$-periodic topological Fukaya categories of surfaces. The latter have been constructed by Dyckerhoff--Kapranov \cite{DK18} and arise as the global sections of a (co)sheaf of dg categories (or of $k$-linear stable $\infty$-categories). The topological Fukaya category can take values in any $2$-periodic category. The category relevant for us will be the $1$-Calabi--Yau cluster category $\C_\ADE$ of Dynkin type $\ADE$ corresponding to the Lie group $G$. The category $\C_\ADE$ can be defined as the cosingularity category 
\[ \C_\ADE\coloneqq \on{CoSing}(\Pi_2(\ADE))=\D^{\on{perf}}(\Pi_2(\ADE))/\D^{\on{fin}}(\Pi_2(\ADE))\]
 of the $2$-Calabi--Yau completion of $\ADE$. We show:

\begin{theorem}[\Cref{thm:Cosing_Fukaya,thm:GS_cluster_tilting,thm:equivHiggsCosing}]\label{introthm:equivalence}
Let ${\bf S}$ be a marked surface and $\ADE$ a Dynkin quiver. ${\bf S}$ is assumed to have non-empty boundary and no punctures. With this, we associate a relative $3$-Calabi--Yau category $\D^{\on{perf}}(\GS)$, see \Cref{def:GS} and \Cref{cor:global_sections_independence}. There exists an equivalence of stable $\infty$-categories between
\begin{enumerate}[i)]
\item the Higgs category $\mathcal{H}_{\GS}$,
\item the cosingularity category $\on{CoSing}(\GS)=\D^{\on{perf}}(\GS)/\D^{\on{fin}}(\GS)$, and
\item the $\C_\ADE$-valued topological Fukaya category $\on{Fuk}({\bf S},\C_\ADE)$.
\end{enumerate} 
\end{theorem}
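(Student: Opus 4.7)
The proof splits naturally into two largely independent pieces: the equivalence (i)$\cong$(ii) is a general statement about relative $3$-Calabi--Yau categories equipped with a cluster tilting object, while (ii)$\cong$(iii) is the geometric content and uses the perverse schober presentation of $\D^{\on{perf}}(\GS)$ together with the Dyckerhoff--Kapranov cosheaf description of $\on{Fuk}(\mathbf{S},\C_\ADE)$.

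For (i)$\cong$(ii), the plan is to invoke the general framework of Wu attaching a Higgs category to a relative $3$-Calabi--Yau pair together with a chosen cluster tilting object. The relative $3$-Calabi--Yau structure on $\D^{\on{perf}}(\GS)$ is already part of the setup, and \Cref{thm:GS_cluster_tilting} should supply the cluster tilting object needed as input. Granting these, Wu's identification expresses $\mathcal{H}_{\GS}$ as the ideal quotient of the ``relatively perfect'' part of $\D^{\on{perf}}(\GS)$ by the finite-dimensional dg modules, which in turn coincides with the Verdier quotient $\D^{\on{perf}}(\GS)/\D^{\on{fin}}(\GS)=\on{CoSing}(\GS)$. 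The main verification here is purely formal: checking that the perverse-schober-glued category satisfies the hypotheses of Wu's theorem (finiteness, properness of the boundary, existence of a Koszul-dual presentation of the tilting object), which I expect to reduce to the elemental case.

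For (ii)$\cong$(iii), the idea is to prove that the cosingularity functor commutes with the gluing along the ribbon graph of $\mathbf{S}$. Both $\D^{\on{perf}}(\GS)$ and $\on{Fuk}(\mathbf{S},\C_\ADE)$ are global sections of (co)sheaves of stable $\infty$-categories on the spine of $\mathbf{S}$, so the approach is: first, establish the equivalence locally on elemental pieces, where the triangle case of Keller--Liu identifies the cosingularity of the stalk of $\QS$ with the $\C_\ADE$-valued local topological Fukaya category; second, promote the local equivalence to an equivalence of the ambient (co)sheaves by checking compatibility with the restriction/corestriction functors; finally, take global sections to obtain the desired equivalence $\on{CoSing}(\GS)\simeq \on{Fuk}(\mathbf{S},\C_\ADE)$.

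The main obstacle is the last step: showing that the Verdier quotient by $\D^{\on{fin}}$ commutes with the limit/colimit computing global sections of the schober. Concretely, I expect to have to identify the image of $\D^{\on{fin}}(\GS)$ under the local restriction functors with the locally finite parts of the stalks, and then argue that the induced functor between quotients is fully faithful and essentially surjective. This will likely require careful control of the gluing functors in the perverse schober, in particular showing that they preserve and reflect finite-dimensionality up to the ``cluster'' ambiguity captured by $\C_\ADE$. Once this compatibility is in place, the equivalence of global sections follows by assembling the pointwise equivalences.
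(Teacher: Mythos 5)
The main gap is in your step (i)$\simeq$(ii). You assert that Wu's framework ``expresses $\mathcal{H}_{\GS}$ as the ideal quotient of the relatively perfect part of $\D^{\on{perf}}(\GS)$ by the finite-dimensional dg modules, which in turn coincides with $\on{CoSing}(\GS)$,'' so that only formal hypotheses remain to be checked. This is not what Wu's construction gives: the Higgs category is an extension-closed subcategory of the \emph{relative} cluster category $\D^{\on{perf}}(\GS)/\D^{\on{fin}}_{B}(\GS)$, where one quotients only by the finite modules killed by the boundary restriction $\D(f_{\GS})^R$, not by all of $\D^{\on{fin}}(\GS)$. There is a canonical exact functor $\mathcal{H}_{\GS}\to\on{CoSing}(\GS)$, but for a general relative $3$-Calabi--Yau pair it is \emph{not} an equivalence, so no purely formal argument can close this step; proving it is an equivalence here is precisely the content of \Cref{thm:equivHiggsCosing}. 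The paper's route is: equip $\on{CoSing}(\GS)\simeq\on{Fuk}({\bf S},\C_\ADE)$ with the Frobenius exact structure induced by boundary restriction (via the schober description), produce a cluster tilting object there by gluing the Keller--Liu cluster tilting objects of the triangles along the perverse schober (\Cref{thm:GS_cluster_tilting}), show the comparison functor is exact and sends the canonical cluster tilting object to this one inducing isomorphisms on endomorphisms and a bijection on inflations (\Cref{lem:endomorphismalgebra}, which itself needs a gluing argument for connective covers of morphism objects), and then apply a recognition criterion for extriangulated equivalences together with a comparison of negative extension groups to lift to an equivalence of $\infty$-categories. Note also that your two halves are not independent: the exact structure and the cluster tilting object on $\on{CoSing}(\GS)$ used for (i)$\simeq$(ii) come from the Fukaya/schober description established in (ii)$\simeq$(iii); and Wu's theory supplies the cluster tilting object on the Higgs side, so \Cref{thm:GS_cluster_tilting} is not a hypothesis check for Wu but the new ingredient on the cosingularity side.

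Your outline for (ii)$\simeq$(iii) is essentially the paper's: one forms the subschober of ($\on{Ind}$-)finite objects, identifies its global sections with the finite modules, and concludes that $\on{Ind}\on{CoSing}(\GS)$ is the global sections of the quotient schober; this is \Cref{prop:Ind_finite_global_sections,prop:SODofschobers}, and you correctly isolate the commutation of the Verdier quotient with gluing as a main difficulty. However, the step you summarize as ``checking compatibility with the restriction/corestriction functors'' conceals the second essential point: the $\C_\ADE$-valued topological Fukaya category is characterized as the global sections of a \emph{non-singular} schober with \emph{trivial monodromy}, so after passing to the quotient schober $\mathcal{F}^{\on{clst}}_{\rgraph,\ADE}$ one must still compute its monodromy local system and show it is trivial (\Cref{thm:global_sections_=_Fukaya}). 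This is where \Cref{prop:shift} — the identification of the involution $\sigma$ with the suspension of $\C_\ADE$ — and the insertion of $\D(\sigma)$ along every edge in the construction of $\mathcal{F}_{\rgraph,\ADE}$ do the real work; a locally constant schober with the correct stalk but nontrivial monodromy would have global sections differing from $\on{Fuk}({\bf S},\C_\ADE)$, so this verification cannot be omitted from the plan.
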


We remark that the above categories, while stable, are equipped with additional $\infty$-categorical Frobenius exact structures, giving rise to extriangulated structures on their homotopy $1$-categories. The Higgs category $\mathcal{H}_{\GS}$ comes with a canonical cluster tilting object. 

The initial cluster seeds of the cluster algebras arising in higher Teichm\"uller theory are constructed via a gluing process along a triangulation of the surface, called amalgamation by Fock--Goncharov \cite{FG06a}. We categorify this amalgamation process in two ways: Firstly, we construct the above mentioned relative $3$-Calabi--Yau and extriangulated $2$-Calabi--Yau categories by gluing along the triangulation. This is formulated using a perverse sheaf of stable $\infty$-categories. On the underlying ice quivers with potentials, this yields the amalgamation. Secondly, we show that the canonical cluster tilting object in the Higgs category $\mathcal{H}_{\GS}$ arises via the gluing of the local cluster tilting objects. This uses a general gluing result for cluster tilting objects recently shown in \cite{Chr25b}. Finally, we note that a local version of \Cref{introthm:equivalence} was shown for ${\bf S}=\Delta$ a triangle in recent work of Keller--Liu \cite{KL25}.

The remainder of the introduction is structured as follows: We begin in \Cref{subsec:higherTeichmuller} with a brief summary of the relevant parts of higher Teichm\"uller theory. We then discuss in \Cref{subsec:3CY} the construction of the higher rank relative $3$-Calabi--Yau categories associated with marked surfaces. In \Cref{subsec:cosing} we discuss our results on its cosingularity category, in particular the description in terms of a $2$-periodic topological Fukaya category. Finally, we discuss in \Cref{subsec:HiggsCT} the relation with the corresponding Higgs category and the additive categorification of the corresponding cluster algebra.

\subsection{Background on higher Teichm\"uller theory and cluster varieties}\label{subsec:higherTeichmuller}

A higher Teichm\"uller space is loosely speaking a subset of a space of (potentially decorated) local systems of a simple Lie group $G$ on a topological surface. There are different versions of higher Teichm\"uller spaces and different ways to construct these. The first were the so-called Hitchin components \cite{Hit92,Lab06}. Another way to construct higher Teichm\"uller spaces uses positivity \cite{FG06}. We refer to \cite{Wie18} for an introductory survey on higher Teichm\"uller theory.

Let us consider the higher Teichm\"uller spaces related to the constructions of this paper. We let ${\bf S}$ be a closed oriented topological surface together with a set of points $M\subset {\bf S}$, consisting of marked points in the boundary, and punctures in the interior. We further choose a split semi-simple simply-laced algebraic group $G$. For instance in type $A_{n-1}$, one can choose $G=\on{SL}_n$. There is a complex algebraic variety $\mathcal{A}_{G,{\bf S}}$, called the cluster $\mathcal{A}$-variety by Fock--Goncharov (note that \cite{GS19} call it the cluster $K_2$-variety). It describes a decorated moduli space of representations of the group $G$. The variety $\mathcal{A}_{G,{\bf S}}$ has special cluster coordinates, which give its coordinate ring the structure of a cluster algebra. A full set of cluster coordinates, describing a cluster, can be associated with every triangulation of ${\bf S}$. Using the positivity of the cluster mutation rules, the decorated higher Teichm\"uller space can be defined as the subset where all cluster variables take positive values. In the case $G=\on{PSL}_2$, this recovers Penner's decorated Teichm\"uller space \cite{Pen12}.\\

\noindent {\bf Amalgamation}

The construction of the cluster coordinate systems on $\mathcal{A}_{G,{\bf S}}$ is based on the amalgamation construction of \cite{FG06a}. The amalgamation construction is of central importance to this work. When gluing two marked surfaces ${\bf S}_1,{\bf S}_2$ along boundary intervals to a marked surface ${\bf S}$, there is a corresponding restriction map $\mathcal{A}_{G,{\bf S}}\to \mathcal{A}_{G,{\bf S}_1}\times \mathcal{A}_{G,{\bf S}_2}$. To construct global coordinates, it thus suffices to construct them for for $\mathcal{A}_{G,\Delta}$ on with $\Delta$ the $3$-gon and then glue these along a choice of triangulation of ${\bf S}$. When gluing coordinates, the ice quivers of initial seeds of the corresponding cluster algebras are also glued along their frozen components. After the gluing, the frozen components along which was glued are unfrozen. This process is called (ice quiver) amalgamation.  

\begin{example}
The ice quiver of an initial seed of the cluster algebra of regular functions on $\mathcal{A}_{\on{SL}_4,\Delta}$ is depicted in \Cref{fig:A3_quiver}. We note that this ice quiver is particularly simple and corresponds to a particularly nice choice of reduced expression for $w_0$. In other Dynkin types, the ice quiver is never $\mathbb{Z}/3\mathbb{Z}$-symmetric.
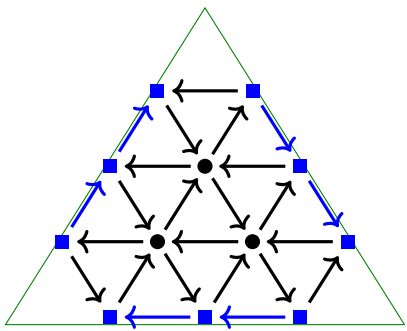
\begin{figure}[ht!]  
\begin{center}
\begin{tikzpicture}[xscale=1.25]
\draw[color=ao] (2.1,-1.1)--(0,3.1)--(-2.1,-1.1)--(2.1,-1.1);
\draw node[bfrvertex] (1) at (-0.5,2) {};
\draw node[bfrvertex] (2) at (0.5,2) {};
\draw node[bfrvertex] (3) at (-1,1) {};
\draw node[bvertex] (4) at (-0,1) {};
\draw node[bfrvertex] (5) at (1,1) {};
\draw node[bfrvertex] (6) at (-1.5,0) {};
\draw node[bvertex] (7) at (-0.5,0) {};
\draw node[bvertex] (8) at (0.5,0) {};
\draw node[bfrvertex] (9) at (1.5,0) {};
\draw node[bfrvertex] (10) at (-1,-1) {};
\draw node[bfrvertex] (11) at (0,-1) {};
\draw node[bfrvertex] (12) at (1,-1) {};

\draw[->, very thick] (2)--(1);
\draw[->, very thick] (1)--(4);
\draw[->, very thick] (4)--(2); 
\draw[->, very thick] (4)--(8);
\draw[->, very thick] (7)--(4);
\draw[->, very thick] (5)--(4);
\draw[->, very thick] (4)--(3);
\draw[->, very thick] (3)--(7);
\draw[->, very thick] (8)--(7);
\draw[->, very thick] (10)--(7);
\draw[->, very thick] (7)--(6);
\draw[->, very thick] (7)--(11);
\draw[->, very thick] (11)--(8);
\draw[->, very thick] (8)--(12);
\draw[->, very thick] (8)--(5);
\draw[->, very thick] (9)--(8);
\draw[->, very thick] (6)--(10);
\draw[->, very thick] (12)--(9);

\draw[->, blue, very thick] (3)--(1);
\draw[->, blue, very thick] (6)--(3);
\draw[->, blue, very thick] (11)--(10);
\draw[->, blue, very thick] (12)--(11);
\draw[->, blue, very thick] (2)--(5);
\draw[->, blue, very thick] (5)--(9);
\end{tikzpicture}
\caption{The ice quiver of the basic triangle for $G=\on{SL}_4$.}\label{fig:A3_quiver}\end{center}
\end{figure}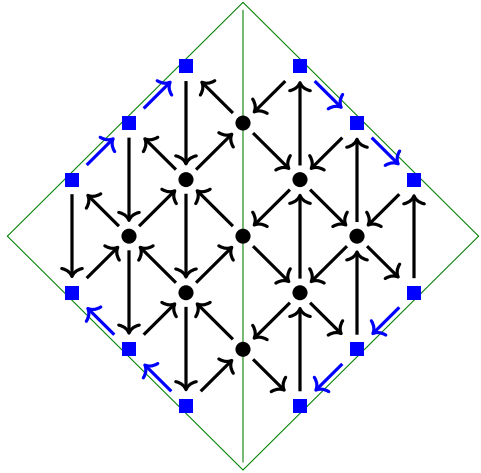
\begin{figure}[ht!]  
\begin{center}
\begin{tikzpicture}
\draw[color=ao] (0,3.1)--(-3.1,0)--(0,-3.1)--(3.1,0)--(0,3.1) (0,3)--(0,-3);
\draw node[bfrvertex] (1) at (-2.25,0.75) {};
\draw node[bfrvertex] (2) at (-1.5,1.5) {};
\draw node[bfrvertex] (3) at (-0.75,2.25) {};
\draw node[bfrvertex] (4) at (-2.25,-0.75) {};
\draw node[bfrvertex] (5) at (-1.5,-1.5) {};
\draw node[bfrvertex] (6) at (-0.75,-2.25) {};
\draw node[bvertex] (7) at (-0.75,0.75) {};
\draw node[bvertex] (8) at (-1.5,0) {};
\draw node[bvertex] (9) at (-0.75,-0.75) {};

\draw node[bvertex] (10) at (0,1.5) {};
\draw node[bvertex] (11) at (0,0) {};
\draw node[bvertex] (12) at (0,-1.5) {};

\draw[->, blue, very thick] (1)--(2);
\draw[->, blue, very thick] (2)--(3);
\draw[->, blue, very thick] (6)--(5); 
\draw[->, blue, very thick] (5)--(4);
\draw[->, very thick] (7)--(2);
\draw[->, very thick] (3)--(7);
\draw[->, very thick] (8)--(1);
\draw[->, very thick] (2)--(8);
\draw[->, very thick] (8)--(7);
\draw[->, very thick] (7)--(9);
\draw[->, very thick] (9)--(8);
\draw[->, very thick] (1)--(4);
\draw[->, very thick] (4)--(8);
\draw[->, very thick] (8)--(5);
\draw[->, very thick] (5)--(9);
\draw[->, very thick] (9)--(6);
\draw[->, very thick] (6)--(12);
\draw[->, very thick] (12)--(9);
\draw[->, very thick] (9)--(11);
\draw[->, very thick] (11)--(7);
\draw[->, very thick] (7)--(10);
\draw[->, very thick] (10)--(3);

\draw node[bfrvertex] (1') at (2.25,0.75) {};
\draw node[bfrvertex] (2') at (1.5,1.5) {};
\draw node[bfrvertex] (3') at (0.75,2.25) {};
\draw node[bfrvertex] (4') at (2.25,-0.75) {};
\draw node[bfrvertex] (5') at (1.5,-1.5) {};
\draw node[bfrvertex] (6') at (0.75,-2.25) {};
\draw node[bvertex] (7') at (0.75,0.75) {};
\draw node[bvertex] (8') at (1.5,0) {};
\draw node[bvertex] (9') at (0.75,-0.75) {};

\draw[->, blue, very thick] (2')--(1');
\draw[->, blue, very thick] (3')--(2');
\draw[->, blue, very thick] (5')--(6'); 
\draw[->, blue, very thick] (4')--(5');
\draw[->, very thick] (2')--(7');
\draw[->, very thick] (7')--(3');
\draw[->, very thick] (1')--(8');
\draw[->, very thick] (8')--(2');
\draw[->, very thick] (7')--(8');
\draw[->, very thick] (9')--(7');
\draw[->, very thick] (8')--(9');
\draw[->, very thick] (4')--(1');
\draw[->, very thick] (8')--(4');
\draw[->, very thick] (5')--(8');
\draw[->, very thick] (9')--(5');
\draw[->, very thick] (6')--(9');
\draw[->, very thick] (12)--(6');
\draw[->, very thick] (9')--(12);
\draw[->, very thick] (11)--(9');
\draw[->, very thick] (7')--(11);
\draw[->, very thick] (10)--(7');
\draw[->, very thick] (3')--(10);
\end{tikzpicture}
\caption{The amalgamation ice quiver of the $4$-gon for $G=\on{SL}_4$.}\label{fig:A3_square_quiver}
\end{center}
\end{figure}
The amalgamation ice quiver associated with the triangulated $4$-gon is depicted in \Cref{fig:A3_square_quiver}. It is the amalgamation of two copies of the ice quiver depicted in \Cref{fig:A3_quiver}.
\end{example}

To produce cluster coordinates on the variety $\mathcal{A}_{G,\Delta}$ associated with a triangle, the strategy of \cite{GS19} is to again use amalgamation, by noting that there is a more fundamental building piece than the triangle, consisting of a triangle with a short side labeled by a simple braid twist in the braid group corresponding to $G$. Amalgamating these along a reduced expression for $w_0$ yields $\mathcal{A}_{G,\Delta}$ together with an initial cluster seed. As an example, the ice quiver for the reduced expression $w_0=s_1s_2s_3s_1s_2s_1$ (with $G$ of type $A_3$) is depicted in \Cref{fig:A3_quiver}. Results of \cite{GS19} include that the cluster seeds constructed from different choices of reduced expression or choice of orientation of the triangle are mutation equivalent, and different triangulations also yield mutation equivalent cluster coordinates on $\mathcal{A}_{G,{\bf S}}$.\\

The constructions of this paper are concerned with a categorification in terms of relative $3$-Calabi--Yau and $2$-Calabi--Yau categories of the above amalgamation of $\mathcal{A}_{G,\Delta}$ along a triangulation of ${\bf S}$. We do not consider in this paper the categorification of the more fundamental building piece associated with a simple braid twist and their amalgamation, but hope to return to this in future work. We also do not allow marked surface with empty boundary or with punctures (meaning marked points in the interior), and generalizations of our results to these would be very interesting.

A version of gluing, called fusion, for wild character varieties (equivalently spaces of Stokes local systems), generalizing cluster varieties, was considered in \cite{Boa14}. While these spaces are expected to always carry cluster structures, their cluster seeds cannot always be constructed by amalgamation, see for instance the case of braid varieties \cite{CGGLSS25}. However in many cases (for instance when ${\bf S}$ is not the disc) the conjectural cluster seeds were constructed via the amalgamation of small triangles in \cite[Section 8]{GK21}.

\subsection{Higher rank \texorpdfstring{$3$}{3}-Calabi--Yau categories of marked surfaces}\label{subsec:3CY}

Let ${\bf S}$ be an oriented topological surface with non-empty boundary $\partial {\bf S}$ and $M\subset \partial {\bf S}$ a collection of marked points. Let $\ADE$ be a Dynkin quiver. We associate with ${\bf S}$ and $\ADE$ a relative $3$-Calabi--Yau category, which can be described as the derived category of a relative $3$-Calabi--Yau dg algebra. These categories have been defined and their representation theory has been well understood in the case $\ADE=A_1$, see for instance \cite{Lab09,BS15,KQ20,Chr21b,CHQ23}. In type $A_n$ with $n\geq 1$, non-relative versions of these categories appear in \cite{Abr18,Smi21}. In physics, the corresponding theories fall into 'class S', see for instance \cite{GMN13}.\\ 

\noindent {\bf The case ${\bf S}=\Delta$.}

We first suppose that ${\bf S}=\Delta$ is the triangle. We orient the triangle, meaning we distinguish one of its three boundary edges, and call the oriented triangle the basic triangle. Keller--Liu \cite{KL25} associate with the basic triangle and the Dynkin quiver $\ADE$ the relative $3$-Calabi--Yau completion of the functor 
\[
\on{proj}(\ADE)^{\times 3}\longrightarrow \on{Fun}([1],\on{proj}(\ADE))\,,\quad (X,Y,Z)\mapsto (X\to 0)\oplus (Y\to Y)\oplus (0\to Z)\,,
\]
where $[1]$ is the poset $\{0\to 1\}$ and $\on{proj}(\ADE)$ is the additive $1$-category of finitely generated projective $\ADE$-modules. We pass to the derived $\infty$-categories to obtain the functor
\[
(\tilde{D}_1,\tilde{D}_2,\tilde{D}_3)\colon \D(\Pi_2(\ADE))^{\times 3}\longrightarrow \D(\GT)
\]
where $\Pi_2(\ADE)$ is the $2$-Calabi--Yau completion and $\GT$ is the relative $3$-Calabi--Yau completion. We note that $\D(\GT)$ does not depend (up to equivalence) on the orientation of $\ADE$, i.e.~it depends only on the Dynkin type. 

The three right adjoint functors $\tilde{D}_1^R,\tilde{D}_2^R,\tilde{D}_3^R$ define a constructible sheaf $\mathcal{F}_{\Delta,\ADE}$ of stable $\infty$-categories on the $3$-spider embedded in the basic triangle (using the exit path description of constructible sheaves). We depict $\mathcal{F}_{\Delta,\ADE}$ in \Cref{fig:3gonschober}.

\begin{figure}[ht!]
\begin{center}
\begin{tikzpicture}[scale=1.3]
\draw[very thick, color=ao] (-30:2)--(90:2)--(210:2);
\draw[very thick, color=ao, dashed](210:2)--(-30:2);
\fill[orange] (-30:2) circle(0.1);
\fill[orange] (90:2) circle(0.1);
\fill[orange] (210:2) circle(0.1);
\fill[black] (0,0) circle (0.1);
\draw[very thick, ->] (0,0)--(30:1);
\draw[very thick, ->] (0,0)--(150:1);
\draw[very thick, ->] (0,0)--(270:1);
\node() at (0.6,-0.25){$\D(\mathscr{G}_{\Delta,\ADE})$};
\node() at (22:1.65){$\D(\Pi_2(\ADE))$};
\node() at (158:1.65){$\D(\Pi_2(\ADE))$};
\node() at (270:1.3){$\D(\Pi_2(\ADE))$};
\node() at (54:0.65){$\tilde{D}_3^R$};
\node() at (174:0.65){$\tilde{D}_1^R$};
\node() at (245:0.65){$\tilde{D}_2^R$};
\end{tikzpicture}
\end{center}
\caption{The perverse schober $\mathcal{F}_{\Delta,\ADE}$ on the basic triangle, parametrized by the $3$-spider. The distinguished bottom edge is dashed.} \label{fig:3gonschober}
\end{figure}
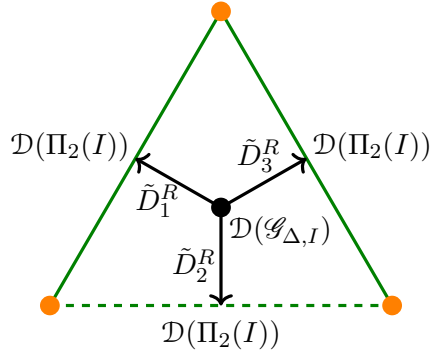

\begin{theorem}[\Cref{thm:3gonschober}]
The constructible sheaf $\mathcal{F}_{\Delta,\ADE}$ defines a perverse schober parametrized by the $3$-spider in the sense of \cite{Chr22b,CHQ23}.
\end{theorem}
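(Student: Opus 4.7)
My aim is to verify the axioms of a perverse schober parametrized by an $n$-spider, as formulated in \cite{Chr22b,CHQ23}. Unwound, these axioms say that the data of such a schober on the $3$-spider amounts to an adjunction from the category assigned to the central vertex to the product of the three edge categories, subject to a sphericality condition together with an identification of the resulting cotwist with a prescribed auto-equivalence induced by the Serre (i.e.~$2$-Calabi--Yau) structure on the edge categories. In our situation, this reduces to showing that the adjunction
\[
(\tilde{D}_1,\tilde{D}_2,\tilde{D}_3)\dashv (\tilde{D}_1^R,\tilde{D}_2^R,\tilde{D}_3^R)\colon \D(\Pi_2(\ADE))^{\times 3}\rightleftarrows \D(\GT)
\]
is spherical with cotwist given, factorwise on each copy of $\D(\Pi_2(\ADE))$, by the shift prescribed by the interplay between the relative $3$-Calabi--Yau structure at the center and the $2$-Calabi--Yau structure at each leg.

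\textbf{Main input.} The central tool is the general theorem that the relative $3$-Calabi--Yau completion of a functor canonically produces a spherical adjunction between its derived $\infty$-categories, with cotwist equal to a prescribed shift. Since $\GT$ is by definition the relative $3$-Calabi--Yau completion of the Keller--Liu functor
\[
\on{proj}(\ADE)^{\times 3}\longrightarrow \on{Fun}([1],\on{proj}(\ADE))\,,\quad (X,Y,Z)\mapsto (X\to 0)\oplus (Y\to Y)\oplus (0\to Z)\,,
\]
passing to derived $\infty$-categories yields the collective adjunction above together with its spherical structure for free. The orientation of the triangle (distinguishing one edge) matches with the single "identity arrow" summand $(Y\to Y)$, which will end up corresponding to the leg of the spider emanating towards the distinguished edge.

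\textbf{Factorwise decomposition.} The remaining task is to upgrade the sphericality of the \emph{collective} adjunction to a perverse-schober structure, which, being formulated leg-by-leg, requires the cotwist to decompose factorwise into three independent contributions, one per copy of $\D(\Pi_2(\ADE))$. I expect this to follow from the fact that the Keller--Liu source already factors through a product of three independent summands. Hence the unit of the collective adjunction respects the direct-sum decomposition of the source, and its cofiber decomposes into three pieces, plus any "cross" contributions which must be shown to vanish.

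\textbf{Main obstacle.} The hardest step will be this last one: explicitly identifying the cotwist and verifying that its three off-diagonal components, which a priori couple distinct legs, are zero. I plan to carry this out via a direct bimodule computation using Keller's explicit resolution of the relative $3$-Calabi--Yau completion, combined with the $2$-Calabi--Yau self-duality of $\Pi_2(\ADE)$ at each leg and the mutual orthogonality of the summands $(X\to 0)$, $(Y\to Y)$, $(0\to Z)$ inside $\on{Fun}([1],\on{proj}(\ADE))$. Once this vanishing is established, the axioms of \cite{Chr22b,CHQ23} are matched by construction, proving that $\mathcal{F}_{\Delta,\ADE}$ is a perverse schober.
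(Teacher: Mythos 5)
Your proposal has a genuine gap at its core. The ``main input'' you invoke --- a general theorem that the relative $3$-Calabi--Yau completion of a functor automatically produces a \emph{spherical} adjunction with cotwist a prescribed shift --- does not exist in that form, and it is precisely the point that requires work here. What the completion gives (via Yeung) is a relative left $3$-Calabi--Yau structure on $(\tilde{D}_1,\tilde{D}_2,\tilde{D}_3)$, and this identifies the cotwist of the adjunction $(\tilde{D}_1,\tilde{D}_2,\tilde{D}_3)\dashv(\tilde{D}_1^R,\tilde{D}_2^R,\tilde{D}_3^R)$ with the inverse dualizing bimodule $\on{id}^!_{\D(\GT)}$ up to shift; but invertibility of this bimodule is an extra condition, not automatic for a smooth non-proper category such as $\GT$. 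The paper's proof spends most of its effort exactly here: it shows that any smooth category admitting a categorical compactification has invertible inverse Serre functor (\Cref{prop:invertibleSerre}), builds a compactification of $\D(\Pi_2(\ADE))$ from the Fukaya--Seidel category of the ADE Milnor fibre (\Cref{ex:FS}), and glues compactifications along the semiorthogonal decomposition of $\D(\GT)$ (\Cref{lem:3gonSOD} together with \Cref{lem:resolutionofSOD}). Your proposal contains no substitute for this step, so sphericality is not established.

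There are also problems with how you read the axioms and with the computation you plan. The definition of a perverse schober on the $3$-spider (\Cref{def:schobernspider}) does not ask for an identification of the cotwist with a prescribed autoequivalence; it asks for fully faithfulness, adjacency equivalences and orthogonality of the edge functors, plus conditions equivalent to sphericality of the collective adjunction. In the paper the first block of conditions is supplied by the Keller--Liu computation (\Cref{thm:KLmonodromy}), which in particular gives $\tilde{D}_i^R\circ\tilde{D}_{i+1}\simeq 0$ but $\tilde{D}_{i+1}^R\circ\tilde{D}_i\simeq\D(\sigma)$. Consequently the twist endofunctor of $\D(\Pi_2(\ADE))^{\times 3}$ does \emph{not} decompose factorwise with vanishing cross terms: it is a cyclic permutation of the three factors composed with $\D(\sigma)$, and that is exactly why it is invertible. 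Your planned bimodule computation, aimed at showing the off-diagonal components vanish, would be trying to prove something false; moreover it only concerns the endofunctor on the product side (you also conflate twist and cotwist), whereas the genuinely hard invertibility statement lives on $\D(\GT)$, as explained above.
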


The proof involves showing that the inverse dualizing bimodule (also known as the inverse Serre functor) of $\GT$ is invertible, which we prove by showing that any stable $\infty$-category with a categorical compactification has this property, see \Cref{prop:invertibleSerre}.

Due to the relative $3$-Calabi--Yau structure, the inverse dualizing bimodule describes, up to shift, the cotwist functor of the adjunction $(\tilde{D}_1,\tilde{D}_2,\tilde{D}_3)\colon \D(\Pi_2(\ADE))^{\times 3}\leftrightarrow \D(\GT) \noloc (\tilde{D}_1^R,\tilde{D}_2^R,\tilde{D}_3^R)$. Using the action of this autoequivalence of $\D(\GT)$ on $\mathcal{F}_{\Delta,\ADE}$, we can show the independence (up to equivalence) of $\mathcal{F}_{\Delta,\ADE}$ the choice of orientation of the basic triangle, see \Cref{prop:Z6symmetry}.\\

\noindent {\bf Gluing for arbitrary ${\bf S}$.}

We next allow ${\bf S}$ to be an arbitrary marked surface. We choose a triangulation of ${\bf S}$, dual to a trivalent spanning ribbon graph $\rgraph$, and an orientation of each triangle as above. We define a $\rgraph$-parametrized perverse schober $\mathcal{F}_{\rgraph,\ADE}$, meaning a constructible sheaf of stable $\infty$-categories on $\rgraph$ with local properties, categorify the properties of perverse sheaves. We require that $\mathcal{F}_{\rgraph,\ADE}$ restricts on each triangle to the perverse schober $\mathcal{F}_{\Delta,\ADE}$. The definition of $\mathcal{F}_{\rgraph,\ADE}$ thus amounts to specifying the local identifications at the edges of the ribbon graph. There, an involution $\sigma$ of the $2$-Calabi--Yau completion $\Pi_2(\ADE)$, see \Cref{def:ADEinvolution}, plays an important role, which is inserted along every edge. In type $A_n$ with the linear orientation, $\sigma$ arises from the reflection symmetry of the $2$-Calabi--Yau completion $\Pi_2(A_n)$, exchanging the vertices $i$ and $n+1-i$.  

The formalism of parametrized perverse schobers allows to prove the following:

\begin{theorem}[\Cref{cor:global_sections_independence} and \Cref{prop:global_sections_GS,prop:rel3CY}]~
\begin{enumerate}[(i)]
\item The stable $\infty$-category of global sections $\glsec(\rgraph,\mathcal{F}_{\rgraph,\ADE})$ of $\mathcal{F}_{\rgraph,\ADE}$ is independent on the choice of the ideal triangulation (and thus the choice of $\rgraph$) up to equivalence.
\item There exists an equivalence of $\infty$-categories $\glsec(\rgraph,\mathcal{F}_{\rgraph,\ADE})\simeq \D(\GS)$ with $\GS$ a smooth connective relative left $3$-Calabi--Yau dg category. 
\end{enumerate}
\end{theorem}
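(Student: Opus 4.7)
The plan is to reduce both claims to local statements on the ribbon graph $\rgraph$ via the parametrized perverse schober formalism developed in \cite{Chr22b,CHQ23}.

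For (i), since ${\bf S}$ has non-empty boundary and no punctures, any two ideal triangulations are related by a finite sequence of flips on interior edges. It thus suffices to produce a canonical equivalence $\glsec(\rgraph,\mathcal{F}_{\rgraph,\ADE})\simeq \glsec(\rgraph',\mathcal{F}_{\rgraph',\ADE})$ whenever $\rgraph'$ is obtained from $\rgraph$ by a single flip. A flip affects only the two triangles adjacent to the flipped edge, so the comparison reduces to a local computation on the quadrilateral they form. There, the gluing datum is given by two copies of $\mathcal{F}_{\Delta,\ADE}$ identified along one edge via the involution $\sigma$. Invoking the rotational $\mathbb{Z}/6$-symmetry of $\mathcal{F}_{\Delta,\ADE}$ (\Cref{prop:Z6symmetry}) produces the desired equivalence after unwinding the definitions; the compatibility of $\sigma$ with this symmetry is what makes the flip invariance work.

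For (ii), the global sections form a limit, indexed by the exit path category of $\rgraph$, of smooth connective stable $\infty$-categories $\D(\Pi_2(\ADE))$ (at edges) and $\D(\GT)$ (at trivalent vertices), with connecting functors given by the right adjoints $\tilde{D}_i^R$, possibly twisted by $\sigma$ along interior edges. Since each local piece is compactly generated by a smooth connective dg algebra and the connecting functors preserve compact objects, the limit is again of this form, which yields a smooth connective dg algebra $\GS$ with $\D(\GS)\simeq \glsec(\rgraph,\mathcal{F}_{\rgraph,\ADE})$. The relative $3$-Calabi--Yau structure is then obtained by gluing the local relative $3$-CY structures on $\GT$ of \cite{KL25} along the interior edges: at each interior edge, the two boundary copies of $\Pi_2(\ADE)$ contributed by the adjacent triangles appear with opposite orientations and cancel, in the style of a Mayer--Vietoris-type descent for relative Calabi--Yau structures à la Brav--Dyckerhoff.

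I expect the main obstacle to be the non-degeneracy of the glued relative $3$-CY form, that is, verifying that the gluing of the local relative $3$-pre-CY structures still gives a genuine relative $3$-CY structure, not merely a weak one. A clean way to handle this is to exhibit $\GS$ as an iterated relative Calabi--Yau completion built from the local triangle pieces, generalizing the description of $\GT$ in \cite{KL25}, and then invoke general preservation results for relative CY completions. The technical point is the interplay between the involution $\sigma$, the Serre functor, and the cotwists along interior edges; the analogous issue on a single triangle was already resolved via \Cref{prop:invertibleSerre} together with the $\mathbb{Z}/6$-symmetry, and one should argue that the amalgamation inherits the resulting compatibilities vertex-by-vertex along $\rgraph$.
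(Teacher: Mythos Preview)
Your high-level strategy matches the paper's, but the implementation differs in a few places worth noting.

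For (i), the paper also reduces to a single flip, but it does not derive the flip equivalence from the $\mathbb{Z}/6$-rotation of \Cref{prop:Z6symmetry}. That proposition is used only to show that $\mathcal{F}_{\rgraph,\ADE}$ is independent of the chosen orientation of each triangle. The flip itself is handled by the contraction/pull--push machinery for parametrized perverse schobers from \cite[Section~6.4]{Chr22}: one passes through a zig-zag of ribbon graph contractions connecting $\rgraph$ and $\rgraph'$, and checks that the resulting $\rgraph'$-schober agrees with $\mathcal{F}_{\rgraph',\ADE}$. The verification boils down to identifying, in the local model of \Cref{lem:locmodel}, two perverse schobers on the $3$-spider which differ by a cyclic shift together with a $\D(\sigma)$-twist; this uses \cite[Prop.~3.11]{Chr22} and that $\D(\sigma)$ is an involution. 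Your ``unwind the $\mathbb{Z}/6$-symmetry'' sketch is morally in the same direction but would need to be rewritten in terms of this contraction calculus to actually go through.

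For (ii), the paper does not argue abstractly that a limit of compactly generated smooth connective categories is again of this form. Instead it \emph{defines} $\GS$ as the homotopy colimit in $\on{dgCat}$ of an explicit diagram $\underline{\GS}\colon \on{Exit}(\rgraph)^{\on{op}}\to \on{dgCat}$ built from the local dg functors $\tilde{D}_i^{\on{dg}}$ and $\sigma^{\on{dg}}$. Then $\D(\GS)\simeq \glsec(\rgraph,\mathcal{F}_{\rgraph,\ADE})$ is automatic, since $\D(\mhyphen)$ sends homotopy colimits to colimits in $\on{LinCat}_k$, and the latter agrees with the limit of the right adjoint diagram. Smoothness and connectivity are then inherited because both are preserved under homotopy colimits in $\on{dgCat}$.

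For the relative $3$-Calabi--Yau structure, your concern about non-degeneracy after gluing is exactly the right point, but the resolution is more direct than an iterated relative CY completion. The paper applies the Brav--Dyckerhoff gluing theorem \cite[Thm.~6.2]{BD19} directly; what needs to be checked is only that the boundary classes on $\D(\Pi_2(\ADE))$ contributed by adjacent triangles match (with the correct sign) along each internal edge. This is the content of \Cref{lem:signs_relCY}: the local relative CY structure on $\GT$ restricts to three identical copies of a class $\eta'$ on $\D(\Pi_2(\ADE))$, and $\D(\sigma)(\eta')=-\eta'$. The latter sign is deduced from a general fact (\Cref{lem:twist_CY_sign}) that the twist functor of a spherical adjunction negates the boundary CY class. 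With this sign compatibility, \cite{BD19} gives a genuine (non-degenerate) relative left $3$-CY structure on $f_{\GS}$ without further work.
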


\noindent {\bf Ice quivers with potential}

Keller--Liu \cite{KL25} sketch that the relative Calabi--Yau completion $\GT$ of the basic triangle is Morita equivalent to the relative Ginzburg dg category of an ice quiver with potential. The underlying ice quiver is furthermore expected to coincide with the ice quiver describing the initial cluster seed of the corresponding cluster algebra of regular functions on the cluster $\mathcal{A}$-variety of the basic triangle. In type $A$, it is however clear that the two ice quivers coincide.

The ice quiver of a general triangulated marked surface is obtained via amalgamation in the sense of Fock--Goncharov \cite{FG06a}. We show in \Cref{subsec:icequiveramalgamation} a general gluing result (specifically a homotopy pushout square) for relative Ginzburg dg categories arising as the amalgamation of two ice quivers with potential, see \Cref{thm:Ginzburg_amalgamation}. Applied to amalgamation of the ice quivers with potential associated with the triangles of a triangulated marked surfaces, we obtain the following:

\begin{proposition}[\Cref{prop:Ginzburg_alg}]
There exists an equivalence of $\infty$-categories between $\D(\GS)$ and the derived $\infty$-category of the relative Ginzburg dg category of the amalgamation ice quiver with potential defined in \Cref{def:surface_ice_quiver}.
\end{proposition}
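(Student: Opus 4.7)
The plan is to assemble the claimed equivalence from the local Keller--Liu identification on each triangle together with an iterated application of the amalgamation gluing result \Cref{thm:Ginzburg_amalgamation} along the internal edges of the ribbon graph $\rgraph$ dual to the chosen triangulation of ${\bf S}$.

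I would first express $\D(\GS)$ as a homotopy colimit of the local triangle categories. By construction, $\mathcal{F}_{\rgraph,\ADE}$ restricts on each triangle $\Delta$ to the local perverse schober $\mathcal{F}_{\Delta,\ADE}$, and the identifications along each internal edge of $\rgraph$ are given by the involution $\sigma$ of $\Pi_2(\ADE)$ from \Cref{def:ADEinvolution}. Using the perverse-schober formalism and the identification $\glsec(\rgraph,\mathcal{F}_{\rgraph,\ADE})\simeq \D(\GS)$, the category $\D(\GS)$ is then expressed as an iterated homotopy pushout (in smooth connective relative $3$-Calabi--Yau dg categories) of the local factors $\D(\GT)$, one pushout per internal edge of $\rgraph$, whose structure maps are certain boundary functors $\tilde D_i^R$ on one side and $\sigma$-twisted versions of such on the other side.

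Next, I would invoke the Keller--Liu result \cite{KL25}: each local factor $\D(\GT)$ is equivalent to the derived $\infty$-category of the relative Ginzburg dg category of the ice quiver with potential associated with the basic triangle, and under this equivalence the three boundary functors $\tilde D_i^R$ correspond to restriction onto the three frozen Dynkin subquivers isomorphic to $\ADE$ sitting inside the local ice quiver. Combined with \Cref{thm:Ginzburg_amalgamation}, which realises each elementary amalgamation of two ice quivers with potential along matching frozen pieces as a homotopy pushout of their relative Ginzburg dg categories, an induction on the internal edges of $\rgraph$ then identifies the iterated pushout of the previous step with the derived $\infty$-category of the relative Ginzburg dg category of the amalgamation ice quiver with potential of \Cref{def:surface_ice_quiver}.

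The main obstacle is verifying that the edge-gluing data on the schober side --- the involution $\sigma$ inserted at each internal edge --- is compatible with the matching of frozen vertices prescribed by the amalgamation rule of \Cref{def:surface_ice_quiver}. Concretely, one must check that, under the Keller--Liu identification, the composition $\sigma\circ \tilde D_i^R$ coming from one triangle and the functor $\tilde D_j^R$ coming from the adjacent triangle give precisely the two restriction functors from the two local relative Ginzburg dg categories onto their shared frozen subalgebra that appear as the two legs of the amalgamation pushout. This is the step where the role of $\sigma$, and in particular its explicit form in type $A_n$ as the reflection $i\mapsto n+1-i$, has to be matched with the conventions used in \Cref{def:surface_ice_quiver}; once this compatibility is established, the iteration of \Cref{thm:Ginzburg_amalgamation} proceeds without further subtleties and yields the desired equivalence.
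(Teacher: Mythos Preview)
Your approach is essentially the same as the paper's: decompose the homotopy colimit defining $\GS$ as an iterated pushout over internal edges, use the Keller--Liu identification on each triangle, and apply \Cref{thm:Ginzburg_amalgamation} at every step. Two small points are worth flagging.

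First, a notational slip: the structure maps in the iterated pushout are the left adjoints $\tilde D_i$ (equivalently, the dg functors $\tilde D_i^{\on{dg}}$ appearing in $\underline{\GS}\colon \on{Exit}(\rgraph)^{\on{op}}\to \on{dgCat}$), not the right adjoints $\tilde D_i^R$. The latter appear in the sheaf/limit picture, whereas the amalgamation pushouts live on the cosheaf/colimit side.

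Second, you correctly isolate the crux --- matching the edge involution $\sigma$ with the gluing data in \Cref{thm:Ginzburg_amalgamation} --- but you leave it unresolved. The paper disposes of this via \Cref{ex:Dynkininvolution}: one factors $\sigma=\xi\circ\psi$, where $\psi$ is exactly the canonical dg isomorphism $\Pi_2(E)\simeq \Pi_2(E')$ from \Cref{rem:Pi2_equivalence} that \Cref{thm:Ginzburg_amalgamation} is stated for, and $\xi$ is induced by a quiver automorphism of $\ADE$. The factor $\psi$ is absorbed by the theorem, while $\xi$ precisely implements the bijection of frozen vertices used in \Cref{def:surface_ice_quiver}. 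With this decomposition in hand, the induction goes through without further work.
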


\subsection{Cosingularity categories}\label{subsec:cosing}

Let $\mathscr{G}$ be a smooth dg category. Then the derived category of finite dg $\mathscr{G}$-modules $\D^{\on{fin}}(\mathscr{G})$ is contained in the perfect derived category $\D^{\on{perf}}(\mathscr{G})$. The cosingularity category is defined as the Verdier quotient $\on{CoSing}(\mathscr{G})=\D^{\on{perf}}(\mathscr{G})/\D^{\on{fin}}(\mathscr{G})$. Cosingularity categories of (absolute!) $3$-Calabi--Yau categories were considered by Amiot \cite{Ami09} under the name of generalized cluster categories. Note that given absolute Calabi--Yau structures, the cosingularity category and the singularity category are exchanged by Koszul duality \cite{GS20}.

We study the cosingularity categories of the relative $3$-Calabi--Yau categories $\glsec(\rgraph,\mathcal{F}_{\rgraph,\ADE})\simeq \D(\GS)$, generalizing the results in the case $\ADE=A_1$ of the prequel \cite{Chr22b}. The result in loc.~cit.~was that the cosingularity category is equivalent to the topological Fukaya category of the surface valued in the $1$-periodic derived category. The $1$-periodic derived category can be seen as the $1$-Calabi--Yau cluster category $\C_{A_1}$ of type $A_1$, see below.\\ 

\noindent{\bf The $1$-Calabi--Yau cluster category $\C_\ADE$ of Dynkin type}

The $1$-Calabi--Yau cluster category $\C_\ADE$ can be defined as the cosingularity category of the $2$-Calabi--Yau completion $\Pi_2(\ADE)$, also known as the derived preprojective algebra. $\C_\ADE$ can also be described as the derived orbit category $\D(\ADE)/\tau$ of $\D(\ADE)$ by the Auslander--Reiten translation functor $\tau\simeq U[-1]$, with $U$ the Serre functor. The $1$-Calabi--Yau cluster category can furthermore be described via matrix factorizations of the type $\ADE$ simple surface singularity, and is thus $2$-periodic. 

We study $\C_\ADE$ in \Cref{sec:1CYclustercat}. The main purpose of that section is to describe the suspension functor $[1]$ of $\C_\ADE$. The main result, is the following: 

\begin{proposition}[\Cref{prop:shift}]\label{introprop:shift}
The explicit dg isomorphism $\sigma\colon \Pi_2(\ADE)\to \Pi_2(\ADE)$ from \Cref{def:ADEinvolution} induces the suspension functor $[1]\colon \on{CoSing}(\Pi_2(\ADE))\to \on{CoSing}(\Pi_2(\ADE))$. 
\end{proposition}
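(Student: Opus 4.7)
My plan is to identify $\sigma_*$ and $[1]$ as autoequivalences of $\on{CoSing}(\Pi_2(\ADE))$ via the $1$-Calabi--Yau structure and by comparing them on a compact generator.

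First, I would invoke that $\C_\ADE = \on{CoSing}(\Pi_2(\ADE))$ is $1$-Calabi--Yau, so the suspension $[1]$ coincides with the Serre functor. Using the orbit category description $\C_\ADE \simeq \D^b(k\ADE)/\tau$ with $\tau = U[-1]$ and $U$ the Serre functor of $\D^b(k\ADE)$, the orbit relation $\tau \simeq \id$ gives $[1] \simeq U$. Concretely, $[1]$ is induced on $\on{CoSing}(\Pi_2(\ADE))$ by a suitable shift of the Nakayama bimodule $D(k\ADE)$, descended to the Verdier quotient.

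Second, I would unpack \Cref{def:ADEinvolution}: the involution $\sigma$ restricts on the degree $0$ subalgebra $k\ADE \subset \Pi_2(\ADE)$ to the Nakayama involution of $k\ADE$, and on the degree $-1$ generators is determined by the $2$-Calabi--Yau pairing together with the Serre duality of $k\ADE$. Thus the twisted bimodule $\Pi_2(\ADE)_\sigma$ tautologically encodes the Nakayama duality enhanced by the $2$-CY completion structure. On the compact generator $T = \bigoplus_i P_i$ with $P_i = e_i \Pi_2(\ADE)$, the functor $\sigma_*$ permutes the summands by the Nakayama permutation $\nu$, while $[1]$ does the same via $U(P_i) = I_i \simeq P_{\nu(i)}$ in $\C_\ADE$ (using $\tau \simeq \id$ in the orbit). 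Both functors transport the dg endomorphism algebra $\on{End}(T)$ by the same Nakayama/$2$-CY duality, so they should agree as autoequivalences by the standard criterion that an autoequivalence is determined up to isomorphism by its action on a compact generator and its dg endomorphism algebra.

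The main obstacle I anticipate is the coherent matching of degree shifts: while $\sigma$ is given by an explicit finite formula on generators of $\Pi_2(\ADE)$, the shift $[1]$ on $\on{CoSing}$ naturally involves the inverse dualizing bimodule $\Pi_2(\ADE)^![2]$, whose excess components vanish only after passage to the Verdier quotient. Producing an explicit bimodule zigzag between $\Pi_2(\ADE)_\sigma$ and $\Pi_2(\ADE)^![1]$ modulo finite-dimensional bimodules, respecting all signs and degree shifts, will be the technical heart of the argument; the core identification then follows from the tautological compatibility between $\sigma$ and the $2$-CY lift of the Nakayama duality on $k\ADE$.
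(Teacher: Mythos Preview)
Your strategy of comparing $\sigma_*$ and $[1]$ via their action on a compact generator is sound in principle, and is related to the paper's \Cref{lem:involution_compatibility}, but you have correctly located the gap without filling it. The ``explicit bimodule zigzag respecting all signs'' is precisely where the difficulty lies, and your closing claim that it follows from ``the tautological compatibility between $\sigma$ and the $2$-CY lift of the Nakayama duality'' is circular: the suspension $[1]$ on $\C_\ADE$ is defined abstractly, there is no a priori canonical $2$-CY lift of the Nakayama automorphism against which to compare, and the content of the proposition is exactly that $\sigma$ \emph{is} such a lift. A related imprecision: $\sigma$ does not restrict to an automorphism of $k\ADE\subset\Pi_2(\ADE)$ (in type $A_n$ it sends $a_i$ to $a_{n-i}^\dagger$), so your second paragraph needs reformulation already at that level.

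The paper takes a genuinely different route. It introduces an auxiliary dg category $\widetilde{\Pi_2(\ADE)}$ built on the Auslander--Reiten quiver of $\D^{\on{perf}}(\ADE)$, sitting in a square where $\C_\ADE$ arises both as an orbit of a cosingularity category and as the cosingularity category of an orbit. An explicit lift $\tilde{\sigma}$ of $\sigma$ to $\widetilde{\Pi_2(\ADE)}$ descends to an autoequivalence of $\on{CoSing}(\widetilde{\Pi_2(\ADE)})\simeq\D^{\on{perf}}(\ADE)$ that can be identified with $[1]$ by direct inspection of its action on the AR quiver. The remaining issue is that there are exactly two $\mathbb{Z}$-equivariant structures on this identification, differing by a global sign, and only one of them induces $[1]$ on the orbit $\C_\ADE$; the paper pins down the correct one by a type-by-type argument using the triangulated axioms to rule out the wrong sign. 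This sign determination is the substance your proposal elides, and without the lift to $\widetilde{\Pi_2(\ADE)}$ it is not clear how you would access it directly on $\C_\ADE$.
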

 
For the proof of \Cref{prop:shift}, we introduce a novel dg category $\widetilde{\Pi_2(\ADE)}$, which interpolates between $\D^{\on{perf}}(\ADE)$ and $\D^{\on{perf}}(\Pi_2(\ADE))$, see also \Cref{fig:squareofautomorphisms} for a summary of their relation.

In type $A$, the involution $\sigma$ acts by reflecting the quiver along its middle (and reversing the signs of the degree $1$ loops). In types $D_{2n},E_7,E_8$, the involution acts trivially on the vertices of $\Pi_2(\ADE)$ (but changes the signs of some morphisms). In types $D_{2n-1},E_6$, $\sigma$ acts as a partial reflection. Identifying the vertices of $\Pi_2(\ADE)$ with the positive roots, the involution $\sigma$ induces a well known involution on the positive roots, given by the action of $-w_0$. This involution is also used in the amalgamation og cluster $\mathcal{A}$-varieties of triangles and in the same way in the definition of the the perverse schober $\mathcal{F}_{\rgraph,\ADE}$.\\  

\noindent {\bf The $\C_\ADE$-valued topological Fukaya category}

Given an oriented marked surface ${\bf S}$ and a $2$-periodic dg category $C$, Dyckerhoff--Kapranov \cite{DK18} show that there is a corresponding topological Fukaya category $\on{Fuk}({\bf S},C)$, defined uniquely up to a contractible choice. Its perfect derived $\infty$-category arises as the global sections of a perverse schober with generic stalk $\D^{\on{perf}}(C)$, no singularities and trivial monodromy local system in the sense of \cite{Chr23}. We review this relation in \Cref{subsec:2_periodic_top_Fukaya}.

The passage to the ($\on{Ind}$-completed) cosingularity category commutes with the gluing along the perverse schober $\mathcal{F}_{\rgraph,\ADE}$. The cosingularity category $\on{CoSing}(\GT)$ can thus be described as the global sections of a quotient perverse schober $\mathcal{F}^{\on{clst}}_{\rgraph,\ADE}$. The generic stalk of $\mathcal{F}_{\rgraph,\ADE}$ is $\D(\Pi_2(\ADE))$, and the generic stalk of the quotient $\mathcal{F}^{\on{clst}}_{\rgraph,\ADE}$ is thus given by the ($\on{Ind}$ completion of the) $1$-Calabi--Yau cluster category $\C_\ADE=\on{CoSing}(\Pi_2(\ADE))$. Further $\mathcal{F}^{\on{clst}}_{\rgraph,\ADE}$ has no singularities. Using \Cref{introprop:shift}, we show:

\begin{theorem}[\Cref{thm:global_sections_=_Fukaya}]\label{introthm:Fuk}
The monodromy local system of the quotient perverse schober $\mathcal{F}_{\rgraph,\ADE}^{\on{clst}}$ of $\mathcal{F}_{\rgraph,\ADE}$ is trivial. Its global sections thus describe the ($\on{Ind}$-completed) $\C_\ADE$-valued topological Fukaya category:
\[
\glsec(\rgraph,\mathcal{F}_{\rgraph,\ADE}^{\on{clst}})\simeq \on{Ind}\on{Fuk}({\bf S},\C_\ADE)\,.
\]
\end{theorem}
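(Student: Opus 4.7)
The plan is to reduce the theorem to a monodromy computation and then evaluate that monodromy via \Cref{introprop:shift}. As noted preceding the theorem, $\mathcal{F}^{\on{clst}}_{\rgraph,\ADE}$ has no singularities, so it is a locally constant cosheaf of presentable stable $\infty$-categories on $\rgraph$, equivalently on ${\bf S} \setminus M$ via the standard deformation retraction (available because ${\bf S}$ has non-empty boundary and no punctures). Any such locally constant cosheaf with generic stalk $\on{Ind}\C_\ADE$ and trivial monodromy has global sections computing the $\on{Ind}$-completed $\C_\ADE$-valued topological Fukaya category of ${\bf S}$, by the description recalled in \Cref{subsec:2_periodic_top_Fukaya} following \cite{DK18, Chr23}. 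The substantive content of the theorem is therefore the triviality of the monodromy.

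To compute the monodromy, I would traverse each face of $\rgraph$ dual to a marked point of $\partial {\bf S}$; the boundaries of such faces generate $\pi_1({\bf S} \setminus M)$. Going around such a face, a loop crosses an alternating sequence of edges and vertices of $\rgraph$. Each edge-crossing contributes the transition functor built into the definition of $\mathcal{F}_{\rgraph,\ADE}$, which is the involution $\sigma$ from \Cref{def:ADEinvolution}; by \Cref{introprop:shift}, it descends to the shift $[1]$ on $\C_\ADE$. Each vertex-crossing contributes a rotation functor that I would read off from the cosingular quotient of the local $3$-spider perverse schober $\mathcal{F}_{\Delta,\ADE}$.

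The main obstacle is pinning down the vertex rotations. These are governed by the spherical-adjunction data of the $3$-spider: the cotwist of the adjunction $(\tilde{D}_1,\tilde{D}_2,\tilde{D}_3) \colon \D(\Pi_2(\ADE))^{\times 3} \to \D(\GT)$ is controlled by the inverse dualizing bimodule of $\GT$, which enters the rotation-by-one-spoke functor in the $\mathbb{N}$-spider perverse schober framework of \cite{Chr22b, CHQ23}. After descending to the cosingular quotient, this rotation should act on $\C_\ADE$ as a shift, and the insertion of $\sigma$ along every edge in $\mathcal{F}_{\rgraph,\ADE}$ is precisely calibrated so that the edge and vertex contributions cancel along any face. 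Invoking the $2$-periodicity of $\C_\ADE$ at the final step (so that any residual shift is a multiple of $[2]\simeq \id$), I conclude that the monodromy around each face is trivial.

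Triviality of the monodromy established, the equivalence $\glsec(\rgraph, \mathcal{F}^{\on{clst}}_{\rgraph,\ADE}) \simeq \on{Ind}\on{Fuk}({\bf S},\C_\ADE)$ follows from the general identification recalled above. As a consistency check, I would verify that the conclusion is insensitive to the choice of triangulation and to the orientations of the individual triangles, using the corresponding independence results already established for $\mathcal{F}_{\rgraph,\ADE}$.
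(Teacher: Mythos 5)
There is a genuine gap in your reduction of the monodromy computation. You propose to verify triviality only on the boundary loops of the faces of $\rgraph$ dual to the marked points, claiming that these generate $\pi_1({\bf S}\setminus M)$. That fails once ${\bf S}$ has positive genus: for the one-holed torus with a single marked point there is a single face, and its boundary walk is homotopic to the boundary curve of ${\bf S}$, i.e.\ to a commutator in the free group $\pi_1({\bf S})\cong F_2$, so triviality of the monodromy around it says nothing about the monodromy on a generating set (your claim is only correct for planar surfaces). The paper's proof of \Cref{thm:global_sections_=_Fukaya} instead takes an arbitrary closed curve $\gamma$, writes it as a composite of segments each of which crosses one ideal triangle and turns exactly one step (clockwise or counterclockwise) around the dual vertex, and checks triviality of the transport along this composite; your argument would need to be redone in that generality.

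Your evaluation of the local contributions is also vaguer and more roundabout than necessary. By \Cref{rem:transport}, which rests on \Cref{thm:KLmonodromy}, the one-step transport of $\mathcal{F}_{\Delta,\ADE}$ around a vertex is already $\D(\sigma)$ at the level of $\D(\Pi_2(\ADE))$, and \Cref{constr:Teichmullerschober} inserts a further $\D(\sigma)$ along each edge; these cancel pairwise, so the transport of $\mathcal{F}_{\rgraph,\ADE}$ itself, before passing to the cosingular quotient, is trivial along every such closed curve, and triviality for $\mathcal{F}^{\on{clst}}_{\rgraph,\ADE}$ is an immediate consequence. In particular, neither \Cref{prop:shift} nor $2$-periodicity is invoked at this stage (\Cref{prop:shift} enters earlier, in identifying the local transport as $\D(\sigma)$ via \Cref{lem:involution_compatibility}). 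Your plan of descending to the quotient, declaring the vertex rotation to be "a shift", and cancelling a residual shift using $[2]\simeq \id$ would additionally require controlling the parity of the number of shifts accumulated around an arbitrary loop (in effect a winding-number count relative to the line field of $\rgraph$), which you do not address; the cancellation at the level of $\D(\Pi_2(\ADE))$ avoids this bookkeeping entirely.
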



Another variant of higher rank topological Fukaya categories have been considered in \cite{HKS21} (in relation with stability conditions). There, they take values in $\D^{\on{perf}}(\ADE)$. The $\C_\ADE$-valued topological Fukaya category $\on{Fuk}({\bf S},\C_\ADE)$ is equivalent to the orbit $\infty$-category of $\on{Fuk}({\bf S},\D^{\on{perf}}(\ADE))$ by the autoequivalence induced by the 'local' Auslander--Reiten translation functor $\tau$ on $\D^{\on{perf}}(\ADE))$. For example in the case $\ADE=A_1$, $\on{Fuk}({\bf S},\D^{\on{perf}}(\ADE))$ describes the orbit category of the usual $\D(k)^{\on{perf}}$-valued topological Fukaya category by $[1]$, see also \cite{Chr25}.

\subsection{The Higgs category and cluster tilting theory}\label{subsec:HiggsCT}

\noindent {\bf Cluster categories and Higgs categories}

As mentioned above, the cluster category of a smooth connective (absolute) $3$-Calabi--Yau dg category $\mathscr{G}$ is given by cosingularity category $\on{CoSing}(\mathscr{G})$, see \cite{Ami09}. Under mild assumptions, the cosingularity category $\on{CoSing}(\mathscr{G})$ is triangulated $2$-Calabi--Yau and the image of a connective generator of $\D(\mathscr{G})$ in $\on{CoSing}(\mathscr{G})$ defines a cluster tilting object. Such $2$-Calabi--Yau triangulated categories with cluster tilting objects can be used for the additive categorification of cluster algebras, see also below. These cluster algebras however have no coefficients (meaning no frozen cluster variables). 

For the additive categorification of cluster algebras with coefficients, one can use Frobenius extriangulated categories in the sense of \cite{NP19}. The indecomposable injective--projective objects in the Frobenius extriangulated category appear in every cluster tilting object and correspond to the frozen cluster variables. The analog of the cluster category is this context is the Higgs category, introduced by Yilin Wu \cite{Wu21}. Its construction takes as input a suitable connective dg category $\mathscr{G}$ together with a relative $3$-Calabi--Yau dg functor $B\to \mathscr{G}$. The homotopy cofiber of $B\to \mathscr{G}$ defines an absolute $3$-Calabi--Yau dg category $\mathscr{G}^\circ$. Instead of passing to the cosingularity category, one passes to the so-called relative cluster category, defined as the Verdier quotient $\D^{\on{perf}}(\mathscr{G})/\D^{\on{fin}}(\mathscr{G}^{\circ})$ by the derived category of finite $\mathscr{G}^\circ$-modules. The Higgs category $\mathcal{H}_\mathscr{G}$ arises as a certain extension closed subcategory of $\D^{\on{perf}}(\mathscr{G})/\D^{\on{fin}}(\mathscr{G}^{\circ})$ and thus inherits an extriangulated structure (which is even Frobenius). The image of $\mathscr{G}$ in the relative cluster category lies in $\mathcal{H}_{\mathscr{G}}$ and a generator of $\mathscr{G}$ is mapped to a cluster tilting object. Further, $\mathcal{H}_{\mathscr{G}}$ is extriangulated $2$-Calabi--Yau.\\ 

\noindent {\bf Higgs categories for higher Teichm\"uller theory}

For any Higgs category, there is a canonical functor $\mathcal{H}_{\mathscr{G}}\subset \D^{\on{perf}}(\mathscr{G})/\D^{\on{fin}}(\mathscr{G}^{\circ})\to \on{CoSing}(\mathscr{G})$ to the cosingularity category, but in general it is not an equivalence of categories. We however show that it is in the case that $\mathscr{G}=\GS$ is the relative $3$-Calabi--Yau dg category from above. The proof of this follows the same strategy as the proof of the statement in the case $\ADE=A_1$ given in the prequel \cite{Chr22b}:

Firstly, as in \cite{Chr22b}, we equip $\on{CoSing}(\GS)$ with an $\infty$-categorical Frobenius exact structure. This is the relative exact structure arising from the boundary restriction functor.

The second step is to show the following:

\begin{theorem}[\Cref{thm:GS_cluster_tilting}]
The exact $\infty$-category $\on{CoSing}(\GS)\simeq \on{Fuk}({\bf S},\C_\ADE)$ admits a canonical cluster tilting object.
\end{theorem}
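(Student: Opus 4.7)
The plan is to build the cluster tilting object by gluing along the triangulation dual to $\rgraph$, mirroring how $\on{CoSing}(\GS)$ itself arises as global sections of the quotient perverse schober $\mathcal{F}^{\on{clst}}_{\rgraph,\ADE}$. The candidate is obtained triangle-by-triangle from the canonical local cluster tilting objects of Keller--Liu \cite{KL25} and assembled via the gluing theorem for cluster tilting objects of \cite{Chr25b}, which was cited in the introduction precisely for this purpose.

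In more detail, on a single triangle $\Delta$ Keller--Liu provide a canonical cluster tilting object $T_\Delta\in\on{CoSing}(\GT)\simeq\on{Fuk}(\Delta,\C_\ADE)$, given as the image of the direct sum of the objects $\tilde D_i(P_j)$ for indecomposable projective $\ADE$-modules $P_j$; its rigidity and cluster tilting property with respect to the relative Frobenius exact structure coming from the boundary restriction functor are established in loc.~cit. For a general ${\bf S}$, I would assign such a $T_\Delta$ to each triangle of the triangulation and declare the candidate global object $T_\rgraph\in\on{CoSing}(\GS)$ to be the section of $\mathcal{F}^{\on{clst}}_{\rgraph,\ADE}$ they assemble into. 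The gluing theorem of \cite{Chr25b}, applied to this schober equipped with its relative Frobenius structure, upgrades a compatible family of local cluster tilting objects to a global cluster tilting object. Rigidity of $T_\rgraph$ then reduces to the rigidity of each $T_\Delta$ via a Mayer--Vietoris type $\on{Ext}$ sequence associated with the gluing along the edges of $\rgraph$, and the generation property is checked locally by restricting to each triangle.

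The principal obstacle is verifying the edge-compatibility hypothesis of \cite{Chr25b}. Along each edge $e$ of the triangulation the two adjacent triangles restrict their local cluster tilting objects to objects in the generic stalk $\C_\ADE$, and these two restrictions must be canonically identified via the gluing automorphism $\sigma$ of $\Pi_2(\ADE)$ from \Cref{def:ADEinvolution} that is inserted along edges in the definition of $\mathcal{F}_{\rgraph,\ADE}$. The content to be unpacked is that, under the identification of the vertices of $\Pi_2(\ADE)$ with positive roots, $\sigma$ acts by the same involution $-w_0$ that governs the amalgamation of cluster $\mathcal{A}$-varieties of triangles, so that the two halves of $T_\rgraph$ fit together in exactly the way required. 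Once this is in place, the remaining hypotheses of \cite{Chr25b} follow formally from the relative $3$-Calabi--Yau structure on $\GS$ established in \Cref{prop:rel3CY} together with the local properties of the schober, and the identification $\on{CoSing}(\GS)\simeq\on{Fuk}({\bf S},\C_\ADE)$ from \Cref{introthm:Fuk} yields the statement in the form written.
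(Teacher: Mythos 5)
Your overall strategy is the paper's: take the canonical local cluster tilting object of Keller--Liu on each triangle and globalize it with the gluing theorem of \cite{Chr25b} applied to $\mathcal{F}^{\on{clst}}_{\rgraph,\ADE}$ with the Frobenius exact structure coming from boundary restriction. However, two concrete points in your write-up do not work as stated. First, the local object is misidentified: the image of $\bigoplus_{i,j}\tilde D_i(P_j)$ in $\on{CoSing}(\GT)$ accounts only for the boundary (frozen) summands. The canonical cluster tilting subcategory is the image of the whole additive subcategory $\GT\subset\D(\GT)$, whose indecomposables correspond to \emph{all} indecomposables of $\on{Fun}([1],\on{proj}(\ADE))$ --- e.g.\ in type $A_3$ the objects $P_1\shortrightarrow P_2$, $P_1\shortrightarrow P_3$, $P_2\shortrightarrow P_3$, which are not of the form $\tilde D_i(P_j)$ and correspond to the unfrozen vertices of the ice quiver. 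The object you describe is too small to have the two-sided $2$-term resolution property; the correct statement is the one established in \cite[Section 7.3]{KL25} via the Higgs category of $\GT$.

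Second, the gluing mechanism is not "finding a section of $\mathcal{F}^{\on{clst}}_{\rgraph,\ADE}$ whose restrictions to the triangles are the $T_\Delta$", so the edge-compatibility condition you single out as the principal obstacle is not a hypothesis of the cited theorem. As used in the paper, \cite[Thm.~1]{Chr25b} takes an arbitrary cluster tilting subcategory $\mathcal{T}_v$ at each vertex and asserts that $\on{Add}(\bigcup_{v\in\rgraph_0}\on{ind}^L_v(\mathcal{T}_v))$ is cluster tilting in $\glsec(\rgraph,\mathcal{F}^{\on{clst}}_{\rgraph,\ADE})^{\on{c}}$, where $\on{ind}^L_v$ is the left adjoint of evaluation at $v$; no matching of restrictions along edges is required, and the involution $\sigma$ enters only through the construction of the schober itself, not as a hypothesis on the cluster tilting objects. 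Likewise, re-proving rigidity by a Mayer--Vietoris sequence and generation by local restriction would just reproduce the content of that theorem. What the paper does need beyond the black box, and what your proposal omits, is the identification making the glued object canonical: by \Cref{rem:projectivesof3CY} and the fact that left induction commutes with morphisms of perverse schobers, the glued subcategory is precisely the image of $\GS$ in $\on{CoSing}(\GS)\simeq\on{Fuk}({\bf S},\C_\ADE)$.
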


The proof is based on two quite trivial results: firstly that $\on{CoSing}(\GT)$ admits a cluster tilting object, which was showin in \cite{KL25}. Secondly, we use the novel gluing result for cluster tilting subcategories along perverse schobers recently proven in \cite{Chr25b}. 

Thirdly, and finally, we show that the functor $\mathcal{H}_{\mathscr{G}_{\rgraph,\ADE}}\to \on{CoSing}(\mathscr{G}_{\rgraph,\ADE})$ is an exact functor mapping a cluster tilting object to a cluster tilting object, inducing an equivalence between the endomorphism algebras. This implies that the functor is an equivalence of exact $\infty$-categories. This yields the equivalence between i) and ii) in \Cref{introthm:equivalence}.\\

\noindent {\bf The additive categorifications of cluster algebras from higher Teichm\"uller theory}

For an additive categorification of a cluster algebra $A$ in terms of a triangulated or extriangulated category $\mathcal{H}$ one requires that
\begin{itemize}
\item $\mathcal{H}$ admits cluster tilting objects,
\item there is a bijection between isomorphisms classes of cluster tilting objects in $\mathcal{H}$ and the clusters of the cluster algebra $A$,
\item the above equivalence identifies the endomorphism (ice) quivers of the cluster tilting objects with the quivers of the cluster and is compatible with mutation.  
\end{itemize}

Furthermore, one asks for a so-called cluster character, which is map $\on{obj}(\mathcal{H})\to A$ sending direct sums to products and satisfying a formula related with cluster mutation. The cluster character thus gives the direct link between the categorification and the cluster algebra. A cluster character exists given a cluster tilting object, see \cite{Pal08,Pla11,KW23}. To obtain a well behaved cluster characters, it is however important that in the ice quivers of the cluster tilting objects no $2$-cycles or loops appear. This is the case if $\mathcal{H}$ arises from an ice quiver with a non-degenerate potential, which simply means that under iterated mutations of the ice quiver with potential at non-frozen vertices no $2$-cycles appear. In this case, the cluster character gives a bijection between equivalence classes of reachable rigid objects and cluster variables, see \cite{CKLP13,KW23}.

To obtain a full additive categorification of the cluster algebras arising from higher Teichm\"uller theory, the following two tasks remain to be completed:
\begin{itemize}
\item Show that the ice quivers associated with the basic triangle in \cite{GS19,KL25} agree beyond type $\ADE=A$ with the linear orientation. By amalgamation, the ice quivers associated with general triangulations then coincide.
\item Show that the ice quiver potential described in \Cref{def:surface_ice_quiver} is non-degenerate. 
\end{itemize} 

\subsection*{Acknowledgements}

I thank Bernhard Keller for many helpful discussions and specifically for suggesting the definition of the dg category $\widetilde{\Pi_2(\ADE)}$ from \Cref{def:tilde_Pi2}. I further thank Philip Boalch, Roger Casals, Mikhail Gorsky, Norihiro Hanihara, Gustavo Jasso, Mauro Porta, Yilin Wu and Qiu Yu for helpful discussions.

The author is a member of the Hausdorff Center for Mathematics at the University of Bonn (DFG GZ 2047/1, project ID 390685813). Most of this work was completed while the author was based at the IMJ-PRG in Paris. During that time, he received funding from the European Union’s Horizon 2020 research and innovation programme under the Marie Skłodowska-Curie grant agreement No 101034255.

\section{Higher categorical preliminaries}

We freely use the language of $\infty$-categories, as developed in \cite{HTT,HA,SAG,Ker,Cis}.

\subsection{Linear \texorpdfstring{$\infty$}{infinity}-categories}

We denote by $\on{St}$ the $\infty$-category of (small) stable $\infty$-categories and exact functors. We denote the $\infty$-category of presentable, stable $\infty$-categories and colimit preserving functors by $\mathcal{P}r^L_{\on{St}}$. Given an $\infty$-category $\C$, we denote by $\C^{\on{c}}$ the subcategory of compact objects. Given a small $\infty$-category $\C$, we denote its $\on{Ind}$-completion by $\on{Ind}(\C)\in \mathcal{P}r^L$. Note that if $\C$ is stable, then $\on{Ind}(\C)^{\on{c}}$ is equivalent to the idempotent completion of $\C$. 

Let $R$ be an $\mathbb{E}_\infty$-ring spectrum and $\on{Mod}_R$ its symmetric monoidal $\infty$-category of module spectra. We denote by $\on{LinCat}_R\coloneqq \on{Mod}_{\on{Mod}_R}(\mathcal{P}r^L_{\on{St}})$ the $\infty$-category of $R$-linear $\infty$-categories. Note that by definition, $R$-linear $\infty$-categories are stable and $R$-linear functors preserve colimits. We will mostly be concerned with the case $R=k$ a field in this paper. 

Given an $R$-linear $\infty$-category $\C$, and two objects $X,Y\in \C$, the morphism object $\on{Mor}_\C(X,Y)\in \on{Mod}_R$ is the essentially unique object equipped with a map $\alpha\colon \on{Mor}_\C(X,Y)\otimes X\to Y$ in $\C$ such that for every $C\in \on{Mod}_R$ the morphism between mapping spaces
\[ \on{Map}_{\on{Mod}_R}(C,\on{Mor}_{\C}(X,Y))\to \on{Map}_{\C}(C\otimes X,\on{Mor}_{\C}(X,Y)\otimes X)\xrightarrow{\alpha} \on{Map}_{\C}(C\otimes X, Y) \]
is an equivalence, see also \cite[Def.~4.2.1.28]{HA}. 

Given a ($k$-linear) dg category $C$, we denote by $\D(C)\in \on{LinCat}_k$ its derived $\infty$-category, which is defined as the $\on{Ind}$-completion of the dg nerve of the dg category $\on{Perf}(C)$ of cofibrant compact right dg $C$-modules. The passage to derived $\infty$-categories defines a functor 
\[ \D(\mhyphen)\colon \on{dgCat}\to \on{LinCat}_k\] 
with $\on{dgCat}$ the nerve of the $1$-category of dg categories. This functor further maps homotopy colimits with respect to the quasi-equivalence model structure to $\infty$-categorical colimits.

\subsection{Exact \texorpdfstring{$\infty$}{infinity}-categories and cluster tilting objects}

We recall some aspects of the theory of exact $\infty$-categories. 

\begin{definition}[$\!\!$\cite{Bar15}]
An exact $\infty$-category is a triple $(\mathcal{C},\mathcal{C}_{\dagger},\mathcal{C}^{\dagger})$, where $\mathcal{C}$ is an additive $\infty$-category and $\mathcal{C}_{\dagger},\mathcal{C}^{\dagger}\subset \mathcal{C}$ are subcategories (called subcategories of inflations and deflations), satisfying that 
\begin{enumerate}[(1)]
\item every morphism $0\rightarrow X$ in $\mathcal{C}$ lies in $\mathcal{C}_{\dagger}$ and every morphism $X\rightarrow 0$ in $\mathcal{C}$ lies in $\mathcal{C}^\dagger$.
\item pushouts in $\mathcal{C}$ along morphisms in $\mathcal{C}_{\dagger}$ exist and lie in $\mathcal{C}_{\dagger}$. Dually, pullbacks in $\mathcal{C}$ along morphisms in $\mathcal{C}^{\dagger}$ exist and lie in $\mathcal{C}^{\dagger}$. 
\item  Given a commutative square in $\mathcal{C}$ of the form
\[
\begin{tikzcd}
X \arrow[r, "a"] \arrow[d, "b"] & Y \arrow[d, "c"] \\
X' \arrow[r, "d"]               & Y'              
\end{tikzcd}
\]
the following are equivalent.
\begin{itemize}
\item The square is pullback, $c\in \mathcal{C}_{\dagger}$ and $d\in \mathcal{C}^{\dagger}$.
\item The square is pushout, $b\in \mathcal{C}_{\dagger}$ and $a\in \mathcal{C}^{\dagger}$.
\end{itemize}
\end{enumerate}
We typically abuse notation and simply refer to $\mathcal{C}$ as the exact $\infty$-category.
\end{definition}

If an $\infty$-category $\C$ is equipped with an exact structure, its homotopy $1$-category $\on{ho}\C$ inherits the structure of an extriangulated category, see \cite{Kle20,NP20}. 

\begin{definition}
An exact sequence $X\rightarrow Y\rightarrow Z$ in an exact $\infty$-category $\C$ consists of a fiber and cofiber sequence in $\mathcal{C}$
\[
\begin{tikzcd}
X \arrow[r, "a"] \arrow[dr, "\square", phantom] \arrow[d] & Y \arrow[d, "b"] \\
0 \arrow[r]                & Z               
\end{tikzcd}
\]
with $a$ an inflation and $b$ a deflation.

A functor between exact $\infty$-categories is called exact if it maps exact sequences to exact sequences.
\end{definition}

\begin{definition}\label{def:frob}
Let $\C$ be an exact $\infty$-category.
\begin{enumerate}[1)]
\item An object $P\in \mathcal{C}$ is called projective if every exact sequence $X\rightarrow Y \rightarrow P$ splits. An object $I\in \mathcal{C}$ is called injective if every exact sequence $I\rightarrow Y \rightarrow Z$ splits.
\item We say that $\mathcal{C}$ has enough projectives if for each object $X\in \mathcal{C}$ there exists an exact sequence $X\rightarrow P\rightarrow Y$ with $P$ projective. Similarly, we say that $\mathcal{C}$ has enough injectives if for each object $Y\in \mathcal{C}$ there exists an exact sequence $Y\rightarrow I \rightarrow X$ with $I$ injective.
\item We call $\mathcal{C}$ a Frobenius exact $\infty$-category if $\mathcal{C}$ has enough projectives and injectives and the classes of projective and injective objects coincide. 
\end{enumerate}
\end{definition}

\begin{remark}\label{rem:inducedexstr}
Let $F\colon \C\to \D$ be an exact functor (in the stable sense) between stable $\infty$-categories. Then there exists an exact structure on $\C$, where a fiber and cofiber sequence in $\mathcal{C}$ is exact if and only if its image under $F$ splits. We will refer to it as the exact structure on $\C$ induced by $F$.

If $F$ is spherical, then the exact structure induced by $F$ is Frobenius, see \cite{BS21,Chr22b}.
\end{remark}

\begin{example}\label{ex:schober_inducedexstr}
Let $\mathcal{F}$ be a $\rgraph$-parametrized perverse schober. Then the $\infty$-category of global sections $\glsec(\rgraph,\mathcal{F})$ inherits by \Cref{rem:inducedexstr} a Frobenius exact structure from the spherical functor \cite[Cor.~4.7]{Chr25b}
\[
\prod_{e\in \rgraph_1^\partial} \on{ev}_e \colon \glsec(\rgraph,\mathcal{F})\longrightarrow \prod_{e\in \rgraph_1^\partial} \mathcal{F}(e)\,.
\]
\end{example}

We call a subcategory $\T$ of an additive $\infty$-category an additive subcategory if it is closed under finite direct sums and direct summands.

\begin{definition}
Let $\C$ be an exact $\infty$-category and $\T\subset \C$ an additive subcategory.
\begin{enumerate}[(1)]
\item We call $\T$ rigid if all exact sequences $T\to Y\to T$ in $\C$ with $T\in \T$ and $Y\in \C$ split.
\item We say that $\T$ has the right $2$-term resolution property if for all $X\in \C$ there exists an exact sequence $X\to T_0\to T_1$ in $\C$ with $T_0,T_1\in \T$. 
\item We say that $\T$ has the left $2$-term resolution property if for all $X\in \C$ there exists an exact sequence $T_0\to T_1\to X$ in $\C$ with $T_0,T_1\in \T$.
\item We say that $\T$ has the two-sided $2$-term resolution property if it has the left and the right $2$-term resolution property.
\item We call $\T$ a cluster tilting subcategory if $T$ is rigid and has the two-sided $2$-term resolution property.
\item Suppose that $\T=\on{Add}(T)$ is the additive closure of an object $T\in \C$. We call $T$ a cluster-tilting object if $\T\subset \C$ is a cluster tilting subcategory and $T$ is basic, meaning $T$ is a finite direct sum of indecomposable objects which are pairwise non-isomorphic. 
\end{enumerate}
\end{definition}

We note that all cluster tilting subcategories appearing in this paper arise from cluster tilting objects. Note also that the property of being cluster tilting can be checked on the extriangulated homotopy $1$-category.

\subsection{\texorpdfstring{$\infty$}{infinity}-categorical group actions}

Given a group $G$, we denote by $\on{BG}$ its classifying space, which can be defined as the nerve of the $1$-category with a unique object $\ast$ with endomorphisms $G$. 

\begin{definition}~
\begin{enumerate}[(1)]
\item An action of a group $G$ on a small stable $\infty$-category $\C\in \on{St}$ is defined as a functor
\[
\rho\colon BG\longrightarrow \on{St},\ast\mapsto \C\,.
\]
\item Given an action $\rho\colon BG\to \on{St}$ of a group $G$ on a small stable $\infty$-category $\C$, the group quotient $\C_G$ is defined as the colimit $\on{colim}(\rho)\in \on{St}$. 
\end{enumerate}
Group actions and group quotients for large stable $\infty$-categories are defined similarly, replacing $\on{St}$ by $\mathcal{P}r^L_{\on{St}}$. 
\end{definition}

We note that $\on{Ind}$-completion $\on{Ind}\colon \on{St}\to \mathcal{P}r^L_{\on{St}}$ preserves colimits and thus group quotients. The forgetful functor $\on{LinCat}_k\to \on{P}r^L_{\on{St}}$ also preserves colimits.

\begin{remark}\label{rem:orbit_category}
A $\mathbb{Z}$-action $\rho\colon B\mathbb{Z}\longrightarrow \on{St} \ast\mapsto \C$ is fully determined by the autoequivalence $F=\rho(1)\colon \C\simeq \C$, see Lemma 4.3 in \cite{Chr25}. In this case, we write $\C/F=\C_G$ for the group quotient and call $\C/F$ the orbit $\infty$-category.
\end{remark}

\begin{remark}
A (not necessarily strict) $\mathbb{Z}$-action on a dg category $C$ induces a $G$-action on its $k$-linear derived $\infty$-category $\D(C)$, and the derived $\infty$-category of the orbit dg category is equivalent to the orbit $\infty$-category, see \cite[Section 4.2]{Chr25}.
\end{remark}

\begin{definition}
Let $\rho_\C,\rho_\D\colon BG\to \on{LinCat}_k$ be $G$-actions on $k$-linear $\infty$-categories $\C,\D$. A $G$-equivariant functor $F\colon \C\to\D$ consists of a functor
\[
\rho_\ast \colon \Delta^1\times BG\longrightarrow \on{LinCat}_k
\]
such that $\rho_0=\rho_\C$ and $\rho_1=\rho_\D$. 
\end{definition}

\begin{lemma}\label{lem:orbit_cat_induced_functor}
Let $C,D$ be dg categories with strict $\mathbb{Z}$-actions, with the actions of $1\in \mathbb{Z}$ given by the dg functors $F_C\colon C\to C$ and $F\colon D\to D$. Let $T\colon C\to D$ be a $\mathbb{Z}$-equivariant dg functor. 
\begin{enumerate}
\item[(1)] Then $T$ induces a dg functor 
\[ T/F_\ast\colon C/F_C\longrightarrow D/F_D\,.\]
\end{enumerate}
Let $\rho_{\D(C)},\rho_{\D(D)}\colon B\mathbb{Z}\to \on{LinCat}_k$ be the induced $\mathbb{Z}$-actions on the derived $\infty$-categories and $\D(T)\colon \D(C)\to \D(D)$ the induced $\mathbb{Z}$-equivariant functor. Passing to the colimit over $BG$ defines a functor 
\[ \D(T)/\D(F_{\ast})\colon \D(C)/\D(F_C)\longrightarrow \D(D)/\D(F_D)\,.\]
\begin{enumerate}
\item[(2)] There exists an equivalence of functors 
\[ \D(T)/\D(F_{\ast})\simeq \D(T/F_\ast)\,.\]  
\end{enumerate}
\end{lemma}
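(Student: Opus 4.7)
The plan is to establish part (1) by an explicit construction of the orbit dg functor, and part (2) by combining the equivalence $\D(C/F_C)\simeq \D(C)/\D(F_C)$ from \cite[Section 4.2]{Chr25} with a naturality argument in the equivariant direction.

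For part (1), the approach uses the explicit (Keller-style) model of the orbit dg category: $C/F_C$ has the same objects as $C$, and its morphism complexes are given by suitable direct sums $\bigoplus_{n\in \mathbb{Z}} \on{Hom}_C(X, F_C^n Y)$, with composition twisted by powers of $F_C$. Strict $\mathbb{Z}$-equivariance of $T$ is the identity $T\circ F_C = F_D\circ T$, so applying $T$ to a morphism $f\in \on{Hom}_C(X, F_C^n Y)$ yields an element $T(f)\in \on{Hom}_D(T(X), F_D^n T(Y))$. This defines $T/F_\ast$ on morphisms; compatibility with composition and the differential is immediate from $T$ being a dg functor and the equivariance being strict.

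For part (2), the strategy is to package the equivariant data of $T$ as a single diagram $\rho_\ast\colon \Delta^1\times B\mathbb{Z}\to \on{dgCat}$, whose restrictions to the two objects of $\Delta^1$ are the actions on $C$ and $D$ and whose restriction to $\Delta^1\times \{\ast\}$ is $T$. Post-composing with $\D(-)\colon \on{dgCat}\to \on{LinCat}_k$, which preserves homotopy colimits as recorded in the preliminaries, produces a diagram $\D\circ \rho_\ast$ whose restriction to $\Delta^1\times \{\ast\}$ is $\D(T)$, so the colimit over $B\mathbb{Z}$ taken in $\on{Fun}(\Delta^1,\on{LinCat}_k)$ is by definition the morphism $\D(T)/\D(F_\ast)$. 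On the other hand, taking the colimit over $B\mathbb{Z}$ first in $\on{dgCat}$ yields $T/F_\ast$ as a morphism in $\on{Fun}(\Delta^1,\on{dgCat})$; applying $\D(-)$ and invoking \cite[Section 4.2]{Chr25} pointwise at each vertex of $\Delta^1$ shows that $\D(T/F_\ast)$ is also the colimit of $\D\circ \rho_\ast$ over $B\mathbb{Z}$. The two constructions thus produce the same morphism in $\on{Fun}(\Delta^1,\on{LinCat}_k)$, yielding the required equivalence.

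The main obstacle is to upgrade the equivalence $\D(C/F_C)\simeq \D(C)/\D(F_C)$ from a pointwise statement at $C$ and $D$ to a natural one across the equivariant functor $T$. The cleanest route is to phrase everything at the level of colimit cocones in $\on{dgCat}$ and transport them along the colimit-preserving $\infty$-functor $\D(-)$: the colimit cocone of $\rho_\ast$ inside $\on{Fun}(\Delta^1,\on{dgCat})$ is sent by $\D$ to a colimit cocone in $\on{Fun}(\Delta^1,\on{LinCat}_k)$, and uniqueness of colimits then identifies $\D(T/F_\ast)$ with $\D(T)/\D(F_\ast)$.
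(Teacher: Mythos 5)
Your proposal is correct and follows essentially the same route as the paper: part (2) is proved there by exactly this mechanism of packaging the equivariant data as a diagram over $\Delta^1\times B\mathbb{Z}$ (with cone point the orbit dg functor), applying $\D(\mhyphen)$, and invoking the pointwise identification $\D(C/F_C)\simeq \D(C)/\D(F_C)$ together with the universal property of the colimit cones. For part (1) you spell out the Keller-style orbit category construction explicitly where the paper simply cites \cite[Prop~3.8]{FKQ24}; this is a fine substitute and does not change the argument.
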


\begin{proof}
Part (1) follows from \cite[Prop~3.8]{FKQ24}. Part (2) follows from the observation that the dg functor $T/F_\ast$ appears in the restriction of a diagram $\Delta^1\times B\mathbb{Z}^\triangleright\to \on{dgCat}_k$ to $\Delta^1\times \ast'$, with $\ast'$ the cone point. Passing to derived categories, $\D(T/F_\ast)$ thus arises as the tip of a morphism between the colimit cones of $\rho_{\D(C)},\rho_{\D(D)}$, and hence is equivalent to $\D(T)/\D(F_{\ast})$ by the universal property of the colimit cones.
\end{proof}

\subsection{Categorical compactifications and inverse Serre functors}

\begin{definition}
Let $\C$ be a $R$-linear $\infty$-category which is dualizable in the symmetric monoidal $\infty$-category $\on{LinCat}_R$ with dual $\C^\vee$. The identity functor $\on{id}_\C\colon \C\to \C$ induces the evaluation bimodule $\on{ev}_\C\colon\C\otimes \C^{\vee}\to \on{Mod}_R$.
\begin{enumerate}[(1)]
\item We call $\C$ smooth if $\on{ev}_\C$ admits a left adjoint $\on{ev}_\C^L$. In this case, we can obtain from $\on{ev}_\C^L$ an $R$-linear endofunctor $\on{id}_\C^!\colon \C\to \C$, called the inverse Serre functor or inverse dualizing bimodule of $\C$.
\item We call $\C$ proper if $\on{ev}_\C$ admits an $R$-linear right adjoint. If $\C$ is compactly generated (as will be all $k$-linear $\infty$-categories considered in this paper), then $\C$ is proper if and only if $\on{Mor}_\C(X,Y)\in \on{Mod}_R$ is compact for all $X,Y\in \C^{\on{c}}$. The right adjoint of $\on{ev}_e$ corresponds to an endofunctor $\on{id}_\C^*\colon \C\to \C$. If $\C$ is compactly generated, then $\on{id}_\C^*$ is a Serre functor, see \cite[Lem.~2.22]{Chr23}.
\end{enumerate}
\end{definition}

We note that if $\C$ is smooth and proper, then $\on{id}_\C^*$ and $\on{id}_\C^!$ are inverse autoequivalences, see \cite{Chr23}.

\begin{definition}
Let $\C$ be a smooth $R$-linear $\infty$-category. A categorical compactification of $\C$ consists of a smooth and proper $R$-linear $\infty$-category $\hat{\C}$ together with a compact objects preserving $R$-linear localization functor $\pi\colon\hat{\C}\twoheadrightarrow \C$, satisfying that the Serre functor of $\hat{\C}$ preserves the kernel of $\pi$.  
\end{definition}

For $R=k$ a field, not every smooth $k$-linear $\infty$-category admits a categorical compactification, see \cite{Efi20}. Note also that for the categorical compactifications considered in \cite{Efi20}, the additional condition on the Serre functor preserving the kernel is not included. This latter condition on the Serre functor also appears in \cite{KS23}.

\begin{example}\label{ex:FS}
Let $\C=\D\mathcal{W}(X)$ be the $k$-linear derived $\infty$-category of the wrapped Fukaya category of a Liouville manifold $X$. Suppose that $X$ is equipped with a Lefschetz fibration $f\colon X\to \mathbb{D}$ with regular fiber $f^{-1}(1)\subset X$. Let $\hat{C}=\D\on{FS}(f)$ be the derived $\infty$-category of the Fukaya--Seidel category. There is a pushout diagram in $\mathcal{P}r^L$, see \cite{GPS18}, as follows:
\[
\begin{tikzcd}
\D\mathcal{W}(f^{-1}(1)) \arrow[d] \arrow[r, "F"] & \hat{\C} \arrow[d, "\pi"] \\
0 \arrow[r]                                             & \C                      
\end{tikzcd}
\]
We suppose that $\D\mathcal{W}(f^{-1}(1))$ admits a left Calabi--Yau structure, which holds for instance under mild assumptions if the fiber $f^{-1}(1)$ is Weinstein, see \cite{Gan13}. As shown in \cite{Chr23}, in this situation, the Serre functor of $\hat{\C}$ preserves the kernel of $\pi$, hence $\pi$ defines a categorical compactification.

Choosing $X$ to be Milnor fibre of ADE type $\ADE$ in complex dimension $2$, we obtain a categorical compactification of $\D\mathcal{W}(X)\simeq \D(\Pi_2(\ADE))$, see \cite{LU21}.
\end{example}

I thank Mauro Porta for a private communication in which the following statement was obtained. 

\begin{proposition}\label{prop:invertibleSerre}
Let $\C$ be a smooth $R$-linear $\infty$-category which admits a categorical compactification. Then the inverse Serre functor $\on{id}_\C^!$ is invertible.
\end{proposition}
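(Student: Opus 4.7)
The strategy is to realize the inverse Serre functor of $\C$ as the descent through $\pi$ of the inverse Serre autoequivalence of the compactification $\hat{\C}$. Let $\pi\colon \hat{\C}\twoheadrightarrow \C$ be a categorical compactification with kernel $\mathcal{K}=\ker(\pi)$, so that $\C\simeq \hat{\C}/\mathcal{K}$ by the definition of a localization. Since $\hat{\C}$ is smooth and proper, the remark after the definition of Serre functors gives that $S_{\hat{\C}}=\on{id}_{\hat{\C}}^{*}$ and $S_{\hat{\C}}^{-1}=\on{id}_{\hat{\C}}^{!}$ exist and are mutually inverse autoequivalences of $\hat{\C}$.

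The first step is to verify that both $S_{\hat{\C}}$ and $S_{\hat{\C}}^{-1}$ preserve $\mathcal{K}$ and restrict to mutually inverse autoequivalences there. The hypothesis gives this for $S_{\hat{\C}}$, so the restriction $S_{\hat{\C}}|_{\mathcal{K}}$ is a colimit-preserving fully faithful endofunctor of the presentable stable $R$-linear $\infty$-category $\mathcal{K}$. I would verify its essential surjectivity on a set of compact generators of $\mathcal{K}$, which exist because $\pi$ preserves compact objects and $\mathcal{K}$ inherits compact generation from $\hat{\C}$; the invertibility of $S_{\hat{\C}}$ on $\hat{\C}$ then forces the same on $\mathcal{K}$ and identifies the inverse with $S_{\hat{\C}}^{-1}|_{\mathcal{K}}$.

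Given this, both autoequivalences descend through the Verdier quotient to mutually inverse autoequivalences $\bar{S}$ and $\bar{S}^{-1}$ of $\C$. The remaining step is to identify $\bar{S}^{-1}$ with $\on{id}_{\C}^{!}$. I would prove this at the level of $R$-linear bimodules: the evaluation bimodule $\on{ev}_{\C}\colon \C\otimes \C^{\vee}\to \on{Mod}_{R}$ is obtained from $\on{ev}_{\hat{\C}}$ by descent along $\pi\otimes\pi^{\vee}$, and the existence of its left adjoint reduces to that of $\on{ev}_{\hat{\C}}^{L}$ together with the descent hypothesis that $\on{id}_{\hat{\C}}^{!}$ preserves $\mathcal{K}$. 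Under this identification, $\on{id}_{\C}^{!}$ is represented by $\bar{S}^{-1}$, which is an autoequivalence by construction with inverse $\bar{S}$.

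The main obstacle will be the bimodule identification of the last step: one has to check carefully that the left adjoint of $\on{ev}_{\C}$ is computed from the left adjoint of $\on{ev}_{\hat{\C}}$ by descent along $\pi$, and that this left adjoint really is represented by the descended endofunctor $\bar{S}^{-1}$. The kernel-preservation hypothesis of a categorical compactification is precisely the condition that makes this descent formula valid; that it is the \emph{inverse} Serre functor (rather than the Serre functor) which descends to $\C$ matches the fact that $\C$ is smooth but need not be proper, so only the left adjoint side of the evaluation bimodule survives to the quotient.
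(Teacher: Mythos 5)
Your proposal follows essentially the same route as the paper: since $\hat{\C}$ is smooth and proper, $\on{id}_{\hat{\C}}^*$ and $\on{id}_{\hat{\C}}^!$ are inverse autoequivalences, they preserve $\on{ker}(\pi)$ and hence descend to inverse autoequivalences of $\C\simeq \hat{\C}/\on{ker}(\pi)$, and the descended functor is identified with $\on{id}_\C^!$. The identification you flag as the main obstacle, namely $\pi\circ \on{id}_{\hat{\C}}^!\circ \pi^R\simeq \on{id}_\C^!$, is exactly what the paper handles by citing \cite[Lem.~2.33.(1)]{Chr23}, so your bimodule-descent sketch is just an in-line version of that cited lemma.
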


\begin{proof}
Let $\pi\colon \hat{\C} \to \C$ be a categorical compactification. Since $\on{id}_{\hat{\C}}^*$ and $\on{id}_{\hat{\C}}^!\simeq (\on{id}_{\hat{\C}}^*)^{-1}$ preserve $\on{ker}(\pi)$, we find that they induce inverse functors on the quotient $\C\simeq \hat{\C}/\on{ker}(\pi)$. The autoequivalence of $\C$ induced by $\on{id}_\C^!$ is given by $\pi\circ \on{id}_{\hat{\C}}^!\circ \pi^{R}$, with $\pi \dashv \pi^R$. Note that $\pi\circ \on{id}_{\hat{\C}}^!\circ \pi^{R}\simeq \on{id}_{\C}^!$, see Lemma 2.33.(1) in \cite{Chr23}. 
\end{proof}

\begin{lemma}\label{lem:resolutionofSOD}
Let $\C$ be a smooth $R$-linear $\infty$-category with an admissible semiorthogonal decomposition $(\A,\B)$, meaning that the semiorthogonal decomposition has an $R$-linear gluing functor $F\colon \A\to \B$ in the sense of \cite{DKSS21}. Denote by $G$ the right adjoint of $F$. Suppose that 
\begin{itemize}
\item $\hat{\A}\twoheadrightarrow \A$ and $\hat{\B}\twoheadrightarrow \B$ are categorical compactifications and that $\hat{\A},\hat{\B}$ are compactly generated,
\item that $F$ lifts to a compact objects preserving $k$-linear functor $\hat{F}\colon \hat{A}\to \hat{B}$ with right adjoint $\hat{G}$,
\item the following diagrams commute:
\begin{equation}\label{eq:adjointablity}
\begin{tikzcd}
\hat{\A} \arrow[r, "\hat{F}"] \arrow[d, two heads] & \hat{\B} \arrow[d, two heads] \\
\A \arrow[r, "F"]                                  & \B                           
\end{tikzcd} 
\quad\quad\quad
\begin{tikzcd}
\hat{\B} \arrow[r, "\hat{G}"] \arrow[d, two heads] & \hat{\A} \arrow[d, two heads] \\
\B \arrow[r, "G"]                                  & \A                           
\end{tikzcd}
\end{equation} 
\end{itemize}
Then $\hat{\C}=\hat{A}\times^{\rightarrow}_{\hat{F}}\hat{B}\twoheadrightarrow \C$ is a categorical compactification. 
\end{lemma}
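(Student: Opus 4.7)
The plan is to verify in turn the three defining conditions of a categorical compactification for $\pi\colon \hat{\C}\to \C$: (1) that $\hat{\C}$ is smooth and proper, (2) that $\pi$ is a compact objects preserving $R$-linear localization, and (3) that the Serre functor $\on{id}_{\hat{\C}}^*$ preserves $\on{ker}(\pi)$.

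The first condition is standard: since $\hat{\A}$ and $\hat{\B}$ are smooth and proper and the gluing functor $\hat{F}$ preserves compact objects, the oriented pullback $\hat{\A}\times^{\rightarrow}_{\hat{F}} \hat{\B}$ is again smooth and proper. For the second condition, the two squares in (\ref{eq:adjointablity}) package into a morphism of gluing data from $(\hat{\A},\hat{\B},\hat{F})$ to $(\A,\B,F)$. The universal property of the oriented pullback then produces an $R$-linear functor $\pi\colon \hat{\C}\to \C$ whose restrictions to the components of the canonical semiorthogonal decomposition $(\hat{\A},\hat{\B})$ of $\hat{\C}$ recover the given localizations $\hat{\A}\twoheadrightarrow \A$ and $\hat{\B}\twoheadrightarrow \B$. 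I would then argue, e.g.~by writing $\pi$ as an appropriate pushout in $\on{LinCat}_R$ of the two component localizations, that $\pi$ itself is a compact objects preserving localization, and moreover that $\on{ker}(\pi)$ inherits a semiorthogonal decomposition with components the kernels of $\hat{\A}\twoheadrightarrow \A$ and $\hat{\B}\twoheadrightarrow \B$.

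A preliminary observation I want to record for the third step is that both squares in (\ref{eq:adjointablity}) force $\hat{F}$ and $\hat{G}$ to restrict to functors between the kernels: for instance, if $x\in \on{ker}(\hat{\B}\twoheadrightarrow \B)$, commutativity of the right square gives $(\hat{\A}\twoheadrightarrow \A)(\hat{G}(x))\simeq G((\hat{\B}\twoheadrightarrow \B)(x))\simeq 0$, so $\hat{G}(x)\in \on{ker}(\hat{\A}\twoheadrightarrow \A)$, and symmetrically for $\hat{F}$.

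The main obstacle is the third condition. I plan to invoke the standard description of the Serre functor of the oriented pullback $\hat{\A}\times^{\rightarrow}_{\hat{F}} \hat{\B}$, which expresses $\on{id}_{\hat{\C}}^*$ as a two-term functor built from $\on{id}_{\hat{\A}}^*$, $\on{id}_{\hat{\B}}^*$ and the right adjoint $\hat{G}$ (this is the $\infty$-categorical version of Kuznetsov's formula for Serre functors of admissible semiorthogonal decompositions). By the standing assumption on the compactifications, $\on{id}_{\hat{\A}}^*$ and $\on{id}_{\hat{\B}}^*$ preserve the respective kernels, and by the observation just above $\hat{G}$ restricts to a functor between the kernels as well. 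Combining these inputs term by term in the Serre functor formula shows that $\on{id}_{\hat{\C}}^*$ sends each summand of the semiorthogonal decomposition of $\on{ker}(\pi)$ into $\on{ker}(\pi)$, which yields the claim. The technical heart of the argument is thus setting up the Serre functor formula precisely at the $\infty$-categorical level and tracking how the commutative squares in (\ref{eq:adjointablity}) intertwine it with the kernel filtration.
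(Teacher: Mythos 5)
Your proposal follows essentially the same route as the paper's proof: smoothness and properness of the lax limit are deduced from those of $\hat{\A},\hat{\B}$, the kernel of $\pi$ is generated by the images of the two component kernels, and the Serre functor is shown to preserve it by combining the Kuznetsov-type component formulas for $U_{\hat{\C}}$ (built from $U_{\hat{\A}},U_{\hat{\B}},\hat{F},\hat{G}$) with the commutativity of the squares in \eqref{eq:adjointablity}. The paper just makes explicit what you call "the standard description", via the adjoints $\iota_{\hat{\A}}^L,\iota_{\hat{\B}}^R$, and additionally records the small observation that $\hat{G}$ automatically preserves compact objects.
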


\begin{proof}
Denote by $U_{\hat{\A}},U_{\hat{\B}},U_{\hat{\C}}$ the Serre functors of $\hat{\A},\hat{\B},\hat{\C}$.
We first note that $\hat{G}$ automatically preserves compact objects: since $\hat{\A},\hat{\B}$ are smooth and proper and compactly generated, we find that the right adjoint of $\hat{G}$ is given by $U_{\hat{\A}}\circ F\circ U_{\hat{\B}}^{-1}$.

The lax limit $\hat{\C}$ arises as the pullback
\[
\begin{tikzcd}
\hat{\C} \arrow[r] \arrow[d] \arrow[rd, "\lrcorner", phantom] & \hat{\A} \arrow[d] \\
{\on{Fun}(\Delta^1,\hat{\B})} \arrow[r, "\on{ev}_0"]          & \hat{\B}          
\end{tikzcd}
\]
in both $\on{LinCat}_R$ and, since the above functors and $\infty$-categories all dualizable, also in the $\infty$-category $\on{LinCat}_R^{\on{dual}}$ of dualizable $R$-linear $\infty$-categories. By \cite[Cor.~3.13]{Chr23}, it follows that $\hat{\C}$ is proper. 

The right adjoint diagram defines a pushout diagram in $\on{LinCat}_R^{\on{dual}}$, hence $\hat{\C}$ is also smooth by \cite[Cor.~3.11]{Chr23}.

It remains to show that the Serre functor preserves the kernel of $\hat{\C}\twoheadrightarrow \C$. We identify objects of $\hat{\C}$ with triples $(a,b,\eta)$, with $a\in \hat{\A}$, $b\in \hat{\B}$ and $\eta\colon F(A)\to b$. There are fully faithful functors $\iota_{\hat{\A}}\colon \hat{\A}\hookrightarrow \hat{\C}$ and $\iota_{\hat{\B}}\colon \hat{\B}\hookrightarrow \hat{\C}$ and adjunctions $\iota_{\hat{\A}}^{LL}\dashv \iota_{\hat{\A}}^L\dashv \iota_{\hat{\A}}$ and $\iota_{\hat{\B}}\dashv \iota_{\hat{\B}}^R\dashv \iota_{\hat{\B}}^{RR}$, where the functors act on objects as follows: 
\begin{itemize}
\item $\iota_{\hat{\A}}(a)=(a,0,0)$, $\iota_{\hat{\A}}^L(a,b,\eta)=a$, and $\iota_{\hat{\A}}^{LL}(a)=(a,F(a),F(a)=F(a))$.
\item $\iota_{\hat{\B}}(b)=(0,b,0)$, $\iota_{\hat{\B}}^R(a,b,\eta)=b$ and $\iota_{\hat{\B}}^{RR}(b)=(G(b),b,\on{counit}\colon FG(b)\to b)$.
\end{itemize}
There are equivalences as follows:
\[
\iota_{\hat{A}}^L\circ U_{\hat{\C}}^{-1}\circ \iota_{\hat{\A}}\simeq U_{\hat{\A}}^{-1}
\]
\[
\iota_{\hat{B}}^R\circ U_{\hat{\C}}^{-1}\circ \iota_{\hat{\A}}\simeq \hat{F} \circ U_{\hat{\A}}^{-1}
\]
\[
\iota_{\hat{B}}^R\circ U_{\hat{\C}}\circ \iota_{\hat{\B}}\simeq U_{\hat{\B}}
\]
\[
\iota_{\hat{A}}^L\circ U_{\hat{\C}}\circ \iota_{\hat{\B}}\simeq \hat{G}\circ U_{\hat{B}}
\]
The kernel of the functor $\hat{\C}\twoheadrightarrow \C$ is stably generated by the images of the kernels of $\hat{\A}\twoheadrightarrow \A$ and $\hat{\B}\twoheadrightarrow \B$ under $\iota_{\hat{\A}}$ and $\iota_{\hat{\B}}$. Both kernels are mapped by $U_{\hat{\C}}$ to the kernel, as follows from the above equivalences and the commutativity of the diagrams \eqref{eq:adjointablity}.  
\end{proof}

\section{The \texorpdfstring{$1$}{1}-cluster categories of Dynkin type}\label{sec:1CYclustercat}

Let $k$ be the base field. Let $\ADE$ be a Dynkin quiver. The $2$-Calabi--Yau completion $\Pi_2(\ADE)$, see \cite{Kel11}, can be defined as the dg category with
\begin{itemize}
\item objects the vertices of $\ADE$,
\item the set of morphisms freely generated by the morphisms $a\colon x\to y$ and $a^\dagger\colon y\to x$ in degree $0$ with $a\in \ADE_1$ any arrow in $\ADE$, and the endomorphisms $l_x\colon x\to x$ in degree $1$ (in the homological grading convention) with $x\in\ADE_0$ any vertex, and
\item the differential determined on the generators by $d(a)=d(a^\dagger)=0$ for $a\in \ADE_1$ and $d(l_x)=\sum_{a\in \ADE_1} \on{id}_x (aa^\dagger-a^\dagger a) \on{id}_x$.
\end{itemize}

We remark that $\Pi_2(\ADE)$ is independent of the orientation of the quiver $\ADE$ up to dg isomorphism.

\begin{definition}
The $1$-cluster category $\C_\ADE$ of type $\ADE$ is defined as the cosingularity category 
\[ \C_\ADE\coloneqq \on{CoSing}(\Pi_2(\ADE))=\D^{\on{perf}}(\Pi_2(\ADE))/\D^{\on{fin}}(\Pi_2(\ADE)\,,\]
with $\D^{\on{fin}}(\Pi_2(\ADE))\subset \D^{\on{perf}}(\Pi_2(\ADE))$ the subcategory of objects whose underlying $k$-module is perfect.  
\end{definition}

In \Cref{subsec:orbit} we will describe $\C_\ADE$ and $\D^{\on{perf}}(\Pi_2(\ADE))$ as orbit categories. We then describe a collection of fiber and cofiber sequences in $\C_\ADE$ arising from the Auslander--Reiten quiver of the triangulated perfect derived category $D^{\on{perf}}(\ADE)$ in \Cref{subsec:ARsequences}.

The $1$-cluster category $\C_\ADE$ is $2$-periodic, by which we mean here an equivalence of $k$-linear endofunctors $[2]\simeq \on{id}_{\C_\ADE}$. We next record a novel description of the involution $[1]\colon \C_\ADE\to\C_\ADE$ in terms of an involution $\sigma$ of $\Pi_2(\ADE)$, that we prove in \Cref{subsec:involution}. We note that the $2$-periodicity of $\C_\ADE$ is well known fact. On the level of homotopy categories, it follows for instance from the equivalence between $\on{ho}\C_\ADE$ and the $2$-periodic category of matrix factorizations of the corresponding simple surface singularity, see for instance \cite{AIR15} and \cite[Thm.~3.3]{Han22}. An enhanced version of the $2$-periodicity is proven in \cite[Prop.~4.10]{HI22}.

\begin{definition}\label{def:ADEinvolution}
Let $\ADE$ be a Dynkin quiver, with an orientation chosen as below. We define an involution $\sigma\colon \Pi_2(\ADE)\to \Pi_2(\ADE)$ on generators as follows:
\begin{itemize}
\item In type 
\[
A_n= \begin{tikzcd}
1 \arrow[r, "a_1"] & 2 \arrow[r, "a_2"] & \dots \arrow[r, "a_{n-2}", no head] & n-1 \arrow[r, "a_{n-1}"] & n
\end{tikzcd}
\] we set 
\[ \sigma(i)=n-i+1 \in \Pi_2(\ADE)\]
and 
\begin{align*} 
\sigma(a_i)& =a_{n-i}^\dagger\\
\sigma(a_i^\dagger)&=a_{n-i}\\ 
\sigma(l_i)&=-l_{\sigma(i)}\,.
\end{align*} 
\item In type 
\[ D_n =\begin{tikzcd}
1 \arrow[r, "a_1"] & 2 \arrow[r, "a_2"] & \dots \arrow[r, "a_{n-3}"] & n-2  \arrow[r, "a_{n-2}"]\arrow[d, "a_{n-1}"] & n-1 \\
                   &                    &                                     & n                                                                 &    
\end{tikzcd}\]
with $n\geq 4$, we distinguish between $n$ even and $n$ odd. 

If $n$ is even, we set $\sigma(i)=i$ and $\sigma(a_i)=a_i$, $\sigma(a_i^\dagger)=-a_i^\dagger$, $\sigma(l_i)=-l_i$. 

If $n$ is odd, we set 
\[ \sigma(i)=\begin{cases} i & i\not= n-1,n\\ n& i=n-1 \\ n-1& i=n\end{cases}\]
and 
\begin{align*}
\sigma(a_i)& =\begin{cases} a_i & i\not= n-2,n-1\\ a_{n-1}& i=n-2 \\ a_{n-2}& i=n-1\end{cases}\\
\sigma(a_i^\dagger)& =\begin{cases} -a_i^\dagger & i\not= n-2,n-1\\ -a_{n-1}^\dagger& i=n-2 \\ -a_{n-2}^\dagger& i=n-1\end{cases}\\
\sigma(l_i)&  = -l_{\sigma(i)}
\end{align*}
\item In type
\[
E_6= \begin{tikzcd}
1 \arrow[r, "a_1"] & 2 \arrow[r, "a_2"] & 3 \arrow[r, "a_3"] \arrow[d, "a_5"] & 4 \arrow[r, "a_4"] & 5 \\
                   &                    & 6                                   &                    &  
\end{tikzcd}
\]
we set 
\[ \sigma(i) =\begin{cases} 6-i & i\neq 6\\ 6 & i=6\end{cases}\]
and
\begin{align*}
\sigma(a_i)&= \begin{cases} a_{5-i}^\dagger & i\neq 5 \\ a_5 & i=5\end{cases}\\
\sigma(a_i^\dagger)&= \begin{cases} a_{5-i} & i\neq 5 \\ -a_5^\dagger & i=5\end{cases}\\
\sigma(l_i)&= -l_{\sigma(i)}\,.
\end{align*}
\item In types 
\[
E_7= \begin{tikzcd}
1 \arrow[r, "a_1"] & 2 \arrow[r, "a_2"] & 3 \arrow[r, "a_3"] & 4 \arrow[r, "a_4"] \arrow[d, "a_6"] & 5 \arrow[r, "a_5"] & 6 \\
                   &                    &                    & 7                                   &                    &  
\end{tikzcd}
\]
and
\[
E_8=\begin{tikzcd}
1 \arrow[r, "a_1"] & 2 \arrow[r, "a_2"] & 3 \arrow[r, "a_3"] & 4 \arrow[r, "a_4"] & 5 \arrow[r, "a_5"] \arrow[d, "a_7"] & 6 \arrow[r, "a_6"] & 7 \\
                   &                    &                    &                    & 8                                   &                    &  
\end{tikzcd}
\]
 we set $\sigma(i)=i$ and $\sigma(a_i)=a_i$, $\sigma(a^\dagger_i)=-a_i^\dagger$ and $\sigma(l_i)=-l_i$. 
\end{itemize}
We note that in each case $\sigma$ commutes with the differential in $\Pi_2(\ADE)$ and thus indeed defines a dg functor.
\end{definition}

\begin{proposition}\label{prop:shift}
The following diagram of $k$-linear $\infty$-categories commutes
\[
\begin{tikzcd}
\D^{\on{perf}}(\Pi_2(\ADE)) \arrow[r, "\D^{\on{perf}}(\sigma)"] \arrow[d] & \D^{\on{perf}}(\Pi_2(\ADE)) \arrow[d] \\
{\C_\ADE} \arrow[r, "{[1]}"]           & {\C_\ADE}  
\end{tikzcd}
\]
with the vertical functors given by the quotient functor $\D^{\on{perf}}(\Pi_2(\ADE))\twoheadrightarrow \on{CoSing}(\Pi_2(\ADE))=\C_\ADE$. In other words, $\D^{\on{perf}}(\sigma)$ induces the suspension functor $[1]$ on $\C_\ADE$. 
\end{proposition}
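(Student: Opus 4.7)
My plan is to compare the two $k$-linear autoequivalences of $\C_\ADE$ induced by $\D^{\on{perf}}(\sigma)$ and by $[1]$ by passing through the orbit-category model of $\C_\ADE$ stated in the introduction, namely $\C_\ADE\simeq \D^{\on{perf}}(\ADE)/\tau$, where $\tau$ is the Auslander--Reiten translation. Since $\tau\simeq U[-1]$ with $U$ the Serre functor of $\D^{\on{perf}}(\ADE)$, the trivialization of $\tau$ in the orbit quotient identifies the suspension $[1]$ of $\C_\ADE$ with $U$. On simple modules, $U$ acts via the combinatorial involution $-w_0$ on simple roots of $\ADE$, which in each Dynkin type reproduces exactly the vertex permutation prescribed in \Cref{def:ADEinvolution}: $i\mapsto n+1-i$ in type $A_n$, the identity on vertices in types $D_{2n},E_7,E_8$, and the nontrivial diagram involution in types $D_{2n+1},E_6$. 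This vertex-level match is the combinatorial shadow of the equivalence we wish to prove.

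The central technical tool is the interpolating dg category $\widetilde{\Pi_2(\ADE)}$. I would introduce this with a zig-zag of dg functors
\[
\ADE\longleftarrow \widetilde{\Pi_2(\ADE)} \longrightarrow \Pi_2(\ADE)
\]
such that, on derived $\infty$-categories, passage to the orbit quotient by $\tau$ on the left (respectively to the cosingularity quotient on the right) realizes the equivalence $\D^{\on{perf}}(\ADE)/\tau\simeq \C_\ADE$. I would then (i) lift $\sigma$ to a dg automorphism of $\widetilde{\Pi_2(\ADE)}$ whose restriction along the left arrow models the Nakayama functor $U$ and along the right arrow recovers $\sigma$ itself, (ii) apply \Cref{lem:orbit_cat_induced_functor} to descend this lift compatibly to $\D^{\on{perf}}(\sigma)$ on the $\Pi_2$-side and to $U$ on the $\ADE$-side after taking orbit/cosingularity quotients, and (iii) combine with the identification $U\simeq [1]$ in the orbit quotient to produce the claimed commutative square.

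The main obstacle is reconciling the sign conventions of \Cref{def:ADEinvolution} with the Nakayama functor at the dg level. The signs on the degree-one loops $l_i$ and on the dual arrows $a_i^\dagger$ are forced by the requirement that $\sigma$ commute with the differential $d(l_x)=\sum_{a\in \ADE_1}\on{id}_x(aa^\dagger - a^\dagger a)\on{id}_x$, but they must simultaneously reproduce, under the interpolation through $\widetilde{\Pi_2(\ADE)}$, the canonical natural equivalence $U[-1]\simeq \tau$ used to trivialize $\tau$ in the orbit quotient. Pinning this down requires a case-by-case verification: type $A$ reduces to a direct reflection, types $D_{2n},E_7,E_8$ require only sign changes on generators with no vertex permutation, and the cases $D_{2n+1},E_6$ mix vertex permutations with sign changes and are the most delicate. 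Since the projective modules $\{P_i\}_{i\in \ADE_0}$ generate $\D^{\on{perf}}(\Pi_2(\ADE))$ as a stable $\infty$-category, the equivalence of autoequivalences is determined by its restriction to these dg-enhanced generators, so the case-by-case verification completes the proof.
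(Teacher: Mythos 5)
Your overall architecture coincides with the paper's: interpolate through $\widetilde{\Pi_2(\ADE)}$, lift $\sigma$ to a $\mathbb{Z}$-equivariant dg automorphism $\tilde{\sigma}$, descend it one way to $\D^{\on{perf}}(\sigma)$ and the other way (through $\on{CoSing}(\widetilde{\Pi_2(\ADE)})\simeq \D^{\on{perf}}(\ADE)$) to an autoequivalence that should be the suspension, and conclude via \Cref{lem:orbit_cat_induced_functor} and the orbit model $\C_\ADE\simeq \D^{\on{perf}}(\ADE)/U[-1]$. However, the step carrying all the content is missing, and your account of where the signs come from is incorrect. It is not true that the signs on $l_i$ and $a_i^\dagger$ are ``forced by the requirement that $\sigma$ commute with the differential'': in types $D_{2n},E_7,E_8$ the identity of $\Pi_2(\ADE)$ also commutes with the differential and fixes every vertex, exactly as $\sigma$ does, yet it induces $\on{id}_{\C_\ADE}$ rather than $[1]$. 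So the vertex-level match with $-w_0$ plus dg-level sign bookkeeping cannot by itself distinguish $[1]$ from other autoequivalences acting identically on objects. What pins it down in the paper is \Cref{lem:Zequivariantequivalence}: after noting (via $\on{HH}^0\simeq k$) that the non-equivariant identification $\on{CoSing}(\tilde{\sigma})\simeq [1]$ admits exactly two $\mathbb{Z}$-equivariant enhancements (the trivial one and its negative), one must decide, type by type, which enhancement descends to the suspension of $\C_\ADE$; this is done by triangulated-category sign arguments --- in type $A_n$ by producing two distinguished triangles from biCartesian mesh squares whose coexistence would contradict the axioms under the wrong choice, in types $D_{2n},E_7,E_8$ by the rotation-axiom sign showing $[1]$ cannot be the identity triangle functor, and in types $D_{2n+1},E_6$ by comparison with the quiver involution. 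Your ``case-by-case verification'' only restates what $\sigma$ looks like in each type and contains no argument of this kind, so the proposal as written does not establish the proposition.

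Two smaller points. First, the lift $\tilde{\sigma}$ used in the paper restricts along the cosingularity quotient to $[1]$ (a glide reflection on the Auslander--Reiten quiver), not to the Nakayama/Serre functor $U$; if you instead work with a lift modeling $U$, you must additionally track the equivariant identification $\on{CoSing}(T)\simeq U[-1]$ used to trivialize $\tau$, which reintroduces exactly the same two-fold sign ambiguity. Second, reducing the comparison of autoequivalences to generators is only legitimate if you compare the induced automorphisms of the full endomorphism dg algebra of $\Pi_2(\ADE)$, as in \Cref{lem:involution_compatibility}, not merely the action on objects; your phrase ``dg-enhanced generators'' suggests you intend this, but it must be made explicit, since at the level of objects and $H^0$ the competing candidates are indistinguishable in several types.
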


\begin{remark}\label{rem:involution_longest_element}
The objects of $\Pi_2(\ADE)$, or equivalently the vertices of the Dynkin quiver $\ADE$, are in bijection with the simple roots. The involution $\sigma$ from \Cref{def:ADEinvolution} induces a well-known involution of the simple roots, given by the formula $\alpha_i\mapsto -w_0(\alpha_i)$.

In representation theory, on the level of objects, the involution $\sigma$ appears already in \cite{Gab80}, used for the description of the Nakayama permutation on the Auslander--Reiten quiver of $k\ADE$ (i.e.~the action of the Serre functor). The formulas for the action of $\sigma$ on morphisms also appear in \cite[Def.~4.6]{BBK02} in a description of the Nakayama automorphism of the module category of the preprojective algebra $H_0\Pi_2(\ADE)$. The involution there is also described for an arbitrary orientation of $\ADE$. Note that the category of finitely generated projective modules $\on{proj}(H_0\Pi_2(\ADE))$ is equivalent to the additive homotopy $1$-category of $\on{CoSing}(\Pi_2(\ADE))$, where the Nakayama automorphism and the suspension functor thus induce the same autoequivalence. This also follows from \Cref{prop:shift} using the fact that $\C_\ADE$ admits a right $1$-Calabi--Yau structure, see \cite{KL23}.
\end{remark}

\subsection{Orbit categories}\label{subsec:orbit}

We fix a Dynkin quiver $\ADE$. The $k$-linear $\infty$-category $\D(\ADE)$ admits a Serre functor $U$, and the action of $U[-1]$ induces a $\mathbb{Z}$-action on $\D(\ADE)$. Note that $U[-1]\simeq \tau$ acts as Auslander--Reiten translation on the Auslander--Reiten quiver. We denote by $\D^{\on{perf}}(\ADE)/U[-1]$  the orbit $\infty$-category, see \Cref{rem:orbit_category}. In the dg setting, we denote the Serre functor of $\on{Perf}(\ADE)$ by $U^{\on{dg}}$. The dg orbit category was introduced in \cite{Kel05}, see also \cite{FKQ24} for a further treatment.

\begin{proposition}\label{prop:1-cluster_orbit}
The $1$-cluster category is equivalent to the orbit $\infty$-category, as well as to the derived $\infty$-category of the orbit dg category $\on{Perf}(\ADE)/U^{\on{dg}}[-1]$:
\[
\C_\ADE\simeq \D^{\on{perf}}(\ADE)/U[-1]\simeq \D^{\on{perf}}(\on{Perf}(\ADE)/U^{\on{dg}}[-1])\,.
\]
\end{proposition}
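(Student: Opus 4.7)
The plan is to prove the two equivalences separately, starting with the simpler one.

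The second equivalence $\D^{\on{perf}}(\ADE)/U[-1] \simeq \D^{\on{perf}}(\on{Perf}(\ADE)/U^{\on{dg}}[-1])$ is an immediate application of the compatibility between the dg orbit construction and the passage to derived $\infty$-categories recorded in the remark preceding the proposition (which cites \cite[Section 4.2]{Chr25}). All that is required is to observe that $U^{\on{dg}}[-1]$ defines a strict $\mathbb{Z}$-action on $\on{Perf}(\ADE)$: one may take $U^{\on{dg}}$ to be the strictly invertible dg functor given by tensoring with the Nakayama bimodule $D(k\ADE)$, and the shift $[-1]$ is strictly invertible on the cofibrant--fibrant model. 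The induced action on $\D^{\on{perf}}(\ADE)$ is then $U[-1]$, giving the claimed equivalence.

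The main content is the first equivalence $\C_\ADE \simeq \D^{\on{perf}}(\ADE)/U[-1]$. The inclusion of dg categories $k\ADE \hookrightarrow \Pi_2(\ADE)$ induces a $k$-linear functor $\iota\colon \D^{\on{perf}}(\ADE) \to \D^{\on{perf}}(\Pi_2(\ADE))$, and composing with the Verdier quotient yields a functor $\Phi\colon \D^{\on{perf}}(\ADE) \to \C_\ADE$. I would first establish a $\mathbb{Z}$-equivariance of $\Phi$ with respect to the action of $\tau = U[-1]$ on the source and the trivial action on the target. This uses that $\C_\ADE$ carries a right $1$-Calabi--Yau structure (as recalled in \Cref{rem:involution_longest_element}, citing \cite{KL23}), so its Serre functor is $[1]$, combined with a natural transformation relating $\iota \circ U$ to the image of $[1]$ coming from the relative $2$-Calabi--Yau structure of $\Pi_2(\ADE)$ over $k\ADE$. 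By the universal property of the orbit, this produces the comparison functor $\bar\Phi\colon \D^{\on{perf}}(\ADE)/U[-1] \to \C_\ADE$.

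To show $\bar\Phi$ is an equivalence I would use the description of $\Pi_2(\ADE)$ as the tensor algebra $T_{k\ADE}(\Theta[1])$ on the (shifted) inverse dualizing bimodule $\Theta$, which yields the standard formula
\[
\on{Mor}_{\D^{\on{perf}}(\Pi_2(\ADE))}(\iota X, \iota Y) \simeq \bigoplus_{n\geq 0} \on{Mor}_{\D^{\on{perf}}(\ADE)}(X, \tau^{-n} Y)
\]
for $X, Y \in \D^{\on{perf}}(\ADE)$. The passage to the cosingularity quotient must then be shown to adjoin the remaining direction of the orbit, so that the right-hand side becomes the full orbit sum $\bigoplus_{n\in\mathbb{Z}} \on{Mor}_{\D^{\on{perf}}(\ADE)}(X,\tau^{-n}Y)$, matching the orbit category morphism object; this yields full faithfulness. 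Essential surjectivity follows because the simple $\Pi_2(\ADE)$-modules, which generate $\D^{\on{fin}}(\Pi_2(\ADE))$, become zero in $\C_\ADE$, while the remaining indecomposables are $\tau$-translates of images under $\iota$ of indecomposables of $\D^{\on{perf}}(\ADE)$. The main obstacle is the morphism-space computation after quotienting: one must identify precisely how $\D^{\on{fin}}(\Pi_2(\ADE))$ interacts with $\iota(\D^{\on{perf}}(\ADE))$ to complete the one-sided sum into a two-sided sum. A clean way to bypass this technical step is to invoke Keller's orbit category theorem from \cite{Kel05}, which directly identifies the dg orbit $\on{Perf}(\ADE)/U^{\on{dg}}[-1]$ with a dg enhancement of the stable category of the preprojective algebra, known to coincide with $\on{ho}\C_\ADE$; the enhancement is then fixed by smoothness and proper generation.
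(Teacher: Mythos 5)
Your treatment of the second equivalence (dg orbit versus $\infty$-categorical orbit) matches the paper, which invokes \cite[Prop.~4.5]{Chr25}; note only that strictness of the action is not needed there, since the cited machinery handles non-strict $\mathbb{Z}$-actions, and tensoring with the Nakayama bimodule is not strictly invertible on the nose. The problem is the first equivalence $\C_\ADE\simeq \D^{\on{perf}}(\ADE)/U[-1]$, which is where all the content lies. Your direct route correctly produces the one-sided formula $\on{Mor}_{\D^{\on{perf}}(\Pi_2(\ADE))}(\iota X,\iota Y)\simeq\bigoplus_{n\geq 0}\on{Mor}(X,\tau^{-n}Y)$ from the tensor-algebra description, but the step you flag as ``the main obstacle'' --- showing that the Verdier quotient by $\D^{\on{fin}}(\Pi_2(\ADE))$ turns this into the two-sided orbit sum --- is exactly the theorem being proved, and you do not supply it. The paper does not argue this by hand either: it quotes \cite[Thm.~3.3]{Han22}, which gives the equivalence between the dg cosingularity category of $\Pi_2(\ADE)$ and the dg orbit category, and then applies \cite{Chr25}.

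The proposed bypass does not repair this gap, because it rests on a false identification. The homotopy category $\on{ho}\C_\ADE$ is \emph{not} the stable module category of the preprojective algebra: as recalled in \Cref{rem:involution_longest_element}, $\on{ho}\C_\ADE$ is additively equivalent to $\on{proj}(H_0\Pi_2(\ADE))$, and it is $1$-Calabi--Yau, whereas $\underline{\on{mod}}\,\Pi(\ADE)$ is the $2$-Calabi--Yau category used by Gei\ss--Leclerc--Schr\"oer; already in type $A_2$ the two indecomposables of $\C_{A_2}$ have a nonzero degree-zero morphism between them (the arrow $a$ of $\Pi_2(A_2)$), while the stable Hom between the two simple $\Pi(A_2)$-modules vanishes, so the categories are genuinely inequivalent. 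Consequently \cite{Kel05} does not ``directly identify'' the dg orbit $\on{Perf}(\ADE)/U^{\on{dg}}[-1]$ with an enhancement of $\C_\ADE$ in the way you claim. Moreover, even if one had the triangulated equivalence, your final sentence --- that the enhancement ``is then fixed by smoothness and proper generation'' --- is an appeal to a uniqueness-of-enhancement principle that is not justified for these $2$-periodic categories; the paper avoids this entirely by citing a statement that is already at the dg level. To close the gap you would either have to carry out the morphism computation in the quotient (essentially reproving Hanihara's theorem) or cite \cite[Thm.~3.3]{Han22} as the paper does.
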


\begin{proof}
By \cite[Theorem 3.3]{Han22}, the dg cosingularity category is equivalent to the orbit dg category. The derived $\infty$-categories of the orbit dg category is equivalent to the orbit $\infty$-category by \cite[Prop.~4.5]{Chr25}.
\end{proof}

We next introduce the dg category $\widetilde{\Pi_2(\ADE)}$ arising from an infinite quiver, which reduces to both $\D^{\on{perf}}(\Pi_2(\ADE))$ and $\D^{\on{perf}}(\ADE)$ via the passage to the orbit $\infty$-category, or the cosingularity category, respectively.

\begin{definition}\label{def:tilde_Pi2}
We define the dg category $\widetilde{\Pi_2(\ADE)}$ as follows:
\begin{itemize}
\item Objects of $\widetilde{\Pi_2(\ADE)}$ are pairs $(i,x)$ with $i\in \mathbb{Z}$ and $x\in \ADE_0$ a vertex. 
\item The morphisms of $\widetilde{\Pi_2(\ADE)}$ are freely generated by the following morphisms:
\begin{itemize}
\item $a_i\colon (i,x)\to (i,y)$ in degree $0$ for each arrow $a\colon x\to y$ in $\ADE$. 
\item $a^\dagger_i\colon (i,y)\to (i+1,x)$ in degree $0$ for each arrow $a\colon x\to y$ in $\ADE$.
\item $l_{i,x}\colon (i,x)\to (i+1,x)$ in degree $1$.
\end{itemize}
If $a=a_j$ in the notation from \Cref{def:ADEinvolution}, we also simply write $a_{i,j},a_{i,j}^\dagger$ for $(a_j)_i, (a_j)_i^\dagger$. 
\item The differential is determined on the generators by 
\[ d(a_{i})=d(a^\dagger_{i})=0\,,\quad\quad\quad d(l_{i,x})=\sum_{a\in \ADE_1, i\in \mathbb{Z}}\on{id}_{i+1,x}(a_{i+1}a^\dagger_i-a^\dagger_ia_{i})\on{id}_{i,x}\,.\]
\end{itemize}
\end{definition}

The underlying ungraded quiver of $\widetilde{\Pi_2(\ADE)}$ is given by the Auslander--Reiten quiver of the triangulated category $D^{\on{perf}}(\ADE)$. The differentials of the degree $1$ arrows give the mesh relations in the Auslander--Reiten quiver, so that by \cite[Prop.~4.6]{Hap87} we can define a dg functor $\pi^{\on{dg}}\colon \widetilde{\Pi_2(\ADE)}\to \on{Perf}(\ADE)$, mapping each object $(i,x)$ to the corresponding indecomposable object appearing in the Auslander--Reiten quiver. This dg functor gives rise to a $k$-linear functor between $\infty$-categories $\pi=\D^{\on{perf}}(\pi^{\on{dg}})\colon  \D^{\on{perf}}(\widetilde{\Pi_2(\ADE)})\to \D^{\on{perf}}(\ADE)$. We will show below in \Cref{prop:derivedcategoryasCoSing} that $\pi$ exhibits $\D^{\on{perf}}(\ADE)$ as the cosingularity category of $\D^{\on{perf}}(\widetilde{\Pi_2(\ADE)})$. We remark that a related construction appears in \cite[Section 4]{FKQ24}, with a similar description of $\on{Perf}(\ADE)$ as a cosingularity category, see \cite[Thm.~4.4]{FKQ24}, and it would be interesting to clarify the precise relation with the results below. These constructions are further related by Koszul duality to a description of Happel \cite{Hap87} of the triangulated category $D^{\on{perf}}(I)$ as the singularity category of the so-called repetitive algebra, see \cite[Section 4.3]{FKQ24}.

There is an apparent strict $\mathbb{Z}$-action on $\widetilde{\Pi_2(\ADE)}$, such that the action of $1\in \mathbb{Z}$ is given by translation $T\colon\widetilde{\Pi_2(\ADE)}\to \widetilde{\Pi_2(\ADE)},\ (i,x)\mapsto (i+1,x)$. The dg functor $F^{\on{dg}}\colon \widetilde{\Pi_2(\ADE)}\to \Pi_2(\ADE)$, given by the assignments $(i,x)\mapsto x$, $a_i\mapsto a$, $a_i^\dagger\mapsto a^\dagger$, $l_{i,x}\mapsto l_x$ induces a dg isomorphism between the dg orbit category $\widetilde{\Pi_2(\ADE)}/T$ and $\Pi_2(\ADE)$. We denote by $F\colon \D(\widetilde{\Pi_2(\ADE)})\to \D(\Pi_2(\ADE))$ the functor obtained from $F^{\on{dg}}$ by passing to derived $\infty$-categories.

The $\mathbb{Z}$-action on $\widetilde{\Pi_2(\ADE)}$ induces a $\mathbb{Z}$-action on the perfect derived $\infty$-category $\D^{\on{perf}}(\widetilde{\Pi_2(\ADE)})$. By \cite[Prop.~4.5]{Chr25}, the orbit $\infty$-category $\D^{\on{perf}}(\widetilde{\Pi_2(\ADE)})/T$ is equivalent to $\D^{\on{perf}}(\Pi_2(\ADE))$. 

We depict the appearing $k$-linear $\infty$-categories, together with compatible autoequivalences (we define $\tilde{\sigma}$ in \Cref{subsec:involution} below), in a commutative diagram in \Cref{fig:squareofautomorphisms}.

\begin{figure}[ht]
\begin{center}
\begin{tikzcd}
                                                                                                       & \D^{\on{perf}}(\widetilde{\Pi_2(\ADE)}) \arrow[rd, "\on{colim}_{B\mathbb{Z}}"', "F"] \arrow[ld, two heads, "\pi"', "{\on{CoSing}}"] \arrow["{\D^{\on{perf}}(\tilde{\sigma})}"', loop, distance=2em, in=125, out=55] &                                                                                               \\
\D^{\on{perf}}(\ADE) \arrow[rd, "\mathbb{Z}\on{-orbit} = \on{colim}_{B\mathbb{Z}}"'] \arrow["{[1]}"', loop, distance=2em, in=215, out=145] &                                                                                                                                               & \D^{\on{perf}}(\Pi_2(\ADE)) \arrow[ld, two heads, "\on{CoSing}"] \arrow["\D^{\on{perf}}(\sigma)"', loop, distance=3em, in=35, out=325] \\
                                                                                                       & \on{CoSing}(\Pi_2(\ADE)) \arrow["{[1]\simeq \on{CoSing}(\sigma)}"', loop, distance=2em, in=305, out=235]                                    &                                                                                              
\end{tikzcd}
\caption{The the $1$-Calabi--Yau category $\C_\ADE=\on{CoSing}(\Pi_2(\ADE))$ arises both as the cosingularity category of an orbit category and as an orbit category of the cosingularity category of $\D^{\on{perf}}(\widetilde{\Pi_2(\ADE)})$. The diagram commutes by the fact that colimits commute with colimit. Depicted are also compatible automorphisms, induced by $\D^{\on{perf}}(\tilde{\sigma})$.}\label{fig:squareofautomorphisms}
\end{center}
\end{figure}
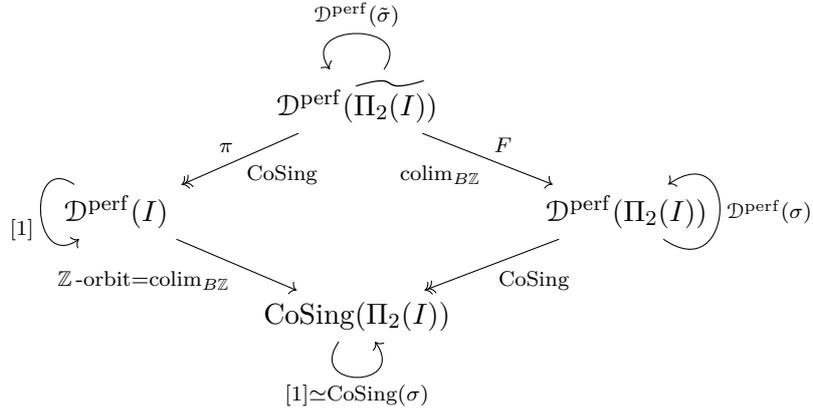

Towards the proof of \Cref{prop:derivedcategoryasCoSing}, we first identify the finite objects in $\D^{\on{perf}}(\widetilde{\Pi_2(\ADE)})$. 

\begin{definition}
An object $X\in \D^{\on{perf}}(\widetilde{\Pi_2(\ADE)})$ is called op-finite if 
\[ \on{Mor}_{\D(\widetilde{\Pi_2(\ADE)})}(X,Y)\in \D^{\on{perf}}(k)\] 
for all $Y\in \widetilde{\Pi_2(\ADE)}$. We denote by $\D^{\on{op-fin}}(\widetilde{\Pi_2(\ADE)})\subset \D^{\on{perf}}(\widetilde{\Pi_2(\ADE)})$ the full subcategory consisting of op-finite objects.

We similarly define the full subcategory $\D^{\on{op-fin}}(\Pi_2(\ADE))\subset \D^{\on{perf}}(\Pi_2(\ADE))$. 
\end{definition}

\begin{lemma}
There is an equality of full subcategories 
\[ \D^{\on{fin}}(\Pi_2(\ADE))=\D^{\on{op-fin}}(\Pi_2(\ADE))\subset \D^{\on{perf}}(\Pi_2(\ADE))\,.\]
\end{lemma}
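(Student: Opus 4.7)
The plan is to deduce the equality from Serre duality associated with the smooth $2$-Calabi--Yau structure on $\Pi_2(\ADE)$. Smoothness ensures that the dualization functor $(-)^\vee := \on{RHom}_{\Pi_2(\ADE)}(-,\Pi_2(\ADE))$ defines an anti-equivalence
\[ \D^{\on{perf}}(\Pi_2(\ADE))^{\on{op}} \xrightarrow{\simeq} \D^{\on{perf}}(\Pi_2(\ADE)^{\on{op}})\,. \]
By definition of the representables, $X^\vee(y) = \on{Mor}(X, P_y)$, so $X \in \D^{\on{op-fin}}(\Pi_2(\ADE))$ is equivalent to $X^\vee \in \D^{\on{fin}}(\Pi_2(\ADE)^{\on{op}})$. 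The lemma thus reduces to checking that $(-)^\vee$ restricts to an anti-equivalence between $\D^{\on{fin}}(\Pi_2(\ADE))$ and $\D^{\on{fin}}(\Pi_2(\ADE)^{\on{op}})$.

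Next I would use that each finite derived subcategory is the thick closure, inside the corresponding perfect derived category, of the simple modules at the vertices. Since $(-)^\vee$ is an exact equivalence and hence preserves thick closures, it suffices to verify that it maps simples to shifts of simples. For a simple $S_y$ I would compute, invoking the Serre duality pairing provided by the $2$-Calabi--Yau structure,
\[ S_y^\vee(y') = \on{Mor}(S_y, P_{y'}) \simeq \on{Mor}(P_{y'}, S_y)^\vee[-2] = S_y(y')^\vee[-2]\,, \]
which equals $k[-2]$ when $y=y'$ and vanishes otherwise. This identifies $S_y^\vee$ with the simple $\Pi_2(\ADE)^{\on{op}}$-module at $y$ shifted by $[-2]$, as required.

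The main technical point to justify carefully is the Serre duality isomorphism used in the second paragraph. It follows from the bimodule equivalence $\Pi_2(\ADE)^! \simeq \Pi_2(\ADE)[-2]$ encoding the right $2$-Calabi--Yau structure, evaluated on a pairing between a finite and a perfect object. An alternative route, avoiding the direct computation of $S_y^\vee$, would be to first establish the easy inclusion $\D^{\on{fin}} \subseteq \D^{\on{op-fin}}$ by applying the duality formula with $M = P_y$ perfect and $N = X$ finite, and then bootstrap to the reverse inclusion by invoking the same result for $\Pi_2(\ADE)^{\on{op}}$ together with the biduality $X^{\vee\vee}\simeq X$ valid for perfect modules over a smooth dg category.
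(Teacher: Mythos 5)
Your proposal is correct and rests on the same two ingredients as the paper's proof: the Calabi--Yau Serre duality $\on{Mor}(Y,X)\simeq \on{Mor}(X,Y)^*[-2]$ for $X$ finite and $Y$ perfect, and a dualization/anti-equivalence argument for the reverse inclusion; indeed your ``alternative route'' at the end is essentially the paper's argument, which gets the converse from the anti self-equivalence $\D^{\on{perf}}(\Pi_2(\ADE))^{\on{op}}\simeq \D^{\on{perf}}(\Pi_2(\ADE))$. The only real differences are that your main route verifies that dualization exchanges the finite subcategories by reducing to the vertex simples and computing $S_y^\vee$ over the opposite category, a correct but avoidable extra step, and a small terminology slip: the equivalence $\Pi_2(\ADE)^!\simeq \Pi_2(\ADE)[-2]$ is the left (smooth) $2$-Calabi--Yau condition, not the right one.
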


\begin{proof}
The smooth $k$-linear $\infty$-category $\D(\Pi_2(\ADE))$ is left $2$-Calabi--Yau, meaning that $\on{id}_{\D(\Pi_2(\ADE))}^!\simeq [-2]$. For $X\in \D^{\on{fin}}(\Pi_2(\ADE))$ and $Y\in \D^{\on{perf}}(\Pi_2(\ADE))$ there is thus an equivalence
\[
\on{Mor}_{\D(\Pi_2(\ADE))}(Y,X)\simeq \on{Mor}_{\D(\Pi_2(\ADE))}(X,Y)^*[-2]\,,
\]
see \cite[Lem.~2.25]{Chr23}. This shows that $\D^{\on{fin}}(\Pi_2(\ADE))\subset \D^{\on{op-fin}}(\Pi_2(\ADE))$. The converse inclusion follows from the anti self-equivalence $\D^{\on{perf}}(\Pi_2(\ADE))^{\on{op}}\simeq \D^{\on{perf}}(\Pi_2(\ADE))$.
\end{proof}

\begin{lemma}
An object $X\in \D^{\on{perf}}(\widetilde{\Pi_2(\ADE)})$ lies in $\D^{\on{op-fin}}(\widetilde{\Pi_2(\ADE)})$ if and only if $F(X)\in \D^{\on{perf}}(\Pi_2(\ADE))$ lies in $\D^{\on{op-fin}}(\Pi_2(\ADE))$.
\end{lemma}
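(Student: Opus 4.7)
The central tool is to express morphism spaces over $\Pi_2(\ADE)$ in terms of those over the $\mathbb{Z}$-cover $\widetilde{\Pi_2(\ADE)}$. Since $\Pi_2(\ADE) \simeq \widetilde{\Pi_2(\ADE)}/T$ is the dg orbit by the strict $\mathbb{Z}$-action $T$, the right adjoint $F^R$ of $F$ (which is restriction along $F^{\on{dg}}$) sends a representable $\Pi_2(\ADE)$-module $h_y$ at a vertex $y \in \ADE_0$ to $\bigoplus_{n \in \mathbb{Z}} h_{(n,y)}$, because the fiber of $F^{\on{dg}}$ over $y$ is precisely the $T$-orbit $\{(n,y)\}_{n \in \mathbb{Z}}$. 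Combining the adjunction $F \dashv F^R$ with compactness of $X$ to commute $\on{Mor}(X, -)$ past the coproduct, I obtain the key identification
\[
\on{Mor}_{\D(\Pi_2(\ADE))}(F(X), y) \;\simeq\; \bigoplus_{n \in \mathbb{Z}} \on{Mor}_{\D(\widetilde{\Pi_2(\ADE)})}(X, (n, y))
\]
valid for every vertex $y$ of $\ADE$.

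For the direction $F(X) \in \D^{\on{op-fin}}(\Pi_2(\ADE)) \Rightarrow X \in \D^{\on{op-fin}}(\widetilde{\Pi_2(\ADE)})$, the argument is immediate from this identification. Perfectness of the left-hand side over $k$ forces perfectness of every direct summand on the right-hand side, so each $\on{Mor}(X, (n, y))$ lies in $\D^{\on{perf}}(k)$. Since every object of the dg category $\widetilde{\Pi_2(\ADE)}$ has the form $(n, y)$ with $y \in \ADE_0$ and $n \in \mathbb{Z}$, this is exactly the definition of op-finiteness of $X$.

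The converse direction is the substantive one: assuming each summand $\on{Mor}(X, (n, y))$ is perfect, one must show that the full direct sum remains perfect. Since a coproduct of perfect $k$-complexes is perfect if and only if only finitely many terms are non-zero and their degree-support is uniformly bounded, this reduces to such a boundedness statement. My plan is to first exploit compactness of $X$, so that $X$ lies in the thick subcategory generated by finitely many representables $(i_1, x_1), \ldots, (i_m, x_m)$, thereby reducing the question to the analogous boundedness for each individual pair: $\on{Mor}_{\widetilde{\Pi_2(\ADE)}}((i_k, x_k), (n, y))$ should have bounded $n$-support with uniformly bounded degrees. Using $T$-equivariance of morphism spaces one can furthermore normalize $i_k = 0$, reducing everything to generator-to-generator estimates.

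The main obstacle is precisely this last boundedness claim, which reflects the finite-type nature of the data: the underlying graded quiver of $\widetilde{\Pi_2(\ADE)}$ is a cover of the Auslander--Reiten-type quiver for Dynkin $\ADE$, and the mesh relations encoded by the differentials $d(l_{i,x})$ should trivialize morphisms between generators whose $\mathbb{Z}$-coordinates differ too much. I would attempt to establish this by a direct analysis of paths in the quiver modulo the differential, using that for Dynkin $\ADE$ every simple reflection chain in the Coxeter element terminates; alternatively, one can try to transport the bounded-support property from $\D^{\on{perf}}(\ADE)$ back to $\widetilde{\Pi_2(\ADE)}$ through the localization functor $\pi$ exhibiting $\D^{\on{perf}}(\ADE)$ as the cosingularity category of $\widetilde{\Pi_2(\ADE)}$ (anticipated in \Cref{prop:derivedcategoryasCoSing}), so that the Dynkin finiteness of $\D^{\on{perf}}(\ADE)$ controls the relevant boundedness upstairs.
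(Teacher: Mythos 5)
Your setup coincides with the paper's: the paper also proves this lemma purely by identifying the right adjoint of $\on{Ind}(F)$ on representables -- it cites \cite{Chr25} for $G(\Pi_2(\ADE))\simeq \prod_{Y\in\widetilde{\Pi_2(\ADE)}}Y$ -- and then reads the statement off the adjunction equivalence $\on{Mor}_{\D(\Pi_2(\ADE))}(F(X),\Pi_2(\ADE))\simeq \on{Mor}_{\D(\widetilde{\Pi_2(\ADE)})}(X,G(\Pi_2(\ADE)))$. Your identification $\on{Mor}(F(X),y)\simeq\bigoplus_{n}\on{Mor}(X,(n,y))$ (restriction along the orbit functor plus compactness of $X$) is the same mechanism, and your argument for the implication ``$F(X)$ op-finite $\Rightarrow X$ op-finite'' (each summand is a retract of a perfect complex) is correct.

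The other implication is where your proposal stops being a proof, and the gap is genuine. You reduce it to the claim that for a generator $(i,x)$ and a vertex $y$ the complexes $\on{Mor}((i,x),(n,y))$ are non-acyclic for only finitely many $n$, with uniformly bounded degrees, and you leave this as ``the main obstacle'' with two tentative strategies. In fact this boundedness claim, as you state it, fails: already for $\ADE=A_1$ one has $\on{Map}_{\widetilde{\Pi_2(A_1)}}((0,1),(n,1))\simeq k$ concentrated in homological degree $n$ (spanned by $l_{n-1,1}\cdots l_{0,1}$, with zero differential) for every $n\geq 0$, so there is no bound on the $n$-support, and $\bigoplus_n\on{Mor}((0,1),(n,1))\simeq k\langle l\rangle\simeq \on{Mor}_{\D(\Pi_2(A_1))}(1,1)$ is not perfect even though each summand is. So neither a path analysis nor Dynkin finiteness of $H_0$ can deliver what your plan needs; the implication has to be organized as in the paper, by treating the whole $T$-orbit at once: the adjunction identity you already wrote matches perfectness of $\on{Mor}(F(X),y)$ with perfectness of the \emph{total} complex $\on{Mor}(X,G(y))$ (the (co)product over the orbit, as in the cited description of $G$), a totalized finiteness condition -- parallel to how $\D^{\on{fin}}$ is defined via the underlying $k$-module -- rather than something deduced from objectwise perfectness of the $\on{Mor}(X,(n,y))$. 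Your second fallback, transporting boundedness through $\pi$ via \Cref{prop:derivedcategoryasCoSing}, is additionally risky in the paper's logical order: that proposition is proved later using \Cref{lem:simplegenerators}, whose induction already presupposes finiteness of $\prod_{(i,x)}\on{Mor}(X,(i,x))$ for op-finite $X$, i.e.\ precisely the kind of finiteness at stake here, so you would need an independent argument to avoid circularity.
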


\begin{proof}
A description of the right adjoint $G\colon \D(\Pi_2(\ADE))\to \D(\widetilde{\Pi_2(\ADE)})$ of $\on{Ind}(F)$ follows directly from \cite[Section 2.2]{Chr25}, it satisfies $G(\Pi_2(\ADE))\simeq \widetilde{\Pi_2(\ADE)}=\prod_{Y\in \widetilde{\Pi_2(\ADE)}} Y$. The statement thus follows from the adjunction equivalence
\[
\on{Mor}_{\D(\Pi_2(\ADE))}(F(X),\Pi_2(\ADE))\simeq \on{Mor}_{\D(\widetilde{\Pi_2(\ADE)})}(X,G(\Pi_2(\ADE)))\,.
\]
\end{proof}

\begin{construction}
Let $(i,x)\in \widetilde{\Pi_2(\ADE)}$. Suppose first that $x\in \ADE$ is not trivalent. We define $C_{i,x}$ as the fiber totalization of the square 
\begin{equation}\label{eq:totsq1}
\begin{tikzcd}
{(i,x)} \arrow[r, "{a_{i,x}}"] \arrow[d, "{a_{i,x-1}^\dagger}"'] & {(i,x+1)} \arrow[d, "{a^\dagger_{i,x}}"] \\
{(i+1,x-1)} \arrow[r, "{a_{i+1,x-1}}"]                         & {(i+1,x)}                             
\end{tikzcd}
\end{equation}
where we set $(i,x+1)=0$ if $x=n$ and $(i+1,x-1)$ if $x=1$. The commutativity is expressed by $l_{i,x}$. We similarly define $S_{i+1,x}\simeq C_{i,x}[2]$ as the cofiber totalization of the square.

Suppose now that $x\in \ADE$ is trivalent, with incoming arrow $a_{x-1}\colon x-1\to x$ and outgoing arrows $a_{x}\colon x\to x+1$ and $b\colon x\to n$. We then define $C_{i,x}$ as the fiber totalization of the square
\begin{equation}\label{eq:totsq2}
\begin{tikzcd}[column sep=45]
{(i,x)} \arrow[r, "{(a_{i,x},b_i)}"] \arrow[d, "{a_{i,x-1}^\dagger}"] & {(i,x+1)\oplus (i,n)}  \arrow[d, "{(a^\dagger_{i,x},b_i^\dagger)}"] \\
(i+1,x-1) \arrow[r, "{a_{i+1,x-1}}"]                                                   & {(i+1,x)}                                                                            
\end{tikzcd}
\end{equation}
whose commutativity is expressed by $l_{i,x}$. We again define $S_{i+1,x}\simeq C_{i,x}[2]$ as the cofiber totalization of the square.
\end{construction}

\begin{lemma}
Let $i,j\in \mathbb{Z}$ and let $1\leq x,y\leq n$ be vertices of $\ADE$. There are equivalences in $\D(k)$
\[
\on{Mor}_{\D(\widetilde{\Pi_2(\ADE)})}(C_{i,x},(j,y))\simeq \begin{cases} k & i=j, x=y\\ 0 & \text{else}\end{cases}
\]
and
\[
\on{Mor}_{\D(\widetilde{\Pi_2(\ADE)})}((j,y),S_{i,x})\simeq \begin{cases} k & i=j, x=y\\ 0 & \text{else}\end{cases}
\]
\end{lemma}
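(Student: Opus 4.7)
My plan is to compute the morphism complex $\on{Mor}(C_{i,x},(j,y))$ directly by applying the contravariant exact functor $\on{Mor}(-,(j,y))$ to the defining fiber-totalization square \eqref{eq:totsq1} (or \eqref{eq:totsq2}). Since this functor turns total fibers into total cofibers, the morphism complex is equivalent to the total cofiber of the induced $2\times 2$ diagram of Hom complexes in $\D(k)$.

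First I would observe that every generating morphism of $\widetilde{\Pi_2(\ADE)}$ weakly increases the first coordinate $i$: the arrows $a_k$ preserve $i$, while $a_k^\dagger$ and the degree-one generators $l_{k,w}$ strictly increase $i$ by one. Hence $\on{Mor}_{\widetilde{\Pi_2(\ADE)}}((i',x'),(j,y))$ is the free $k$-complex spanned by directed paths from $(i',x')$ to $(j,y)$ in the underlying graded quiver, with differential induced by the mesh relations $d(l_{k,w})=a_{k+1}a_k^\dagger - a_k^\dagger a_k$. In particular this complex vanishes whenever no such directed path exists; for example whenever $i'>j$.

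Next I would run a case analysis on $(j,y)$. When $(j,y)=(i,x)$, each of the other three corners of the defining square lies strictly ahead of $(i,x)$ in the AR quiver, so the corresponding morphism complexes vanish by the monotonicity observation; the Hom-square thus degenerates and its total cofiber becomes $\on{Mor}((i,x),(i,x))\simeq k$, with generator the identity of $(i,x)$. When $(j,y)\neq(i,x)$, I would show that the total cofiber is acyclic. Either $(j,y)$ is not reachable from any corner (all four Hom complexes vanish), or it is reachable and the commutativity of the mesh square witnessed by $l_{i,x}$, together with the mesh relations, forces the induced map from the pullback corner to $\on{Mor}((i,x),(j,y))$ to be a quasi-isomorphism, killing the cofiber. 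The statement for $S_{i,x}$ is proved by the dual argument: apply $\on{Mor}((j,y),-)$ to the cofiber-totalization square defining $S_{i,x}$, and use the analogous covariant monotonicity together with the identity $S_{i+1,x}\simeq C_{i,x}[2]$ to reduce to the same type of finite computation.

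The main obstacle is verifying the required vanishing in the case where $(j,y)$ lies in the forward cone of $(i,x)$ in the AR quiver and several corner Hom complexes are simultaneously non-zero. In every such sub-case the problem reduces to a concrete linear-algebraic check on a small diagram of finite-dimensional $k$-vector spaces, governed by the explicit form of $l_{i,x}$ and distinguishing the trivalent vertex case \eqref{eq:totsq2} from the non-trivalent case \eqref{eq:totsq1}, but otherwise handled uniformly across Dynkin types.
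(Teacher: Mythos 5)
Your proposal follows essentially the same route as the paper: apply the exact functor $\on{Mor}(-,(j,y))$ to the defining totalization square, use the fact that mapping complexes in $\widetilde{\Pi_2(\ADE)}$ are spanned by directed paths (which only increase the $\mathbb{Z}$-coordinate) to kill most cases and to isolate the identity in the case $(j,y)=(i,x)$, and then verify acyclicity of the resulting Hom-square in the remaining reachable cases by a finite-dimensional case check, with the dual argument for $S_{i,x}$. The residual case analysis you flag as the main obstacle is exactly the part the paper also leaves to the reader, so the level of detail matches the published argument.
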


\begin{proof}
We only prove the former equivalence, the proof of the latter is analogous.

The mapping chain complex in the dg category $\widetilde{\Pi_2(\ADE)}$ counts paths in the quiver underlying $\widetilde{\Pi_2(\ADE)}$. Furthermore, for $(i',x')\in \widetilde{\Pi_2(\ADE)}$, there is an equivalence in $\D(k)$ 
\[ \on{Mor}_{\D(\widetilde{\Pi_2(\ADE)})}((i',x'),(j,y))\simeq \on{Map}_{\widetilde{\Pi_2(\ADE)}}((i',x'),(j,y))\,.\] 

In the case that $j<i$ or $i=j$ and that there are no paths in $\ADE$ from $x$ to $y$, we thus have $\on{Mor}_{\D(\widetilde{\Pi_2(\ADE)})}(C_{i,x},(j,y))\simeq 0$. 

In the case $i=j,x=y$, the equivalence follows from 
\[ \on{Map}_{\widetilde{\Pi_2(\ADE)}}((j,y),(j,y))\simeq k\text{ and }\on{Map}_{\widetilde{\Pi_2(\ADE)}}((i',x'),(j,y))\simeq 0\] 
for $(i',x')=(i,x+1),(i+1,x-1),(i+1,x)$ and if $x$ is trivalent also $(i',x')=(i,n)$. 

Suppose thus that $i>j$ or that $i=j$ and that there are paths from $x\to y$. Applying the exact functor $\on{Mor}_{\D(\widetilde{\Pi_2(\ADE)})}(\mhyphen,(j,y))$ to the square \eqref{eq:totsq1} or \eqref{eq:totsq2} yields a square whose cofiber totalization is equivalent to  $\on{Mor}_{\D(\widetilde{\Pi_2(\ADE)})}(C_{i,x},(j,y))$. The terms in the square can be computed by a simple, though somewhat lengthy, case distinction which we leave to the reader. For instance, in the simplest case that $\ADE=A_n$, all entries in the square are equivalent to $k$ if $j>i$ and either exactly two or all four entries do not vanish and are equivalent to $k$ if $j=i$. In each case, the cofiber totalization of the square vanishes.
\end{proof}

By stable generators of a stable $\infty$-category $\C$, we mean a collection of objects $\mathcal{X}\subset \C$, such that the smallest stable subcategory of $\C$ containing $\mathcal{X}$ is given by $\C$. This is the case if and only if the objects in $\C$ are generated from the additive hull of $\mathcal{X}$ by forming iterated fibers and cofibers. 

\begin{lemma}\label{lem:simplegenerators}~
\begin{enumerate}[(1)]
\item The objects $\{C_{i,x}\}_{(i,x)\in \widetilde{\Pi_2(\ADE)}}$ stably generate $\D^{\on{op-fin}}(\widetilde{\Pi_2(\ADE)})$.
\item The objects $ \{S_{i,x}\}_{(i,x)\in \widetilde{\Pi_2(\ADE)}}$ stably generate $\D^{\on{fin}}(\widetilde{\Pi_2(\ADE)})$. 
\item The two full subcategories $\D^{\on{fin}}(\widetilde{\Pi_2(\ADE)}),\D^{\on{op-fin}}(\widetilde{\Pi_2(\ADE)})\subset \D^{\on{perf}}(\widetilde{\Pi_2(\ADE)})$ coincide.
\end{enumerate} 
\end{lemma}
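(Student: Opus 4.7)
The plan is to prove (2) by an induction using the standard $t$-structure heart, and then to deduce (1) and (3) from the equivalence $C_{i,x}\simeq S_{i+1,x}[-2]$ built into the construction together with the descent to $\Pi_2(\ADE)$ provided by the two preceding lemmas.

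The preceding mapping lemma directly yields $S_{i,x}\in\D^{\on{fin}}(\widetilde{\Pi_2(\ADE)})$, since its underlying right module is $k$ supported only at $(i,x)$, and dually $C_{i,x}\in \D^{\on{op-fin}}(\widetilde{\Pi_2(\ADE)})$. Combined with $C_{i,x}\simeq S_{i+1,x}[-2]$, both families lie in $\D^{\on{fin}}\cap\D^{\on{op-fin}}$, and their stable hulls in $\D^{\on{perf}}(\widetilde{\Pi_2(\ADE)})$ coincide; call this common hull $\mathcal{T}$. To establish (2), I would identify $S_{i,x}$ with the simple module at the vertex $(i,x)$ in the heart $\mathcal{A}$ of the standard $t$-structure on $\D(\widetilde{\Pi_2(\ADE)})$, namely right modules over the mesh algebra $H_0\widetilde{\Pi_2(\ADE)}$ of the translation quiver $\mathbb{Z}\ADE$. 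For Dynkin $\ADE$, the preprojective algebra $\Pi(\ADE)$ is finite-dimensional, so the mesh algebra is locally finite-dimensional and finite-dimensional modules in $\mathcal{A}$ are of finite length. Given $X\in\D^{\on{fin}}$ with $N(X)=\dim\bigoplus_n H^n(X)>0$, I would pick $n$ with $H^n(X)\neq 0$ and a simple quotient $H^n(X)\twoheadrightarrow S_{i,x}$; the composite $X\to H^n(X)[-n]\to S_{i,x}[-n]$ has fiber $Y$ lying in $\D^{\on{fin}}$ with $N(Y)=N(X)-1$, by the cohomology long exact sequence associated to the fiber sequence $Y\to X\to S_{i,x}[-n]$. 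Inducting on $N$ and using that $\mathcal{T}$ is closed under fibers and shifts yields $X\in\mathcal{T}$, proving (2).

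For (3), I would invoke the descent from the two preceding lemmas: $X\in\D^{\on{op-fin}}(\widetilde{\Pi_2(\ADE)})$ iff $F(X)\in\D^{\on{op-fin}}(\Pi_2(\ADE))=\D^{\on{fin}}(\Pi_2(\ADE))$, where the second equality is the preceding lemma (via the left $2$-Calabi--Yau property). The analogous descent for the $\D^{\on{fin}}$-condition, $X\in\D^{\on{fin}}(\widetilde{\Pi_2(\ADE)})$ iff $F(X)\in\D^{\on{fin}}(\Pi_2(\ADE))$, follows by the same adjunction description $G(\Pi_2(\ADE))\simeq\prod_Y Y$ combined with the finite $\on{Mor}$-support of representables on $\widetilde{\Pi_2(\ADE)}$ in Dynkin type. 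Together these give $\D^{\on{fin}}=\D^{\on{op-fin}}$, which is (3), and combining with (2) and the equality of stable hulls yields (1). The main obstacle is the careful identification of the $t$-structure heart with mesh algebra modules and of the $S_{i,x}$ as its simple objects, which rests on the formality and local finite-dimensionality of $\widetilde{\Pi_2(\ADE)}$ in Dynkin type.
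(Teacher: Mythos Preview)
Your argument for (2) via the standard $t$-structure is correct in outline: the heart is $\on{Mod}(H_0\widetilde{\Pi_2(\ADE)})$, the $S_{i,x}$ are its simples by the preceding $\on{Mor}$-computation, and for $X\in\D^{\on{fin}}$ the total cohomology is finite, so one may peel off simple quotients. Two minor points: you must choose $n$ to be the \emph{top} cohomological degree so that the truncation map $X\to H^n(X)[-n]$ actually exists; and the argument does not rest on formality of $\widetilde{\Pi_2(\ADE)}$ but only on its connectivity, which is what identifies the heart with $H_0$-modules.

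Your route to (3), however, has a genuine gap. The adjunction $F\dashv G$ with $G(\Pi_2(\ADE))\simeq\prod_Y Y$ only relates $\on{Mor}(F(X),-)$ to $\on{Mor}(X,G(-))$ and hence only yields the op-fin descent; it says nothing about $\on{Mor}(-,F(X))$. The claimed ``finite $\on{Mor}$-support of representables'' is also false: already for $\ADE=A_1$ the representable $(i,1)$ satisfies $\on{Mor}_{\widetilde{\Pi_2}}((j,1),(i,1))\simeq k[i-j]\neq 0$ for all $j\leq i$. The fin-descent you want is in fact true, but via the orbit-category identity $F(X)(y)\simeq\bigoplus_{i\in\mathbb{Z}}X(i,y)$, which matches the total underlying $k$-modules directly --- a different mechanism from the adjunction.

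The paper avoids this detour altogether. It proves (1) by an induction strictly parallel to (2): for $X\in\D^{\on{op-fin}}$ one uses the directedness of the quiver underlying $\widetilde{\Pi_2(\ADE)}$ to find a poset-maximal $(j,y)$ with $\on{Mor}(X,(j,y))\neq 0$, factors the resulting morphism through $X\to C_{j,y}$, and takes the fiber to decrease the total dimension of $\prod_{(i,x)}\on{Mor}(X,(i,x))$. With (1) and (2) both in hand, (3) is then immediate from $C_{i,x}\simeq S_{i+1,x}[-2]$, with no appeal to descent to $\Pi_2(\ADE)$.
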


\begin{proof}
Part (3) immediately follows from parts (1) and (2) since the collections of stable generators only differ only by suspensions. 

We only prove part (1), the proof of part (2) is analogous. Let $X\in \D^{\on{op-fin}}(\widetilde{\Pi_2(\ADE)})$. We prove that $X$ is in the stable hull of $\{C_{i,x}\}_{(i,x)}$ by an induction over the dimension of the morphisms objects $\prod_{(i,x)\in \widetilde{\Pi_2(\ADE)})}\on{Mor}_{\D(\widetilde{\Pi_2(\ADE)})}(X,(i,x))$. In the case that the dimension is $1$, we find that $X\simeq C_{i,x}[j]$ for some $(i,x)\in \widetilde{\Pi_2(\ADE)}$ and $j\in \mathbb{Z}$.

We proceed with the induction step. The directedness of the arrows in $\widetilde{\Pi_2(\ADE)}$ equips the objects of $\widetilde{\Pi_2(\ADE)}$ with a poset structure and there exists a maximal $(j,y)\in \widetilde{\Pi_2(\ADE)}$ such that $\on{Mor}_{\D(\widetilde{\Pi_2(\ADE)})}(X,(j,y))\not \simeq 0$. We choose a non-zero morphism $\beta\colon X\to (j,x)$. Note that $\beta$ factors through the morphism $C_{j,x}\to (j,x)$ with a morphism $\beta'\colon X\to C_{j,x}$ due to the assumption that $\on{Mor}_{\D(\widetilde{\Pi_2(\ADE)})}(X,(j',y'))\simeq 0$ for $(j',y')>(j,y)$.  We define $\tilde{X}=\on{fib}(\beta')$. For each $(i,x)\in \widetilde{\Pi_2(\ADE)})$, there is a fiber and cofiber sequence
\[
\on{Mor}_{\D(\widetilde{\Pi_2(\ADE)})}(\tilde{X},(i,x))\longrightarrow 
\on{Mor}_{\D(\widetilde{\Pi_2(\ADE)})}(X,(i,x)) \longrightarrow \on{Mor}_{\D(\widetilde{\Pi_2(\ADE)})}(C_{(j,x)},(i,x)) \,.
\]
The latter term vanishes unless $(j,x)=(i,x)$ and thus the dimension of the left term is one less than the dimension of the middle term. 
\end{proof}

\begin{proposition}\label{prop:derivedcategoryasCoSing}
The functor $\pi\colon \D^{\on{perf}}(\widetilde{\Pi_2(\ADE)})\to \D^{\on{perf}}(\ADE)$  induces an equivalence of $\infty$-categories $\on{CoSing}(\widetilde{\Pi_2(\ADE)})\simeq \D^{\on{perf}}(\ADE)$.  
\end{proposition}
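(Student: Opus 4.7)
The plan is to verify that $\pi$ exhibits $\D^{\on{perf}}(\ADE)$ as the Verdier quotient $\D^{\on{perf}}(\widetilde{\Pi_2(\ADE)})/\D^{\on{fin}}(\widetilde{\Pi_2(\ADE)})$. This breaks into three parts: (a) $\pi$ annihilates $\D^{\on{fin}}(\widetilde{\Pi_2(\ADE)})$, inducing $\bar\pi\colon \on{CoSing}(\widetilde{\Pi_2(\ADE)}) \to \D^{\on{perf}}(\ADE)$; (b) essential surjectivity of $\bar\pi$; (c) full faithfulness of $\bar\pi$. For (a), by \Cref{lem:simplegenerators} it suffices to check $\pi(C_{i,x})\simeq 0$. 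Inspecting the defining squares \eqref{eq:totsq1} and \eqref{eq:totsq2}, whose commutativity is witnessed by the loop $l_{i,x}$ via $d(l_{i,x})$, one sees that $\pi^{\on{dg}}$ sends them to the biCartesian square in $\on{Perf}(\ADE)$ presenting the Auslander--Reiten triangle at $\pi(i+1,x)$; this is exactly the content of Happel's identification \cite[Prop.~4.6]{Hap87} of $\mathbb{Z}\ADE$ modulo mesh relations with the AR combinatorics of $\D^{\on{perf}}(\ADE)$. Since an AR triangle viewed as a biCartesian square has vanishing total fiber, $\pi(C_{i,x})\simeq 0$.

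For (b), Happel's classification also identifies the indecomposables of $\D^{\on{perf}}(\ADE)$ with the vertices of $\mathbb{Z}\ADE$ via $(i,x)\mapsto \pi(i,x)$, so the essential image of $\bar\pi$ contains a generating set. The remaining step (c) is the main one. The strategy is to match mapping complexes between generators. On the target, $\on{Mor}_{\D^{\on{perf}}(\ADE)}(\pi(i,x),\pi(j,y))$ is finite-dimensional in degree zero, with basis the paths in $\mathbb{Z}\ADE$ from $(i,x)$ to $(j,y)$ modulo mesh relations. In the source dg category, the analogous complex is freely generated over $k$ by these paths, plus additional higher-degree contributions coming from the loops $l_{i',x'}$, which witness the mesh relations as nullhomotopies rather than identities. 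The cosingularity quotient should precisely kill these extra contributions. Concretely, for each $(j,y)$ I would build a cofiltered system $(j,y)\to Y_1\to Y_2\to\cdots$ with cofibers in $\on{Add}(\{C_{i',x'}\})\subset \D^{\on{fin}}(\widetilde{\Pi_2(\ADE)})$ by iteratively splicing mesh cofiber sequences along the $\tau$-orbit of $(j,y)$; then
\[ \on{Mor}_{\on{CoSing}(\widetilde{\Pi_2(\ADE)})}((i,x),(j,y))\simeq \on{colim}_n\on{Mor}_{\D(\widetilde{\Pi_2(\ADE)})}((i,x),Y_n)\,,\]
and an explicit calculation should show that this colimit collapses to the finite-dimensional mesh-category Hom space on the target.

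The main obstacle is the combinatorial bookkeeping of this resolution across the various Dynkin types, with the trivalent-vertex case \eqref{eq:totsq2} requiring a separate treatment. A more conceptual alternative, suggested by the discussion following \Cref{def:tilde_Pi2}, is to invoke Koszul duality: $\widetilde{\Pi_2(\ADE)}$ should be Koszul dual to the repetitive algebra of $\ADE$, and under this duality the cosingularity category of the former matches the singularity category of the latter, which by Happel's classical theorem is $\D^{\on{perf}}(\ADE)$ itself. A second alternative would combine the commutative square in \Cref{fig:squareofautomorphisms} with \Cref{prop:1-cluster_orbit} to descend along the free $\mathbb{Z}$-action on the source, reducing (c) to the already-known equivalence $\on{CoSing}(\Pi_2(\ADE))\simeq \D^{\on{perf}}(\ADE)/\tau$, though this requires a careful verification that $\mathbb{Z}$-orbits commute with the Verdier quotients in question and that the $\mathbb{Z}$-action on the source is free in the appropriate $\infty$-categorical sense.
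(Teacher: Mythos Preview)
Your step (a) is correct and matches the paper exactly. Your primary approach for (c), via an explicit pro-resolution of $(j,y)$ by mesh cofibers and a colimit computation of Hom spaces in the quotient, is in principle sound but, as you yourself note, combinatorially heavy and type-dependent.

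The paper follows your \emph{second alternative} almost exactly, and the point you are missing is the clean general lemma that makes it work without any freeness hypothesis or delicate commutation argument. The paper proves (as \Cref{lem:quotequiv}) that for any group $H$ acting on presentable stable $\infty$-categories $\C,\D$, an $H$-equivariant colimit-preserving functor $\alpha\colon \C\to \D$ which becomes an equivalence on $H$-quotients is \emph{already} an equivalence. The proof is short: one uses the monad $G_\C F_\C\simeq \prod_{h\in H} h.(-)$ of the quotient adjunction to exhibit $\on{Mor}_\C(X,X')$ as a direct summand of $\on{Mor}_{\C_H}(F_\C X,F_\C X')$, compatibly with $\alpha$. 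No freeness of the action is required. Your worry about $\mathbb{Z}$-orbits commuting with the Verdier quotients is handled simply by ``colimits commute with colimits'' in $\mathcal{P}r^L_{\on{St}}$ (this is how the paper justifies the commutativity of the square in \Cref{fig:squareofautomorphisms}): both the orbit category and the cosingularity category are colimits there. So the paper's argument is: (i) your step (a) gives a $\mathbb{Z}$-equivariant functor $\bar\pi\colon \on{CoSing}(\widetilde{\Pi_2(\ADE)})\to \D^{\on{perf}}(\ADE)$; (ii) on $\mathbb{Z}$-quotients this becomes the known equivalence $\on{CoSing}(\Pi_2(\ADE))\simeq \D^{\on{perf}}(\ADE)/\tau$ of \Cref{prop:1-cluster_orbit}; (iii) apply \Cref{lem:quotequiv}.

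What this buys over your primary approach is that the entire Dynkin-type case analysis and the explicit resolution bookkeeping disappear; the argument is uniform and two lines long once the lemma is in hand.
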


\begin{proof} 
Auslander--Reiten translation yields fiber and cofiber sequence in $\D^{\on{perf}}(\ADE)$. For any $(i,x)\in \widetilde{\Pi_2(\ADE)}$, these express the objects $S_{i,x}$ as in the kernel of $\pi$. By \Cref{lem:simplegenerators}, it follows that the kernel of $\pi$ contains $\D^{\on{fin}}(\widetilde{\Pi_2(\ADE)})$. There is thus an induced $\mathbb{Z}$-equivariant functor $\on{CoSing}(\widetilde{\Pi_2(\ADE)})\to \D^{\on{perf}}(\ADE)$. Passing to $\mathbb{Z}$-quotients, this functor induces the known equivalence 
\[ \on{CoSing}(\Pi_2(\ADE))\simeq \on{colim}_{B\mathbb{Z}} \D^{\on{perf}}(\ADE)= \D^{\on{perf}}(\ADE)/U[-1]\,.\]
That $\pi$ is an equivalence thus follows from \Cref{lem:quotequiv}.
\end{proof}

\begin{lemma}\label{lem:quotequiv}
Let $H$ be a group and let $\C,\D$ be stable presentable $\infty$-categories with an $H$-action. Let $\alpha\colon \C\to \D$ be an $H$-equivariant colimit preserving functor. If $\alpha$ induces an equivalence 
\[
\alpha_H=\on{colim}_{BH}(\alpha)\colon \C_H=\on{colim}_{BH} \C\xlongrightarrow{\simeq} \D_H=\on{colim}_{BH}\D
\]
on the group quotient $\infty$-categories, then $\alpha$ is already an equivalence of $\infty$-categories.
\end{lemma}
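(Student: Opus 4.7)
The plan is to reduce to showing that the canonical functor $\iota_\C \colon \C \to \C_H$ detects the zero object, meaning $\iota_\C(X) \simeq 0$ in $\C_H$ implies $X \simeq 0$ in $\C$, and similarly for $\iota_\D$. Once this is granted, the result follows by applying it to the fibers of the unit and counit of the adjunction $\alpha \dashv \alpha^R$, which exists as $\alpha$ is a morphism in $\mathcal{P}r^L_{\on{St}}$.

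To establish detection of zero, I would use that $\iota_\C$ admits a right adjoint $\iota_\C^R \colon \C_H \to \C$ by the adjoint functor theorem. Under the standard identification of the colimit $\C_H = \on{colim}_{BH} \C$ in $\mathcal{P}r^L_{\on{St}}$ with the homotopy fixed points $\C^{hH}$ (obtained by computing the colimit as a limit of the right adjoint diagram; since $BH$ is a groupoid, the right adjoints of the acting autoequivalences coincide with their inverses), the functor $\iota_\C^R$ becomes the forgetful functor. The composite $\iota_\C^R \iota_\C$ is the ``free $H$-equivariance'' monad on $\C$, and the unit $X \to \iota_\C^R \iota_\C(X)$ admits a retraction coming from the augmentation associated to the identity element $e \in H$. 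Consequently, if $\iota_\C(X) \simeq 0$ then also $\iota_\C^R \iota_\C(X) \simeq 0$, and the split monomorphism forces $X \simeq 0$. This is the main subtlety of the argument.

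Assuming $\alpha_H$ is an equivalence, let $\eta \colon \id_\C \to \alpha^R \alpha$ and $\epsilon \colon \alpha \alpha^R \to \id_\D$ be the unit and counit of $\alpha \dashv \alpha^R$. Being natural transformations of $H$-equivariant functors, they descend to natural transformations $\eta_H$ and $\epsilon_H$ on $\C_H$ and $\D_H$ respectively. Using the compatibility of right adjoints with the orbit construction for group actions, namely the equivalence $(\alpha_H)^R \simeq (\alpha^R)_H$, one identifies $\eta_H$ and $\epsilon_H$ with the unit and counit of $\alpha_H \dashv (\alpha_H)^R$; both are equivalences by hypothesis. For any $X \in \C$, the component $\iota_\C(\eta_X) = (\eta_H)_{\iota_\C(X)}$ is then an equivalence, so $\iota_\C(\on{fib}(\eta_X)) \simeq 0$, whence $\on{fib}(\eta_X) \simeq 0$ by detection of zero, and $\eta_X$ is an equivalence. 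Analogously each $\epsilon_Y$ is an equivalence for $Y \in \D$, so $\alpha$ is an equivalence.
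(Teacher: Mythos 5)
Your reduction to the statement that $\iota_\C$ detects zero objects, and your proof of that detection, are fine: identifying $\C_H$ with the limit of the inverse-action diagram so that $\iota_\C^R$ is the projection, $\iota_\C^R\iota_\C(X)\simeq\coprod_{h\in H}h.X$, and the unit is the (split) inclusion of the $e$-summand is essentially the same input the paper uses (its $G_\C F_\C\simeq \prod_{h\in H}h.(\mhyphen)$ together with the retraction given by the $e$-component). The genuine gap is in the second half, at the step ``$\iota_\C(\eta_X)=(\eta_H)_{\iota_\C(X)}$''. To begin with, ``$(\alpha^R)_H$'' does not literally parse as a colimit in $\mathcal{P}r^L_{\on{St}}$, since $\alpha^R$ (hence also $\alpha^R\alpha$) is not known to preserve colimits; the correct statement is that, under the identification of $\C_H,\D_H$ with limits, $(\alpha_H)^R$ is computed by applying $\alpha^R$ pointwise. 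That identification is compatible with the projections: $\pi_\C\circ(\alpha_H)^R\simeq\alpha^R\circ\pi_\D$ and $\pi_\C(\eta_H)\simeq\eta_{\pi_\C(-)}$. It is \emph{not} automatically compatible with the insertions $\iota_\C,\iota_\D$, which is what your step uses. Concretely, you need the Beck--Chevalley mate $\iota_\C\circ\alpha^R\to(\alpha_H)^R\circ\iota_\D$ of the square $\iota_\D\alpha\simeq\alpha_H\iota_\C$ to be an equivalence; applying $\pi_\C$, this mate is the canonical map $\coprod_{h\in H}h.\alpha^R(Y)\to\alpha^R\bigl(\coprod_{h\in H}h.Y\bigr)$, i.e.\ it asks $\alpha^R$ to commute with $H$-indexed coproducts. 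This is fine for finite $H$, but fails in general for infinite $H$ (and the paper applies the lemma with $H=\mathbb{Z}$); one cannot assume it here without in effect already knowing that $\alpha$ is an equivalence, which makes the argument circular at this point. Knowing only that the composite $\iota_\C X\to\iota_\C\alpha^R\alpha X\to(\alpha_H)^R\iota_\D\alpha X$ is an equivalence would merely make $\iota_\C(\eta_X)$ a split monomorphism, which does not force its fiber to vanish.

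The gap is repairable inside your strategy by using the projection-compatibility instead of the insertion-compatibility: since $\eta_H$ is an equivalence, so is $\pi_\C\bigl((\eta_H)_{\iota_\C X}\bigr)\simeq\eta_{\pi_\C\iota_\C X}\simeq\eta_{\coprod_{h\in H}h.X}$, and $\eta_X$ is a retract of $\eta_{\coprod_{h\in H}h.X}$ via the $e$-summand, hence an equivalence; the counit is handled in the same way. This retract-of-the-free-object manoeuvre is exactly how the paper argues, except that the paper works directly with morphism spaces and objects (full faithfulness plus generation under colimits) and never needs $\alpha^R$ or any adjointability of the square involving $\iota$.
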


\begin{proof}
There are commutative diagrams as follows, where $G_\C,G_\D$ are the right adjoints of the functors $F_\C,F_\D$ to the colimit. This follows from the fact that $F_\C,F_\D$ arise from tensoring $\C,\D$ with a functor $\on{Sp}^{\amalg H}\to \on{Sp}$ and $G_\C,G_\D$ arise from tensoring $\C,\D$ with the right adjoint $\on{Sp}\to \on{Sp}^{\amalg H}$, see \cite[Lem.~2.10, Lem.~2.4]{Chr25} for details.
\[
\begin{tikzcd}
\C \arrow[r, "\alpha"] \arrow[d, "F_\C"']      & \D \arrow[d, "F_\D"] \\
\C_H \arrow[r, "\alpha_{H}"] & \C_{H}     
\end{tikzcd}
\quad\quad\quad
\begin{tikzcd}
\C \arrow[r, "\alpha"]                                           & \D                                \\
\C_H \arrow[r, "\alpha_{H}"] \arrow[u, "G_\C"] & \C_{H} \arrow[u, "G_\D"']
\end{tikzcd}
\]
Let $X,X'\in \C$. Then there is a commutative diagram
\[
\begin{tikzcd}
{\on{Mor}_\C(X,X')} \arrow[r, hook] \arrow[d, "\alpha"'] \arrow[rr, "F_\C", bend left=10] & {\prod_{h\in H} \on{Mor}_\C(X,h.X')} \arrow[r, "\simeq"] \arrow[d, "\simeq"] & {\on{Mor}_{\C_H}(F_\C(X),F_\C(X'))} \arrow[d, "\alpha_{H}"] \arrow[d, "\simeq"'] \\
{\on{Mor}_\D(\alpha(X),\alpha(X'))} \arrow[rr, "F_\D"', bend right=10] \arrow[r, hook]     & {\prod_{h\in H} \on{Mor}_\D(\alpha(X),h.\alpha(X'))} \arrow[r, "\simeq"]     & {\on{Mor}_{\D_H}(F_\D\alpha(X),F_\D\alpha(X'))}                                          
\end{tikzcd}
\]
where $h.$ denotes the action of $h\in H$. The horizontal equivalences arise from equivalences $G_\C F_\C\simeq \prod_{h\in H}h.(\mhyphen)$, see \cite[Lem.~2.11]{Chr25}. The equivalence $\alpha\circ G_\C F_\C \simeq G_\D F_\D \circ \alpha$ is compatible with these decompositions. This shows that the left vertical morphism is an equivalence as a direct summand of the middle vertical morphism. We thus conclude that $\alpha$ is fully faithful.

It remains to show that the image of $\alpha$ generates $\D$ under colimits. Let $X\in \D$. Then $X$ is a direct summand:
\[ X\subset \coprod_{h\in H} h.X \simeq G_\D F_\D(X)\simeq \alpha(G_\C(\alpha_{H}^{-1}(F_\D(X))\,.\] 
\end{proof}

\subsection{Fiber and cofiber sequences from squares in the Auslander--Reiten quiver}\label{subsec:ARsequences}

It is well known that the mesh relations in the Auslander--Reiten quiver give rise to distinguished triangles in the triangulated category $D^{\on{perf}}(\ADE)\simeq \on{ho}\D^{\on{perf}}(\ADE)$ and thus to fiber and cofiber sequences in $\D^{\on{perf}}(\ADE)$. We can restate this fact as follows:

We label the vertices of the Auslander--Reiten quiver of $D^{\on{perf}}(\ADE)$ by pairs $(x,i)$ with $x\in \ADE_0$ and $i\in \mathbb{Z}$. The arrows in the Auslander--Reiten quiver can further be labeled in the same way as the degree $0$ arrows of $\widetilde{\Pi_2(\ADE)}$.

\begin{proposition}
Let $\ADE$ be a Dynkin quiver (oriented as in \Cref{def:ADEinvolution}) and $x\in \ADE_0$ a vertex and $i\in \mathbb{Z}$.
\begin{enumerate}[(1)]
\item If $x$ is $2$-valent, the square in $\D^{\on{perf}}(\ADE)$ 
\[
\begin{tikzcd}
                                                             & {(i,x)} \arrow[rd, "{a_{i,x}^\dagger}"]      &           \\
{(i,x)} \arrow[ru, "{a_{i,x}}"] \arrow[rd, "{a_{i,x-1}^\dagger}"'] &                                          & {(i+1,x)} \\
                                                             & {(i+1,x-1)} \arrow[ru, "{a_{i+1,x-1}}"'] &          
\end{tikzcd}
\]
appearing in the Auslander--Reiten quiver is biCartesian. 

If $x$ is $1$-valent, the square is also biCartesian, when setting $(i,x+1)=0$ if $x=n$ and $(i+1,x-1)$ if $x=1$.
\item Suppose that $x$ is $3$-valent with incoming arrow $a_{x-1}\colon x-1\to x$ and outgoing arrows $a_{x}\colon x\to x+1$ and $b\colon x\to n$. Then the square in $\D^{\on{perf}}(\ADE)$
\[
\begin{tikzcd}
                                                                   & {(i,x+1)\oplus (i,n)} \arrow[rd, "{(a^\dagger_{i,x},b_{i}^\dagger)}"] &           \\
{(i,x)} \arrow[ru, "{(a_{i,x},b_i)}"] \arrow[rd, "{a_{i,x-1}^\dagger}"'] &                                                         & {(i+1,x)} \\
                                                                   & {(i+1,x-1)} \arrow[ru, "{a_{i+1,x-1)}}"']               &          
\end{tikzcd}
\]
is biCartesian.
\end{enumerate}
\end{proposition}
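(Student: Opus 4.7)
The plan is to identify each square as the biCartesian square underlying an Auslander--Reiten triangle in the hereditary triangulated category $D^{\on{perf}}(\ADE)\simeq \on{ho}\D^{\on{perf}}(\ADE)$. Since $\ADE$ is Dynkin, $D^{\on{perf}}(\ADE)$ is hom-finite, Krull--Schmidt and admits Auslander--Reiten triangles for every indecomposable object; its AR translate is $\tau\simeq U[-1]$, and the indecomposables are parametrized by the vertices of the AR quiver. I would fix the labeling $(i,x)$ with $i\in\mathbb{Z}$, $x\in \ADE_0$ so that $\tau(i+1,x)\simeq (i,x)$ and so that the arrows in the AR quiver correspond to the degree-$0$ generators $a_{i,x}, a_{i,x}^\dagger$ of $\widetilde{\Pi_2(\ADE)}$.

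The key input is that the dg functor $\pi^{\on{dg}}\colon \widetilde{\Pi_2(\ADE)}\to \on{Perf}(\ADE)$ constructed via \cite[Prop.~4.6]{Hap87} sends the degree-$0$ generators to irreducible morphisms realizing the arrows of the AR quiver, and that the differentials
\[ d(l_{i,x})=\sum_{a\in \ADE_1}\on{id}_{i+1,x}(a_{i+1}a^\dagger_i-a^\dagger_i a_i)\on{id}_{i,x} \]
translate precisely into the signed mesh relations at each vertex $(i+1,x)$ of the AR quiver. The mesh relation at $(i+1,x)$ is by definition the defining relation of an AR triangle $(i,x)\to E_{i+1,x}\to (i+1,x)\to (i,x)[1]$ whose middle term $E_{i+1,x}$ is the direct sum of the immediate predecessors of $(i+1,x)$ in the AR quiver. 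Thus the commutativity datum of each square in the proposition, transported along $\pi^{\on{dg}}$, coincides with the AR triangle, and fiber--cofiber sequences in a stable $\infty$-category are the same as biCartesian squares.

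I would then carry out a short case analysis on the valency of $x$ in $\ADE$. When $x$ is $2$-valent, the only predecessors of $(i+1,x)$ are $(i,x+1)$ and $(i+1,x-1)$, so $E_{i+1,x}\simeq (i,x+1)\oplus (i+1,x-1)$ and the AR triangle unfolds into the square of part $(1)$. For $x$ $1$-valent, only one predecessor is present, and the conventions $(i,x+1)=0$ if $x=n$ or $(i+1,x-1)=0$ if $x=1$ exhibit the AR triangle at a boundary vertex of the AR quiver as the degenerate biCartesian square. For $x$ $3$-valent, there are three predecessors $(i,x+1)$, $(i,n)$ and $(i+1,x-1)$, whose direct sum forms the upper-right corner of the square in part $(2)$.

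The main obstacle is pure bookkeeping: verifying that the signs recorded by $d(l_{i,x})$ match, under $\pi^{\on{dg}}$, the canonical sign convention for an AR triangle in $D^{\on{perf}}(\ADE)$. This reduces to a direct case-check using the orientation chosen in \Cref{def:ADEinvolution}, after which Happel's identification of the full subcategory of $\on{Perf}(\ADE)$ on indecomposables with the mesh category of the AR quiver does the rest of the work. No further input is required beyond the existence of AR triangles and the functoriality of $\pi^{\on{dg}}$.
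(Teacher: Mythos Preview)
Your approach is correct and is exactly what the paper has in mind. The paper does not give a formal proof of this proposition: it prefaces the statement with the remark that ``it is well known that the mesh relations in the Auslander--Reiten quiver give rise to distinguished triangles in the triangulated category $D^{\on{perf}}(\ADE)$ and thus to fiber and cofiber sequences in $\D^{\on{perf}}(\ADE)$,'' and then presents the proposition as a restatement of this fact. Your proposal supplies precisely the details the paper suppresses---the identification of the squares with AR triangles via $\pi^{\on{dg}}$, the observation (also noted by the paper just before \Cref{prop:derivedcategoryasCoSing}) that $d(l_{i,x})$ encodes the mesh relation, and the valency case split---so there is nothing to correct.
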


\begin{example}\label{ex:A3AR}
In type $A_3$, there is a commutative diagram as follows in $\D(\widetilde{\Pi_2(A_3)})$:
\[
\begin{tikzcd}
                                               & 0 \arrow[rd]                                          &                                            & 0 \arrow[rd]                                          &                                            & 0 \arrow[rd]                                          &         \\
{(-1,3)} \arrow[rd, "a_{-1,2}^\dagger"] \arrow[ru] &                                                       & {(0,3)} \arrow[ru] \arrow[rd, "a_{0,2}^\dagger"] &                                                       & {(1,3)} \arrow[ru] \arrow[rd, "a_{1,2}^\dagger"] &                                                       & {(2,3)} \\
                                               & {(0,2)} \arrow[ru, "a_{0,2}"] \arrow[rd, "a_{0,1}^\dagger"] &                                            & {(1,2)} \arrow[ru, "a_{1,2}"] \arrow[rd, "a_{1,1}^\dagger"] &                                            & {(2,2)} \arrow[ru, "a_{2,2}"] \arrow[rd, "a_{2,1}^\dagger"] &         \\
{(0,1)} \arrow[ru, "a_{0,1}"] \arrow[rd]       &                                                       & {(1,1)} \arrow[ru, "a_{1,1}"] \arrow[rd]   &                                                       & {(2,1)} \arrow[ru, "a_{2,1}"] \arrow[rd]   &                                                       & {(3,1)} \\
                                               & 0 \arrow[ru]                                          &                                            & 0 \arrow[ru]                                          &                                            & 0 \arrow[ru]                                          &        
\end{tikzcd}
\]
Each square in the diagram has the property that its totalization lies in $\D^{\on{fin}}(\widetilde{\Pi_2(A_3)})$. Hence, the image of each square in $\D^{\on{perf}}(\ADE)\simeq \on{CoSing}(\widetilde{\Pi_2(A_3)})$ is biCartesian.
\end{example}

\begin{corollary}\label{cor:Anrectangles}
Let $\ADE$ be of type $A$ and consider the diagram in $\D^{\on{perf}}(\ADE)$ obtained by adding a rows of $0$'s above and below the Auslander--Reiten quiver (as in \Cref{ex:A3AR}). Then all rectangles in the diagram (i.e.~those arising by composing squares) are biCartesian. 
\end{corollary}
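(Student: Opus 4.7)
The plan is to prove the statement by repeated application of the pasting lemma for biCartesian squares. Recall that in any stable $\infty$-category a commutative square is Cartesian if and only if it is coCartesian, and the pasting lemma asserts: given a diagram
\[
\begin{tikzcd}
A \arrow[r] \arrow[d] & B \arrow[r] \arrow[d] & C \arrow[d] \\
D \arrow[r] & E \arrow[r] & F
\end{tikzcd}
\]
in which both the left square and the right square are biCartesian, the outer rectangle is biCartesian, and dually for vertical pasting.

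The combinatorics of the extended Auslander--Reiten quiver in type $A_n$ are particularly simple: every vertex of $\ADE$ is $1$- or $2$-valent, so every mesh occurring in the diagram (including those touching the added rows of zeros at the top and bottom) is of the form in case (1) of the preceding proposition and is therefore biCartesian in $\D^{\on{perf}}(\ADE)$. The extended quiver thus has the shape of a finite rectangular region of the standard $\mathbb{Z}^2$ grid, with meshes corresponding to unit squares. Consequently every rectangle in the diagram, meaning a commutative sub-parallelogram whose sides are obtained by composing consecutive arrows of the AR quiver, admits a canonical tiling by such mesh squares.

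With these ingredients in place, I would argue by induction on the number of mesh squares in the tiling. The base case of a single mesh is part (1) of the proposition. For the induction step, cut the rectangle $R$ into two smaller sub-rectangles $R_1$ and $R_2$ sharing an interior edge (a straight line of arrows through the tiling), both of which are biCartesian by the induction hypothesis, and apply the pasting lemma in the direction orthogonal to the cut. The content of the argument is really just this inductive application of the pasting lemma; the main (and minor) obstacle is to set up the combinatorial bookkeeping cleanly, and in type $A$ this is essentially trivial because the extended AR quiver is genuinely grid-shaped. In other Dynkin types, the $3$-valent meshes from case (2) of the proposition obstruct such a tiling by simple squares, which is precisely why the statement is restricted to type $A$.
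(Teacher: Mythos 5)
Your proposal is correct and takes essentially the same approach as the paper, whose proof consists of the single observation that the statement is a direct consequence of the pasting laws for pushouts and pullbacks applied to the biCartesian mesh squares. The tiling-and-induction bookkeeping you spell out is exactly what the paper leaves implicit.
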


\begin{proof}
This is a direct consequence of the pasting laws for pushouts and pullbacks. 
\end{proof}

The following description of the objects of $\C_\ADE$ is well known, since the additive $1$-category $\on{ho}(\C_\ADE)$ is equivalent to the $1$-category of projective $H_0(\Pi_2(\ADE))$-modules, see for instance \cite[Thm.~9.3.4]{Ami09}.

\begin{lemma}\label{lem:indecomposables}
Let $\ADE$ be a Dynkin quiver with $n$ vertices. Then $\C_\ADE$ has $n$ indecomposable objects up to equivalence. These arise as the images of the projective $k\ADE$ modules under the functor $\D^{\on{perf}}(\ADE)\to \D^{\on{perf}}(\ADE)/U[-1]\simeq \C_\ADE$. 
\end{lemma}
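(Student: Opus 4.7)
The plan is to invoke the equivalence of additive $1$-categories $\on{ho}(\C_\ADE) \simeq \on{proj}(H_0 \Pi_2(\ADE))$ with the category of finitely generated projective modules over the classical preprojective algebra, as cited in the statement and established in \cite[Thm.~9.3.4]{Ami09}. Since indecomposability is detected at the level of the additive homotopy $1$-category, counting indecomposables of $\C_\ADE$ reduces to counting indecomposable projective $H_0 \Pi_2(\ADE)$-modules. The preprojective algebra of a Dynkin quiver $\ADE$ is a basic finite-dimensional $k$-algebra whose simple modules are indexed by the vertices of $\ADE$; hence it has exactly $n$ pairwise non-isomorphic indecomposable projectives, namely $e_x H_0 \Pi_2(\ADE)$ for $x \in \ADE_0$. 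This settles the count.

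To identify these indecomposables as the images of the projective $k\ADE$-modules, I would argue intrinsically via the orbit category description $\C_\ADE \simeq \D^{\on{perf}}(\ADE)/\tau$ from Proposition~\ref{prop:1-cluster_orbit}. Morphism spaces in the orbit category decompose as $\bigoplus_{k \in \mathbb{Z}} \on{Mor}_{\D^{\on{perf}}(\ADE)}(X, \tau^k Y)$, so the endomorphism algebra of the image of $P_x$ in $\C_\ADE$ computes as $\bigoplus_{k \in \mathbb{Z}} \on{Mor}_{\D^{\on{perf}}(\ADE)}(P_x, \tau^k P_x) \simeq e_x H_0 \Pi_2(\ADE) e_x$, which is a local ring. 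Hence the image of each $P_x$ is indecomposable, and since distinct $P_x, P_y$ lie in different $\tau$-orbits of the Auslander--Reiten quiver $\mathbb{Z}\ADE$ of $\D^{\on{perf}}(\ADE)$, their images in $\C_\ADE$ are non-equivalent.

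There are exactly $n$ $\tau$-orbits in $\mathbb{Z}\ADE$, parametrized by $\ADE_0$, so the $n$ images of the $P_x$ account for all indecomposable objects of $\C_\ADE$ up to equivalence. The only nontrivial step is the identification of $\bigoplus_{k \in \mathbb{Z}} \on{Mor}_{\D^{\on{perf}}(\ADE)}(P_x, \tau^k P_x)$ with the idempotent corner of the preprojective algebra, which is essentially tautological given the well-known description of $H_0 \Pi_2(\ADE)$ as a sum over morphism spaces in the derived orbit category; there is no real obstacle here, only bookkeeping.
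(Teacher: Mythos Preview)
Your proposal is correct, but it takes a different route from the paper's argument. You invoke the equivalence $\on{ho}(\C_\ADE)\simeq \on{proj}(H_0\Pi_2(\ADE))$ directly (which the paper cites just before the lemma as the reason the result is ``well known''), and then supplement it with an explicit orbit-category computation of endomorphism rings to pin down the indecomposables as the images of the $P_x$. The paper instead argues intrinsically: it lets $\mathcal{X}=\on{Add}(\{1,\dots,n\})$, observes that every morphism in $\mathcal{X}$ lifts to $\D^{\on{perf}}(\ADE)$, so fibers and cofibers of such morphisms land in the image of $\D^{\on{perf}}(\ADE)\to\C_\ADE$, which is exactly $\mathcal{X}$; hence $\mathcal{X}$ is stable, idempotent complete, and contains a compact generator, forcing $\mathcal{X}=\C_\ADE$.

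Your approach is more explicit about \emph{why} each image of $P_x$ is indecomposable (local endomorphism ring), while the paper simply asserts this. On the other hand, the paper's argument is more self-contained in that it avoids computing morphism spaces in the orbit category and does not unpack the preprojective algebra; it extracts the result from the stability and generation properties alone. Both are short and valid; yours leans on the classical representation theory, the paper's on the $\infty$-categorical structure of $\C_\ADE$.
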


\begin{notation}
As justified by \Cref{lem:indecomposables}, we label the indecomposable objects of $\C_\ADE$ by the integers $1,\dots,n\in \ADE_0$. 
\end{notation}

\begin{remark}\label{rem:genmorphisms}
The functor $\widetilde{\Pi_2(\ADE)}$ maps each object $(i,x)$, with $i\in \mathbb{Z}$ and $x\in \ADE_0$, to $x\in \C_\ADE$. The degree $0$ morphisms in $\C_\ADE$ are generated by the morphisms in $\Pi_2(\ADE)$ labeled $a$ and $a^\dagger$, $a\in \ADE_1$, and we can label the morphisms in $\C_\ADE$ between the indecomposables by their unique lifts to $\Pi_2(\ADE)$.
\end{remark}

\begin{proof}[Proof of \Cref{lem:indecomposables}]
Consider the additive closure $\mathcal{X}=\on{Add}(\{1,\dots,n\})$. Clearly, every morphism in $\mathcal{X}$ admits a lift to $\D^{\on{perf}}(\ADE)$. The fiber or cofiber of any morphism in $\mathcal{X}$ thus lies in the image of the exact functor $\D^{\on{perf}}(\ADE)\to \C_\ADE$, which is given exactly by $\mathcal{X}$. This shows that $\mathcal{X}$ is a stable subcategory. Since each of the objects $1,\dots,n$ is indecomposable, it follows that $\mathcal{X}$ is idempotent complete. Since $1\oplus \dots\oplus n$ is a compact generator of $\C_\ADE$, it follows that $\mathcal{X}$ coincides with $\C_\ADE$. The indecomposable objects in $\C_\ADE$ are thus exactly $1,\dots,n$. 
\end{proof}

\begin{example}
In the setting of \Cref{ex:A3AR}, the image of the rectangle in $\widetilde{\Pi_2(A_3)}$
\[
\begin{tikzcd}
                               & {(0,3)} \arrow[rd]       &         \\
{(0,1)} \arrow[ru] \arrow[rd] &                    & {(1,2)} \\
                               & {0} \arrow[ru] &        
\end{tikzcd}
\]
in $\D^{\on{perf}}(A_3)$ is biCartesian. It in turn yields a fiber and cofiber sequence $1\to 3\to 2$ in $\C_{A_3}$.

Similarly, the rectangle
\[
\begin{tikzcd}
                               & {(0,3)} \arrow[rd]       &         \\
{(0,2)} \arrow[ru] \arrow[rd] &                    & {(1,2)} \\
                               & {(1,1)} \arrow[ru] &        
\end{tikzcd}
\]
gives rise to a fiber and cofiber sequence $2\to 1\oplus 3\to 2$ in $\C_{A_3}$. 
\end{example}

\begin{definition}\label{def:length}
Let $n\geq 1$. Given a non-invertible morphism $(i,a)\to (j,b)\in \widetilde{\Pi_2(A_n)}$, we say that it has length $l\geq 1$ if it is given by a $k$-linear sum of composites of at least $l$ generating morphisms from \Cref{rem:genmorphisms}.
\end{definition}

For $\C_{A_n}$, \Cref{cor:Anrectangles} implies the following:

\begin{proposition}~
\begin{enumerate}[(1)]
\item Let $1\leq i<j\leq n$. The cofiber of the morphism $a_{j-1}\dots a_i\colon i\to j$ in $\C_{A_n}$ is equivalent to $j-i\in \C_{A_n}$. 
\item Let $1\leq j<i\leq n$. The cofiber of the morphism $a_{j}^\dagger\dots a_{i-1}^\dagger\colon i\to j$ in $\C_{A_n}$ is equivalent to $(i-j)[1]=n+1+j-i\in \C_{A_n}$. 
\end{enumerate}
Note that the above morphisms $i\to j$ are of minimal length $|j-i|$. 
\begin{enumerate}
\item[(3)] Let $i\to j\to k$ be a fiber and cofiber sequence in $\C_{A_n}$ consisting of morphisms of minimal length between them. Then either $i+j[1]+k=n+1$ or $i+j[1]+k=2n+2$. 
\end{enumerate}
\end{proposition}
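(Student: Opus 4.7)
The plan for (1) and (2) is to realize each morphism in $\C_{A_n}$ as the image of an edge of a biCartesian rectangle in the extended Auslander--Reiten quiver of $\D^{\on{perf}}(A_n)$, as furnished by \Cref{cor:Anrectangles}, chosen so that one opposite corner lies in the added zero row. Under the exact quotient functor $\D^{\on{perf}}(A_n)\twoheadrightarrow \C_{A_n}$ of \Cref{prop:1-cluster_orbit}, which sends the AR-vertex $(i,x)$ to the indecomposable $x\in\C_{A_n}$ (see \Cref{rem:genmorphisms}), such a rectangle descends to a fiber--cofiber sequence in $\C_{A_n}$ whose cofiber can be read off from the remaining corner. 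Part (3) will then follow formally from (1) and (2).

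For (1) with $1\le i<j\le n$, lift $a_{j-1}\dots a_i\colon i\to j$ to the up-right path $(0,i)\to(0,j)$ in $\widetilde{\Pi_2(A_n)}$ and consider the rectangle in the extended AR quiver with corners $(0,i)$ (left), $(0,j)$ (top), $(i,0)$ (bottom, in the added zero row), and $(i,j-i)$ (right). By \Cref{cor:Anrectangles} this rectangle is biCartesian in $\D^{\on{perf}}(A_n)$, and since the bottom corner vanishes, it yields the fiber--cofiber sequence
\[ (0,i)\longrightarrow(0,j)\longrightarrow(i,j-i). \]
Its image in $\C_{A_n}$ identifies the cofiber of $i\to j$ as $j-i$.

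For (2) with $1\le j<i\le n$, lift $a_j^\dagger\dots a_{i-1}^\dagger\colon i\to j$ to the down-right path $(0,i)\to(i-j,j)$, and consider the rectangle with corners $(0,i)$ (left), $(0,n+1)$ (top, in the added zero row), $(i-j,j)$ (bottom), and $(i-j,n+1+j-i)$ (right). This is biCartesian with vanishing top corner, producing the fiber--cofiber sequence $(0,i)\to(i-j,j)\to(i-j,n+1+j-i)$. Its image in $\C_{A_n}$ shows that the cofiber of $i\to j$ is $n+1+j-i$, which equals $(i-j)[1]$ by \Cref{prop:shift} together with $\sigma(m)=n+1-m$ from \Cref{def:ADEinvolution}.

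For (3), since $i\to j\to k$ is a fiber--cofiber sequence, $k$ is the cofiber of $i\to j$. Parts (1) and (2) force $k=j-i$ if $i<j$ and $k=n+1+j-i$ if $i>j$; substituting $j[1]=n+1-j$ yields $i+j[1]+k=n+1$ in the first case and $i+j[1]+k=2n+2$ in the second. The case $i=j$ does not occur: an invertible endomorphism gives cofiber $0$, while a non-invertible one gives a non-indecomposable cofiber. No step here is genuinely difficult; the main care is in correctly identifying the four rectangle corners in each case and tracking the labels through the quotient functor.
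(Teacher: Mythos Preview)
Your proof is correct and follows the same approach as the paper: parts (1) and (2) are deduced from \Cref{cor:Anrectangles} by choosing rectangles with one corner in an added zero row, and part (3) follows by substituting $j[1]=n+1-j$. The paper's own proof is terser (it simply cites \Cref{cor:Anrectangles} without naming the rectangles), while you spell out the four corners explicitly; your choices are correct, though writing $(i,0)$ and $(0,n+1)$ for the zero objects in the added rows is a mild abuse of notation.
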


\begin{proof}
Parts (1) and (2) follow directly from \Cref{cor:Anrectangles}. For part (3), note that $j[1]=n+1-j$. Thus if $i<j$, then $k=j-i$ and $i+j[1]+k=i+n+1-j+(j-i)=n+1$. If $j>i$, then $k=n+1+j-i$ and thus $i+j[1]+k=2(n+1)$. 
\end{proof}

\subsection{Description of the suspension functor}\label{subsec:involution}

We fix a Dynkin quiver $\ADE$. The goal of this section is to prove the description given in \Cref{prop:shift} of the suspension functor of $\C_\ADE=\on{CoSing}(\Pi_2(\ADE))$ in terms of the involution $\sigma$ of the dg category $\Pi_2(\ADE)$ from \Cref{def:ADEinvolution}.  We do this by showing the compatibility of the automorphisms in the commutative square in \Cref{fig:squareofautomorphisms}.


\begin{definition}
We define the dg isomorphism $\tilde{\sigma}\colon \widetilde{\Pi_2(\ADE)} \simeq \widetilde{\Pi_2(\ADE)}$.

In type $\ADE=A_n$, labeling the arrows as in \Cref{def:ADEinvolution}, we set 
\[
\tilde{\sigma}((i,x))=(i+x,\sigma(x))\,,
\]
where $\sigma(x)=n+1-x$ as in \Cref{def:ADEinvolution}, and for all $1\leq j\leq n-1$, $1\leq x\leq n$, and $i\in \mathbb{Z}$
\begin{align*}
\tilde{\sigma}(a_{i,j})&=a_{i,j}^\dagger\\
\tilde{\sigma}(a_{i,j}^\dagger)&=a_{i+j+1,j}\\
\tilde{\sigma}(l_{i,x})& =-l_{i+x,\sigma(x)}\,.
\end{align*}

In type $\ADE=D_n$, we set 
\[
\tilde{\sigma}((i,x))=(i+n-1,\sigma(x))\,.
\]
For $n$ even, we set for $1\leq j\leq n-1$, $1\leq x\leq n$ and $i\in \mathbb{Z}$ 
\begin{align*} 
\tilde{\sigma}(a_{i,j})&= a_{i+n-1,j}\\
\tilde{\sigma}(a_{i,j}^\dagger)&= a_{i+n-1,j}^\dagger\\
\tilde{\sigma}(l_{i,x})&= l_{i+n-1,x}\,.\\
\end{align*}
For $n$ odd, we set for $1\leq j\leq n-1$, $1\leq x\leq n$ and $i\in \mathbb{Z}$ 
\begin{align*} 
\tilde{\sigma}(a_{i,j})&=\begin{cases} a_{i+n-1,j}& j\not = n-2,n-1\\  a_{i+n-1,n-1} & j=n-2\\ a_{i+n-1,n-2}& j=n-1 \end{cases}\\
\tilde{\sigma}(a_{i,j}^\dagger)&=\begin{cases} a_{i+n-1,j}^\dagger& j\not = n-2,n-1\\  a^\dagger_{i+n-1,n-1}& j=n-2\\ a_{i+n-1,n-2}^\dagger& j=n-1 \end{cases}\\
\tilde{\sigma}(l_{i,x})&= l_{i+n-1,\sigma(x)}\,.\\
\end{align*}

In type $\ADE=E_6$, we set
\[
\tilde{\sigma}((i,x))=\begin{cases} (i+3+x,\sigma(x)) & i\not=6 \\ (i+6,x) &i=6\,. \end{cases}
\]
We further set for $1\leq j\leq 5$, $1\leq x\leq 6$ and $i \in \mathbb{Z}$
\begin{align*}
\tilde{\sigma}(a_{i,j})&=\begin{cases} a_{i+3+j,5-j}^\dagger & j\not = 5\\ a_{i+6,5} & j=5\end{cases}\\
\tilde{\sigma}(a_{i,j}^\dagger)&=\begin{cases} a_{i+3+j+1,5-j} & j\not = 5\\ -a_{i+6,5}^\dagger & j=5\end{cases}\\
\tilde{\sigma}(l_{i,x})&=-l_{\tilde{\sigma}((i,x))}\,.
\end{align*}

In type $\ADE=E_7$, we have $\sigma(x)=x$ for all vertices $x$ and we set
\[
\tilde{\sigma}((i,x))=(i+9,x)\,.
\]
For $x$ a vertex and $a$ an arrow in $E_7$ and $i\in \mathbb{Z}$, we set 
\begin{align*}
\tilde{\sigma}(a_i)&=a_{i+9}\\
\tilde{\sigma}(a_i^\dagger)&=a_{i+9}^\dagger\\
\tilde{\sigma}(l_{i,x})&=l_{i+9,x}\,.
\end{align*}

In type $\ADE=E_8$, we also have $\sigma(x)=x$ for all vertices $x$ and set 
\[
\tilde{\sigma}((i,x))=(i+15,x)\,.
\]
For $x$ a vertex and $a$ an arrow in $E_8$ and $i\in \mathbb{Z}$, we set 
\begin{align*}
\tilde{\sigma}(a_i)&=a_{i+15}\\
\tilde{\sigma}(a_i^\dagger)&=a_{i+15}^\dagger\\
\tilde{\sigma}(l_{i,x})&=l_{i+15,x}\,.
\end{align*}
\end{definition}


Equipping the endofunctor $\tilde{\sigma}$ with a $\mathbb{Z}$-equivariant structure amounts to specifying a natural transformation $\tilde{\sigma}\circ T \simeq T\circ \tilde{\sigma}$. Since these two dg functors strictly commute, two natural choices are the trivial identification $\tilde{\sigma}\circ T = T\circ \tilde{\sigma}$ and the negative of the trivial identification. We choose the trivial identifications in types $\ADE=A_n,E_6$ and the negative of the trivial identification in types $\ADE=D_n,E_7,E_8$, and in the following understand $\tilde{\sigma}$ as a $\mathbb{Z}$-equivariant dg functor.


\begin{lemma}\label{lem:tilde_sigma_induces_sigma}
The automorphism of the dg orbit category 
\[ \Pi_2(\ADE)\simeq \widetilde{\Pi_2(\ADE)}/T\] 
induced by the $\mathbb{Z}$-equivariant dg functor $\tilde{\sigma}$ is given by the involution $\sigma$.
\end{lemma}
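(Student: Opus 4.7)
The plan is a direct, generator-by-generator verification. By \Cref{lem:orbit_cat_induced_functor} applied to the $\mathbb{Z}$-equivariant dg endofunctor $\tilde{\sigma}\colon \widetilde{\Pi_2(\ADE)}\to \widetilde{\Pi_2(\ADE)}$, we obtain an induced dg endofunctor of the orbit dg category $\widetilde{\Pi_2(\ADE)}/T$. Under the identification $F^{\on{dg}}\colon\widetilde{\Pi_2(\ADE)}/T \xrightarrow{\simeq} \Pi_2(\ADE)$ recorded above \Cref{fig:squareofautomorphisms}, this gives a dg endofunctor of $\Pi_2(\ADE)$. Since $\Pi_2(\ADE)$ is semi-free, this endofunctor is determined by its values on the generating objects $x\in \ADE_0$ and the generating morphisms $a, a^\dagger, l_x$, so it suffices to match these values with those dictated by \Cref{def:ADEinvolution}.

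First I would handle the object level: in every Dynkin type, $\tilde{\sigma}((i,x))=(i',\sigma(x))$ for some integer $i'$ depending on $(i,x)$, so under the quotient $F^{\on{dg}}$ the induced map is $x\mapsto \sigma(x)$, matching the first line of \Cref{def:ADEinvolution}. Next I would treat the morphisms: for each generator $g$ of $\Pi_2(\ADE)$ I choose a preferred lift $\tilde g\in \widetilde{\Pi_2(\ADE)}$ (e.g.~$a_{0,j}$ for $a_j$, $a_{0,j}^\dagger$ for $a_j^\dagger$, and $l_{0,x}$ for $l_x$), apply $\tilde{\sigma}$ to obtain a morphism of $\widetilde{\Pi_2(\ADE)}$ between objects lying over $\sigma(\text{source})$ and $\sigma(\text{target})$, and then push down via $F^{\on{dg}}$. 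A direct inspection of the formulas in each of the cases $A_n$, $D_n$ (with the sub-cases $n$ even and $n$ odd), $E_6$, $E_7$ and $E_8$ shows that the image equals the value prescribed by \Cref{def:ADEinvolution}.

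The only delicate point is sign bookkeeping. Signs enter in two ways: explicitly in the definition of $\tilde{\sigma}$ (e.g.~the coefficient $-1$ in $\tilde{\sigma}(l_{i,x})$ in type $A_n$ and $E_6$, or the coefficient $-1$ in $\tilde{\sigma}(a_{i,5}^\dagger)$ in type $E_6$), and implicitly via the chosen natural transformation $\eta\colon \tilde{\sigma}\circ T \Rightarrow T\circ \tilde{\sigma}$ realizing the $\mathbb{Z}$-equivariant structure. Replacing $\eta$ by $-\eta$ modifies the induced endofunctor of the orbit category by multiplication by $-1$ on morphisms that cross a $T$-orbit, which is exactly what introduces the extra signs on $a^\dagger$ and $l_x$ observed in \Cref{def:ADEinvolution} for types $D_n$, $E_7$, $E_8$. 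The prescription at the end of \Cref{def:tilde_Pi2} (trivial identification in types $A_n, E_6$; its negative in types $D_n, E_7, E_8$) is tailored precisely so that these signs match.

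I expect the type-by-type verification of the morphism formulas to be the main (but essentially routine) obstacle: one must carefully track, generator by generator, the combined effect of the explicit signs in $\tilde{\sigma}$ and the sign choice in $\eta$, and check that these combine to produce exactly the signs $-l_{\sigma(x)}$, $-a_i^\dagger$, etc., that appear in \Cref{def:ADEinvolution}. No further conceptual input is needed beyond the universal property of the orbit category and the formulas already written down.
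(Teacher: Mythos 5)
Your proposal is correct and follows essentially the same route as the paper: the paper also pushes the generators down through the dg orbit category (citing the sign rules of Keller's orbit construction rather than \Cref{lem:orbit_cat_induced_functor}), notes that in types $A_n,E_6$ the trivial equivariant structure makes the identification immediate, and in types $D_n,E_7,E_8$ performs exactly the sample computation showing that the $-\on{id}$ in the equivariant structure produces the sign $-1$ on $a^\dagger$ and $l_x$ (morphisms crossing one $T$-step) while leaving $a$ unchanged. Your sign-bookkeeping explanation matches the paper's argument, so no gap.
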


\begin{proof}
We denote by $\sigma'\colon \Pi_2(\ADE)\simeq \widetilde{\Pi_2(\ADE)}/T \to \widetilde{\Pi_2(\ADE)}/T \simeq \Pi_2(\ADE)$ the dg functor induced by $\tilde{\sigma}$. 
The (sign) rules for determining $\sigma'$ from $\tilde{\sigma}$ can be found for instance in \cite[Section 2.1]{Kel05b}. 

In the types $\ADE=A_n,E_6$, where $\tilde{\sigma}$ is equipped with the trivial equivariant structure, the identification $\sigma'= \sigma$ is immediate. 

In the types $\ADE=D_n,E_7,E_8$, $\tilde{\sigma}$ is equipped with the negative trivial equivariant structure. Let $a\colon x\to y$ in $\ADE$. Then $a_0^\dagger\colon (0,y)\to (1,x)=T(0,x)$ and $F(a_0^\dagger)=a^\dagger\in \Pi_2(\ADE)$. The dg functor $\sigma'$ maps $a^\dagger$ to the composite
\[
F\circ \tilde{\sigma}(0,y)\xrightarrow{F\circ \tilde{\sigma}(a_0^\dagger)}F\circ \tilde{\sigma}(1,x)=F\circ \tilde{\sigma}\circ T(0,x)\xrightarrow{-\on{id}}F\circ T\circ \tilde{\sigma}(0,x)
\]
which amounts to $-a^\dagger$. Similar computations show that $\sigma'(a)=F(\tilde{\sigma}(a_0))$ and $\sigma'(l_i)=-l_{\sigma(i)}$, and thus $\sigma'\simeq \sigma$. 
\end{proof}

\begin{remark}\label{rem:equivariantstructureonshift}
\begin{enumerate}[(1)]
\item The $\mathbb{Z}$-action arising from the autoequivalence $\D^{\on{perf}}(T)$ of $\D^{\on{perf}}(\widetilde{\Pi_2(\ADE)})$ descends to a $\mathbb{Z}$-action on the cosingularity category $\D^{\on{perf}}(\ADE)\simeq \on{CoSing}(\widetilde{\Pi_2(\ADE)})$.  Furthermore, since $\on{CoSing}(T)\simeq U[-1]$, with $U$ the Serre functor, this $\mathbb{Z}$-action coincides with $\mathbb{Z}$-action considered in \Cref{prop:1-cluster_orbit}.
\item The suspension functor $[1]\colon \D^{\on{perf}}(\ADE)\to \D^{\on{perf}}(\ADE)$ has a canonical $\mathbb{Z}$-equivariant structure:\\
For any exact functor $f$ between stable $\infty$-categories there is a canonical equivalence $[1]\circ f\simeq f\circ [1]$ arising from applying $f$ to the fiber and cofiber sequence of endofunctors $\on{id}\to 0 \to [1]$. Choosing $f=\on{CoSing}(T)$, this induces the $\mathbb{Z}$-equivariant structure on $[1]$.\\
With its canonical $\mathbb{Z}$-equivariant structure, passing to the colimit over $B\mathbb{Z}$, the suspension of $\D^{\on{perf}}(\ADE)$ induces the suspension functor of $\D^{\on{perf}}(\ADE)/U[-1]\simeq \C_\ADE$. 
\end{enumerate}
\end{remark}

\begin{lemma}\label{lem:Zequivariantequivalence}
Denote by $\on{CoSing}(\tilde{\sigma})$ the autoequivalence of $\D^{\on{perf}}(\ADE)$ induced by the autoequivalence $\D(\tilde{\sigma})$ of $\D(\widetilde{\Pi_2(\ADE)})$. There exists a $\mathbb{Z}$-equivariant natural equivalence $\on{CoSing}(\tilde{\sigma})\simeq [1]$.
\end{lemma}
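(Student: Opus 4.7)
The plan is to first establish the non-equivariant equivalence $\on{CoSing}(\tilde{\sigma}) \simeq [1]$ by comparing the two autoequivalences on generators, and then promote it to a $\mathbb{Z}$-equivariant equivalence.

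I would begin on the stable generators $\{(i,x)\}$ of $\D^{\on{perf}}(\ADE) \simeq \on{CoSing}(\widetilde{\Pi_2(\ADE)})$. Under the localization $\pi$ of \Cref{prop:derivedcategoryasCoSing}, the representable $(i,x)$ is sent to the indecomposable at position $(i,x)$ in the Auslander--Reiten quiver $\mathbb{Z}\ADE$ of $\D^{\on{perf}}(\ADE)$. Using the biCartesian squares from the mesh relations (\Cref{cor:Anrectangles} and its type $D, E$ analogs), one computes the suspension $[1]$ on these indecomposables explicitly. In type $A_n$ one obtains $(i,x)[1] = (i+x, n+1-x)$, precisely matching the formula defining $\tilde{\sigma}$ on objects; an analogous case-by-case computation in types $D_n$ and $E_{6,7,8}$ confirms agreement of $\tilde{\sigma}$ and $[1]$ on objects throughout.

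For the agreement on morphisms, both autoequivalences preserve the mesh arrows in the AR quiver: for $\D^{\on{perf}}(\tilde{\sigma})$ by definition of $\tilde{\sigma}$, and for $[1]$ by the AR structure. The sign-bearing recipe for $\tilde{\sigma}$ on the generators $a_{i,j}, a_{i,j}^\dagger$ is designed so that the induced action on the generating morphisms in $\D^{\on{perf}}(\ADE)$ matches $[1]$; the signs are in fact forced by $\tilde\sigma \circ d = d \circ \tilde\sigma$, which is the compatibility with the mesh relations recorded in the definition of $\tilde\sigma$. Since the representables together with these arrows stably generate $\D^{\on{perf}}(\ADE)$, this yields a natural equivalence $\on{CoSing}(\tilde{\sigma}) \simeq [1]$ of $k$-linear endofunctors.

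For the $\mathbb{Z}$-equivariant upgrade, I would compare the equivariance data on both sides. The functor $\tilde{\sigma}$ carries the equivariant structure chosen just before \Cref{lem:tilde_sigma_induces_sigma} (trivial in types $A_n, E_6$, and the negation of the trivial in types $D_n, E_7, E_8$), while $[1]$ carries the canonical equivariant structure of \Cref{rem:equivariantstructureonshift}(2). I would verify on the generating objects $(i,x)$ that the natural equivalence above intertwines these two structures: the strict commutation $\tilde\sigma \circ T = T \circ \tilde\sigma$, equipped with the chosen sign, produces an interchange $\on{CoSing}(\tilde{\sigma}) \circ T \simeq T \circ \on{CoSing}(\tilde{\sigma})$ whose sign is fixed precisely so that the induced autoequivalence on the orbit category $\C_\ADE$ equals the suspension rather than its negative; this is the same sign condition already used in \Cref{lem:tilde_sigma_induces_sigma} to recover $\sigma$ (and not $-\sigma$) as the induced dg automorphism of $\Pi_2(\ADE)$. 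The main obstacle is the Dynkin-type case analysis with signs, especially for $D_{2n-1}$ and $E_6$ where $\sigma$ acts by a partial reflection on the vertex set, and in the $D_n, E_7, E_8$ families where the equivariance involves the nontrivial interchange; beyond sign bookkeeping, the verifications reduce to finite combinatorial checks on the AR quiver and the explicit formula for $\tilde\sigma$.
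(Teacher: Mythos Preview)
Your non-equivariant argument is essentially the paper's: both observe that $\on{CoSing}(\tilde\sigma)$ and $[1]$ are determined by their action on the compact generator $k\ADE$ and its generating morphisms, and these agree by inspection of the Auslander--Reiten quiver.

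The equivariant upgrade, however, has a genuine gap. You write that the sign in the equivariant structure on $\tilde\sigma$ is ``fixed precisely so that the induced autoequivalence on the orbit category $\C_\ADE$ equals the suspension rather than its negative'', and that this is ``the same sign condition already used in \Cref{lem:tilde_sigma_induces_sigma} to recover $\sigma$''. But \Cref{lem:tilde_sigma_induces_sigma} only tells you that the chosen sign yields $\sigma$ on $\Pi_2(\ADE)$; it does not tell you that $\sigma$ induces $[1]$ on $\C_\ADE$. That last step is precisely \Cref{prop:shift}, which is proved \emph{using} the present lemma. So your argument is circular: you are invoking the conclusion of the lemma to establish the lemma.

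The paper avoids this by a different, indirect route. It first observes that $\on{HH}^0(\D(\ADE))=k$, so there are exactly two $\mathbb{Z}$-equivariant structures on $\on{CoSing}(\tilde\sigma)$ (the trivial identification $T\circ\tilde\sigma=\tilde\sigma\circ T$ and its negative), and exactly one of them is $\mathbb{Z}$-equivariantly equivalent to $[1]$. It then rules out the wrong one by a case analysis in the triangulated homotopy category: in types $D_{2n},E_7,E_8$ the trivial structure would induce the identity on $\C_\ADE$, which cannot be $[1]$ by the sign in the rotation axiom; in type $A_n$ the negative structure would produce two incompatible distinguished triangles in $\on{ho}\C_\ADE$; in types $D_{2n+1},E_6$ one composes with the obvious quiver involution $\sigma'$ to reduce to a power of the translation, and argues similarly. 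The key idea you are missing is this reduction to a binary choice via $\on{HH}^0$, followed by elimination using concrete distinguished triangles --- there is no direct ``check on objects'' that the equivariant structures match.
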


\begin{proof}
The functor $\D^{\on{perf}}(\tilde{\sigma})$ clearly preserves finite modules, and hence descends to an autoequivalence of $\on{CoSing}(\widetilde{\Pi_2(\ADE)})\simeq \D^{\on{perf}}(\ADE)$. To abstractly conclude that this is the suspension functor, it suffices to observe that $\on{CoSing}(\D(\tilde{\sigma}))$ is determined by its action on the compact generator $k\ADE\in \D^{\on{perf}}(\ADE)$ (this including the action on its endomorphisms). As we can see by inspection of the Auslander--Reiten quiver: $\on{CoSing}(\tilde{\sigma})$ maps $k\ADE$ to $k\ADE[1]$ and the generating morphisms (the $a$'s) to the generating morphisms in $k\ADE[1]$ (without signs). 

The $0$-th Hochschild cohomology $\on{HH}^0(\D(\ADE))$ of $\D(\ADE)$ is given by the center $k$ of $k\ADE$. It describes the endomorphisms of the endofunctor $\on{id}_{\D(\ADE)}$, or equivalently the endomorpisms of any autoequivalence of $\D(\ADE)$. The $\mathbb{Z}$-equivariant structure on $\on{CoSing}(\tilde{\sigma})$ amounts to an equivalence $T\circ \tilde{\sigma}\simeq \tilde{\sigma}\circ T$. There thus exist exactly two (unless $\on{char}(k)=2$) $\mathbb{Z}$-equivariant structures on $\on{CoSing}(\tilde{\sigma})$, which give rise to an involution of $\C_\ADE=\on{CoSing}(\Pi_2(\ADE))$, which are either the trivial or the negative of the trivial $\mathbb{Z}$-equivariant structure. If $\on{char}(k)=2$, these coincide, so we assume that $\on{char}(k)\neq 2$ in the following. \\

{\bf Case 1: $\ADE=A_n$.} Suppose that the negative of the trivial equivariant structure on $\on{CoSing}(\tilde{\sigma})$ gives rise to the suspension functor $[1]$ on $\C_\ADE$. The biCartesian squares in the Auslander--Reiten quiver, see \Cref{cor:Anrectangles}, give by \cite[Lem.~1.1.2.13]{HA} rise to the distinguished triangles
\[
1\xrightarrow{a_1} 2\xrightarrow{a_1^\dagger} 1\xrightarrow{{\bf a}} n
\]
and 
\begin{equation}\label{eq:2ndtriangle}
n\xrightarrow{a_{n-1}^\dagger} n-1\xrightarrow{a_{n-1}} n\xrightarrow{-{\bf a}^\dagger} 1
\end{equation}
in the triangulated category $\on{ho}\D^{\on{perf}}(\ADE)$, and thus also in $\on{ho}\C_\ADE$, with ${\bf a}$ a composite of morphisms arising from morphisms in $\ADE$ and ${\bf a}^\dagger$ a composite of dual morphisms. A crucial point here is the minus sign in the second triangle, arising from the reversed orientation of the biCartesian squares. 
We also remark that we may (and do) chose the suspension functor of $\D^{\on{perf}}(\ADE)$ such that for these biCartesian squares, the pasted biCartesian square, expressing an equivalence $[1]((0,x))\simeq (1,x)$, with $x=1,n$, expresses the identity. 

Applying the functor induced by $\on{CoSing}(\tilde{\sigma})$ with the negative trivial equivariant structure to the first biCartesian square reverses the orientation of the biCartesian square (since $\on{CoSing}(\tilde{\sigma})$ acts as a glide reflection on the Auslander--Reiten quiver) and also reverses the signs of the dual morphisms $a^\dagger$ in $\on{ho}\C_\ADE$ due to the choice of equivariant structure. There is thus a further distinguished triangle
\[
n\xrightarrow{a_{n-1}^\dagger} n-1\xrightarrow{a_{n-1}} n\xrightarrow{{\bf a}^\dagger} 1
\]
in $\on{ho}\C_\ADE$. This contradicts the existence of the triangle \eqref{eq:2ndtriangle} by the axioms of a triangulated category. Hence it must be the trivial equivariant structure on $\on{CoSing}(\tilde{\sigma})$ which makes it $\mathbb{Z}$-equivariantly equivalent to the functor $[1]$ on $\D(\ADE)$.\\

{\bf Case 2: $\ADE=D_{2n},E_7,E_8$.} The autoequivalence of $\C_\ADE$ induced by $\on{CoSing}(\tilde{\sigma})$ wit the trivial equivariant structure is the identity. Note that applying three times the rotation axioms of a triangulated category, if $A\xrightarrow{f}B\xrightarrow{g}C\xrightarrow{h}A[1]$ is a distinguished triangle, then so is $A[1]\xrightarrow{f[1]}B[1]\xrightarrow{g[1]}C[1]\xrightarrow{-h[1]}A[2]$. Therefore, the identity cannot coincide with the suspension functor $[1]$ (since $\on{char}(k)\neq 2$). Thus, with the negative trivial equivariant structure, $\on{CoSing}(\tilde{\sigma})$ is $\mathbb{Z}$-equivariantly equivalent to $[1]$.\\

{\bf Case 3: $\ADE=D_{2n+1},E_6$.} In these types, there is an apparent involution $\sigma'$ of the quiver $\ADE$, with corresponding autoequivalence $\D^{\on{perf}}(\sigma')$ of $\D^{\on{perf}}(\ADE)$, with the property that $\on{CoSing}(\tilde{\sigma})\circ \D^{\on{perf}}(\sigma')$ is equivalent to a power of the translation $\on{CoSing}(T)$.  Note that $\on{ho}\D^{\on{perf}}(\sigma')$ is equivalent to the derived functor of the exact autoequivalence of the abelian module $1$-category $\on{mod}(\ADE)$ and thus describes a triangle functor of $\on{ho}\D^{\on{perf}}(\ADE)$ whose corresponding identification $\on{ho}\D^{\on{perf}}(\sigma')\circ [1]=[1]\circ \on{ho}\D^{\on{perf}}(\sigma')$ is trivial. Equivalently, this means that $\on{ho}\D^{\on{perf}}(\sigma')$ preserves distinguished triangles (without any appearing signs). Equipping $\D^{\on{perf}}(\sigma')$ with the trivial $\mathbb{Z}$-equivariant structure, we find that the induced autoequivalence of $\on{ho}\C_\ADE$ preserves the distinguished triangles in the image of $\on{ho}\D^{\on{perf}}(\ADE)\to \on{ho}\C_\ADE$. 
Any power of $\on{CoSing}(T)$, with the trivial $\mathbb{Z}$-equivariant structure, induces the identity on $\C_\ADE$. Thus, with the trivial equivariant structure, the functor $\D^{\on{perf}}(\tilde{\sigma})$ cannot induce the suspension $[1]$ on $\C_\ADE$. 
\end{proof}

\begin{proof}[Proof of \Cref{prop:shift}.]
Combine \Cref{rem:equivariantstructureonshift} and \Cref{lem:orbit_cat_induced_functor,lem:tilde_sigma_induces_sigma,lem:Zequivariantequivalence}.
\end{proof}

\section{\texorpdfstring{$3$}{3}-Calabi--Yau perverse schobers for higher Teichm\"uller theory} 

\subsection{Marked surfaces and ideal triangulations}

\begin{definition}
A marked surface consists of a compact oriented topological surface ${\bf S}$ together with a finite subset of marked points $M\subset \partial {\bf S}$ in the boundary, such that each boundary component contains at least one marked point. 
\end{definition}

\begin{definition}
Let ${\bf S}$ be a marked surface. 
\begin{enumerate}[(1)]
\item An arc in ${\bf S}$ consists of an embedded curve $[0,1]\to {\bf S}$ with endpoints in $\partial {\bf S}\backslash M$, considered up to homotopies relative $\partial {\bf S}\backslash M$. Arcs are not allowed to be contractible. An arc is called a boundary arc if it cuts out a monogon.  
\item Two arcs in ${\bf S}$ are called compatible if they have representatives that do not intersect each other.
\item An ideal triangulation of ${\bf S}$ consists of a maximal collection of compatible arcs. Note that every ideal triangulation contains all boundary arcs.
\end{enumerate}
\end{definition}

Given a graph $\rgraph$, we denote by $\on{Exit}(\rgraph)$ its exit path category, which is defined as (the nerve of) the $1$-category with objects the vertices and edges of $\rgraph$ and morphisms going from vertices to edges by incidence. The morphisms in $\on{Exit}(\rgraph)$ can thus be identified with the halfedges of $\rgraph$.

\begin{definition}
Given a graph $\rgraph$ and a marked surface ${\bf S}$, together with an embedding of the geometric realization $\iota\colon |\on{Exit}(\rgraph)|\subset {\bf S}\backslash M$, we call $\rgraph$ a spanning graph of ${\bf S}$ if
\begin{itemize}
\item the embedding $\iota$ is a homotopy equivalence and
\item $\iota$ restricts to a homotopy equivalence $\iota^{-1}(\partial {\bf S}\backslash M)\to \partial {\bf S}\backslash M$.
\end{itemize}
\end{definition}

A ribbon graph refers to a graph $\rgraph$ together with a cyclic orientation on the set of halfedges incident to each vertex of $\rgraph$. Any spanning graph of a marked surface inherits a ribbon graph structure, via the counterclockwise cyclic ordering of halfedges.

\begin{remark}
Ideal triangulation of a marked surface ${\bf S}$ are in bijection with trivalent spanning graph of ${\bf S}$ (considered up to homotopy), by passing to the dual graph of the triangulation.
\end{remark}

\subsection{Parametrized perverse schobers}

We briefly recall the notion of a perverse schober on a marked surface parametrized by a ribbon graph, see \cite[Section 3]{CHQ23}, \cite[Sections 3,4]{Chr22}. The notion of a perverse schober as a categorified perverse sheaf was proposed by Kapranov--Schechtman in \cite{KS14}.

For $n\in\mathbb{N}_{\geq 1}$, we let $\rgraph_{n}$ be the ribbon graph with a single vertex $v$ and $n$ incident external edges. We call $\rgraph_n$ the $n$-spider. 

\begin{definition}\label{def:schobernspider}
Let $n\geq 1$. A perverse schober on the $n$-spider consists of the following data: 
\begin{enumerate}
\item[(1)] If $n=1$, a spherical adjunction between stable $\infty$-categories
\[ F\colon \mathcal{V}\longleftrightarrow \mathcal{N}\cocolon G\,,\]
meaning an adjunction whose twist functor $T_{\mathcal{V}}=\on{cof}(\on{id}_{\mathcal{V}}\xrightarrow{\on{unit}}GF)\in \on{Fun}(\mathcal{V},\mathcal{V})$ and cotwist functor $T_{\mathcal{N}}=\on{fib}(FG\xrightarrow{\on{counit}}\on{id}_{\mathcal{N}})\in \on{Fun}(\mathcal{N},\mathcal{N})$ are autoequivalences. We also call the functors $F,G$ spherical functors, as in \cite{AL17}. 
\item[(2)] If $n\geq 2$, a collection of adjunctions between stable $\infty$-categories
\[ (F_i\colon \mathcal{V}^n\longleftrightarrow \mathcal{N}_i\cocolon G_i)_{i\in \mathbb{Z}/n\mathbb{Z}}\]
satisfying that
\begin{enumerate}
    \item $G_i$ is fully faithful, i.e.~$F_iG_i\simeq \on{id}_{\mathcal{N}_i}$ via the counit,
    \item $F_{i}\circ G_{i+1}$ is an equivalence of $\infty$-categories,
    \item $F_i\circ G_j\simeq 0$ if $j\neq i,i+1$,
    \item $G_i$ admits a right adjoint $G_i^R$ and $F_i$ admits a left adjoint $F_i^L$ and
    \item $\on{fib}(G_{i+1}^R)=\on{fib}(F_{i})$ as full subcategories of $\mathcal{V}^n$.
\end{enumerate}
\end{enumerate}
We will consider a collection of functors $(F_i\colon \mathcal{V}^n\rightarrow \mathcal{N}_i)_{i\in \mathbb{Z}/n}$ as a perverse schober on the $n$-spider if there exist adjunctions $(F_i\dashv F_i^R)_{i\in \mathbb{Z}/n\mathbb{Z}}$ defining a perverse schober on the $n$-spider. 
\end{definition}

We remark that conditions (d) and (e) are equivalent to the assertion that the adjunction $\prod_{i=1}^n F_i\colon \V^n\leftrightarrow \prod_{i=1}^n \N_i\noloc G_i$ is spherical, see \cite[Lem.~3.14]{Chr25b}. 

The exit path category of the $n$-spider consists of $n+1$ objects $v,e_1,\dots,e_n$ and $n$ morphisms $v\to e_i$. A functor $\on{Exit}(\rgraph_n)\to \on{LinCat}_R$ thus amounts to a collection of functors $(F_i)_{i\in \mathbb{Z}/n}$ as in \Cref{def:schobernspider}.

\begin{definition}\label{def:schober}
Let $\rgraph$ be a ribbon graph. A functor $\mathcal{F}\colon \on{Exit}(\rgraph)\rightarrow \on{St}$ is called a $\rgraph$-parametrized perverse schober if for each vertex $v$ of $\rgraph$, the restriction of $\mathcal{F}$ to $\on{Exit}(\rgraph)_{v/}$ determines a perverse schober parametrized by the $n$-spider in the sense of \Cref{def:schobernspider}. 
\end{definition}

Most perverse schober considered in this paper will take values in presentable (and thus large) $\infty$-categories. In this case, one simply replaces the target in \Cref{def:schober} by $\mathcal{P}r^L_{\on{St}}$.

We will also need the following explicit local description of parametrized perverse schobers, derived from the relative $S_\bullet$-construction of the underlying spherical functor, see \cite{Dyc21,Chr22}. The equivalence of this model with \Cref{def:schobernspider} is shown in \cite{CHQ23}.

\begin{definition}[The local model for perverse schobers on the $n$-spider]\label{def:locmodel_schober}
Fix $n\geq 1$ and let $F\colon \V\to \N$ be a  spherical functor. 
\begin{itemize}
\item We define $\V^n_F=\V\overset{\rightarrow}{\times_F} \on{Fun}(\Delta^{n-2},\N)$ as the lax limit of $\V$ and $\on{Fun}(\Delta^{n-2},\N)$ along the functor $\V\xrightarrow{F} \N\xrightarrow{(\Delta^{\{0\}}\subset \Delta^{n-2})_*} \on{Fun}(\Delta^{n-2},\N)$, where the second functor is right Kan extension. $\V^n$ can also be concretely be described as the $\infty$-category of sections of the Grothendieck construction of the diagram $\Delta^{n-1}\to \on{St}$ of the form
\[ \V\xrightarrow{F}\N\xrightarrow{\on{id}}\dots \xrightarrow{\on{id}} \N\,.\]
Objects of $\V^n_F$ can thus be identified with diagrams $A\to B_1\to \dots B_{n-1}$, with $A\in \V$ and $B_1,\dots,B_{n-1}\in \N$.  
\item We define the functor $\varrho_1\colon \V^n_F\to \N$ as the restriction functor to the $(n-2)$-th (i.e.~last) component of $\N$ from the lax limit  cone. Thus $\varrho_1$ maps $A\to B_1\to \dots B_{n-1}$ to $B_{n-1}$. The functor $\varrho_1$ admits all repeated left and right adjoints, see \cite{Chr22}. 
\item For $2\leq i\leq n$, we recursively define $\varrho_i=(\varrho_{i-1}^{L})^{L}$ as the doubly left adjoint of $\varrho_{i-1}$. Concretely, we can describe $\varrho_i$ on objects as 
\[
\varrho_i(A\to B_1\to \dots \to B_{n-1})\simeq \begin{cases} \on{fib}(B_{n-i}\to B_{n-i+1})[i-1]& i\not = n\\ \on{fib}(F(A)\to B_1)[n-1] & i=n\,. \end{cases}
\]
\end{itemize} 
\end{definition}

\begin{remark}\label{rem:underlyingsphericalfunctor}
\begin{enumerate}[(1)]
\item The values of a $\rgraph$-parametrized perverse schober $\mathcal{F}$ at any two edges of $\rgraph$ are equivalent. We call the equivalence class of $\mathcal{F}(e)$ for any choice of edge $e$ of $\rgraph$ the generic stalk of $\mathcal{F}$ and denote it by $\N$.
\item We call the spherical functor $F\colon \V\to \N$ appearing in the local model for a perverse schober on the $n$-spider in \Cref{def:locmodel_schober} the spherical functor underlying the perverse schober at the vertex. This spherical functor is unique in an appropriate sense. If $\V\not \simeq 0$, we call the vertex a singularity of $\mathcal{F}$.  Note that the spherical functor underlying a non-singular vertex is given by $F\colon 0\to \N$ and thus $\V^n_F\simeq \on{Fun}(\Delta^{n-2},\N)$.
\item A $\rgraph$-parametrized perverse schober without singularities is called non-singular or also locally constant.
\end{enumerate}
\end{remark}

\begin{definition}\label{def:sections}
Let $\mathcal{F}$ be a $\rgraph$-parametrized perverse schober.
We define the stable $\infty$-category of global sections $\glsec(\rgraph,\mathcal{F})\coloneqq \on{lim}(\mathcal{F})$  of $\mathcal{F}$ as the limit of $\mathcal{F}$. 
\end{definition}


Finally, we discuss the notions of transport and monodromy of parametrized perverse schobers. We fix a marked surface with a spanning ribbon graph $\rgraph$ and a $\rgraph$-parametrized perverse schober $\mathcal{F}$.

\begin{definition}
Let $\mathcal{F}$ be a $\rgraph$-parametrized perverse schober. Let $\gamma\colon [0,1]\to {\bf S}\backslash (M\cup \rgraph_0)$ be any curve with endpoints on edges $\gamma(0)\in e_0, \gamma(1)\in e_1$ of $\rgraph$, considered up to homotopies that move the endpoints at most on these edges. The transport of $\mathcal{F}$ is along $\gamma$ is given by the equivalence $\mathcal{F}^{\rightarrow}(\gamma)\colon \mathcal{F}(e_0)\to \mathcal{F}(e_1)$ obtained as follows. We note that one readily checks this equivalence to be well-defined up to natural equivalence.
\begin{itemize}
\item Suppose that $\rgraph$ is the $n$-spider with edges $e_1,\dots,e_n$. Suppose that $\gamma$ goes one step counterclockwise, starting at $e_i$ and ending at $e_{i+1}$. If $n\geq 2$, we set
\[ \mathcal{F}^{\rightarrow}(\gamma)\coloneqq \mathcal{F}(v\to e_{i+1}) \circ \mathcal{F}(v\to e_i)^L \colon \mathcal{F}(e_i)\longrightarrow \mathcal{F}(e_{i+1})\,.\]
If $n=1$, we set $\mathcal{F}^{\rightarrow}(\gamma)=T_{\mathcal{F}(e_1)}^{-1}$ to be the inverse cotwist of the spherical adjunction $\mathcal{F}(v\to e_1)\dashv \mathcal{F}(v\to e_1)^R$.\footnote{Note that this inverse cotwist is equivalent to the twist of the spherical adjunction $\mathcal{F}(v\to e_1)^L\dashv \mathcal{F}(v\to e_1)$} 

Similarly, if $\gamma$ goes one step clockwise, starting at $e_{i+1}$ and ending at $e_i$, and $n\geq 2$ we set 
\[ \mathcal{F}^{\rightarrow}(\gamma)\coloneqq \mathcal{F}(v\to e_{i}) \circ \mathcal{F}(v\to e_{i+1})^R\colon  \mathcal{F}(e_{i+1})\longrightarrow \mathcal{F}(e_{i})\,.\]
If $n=1$, we set $\mathcal{F}^{\rightarrow}(\gamma)=T_{\mathcal{F}(e_1)}$ to be the cotwist of $\mathcal{F}(v\to e_1)\dashv \mathcal{F}(v\to e_1)^R$.
\item Suppose again that $\rgraph$ is the $n$-spider. Then we can obtain $\gamma$ as the composite of $m\geq 0$ minimal curves $\delta_1,\dots,\delta_m$ which each go one step clockwise or counterclockwise, as before, and set $\mathcal{F}^{\rightarrow}(\gamma)=\mathcal{F}^{\rightarrow}(\delta_m)\circ \dots \circ \mathcal{F}^{\rightarrow}(\delta_1)$. 
\item In general, we can obtain $\gamma$ as the composite of smaller curves $\delta_1,\dots,\delta_m$, each contained in a disc spanned by the $n$-spider at a vertex of $\rgraph$, and set $\mathcal{F}^{\rightarrow}(\gamma)=\mathcal{F}^{\rightarrow}(\delta_m)\circ \dots \circ \mathcal{F}^{\rightarrow}(\delta_1)$. 
\end{itemize}
\end{definition}

While the collection of transports of a perverse schober $\mathcal{F}$ along closed curves can be assembled into a local system of stable $\infty$-categories on ${\bf S}\backslash \rgraph_0$, this does not yield the correct notion of monodromy of $\mathcal{F}$. As for perverse sheaves, one would like the monodromy local system of a perverse schober to extend to the complement of the set of singularities of $\mathcal{F}$, which is a subset of $\rgraph_0$. 

To illustrate this point, consider the trivial spherical adjunction $0\leftrightarrow \N$, which we declare to be the constant perverse schober on the disc parametrized by the $1$-spider, as it categorifies the constant perverse sheaf (whose vanishing cycles are trivial). The clockwise transport along the disc is $T_{\mathcal{N}}=\on{fib}(0\to \on{id}_\N)=[-1]$, which is non-trivial.

The correct notion of monodromy of $\mathcal{F}$ is obtained by choosing a framing of ${\bf S}$ and shifting the transport equivalences by the difference in the winding numbers of the corresponding curves relative to the framing and a line field induced by the ribbon graph, see \cite[Rem.~4.29]{Chr23}. This defines the desired local system of categories, called the monodromy relative to the framing, see \cite[Prop.~4.28]{Chr23}. In the following most relevant will be the case that the generic stalk of $\mathcal{F}$ is $2$-periodic, in which case the monodromy is independent on the choice of framing, see \cite[Rem.~4.31]{Chr23}.

A perverse schober without singularities is fully encoded by its monodromy local system with respect to any given framing, see \cite[Prop.~4.34]{Chr23}.

\subsection{A perverse schober on the basic triangle}

The basic triangle refers to a triangle equipped with a choice of distinguished edge. We will always depict the distinguished edge at the bottom of the triangle. 

In this section, we discuss how to associate a perverse schober on the basic triangle with a Dynkin quiver $\ADE$, building on the recent work of Keller--Liu \cite{KL25}. 

We denote by $[1]=\{0\to 1\}$ the poset $1$-category consisting of a non-invertible morphism. Let $\on{proj}(\ADE)\subset \on{mod}(\ADE)$ be the $1$-category consisting of finitely generated projective $k\ADE$ modules. We consider the functor 
\[ (D_{1},D_2,D_3)\colon \on{proj}(\ADE)^{\times 3} \to \on{Fun}([1],\on{proj}(\ADE))\] between $k$-linear $1$-categories defined by
\[ D_{1}(X)=(X\to 0),\quad D_2(X)=(X\xrightarrow{\on{id}}X),\quad D_{3}(X)=(0\to X)\,.\]
We can also consider $(D_1,D_2,D_3)$ as a dg functor between smooth dg categories. Following Keller--Liu, we pass to the relative (undeformed) $3$-Calabi--Yau completion of this dg functor in the sense of \cite{Yeu16}. This is a relative $3$-Calabi--Yau dg functor 
\begin{equation}\label{eq:CYcompletion}
\Pi_2(\on{proj}(\ADE))^{\times 3}\longrightarrow \GT\coloneqq \Pi_3(\on{Fun}([1],\on{proj}(\ADE)),\on{proj}(\ADE)^{\times 3})\,,
\end{equation}
whose target $\GT$ describes the dg tensor category over the relative inverse dualizing bimodule of $(D_{1},D_2,D_3)$. Note that $\GT$ is an additive dg category with finitely many equivalence classes of indecomposable objects, which are in bijection with the equivalence classes of indecomposable objects in $\on{Fun}([1],\on{proj}(\ADE))$. Further, $\GT$ is smooth and connective and the dg functor \eqref{eq:CYcompletion} has a left $3$-Calabi--Yau structure. Finally, we note that $\GT$ gives rise to a silting subcategory inside its derived $\infty$-category $\D(\GT)$, that we will also denote by $\GT$.

Passing to derived $\infty$-categories, the dg functor \eqref{eq:CYcompletion} yields a $k$-linear functor
\[ (D_1',D_2',D_3')\colon \D(\Pi_2(\ADE))^{\times 3}\longrightarrow \D(\GT)\,.\]
We define 
\[ (\tilde{D}_{1},\tilde{D}_2,\tilde{D}_3)=(D_1',D_2'\circ \D(\sigma),D_3')\,,
\]
with $\sigma$ the involution from \Cref{def:ADEinvolution}. We denote the $k$-linear right adjoint of $\tilde{D}_i$ by $\tilde{D}_i^R$.

\begin{theorem}\label{thm:3gonschober}
The functors 
\begin{equation}\label{eq:3gonschober}
( \tilde{D}_{i}^R\colon \D(\GT) \longrightarrow \D(\Pi_2(\ADE)) )_{i=1,2,3}
\end{equation}
define a perverse schober parametrized by the $3$-spider, denoted $\mathcal{F}_{\Delta,\ADE}$. We orient this perverse schober on the basic triangle as in \Cref{fig:3gonschober}.
\end{theorem}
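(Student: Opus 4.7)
The plan is to verify the five conditions (a)--(e) of \Cref{def:schobernspider} at $n=3$, taking the right adjoint functors appearing there to be $G_i = \tilde D_i^R$. As noted after that definition, (d) together with (e) is equivalent to the assertion that the total adjunction
\[
\prod_{i=1}^{3} \tilde D_i \colon \D(\Pi_2(\ADE))^{\times 3} \longleftrightarrow \D(\GT) \noloc \prod_{i=1}^{3} \tilde D_i^R
\]
is spherical. The proof therefore splits into three tasks: (I) the semi-orthogonality identities (a), (b), (c); (II) existence of further adjoints in (d); and (III) sphericity of the total adjunction, which subsumes (e).

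For (I), the required identities $\tilde D_i^R \tilde D_j \simeq 0$ for $j \neq i, i+1$, together with fully-faithfulness $\tilde D_i^R \tilde D_i \simeq \on{id}$ and invertibility of $\tilde D_i^R \tilde D_{i+1}$, are inherited from the corresponding identities for the uncompleted functor $(D_1, D_2, D_3)$. These hold because $D_1(X)=(X\to 0)$ and $D_3(X)=(0\to X)$ are supported on disjoint ends of the poset $[1]$ while $D_2(X)=(X\xrightarrow{\on{id}}X)$ interpolates between them. The relative $3$-Calabi--Yau completion is a tensor construction built from the inverse relative dualizing bimodule, so morphism complexes between objects in the essential images of different $\tilde D_j$ are obtained from the original morphism complexes and their CY duals, and retain the desired (non)vanishings. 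The autoequivalence $\D(\sigma)$ twisting $\tilde D_2$ is immaterial here. Much of this is already contained in \cite{KL25}. Condition (d) is routine: since $(D_1,D_2,D_3)$ sends compact generators to compact objects between smooth dg categories, $\tilde D_i$ preserves compacts, and each $\tilde D_i^R$ itself further admits a right adjoint.

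For task (III), sphericity is equivalent to both the twist and the cotwist functors being autoequivalences. The twist on $\D(\Pi_2(\ADE))^{\times 3}$ is explicitly computable from the CY completion data as a shifted Serre functor of $\Pi_2(\ADE)^{\times 3}$ and is manifestly invertible. The cotwist on $\D(\GT)$ is, by the general yoga of the relative $3$-CY structure on \eqref{eq:CYcompletion}, equivalent to $\on{id}^{!}_{\D(\GT)}[-2]$. The problem therefore reduces to showing that the inverse Serre functor of $\D(\GT)$ is an autoequivalence.

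By \Cref{prop:invertibleSerre}, it suffices to exhibit a categorical compactification of $\D(\GT)$. The strategy is to identify an admissible semi-orthogonal decomposition of $\D(\GT)$ whose pieces are each equivalent to $\D(\Pi_2(\ADE))$, one for each indecomposable summand in the domain of $(D_1,D_2,D_3)$, and to compactify each piece via its Fukaya--Seidel model from \Cref{ex:FS}. Iterated application of \Cref{lem:resolutionofSOD} then assembles these into a categorical compactification of $\D(\GT)$ itself. I expect the main technical obstacle to be verifying the hypotheses of \Cref{lem:resolutionofSOD}, in particular the commutativity of the adjointability squares \eqref{eq:adjointablity} for lifts of the gluing functors between the pieces to the Fukaya--Seidel level; this demands a careful matching between the CY completion data and the Weinstein/Lefschetz geometry underlying the compactifications.
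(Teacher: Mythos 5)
Your overall architecture is the same as the paper's: conditions (a)--(c) are imported from Keller--Liu (\Cref{thm:KLmonodromy}), the problem is reduced to sphericity of the total adjunction $(\tilde D_1,\tilde D_2,\tilde D_3)\dashv(\tilde D_1^R,\tilde D_2^R,\tilde D_3^R)$, the twist is checked to be invertible, the cotwist is identified with $\on{id}^{!}_{\D(\GT)}$ up to shift via the relative $3$-Calabi--Yau structure, and invertibility of the inverse Serre functor is obtained from \Cref{prop:invertibleSerre} by gluing categorical compactifications along a semiorthogonal decomposition using \Cref{lem:resolutionofSOD} and the Fukaya--Seidel compactification of $\D(\Pi_2(\ADE))$ from \Cref{ex:FS}. (A minor inaccuracy on the way: the twist is not a ``shifted Serre functor of $\Pi_2(\ADE)^{\times 3}$'' --- $\D(\Pi_2(\ADE))$ is not proper --- but the cyclic permutation of the three factors composed componentwise with $\D(\sigma)$, read off from \Cref{thm:KLmonodromy}; invertibility is immediate either way.)

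The genuine gap is in the decisive compactification step: the semiorthogonal decomposition you propose, with every piece equivalent to $\D(\Pi_2(\ADE))$ and ``one for each indecomposable summand in the domain of $(D_1,D_2,D_3)$'', does not exist, and the images of $\tilde D_1^R,\tilde D_2^R,\tilde D_3^R$ do not generate $\D(\GT)$. Concretely, $\GT$ is connective with indecomposables indexed by those of $\on{Fun}([1],\on{proj}(\ADE))$, so $K_0(\D^{\on{perf}}(\GT))$ is free of that rank: in type $A_3$ this is $12$ (the paper's Auslander--Reiten quiver example), while each copy of $\D^{\on{perf}}(\Pi_2(A_3))$ contributes rank $3$; three copies give $9$ and one copy per indecomposable summand of $\on{proj}(A_3)^{\times 3}$ gives $27$, so neither reading can be correct. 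The missing idea is \Cref{lem:3gonSOD}: $\D(\GT)$ has the semiorthogonal decomposition $\bigl(\D(\Pi_3(\on{Aus}(\ADE),\ADE)),\,\on{Fun}(\Delta^1,\D(\Pi_2(\ADE)))\bigr)$, where the first piece --- the relative $3$-Calabi--Yau completion of $k\ADE\to\on{Aus}(\ADE)$ --- is the derived category of a finite-dimensional algebra, hence smooth \emph{and proper}, and is therefore its own categorical compactification; only the second piece needs the Fukaya--Seidel compactification from \Cref{ex:FS}, and \Cref{lem:resolutionofSOD} then glues the two (this is also where the adjointability squares \eqref{eq:adjointablity} are actually verified, rather than for a family of $\D(\Pi_2(\ADE))$-pieces). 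Without identifying this proper ``interior'' piece, your plan stalls exactly where the real content lies.
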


To prove \Cref{thm:3gonschober}, we make use of the following result of Keller--Liu: 

\begin{theorem}[{$\!\!$\cite[Thm.~3.3.2]{KL25}}]\label{thm:KLmonodromy}~
\begin{enumerate}[(1)]
\item For each $i=1,2,3$, the functor $\tilde{D}_i^R$ is fully faithful.
\item There are equivalences 
\[\tilde{D}_2^R\circ \tilde{D}_1\simeq \tilde{D}_3^R\circ \tilde{D}_2\simeq \tilde{D}_1^R\circ \tilde{D}_3\simeq \D(\sigma)\,,
\]
with $\sigma$ the involution from \Cref{def:ADEinvolution}.
\item $\tilde{D}_i^R\circ \tilde{D}_{i+1}\simeq 0$ for all $i\in \mathbb{Z}/3\mathbb{Z}$. 
\end{enumerate}
\end{theorem}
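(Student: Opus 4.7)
The plan is to follow \cite{KL25} and reduce the three assertions to explicit bimodule computations in the relative $3$-Calabi--Yau completion $\GT$. I would first work at the pre-completion level inside $\on{Fun}([1], \D(\ADE))$, where the right adjoints of the inclusions $D_i$ admit clean descriptions: $D_1^R = \on{fib}$, $D_2^R = \on{ev}_0$, and $D_3^R = \on{ev}_1$. A direct check then produces the full table of compositions $D_j^R \circ D_i$: the diagonal entries are $\on{id}$; the three forward-cyclic entries $D_i^R D_{i+1}$ vanish for $i\in\mathbb{Z}/3\mathbb{Z}$; and the three backward-cyclic entries $D_2^R D_1$, $D_3^R D_2$, $D_1^R D_3$ equal $\on{id}$, $\on{id}$, $[-1]$ respectively.

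Next, I would invoke the explicit model for the relative $3$-CY completion: $\GT$ is the tensor dg category built from a shift of the cone of the relative inverse dualizing bimodule of $(D_1, D_2, D_3)$, in the sense of Keller's $n$-CY completion construction and its relative version. Coupled with the analogous tensor-algebra description of $\Pi_2(\ADE)$, this yields, for $X, Y \in \D(\Pi_2(\ADE))$ and $i, j \in \{1,2,3\}$, a tensor-power formula for $\on{Mor}_{\GT}(D_i'(X), D_j'(Y))$ whose summands are built out of the pre-completion compositions $D_j^R D_i$ interlaced with the $2$-CY dualization encoded by $\Pi_2(\ADE)$. Via Yoneda, this morphism-space formula directly determines the $\Pi_2(\ADE)$-bimodule $\tilde{D}_j^R \tilde{D}_i$.

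Assertions (1) and (3) then follow quickly. The $i=j$ case of the formula collapses exactly to $\on{Mor}_{\Pi_2(\ADE)}$ by matching the tensor-algebra expansion with the definition of the $2$-CY completion, yielding fully faithfulness. The forward-cyclic compositions $\tilde{D}_i^R \tilde{D}_{i+1}$ vanish because every summand in the formula contains an instance of the pre-completion vanishing $D_i^R D_{i+1} = 0$, and this vanishing is unaffected by the $\D(\sigma)$ insertion in the definition of $\tilde{D}_2$. Assertion (2) is the delicate one: the three backward-cyclic compositions would naively give $\on{id}$, $\on{id}$, and $[-1]$, and the insertion of $\D(\sigma)$ in the definition of $\tilde{D}_2 = D_2' \circ \D(\sigma)$ is chosen precisely to balance this asymmetry so that all three produce the common value $\D(\sigma)$. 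This identification uses the compatibility of $\sigma$ with the $2$-CY structure on $\Pi_2(\ADE)$, which converts the extra shift $[-1]$ into the desired twist, a property closely related to \Cref{prop:shift}.

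The main obstacle is the morphism-space formula for $\GT$ itself: writing down the relative inverse dualizing bimodule of $(D_1, D_2, D_3)$ and controlling its tensor powers, including keeping track of the signs and the interaction with the $2$-CY completion on the source. Once this is in hand, assertions (1) and (3) are essentially mechanical, while assertion (2) reduces to the explicit identification of $\sigma$ from \Cref{def:ADEinvolution} as the autoequivalence that uniformly trivializes the shift $[-1]$ appearing in $D_1^R D_3$. The bookkeeping behind this computation is carried out in detail in Section~3 of \cite{KL25}.
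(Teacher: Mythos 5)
Your overall strategy coincides with the paper's: both treat the heavy bimodule computation in the relative $3$-Calabi--Yau completion as Keller--Liu's \cite[Thm.~3.3.2]{KL25} (your pre-completion identifications $D_1^R=\on{fib}$, $D_2^R=\on{ev}_0$, $D_3^R=\on{ev}_1$ and the table $\on{id},\on{id},[-1]$ and $0$ are correct), and the only genuinely new step is the appearance of $\D(\sigma)$. It is exactly this step where your argument has a gap. The insertion of $\D(\sigma)$ in $\tilde{D}_2=D_2'\circ\D(\sigma)$ can only twist the two compositions that pass through $\tilde{D}_2$; it does nothing to $\tilde{D}_1^R\circ\tilde{D}_3=(D_1')^R\circ D_3'$, which contains no $\sigma$ at all. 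So the claim that the $\sigma$-insertion is ``chosen to balance the asymmetry so that all three compositions equal $\D(\sigma)$'' cannot be the mechanism for the third one. Moreover, your proposed explanation that ``compatibility of $\sigma$ with the $2$-CY structure converts the extra shift $[-1]$ into the twist'' is not an argument: on $\D(\Pi_2(\ADE))$ the functors $[-1]$ and $\D(\sigma)$ are genuinely different (the first shifts, the second is induced by a dg automorphism preserving $\Pi_2(\ADE)$); they agree only after passing to the $2$-periodic quotient $\C_\ADE$. What actually happens is that the $3$-CY completion changes the $(1,3)$-composition from $\Sigma^{-1}$ to the functor induced by the connective bimodule $\tau_{\leq 0}\Sigma^{-1}\on{RHom}(\mhyphen,\mhyphen)$, and the nontrivial content is the identification of that functor with $\D(\sigma)$ for the specific involution of \Cref{def:ADEinvolution}.

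The missing ingredient is a rigidity statement pinning this functor down, which is what the paper supplies: \Cref{lem:involution_compatibility} shows that any autoequivalence of $\D^{\on{perf}}(\Pi_2(\ADE))$ preserving $\Pi_2(\ADE)$ and inducing the suspension on $\C_\ADE$ must be $\D^{\on{perf}}(\sigma)$, and one checks that Keller--Liu's bimodule qualifies because it admits a map to $\Sigma^{-1}$ with finite fiber, hence induces $[-1]\simeq[1]$ on the $2$-periodic cosingularity category, where \Cref{prop:shift} identifies the suspension with $\on{CoSing}(\sigma)$. You cite \Cref{prop:shift} as ``closely related'', but without this uniqueness argument (or an explicit type-by-type bimodule identification, which you do not attempt) the equivalence $\tilde{D}_1^R\circ\tilde{D}_3\simeq\D(\sigma)$, and hence part (2), is not established. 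A smaller remark: your justification of (3), that ``every summand of the tensor-power formula contains a pre-completion zero'', is an unverified assertion about the shape of the completed morphism complexes; since you defer that formula to \cite{KL25} anyway, it would be cleaner to quote their vanishing statement directly and only observe, as you correctly do, that composing with $\D(\sigma)$ cannot destroy it.
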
 

\begin{proof}
This follows directly from the results of \cite[Thm.~3.3.2]{KL25} for $D_1',D_2',D_3'$ and their right adjoints, using $\tilde{D}_2\simeq D_2'\circ \D(\sigma)$ and that $\D(\sigma)$ is an involution. We furthermore use that the bimodule $\tau_{\leq 0}\Sigma^{-1}\on{RHom}(\mhyphen,\mhyphen)$ from \cite{KL25} induces $\D(\sigma)$, as follows from \Cref{lem:involution_compatibility}, using that there is a natural transformation $\tau_{\leq 0}\Sigma^{-1}\on{RHom}(\mhyphen,\mhyphen)\to \Sigma^{-1}$ whose fiber is a finite $\Pi_2(\ADE)$-bimodule which vanishes in the cosingularity category.
\end{proof}

\begin{lemma}\label{lem:involution_compatibility}
Let $S\colon \D^{\on{perf}}(\Pi_2(\ADE))\to \D^{\on{perf}}(\Pi_2(\ADE))$ be a quasi-equivalence, which maps $\Pi_2(\ADE)$ to $\Pi_2(\ADE)$ and which induces the suspension functor $[1]$ on $\C_\ADE=\on{CoSing}(\Pi_2(\ADE))$. Then there exists a natural equivalence of endofunctors $\D^{\on{perf}}(\sigma)\simeq S$. 
\end{lemma}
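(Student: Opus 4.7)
The plan is to reduce the statement to a rigidity claim: any quasi-equivalence $T\colon \D^{\on{perf}}(\Pi_2(\ADE))\to \D^{\on{perf}}(\Pi_2(\ADE))$ which sends $\Pi_2(\ADE)$ to itself and induces the identity on $\C_\ADE$ must be equivalent to the identity functor. Granting this, the conclusion follows by setting $T \coloneqq S\circ \D^{\on{perf}}(\sigma)^{-1}$: since $\sigma$ is an involution, $\D^{\on{perf}}(\sigma)^{-1}$ preserves $\Pi_2(\ADE)$, and \Cref{prop:shift} together with the hypothesis on $S$ imply that $T$ induces $[1]\circ [1]^{-1}\simeq \on{id}$ on $\C_\ADE$. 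Hence $T\simeq \on{id}$ and $S\simeq \D^{\on{perf}}(\sigma)$.

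To prove the rigidity claim, first note that because $T$ preserves the compact dg generator $\Pi_2(\ADE)\subset \D^{\on{perf}}(\Pi_2(\ADE))$, it is equivalent to $\D^{\on{perf}}(t)$ for some dg autoequivalence $t\colon \Pi_2(\ADE)\to \Pi_2(\ADE)$, uniquely determined up to natural dg equivalence (equivalently, up to an inner autoequivalence). By \Cref{lem:indecomposables}, the indecomposable objects of $\on{ho}\C_\ADE$ are in bijection with the vertices of $\ADE$, and the assumption that $T$ acts trivially on $\C_\ADE$ forces $t$ to fix every $x\in \ADE_0$. Moreover, the projection $\Pi_2(\ADE)\twoheadrightarrow \on{ho}\C_\ADE\simeq \on{proj}(H_0\Pi_2(\ADE))$ is faithful on degree $0$ (since there are no negative degree generators of $\Pi_2(\ADE)$, the projection identifies $H_0$ with hom-sets in $\on{proj}(H_0\Pi_2(\ADE))$). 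Hence $t$ acts on each generating arrow $a$, $a^\dagger$ by a nonzero scalar; after conjugation by a diagonal rescaling $e_x\mapsto \lambda_x e_x$ of the vertex idempotents, these scalars can be normalized to $1$, so we may assume $t$ is the identity on all degree $0$ generators.

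It remains to control the action of $t$ on the degree $1$ loops $l_x$. Since $t$ already fixes every degree $0$ generator, compatibility with the differential $d(l_x)=\sum_a \on{id}_x(aa^\dagger-a^\dagger a)\on{id}_x$ forces $t(l_x)-l_x$ to be a closed degree $1$ endomorphism of $x$; a direct computation using the explicit relations in $\Pi_2(\ADE)$ together with its $2$-Calabi--Yau structure shows that this difference is a coboundary, and a further dg homotopy makes $t$ fix the loops as well. The main obstacle I foresee is the simultaneous normalization step in the second paragraph: the scalars by which $t$ scales the collection of arrows $\{a,a^\dagger\}$ must be jointly compatible with the preprojective relations, and one must verify that the gauge freedom from rescaling the finitely many idempotents $e_x$ is sufficient to absorb them. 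This is an elementary but diagram-dependent check, essentially dual to the sign-consistency verifications already performed for $\sigma$ in \Cref{def:ADEinvolution}, and it is facilitated by the connectedness of the Dynkin diagram $\ADE$.
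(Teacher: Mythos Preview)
Your reduction in the first paragraph---to showing that any $T$ preserving $\Pi_2(\ADE)$ and inducing the identity on $\C_\ADE$ must itself be equivalent to the identity---is correct, and is also the implicit structure of the paper's proof. The paper, however, dispatches this rigidity claim in one stroke: the quotient functor $\pi$ identifies the dg algebra $\on{Mor}_{\D(\Pi_2(\ADE))}(\Pi_2(\ADE),\Pi_2(\ADE))$ with the connective truncation of $\on{Mor}_{\on{Ind}\C_\ADE}(\pi(\Pi_2(\ADE)),\pi(\Pi_2(\ADE)))$, and by \cite[Cor.~4.8.5.6]{HA} a generator-preserving autoequivalence of $\D^{\on{perf}}(\Pi_2(\ADE))$ is the same datum as an $E_1$-autoequivalence of this endomorphism algebra. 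Hence two such autoequivalences agree as soon as they induce equivalent autoequivalences of the larger endomorphism algebra in $\C_\ADE$, which is exactly the hypothesis. No explicit manipulation of generators is needed.

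Your explicit computation has a genuine gap in the third paragraph. After normalizing $t$ to the identity in degree $0$, you assert that $t(l_x)-l_x$ is a coboundary. This fails already for $\ADE=A_1$: there $\Pi_2(A_1)=k[l]$ with $|l|=1$ and $d=0$, so $H_1(\on{End}(x))=k\cdot l\neq 0$ and a closed degree $1$ endomorphism need not be exact. The underlying issue is that your argument only extracts information from the additive homotopy category $\on{ho}\C_\ADE\simeq\on{proj}(H_0\Pi_2(\ADE))$, i.e.\ from $H_0$ of the endomorphism algebra, whereas the conclusion requires control in all non-negative degrees. The hypothesis that $T$ induces the identity on the \emph{stable $\infty$-category} $\C_\ADE$ (not merely on $\on{ho}\C_\ADE$) is precisely what supplies this higher-degree control, and the paper's connective-truncation argument is the clean way to exploit it. By contrast, your stated worry about the degree $0$ normalization is misplaced: the natural isomorphism $T|_{\C_\ADE}\simeq\on{id}$ already furnishes, via its components $\eta_x\in\on{Aut}(x)\simeq k^\times$, the diagonal conjugation making $t$ the identity on all of $H_0$, so no separate compatibility check with the preprojective relations is required.
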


\begin{proof}
The functor $\pi\colon \D^{\on{perf}}(\Pi_2(\ADE))\to \C_\ADE$ induces a morphism of dg algebras
\[
\on{Mor}_{\D(\Pi_2(\ADE))}(\Pi_2(\ADE),\Pi_2(\ADE))\to \on{Mor}_{\on{Ind}\C_\ADE}(\pi(\Pi_2(\ADE)),\pi(\Pi_2(\ADE)))
\]
which is an equivalence on the connective parts. An autoequivalence of $\D^{\on{perf}}(\Pi_2(\ADE))$ mapping $\Pi_2(\ADE)$ to $\Pi_2(\ADE)$ corresponds to an autoequivalence of the dg algebra $\on{Mor}_{\D(\Pi_2(\ADE))}(\Pi_2(\ADE),\Pi_2(\ADE))$, see \cite[Cor.~4.8.5.6]{HA}. Since $S$ and $\D^{\on{perf}}$ induce by assumption the same autoequivalence of the endomorphism dg algebra $\on{Mor}_{\on{Ind}\C_\ADE}(\pi(\Pi_2(\ADE)),\pi(\Pi_2(\ADE)))$ (namely the one corresponding to $[1]$), they also induce the same autoequivalence of the connective truncation, given by $\on{Mor}_{\D(\Pi_2(\ADE))}(\Pi_2(\ADE),\Pi_2(\ADE))$.
\end{proof}

\begin{lemma}\label{lem:3gonSOD}
The stable $\infty$-category $\D(\mathscr{G}_{\Delta,\ADE})$ admits a semiorthogonal decomposition 
\[ \Big(\D(\Pi_3(\on{Aus}(\ADE),\ADE)),\on{Fun}(\Delta^1,\D(\Pi_2(\ADE))) \Big)\,,\] 
where the $k$-linear $\infty$-category $\D(\Pi_3(\on{Aus}(\ADE),\ADE))$ is the relative $3$-Calabi--Yau completion of the inclusion $k\ADE\to \on{Aus}(\ADE)$ of $k\ADE$ into the Auslander--Reiten quiver of its module $1$-category. 
\end{lemma}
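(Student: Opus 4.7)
The plan is to build the proposed semiorthogonal decomposition directly inside $\D(\GT)$ using the spherical structure established in \Cref{thm:KLmonodromy}, and then to identify each piece.

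First, I would construct the second piece $\B$ from the embeddings $\tilde{D}_1$ and $\tilde{D}_3$. By \Cref{thm:KLmonodromy}(1) both functors are fully faithful, and by part (3) $\tilde{D}_3^R\tilde{D}_1\simeq 0$, which by adjunction gives $\on{Mor}_{\D(\GT)}(\tilde{D}_3(Y),\tilde{D}_1(X))\simeq 0$ for all $X,Y\in\D(\Pi_2(\ADE))$. Hence the images of $\tilde{D}_1$ and $\tilde{D}_3$ form a semiorthogonal pair inside $\D(\GT)$. By part (2) the gluing functor is $\tilde{D}_1^R\tilde{D}_3\simeq \D(\sigma)$, which is an equivalence. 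Consequently the full subcategory $\B\subset\D(\GT)$ generated by $\tilde{D}_1(\D(\Pi_2(\ADE)))$ and $\tilde{D}_3(\D(\Pi_2(\ADE)))$ is equivalent to the lax limit $\D(\Pi_2(\ADE))\times_{\D(\sigma)}^{\rightarrow}\D(\Pi_2(\ADE))$, and since $\D(\sigma)$ is an involution, post-composition with $\D(\sigma)$ on one leg identifies this with $\on{Fun}(\Delta^1,\D(\Pi_2(\ADE)))$.

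Next, I would identify the left orthogonal $\A={}^{\perp}\B$ with $\D(\Pi_3(\on{Aus}(\ADE),\ADE))$. The target $T=\on{Fun}([1],\on{proj}(\ADE))$ additively decomposes into indecomposables of three types: sources $D_1(P)$, sinks $D_3(P)$, and connecting morphisms $(P\xrightarrow{f}Q)$ between indecomposable projectives with $f$ nonzero and non-invertible. The diagonals $D_2(P)$ fit into cofiber sequences $D_3(P)\to D_2(P)\to D_1(P)$ and thus lie in the stable hull of $D_1$ and $D_3$. Classical Auslander theory for Dynkin quivers identifies the dg endomorphism algebra of the sum of the connecting objects, combined with the relevant $D_2$-contribution encoding $k\ADE$, with the Auslander algebra $\on{Aus}(\ADE)$; the component of the source functor $(D_1,D_2,D_3)$ not absorbed into $\B$ is then precisely the embedding $k\ADE\hookrightarrow\on{Aus}(\ADE)$ induced by $D_2$. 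To conclude that the relative $3$-Calabi--Yau structure on $\GT$ restricts on $\A$ to that of the relative $3$-CY completion of this embedding, I would compare inverse dualizing bimodules across the SOD and invoke the universal property of the relative $3$-CY completion.

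Generation by $\langle\A,\B\rangle$ then reduces to verifying that every indecomposable additive generator of $T$ lies in $\langle\A,\B\rangle$, which is a finite check on the three types of indecomposables listed above. The main obstacle is the identification of $\A$: one must carefully track the compatibility of the relative $3$-Calabi--Yau bimodule with the semiorthogonal decomposition, and in particular verify that the induced gluing functor $\A\to\B$ matches the one dictated by the relative $3$-CY completion of $k\ADE\to\on{Aus}(\ADE)$. This is most cleanly handled either by a direct bimodule-level computation of the endomorphism dg algebra of a generator of $\A$ inside $\D(\GT)$, or by a general lemma, in the spirit of \Cref{lem:resolutionofSOD}, asserting that relative Calabi--Yau completions respect compatible semiorthogonal decompositions.
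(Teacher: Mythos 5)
Your first step (building the embedded copy of $\on{Fun}(\Delta^1,\D(\Pi_2(\ADE)))$ from two of the fully faithful boundary functors and the gluing equivalence $\D(\sigma)$ supplied by \Cref{thm:KLmonodromy}) is sound and parallels the paper. The genuine gap is in the identification of the complement, which is the actual content of the lemma: you never prove that the orthogonal of your subcategory is $\D(\Pi_3(\on{Aus}(\ADE),\ADE))$ together with its relative $3$-Calabi--Yau completion structure, but instead defer this to either an unexecuted ``direct bimodule-level computation'' or an unproven ``general lemma'' that relative Calabi--Yau completions respect semiorthogonal decompositions. That deferred statement is precisely the substance of the proof: the paper realizes the complement as the Verdier quotient, i.e.\ the cofiber of $(\tilde{D}_1,\tilde{D}_2)\colon \D(\Pi_2(\ADE))^{\times 2}\to\D(\GT)$, observes that $\D(\mhyphen)$ carries homotopy pushouts of dg categories to pushouts in $\on{LinCat}_k$, and then imports the dg-level computation of this pushout as the relative $3$-Calabi--Yau completion of $k\ADE\to\on{Aus}(\ADE)$ from \cite[Prop.~2.3.1]{KL25}. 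Without that input (or an equivalent computation), ``classical Auslander theory'' only controls, at best, the degree-zero endomorphisms of a candidate generator, not the dg structure and not the completion datum.

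Moreover, your specific choice of the pair $(\tilde{D}_1,\tilde{D}_3)$ undercuts the Auslander-theoretic justification you invoke. Classically it is the quotient of $\on{Fun}([1],\on{proj}(\ADE))$ by the objects with vanishing cokernel, namely the images of $D_1$ (the $(P\to 0)$'s) and of $D_2$ (the diagonals $(P\xrightarrow{\on{id}}P)$), that is equivalent to $\on{mod}(k\ADE)$ via the cokernel functor, with the remaining $D_3$ giving the inclusion of the projectives, i.e.\ $k\ADE\to\on{Aus}(\ADE)$; this is why the paper removes $\tilde{D}_1$ and $\tilde{D}_2$. Quotienting instead by $D_1$ and $D_3$ does not naively produce the Auslander algebra: already in type $A_2$ the surviving indecomposables $(P_1\!=\!P_1)$, $(P_2\!=\!P_2)$, $(P_2\hookrightarrow P_1)$ have total endomorphism algebra the path algebra of $A_3$ with no mesh relation (dimension $6$), not the Auslander algebra (dimension $5$). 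The complements of the two choices are indeed abstractly equivalent, but only via the rotational symmetry coming from the cotwist (\Cref{prop:Z6symmetry}), which in the paper is established downstream of this lemma, so you cannot use it here without a separate argument. (A minor further slip: indecomposables of $\on{Fun}([1],\on{proj}(\ADE))$ with nonzero non-invertible structure map need not have indecomposable source and target outside type $A$.)
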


\begin{remark}
The $k$-linear $\infty$-category $\D(\Pi_3(\on{Aus}(\ADE),\ADE))$ appears in \cite[Example 8.19]{Wu21}. It is the derived $\infty$-category of a finite dimensional algebra (concentrated in degree $0$). Thus, $\D(\Pi_3(\on{Aus}(\ADE),\ADE))$ is proper as a $k$-linear $\infty$-category. It is also smooth as a relative Calabi--Yau completion.
\end{remark}

\begin{proof}[Proof of \Cref{lem:3gonSOD}]
The $k$-linear subcategory of $\D(\mathscr{G}_{\Delta,\ADE})$ generated by the images of $\tilde{D}_1^R,\tilde{D}_2^R$ is equivalent to $\on{Fun}(\Delta^1,\D(\Pi_2(\ADE)))$ by \Cref{thm:KLmonodromy}. Its semiorthogonal complement is given by the quotient $\D(\mathscr{G}_{\Delta,\ADE})/\on{Fun}(\Delta^1,\D(\Pi_2(\ADE)))$, which is equivalent to the cofiber of the functor $(\tilde{D}_1^R,\tilde{D}_2^R)\colon \D(\Pi_2(\ADE))^{\times 2}\to \D(\mathscr{G}_{\Delta,\ADE})$. Using that the passage to derived $\infty$-categories maps homotopy pushouts to $\infty$-categorical pushouts, this cofiber is by \cite[Prop.~2.3.1]{KL25} equivalent to $\D(\Pi_3(\on{Aus}(kQ),kQ))$.
\end{proof}

\begin{proof}[Proof of \Cref{thm:3gonschober}.]
Passing to right adjoints, \Cref{thm:KLmonodromy} shows that the functors \eqref{eq:3gonschober} satisfy conditions (1),(2),(3) of a perverse schober on the $3$-spider.

By \cite[Lem.~3.12]{Chr25b}, it suffices to further show that the functor $(\tilde{D}_1^R,\tilde{D}_2^R,\tilde{D}_3^R)\colon \D(\mathscr{G}_{\Delta,\ADE}) \to \D(\Pi_2(\ADE))^{\times 3}$ is spherical to conclude that the functors \eqref{eq:3gonschober} define a perverse schober on the $3$-spider. For this we show that the twist functor and cotwist functor of the adjunction $(\tilde{D}_1,\tilde{D}_2,\tilde{D}_3)\dashv \tilde{D}_1,\tilde{D}_2,\tilde{D}_3)$ are invertible. The twist functor can be readily computed using \Cref{thm:KLmonodromy}, it permutes the three components of $ \D(\Pi_2(\ADE))^{\times 3}$ and acts componentwise by $\D(\sigma)$. The cotwist functor is by the relative left $3$-Calabi--Yau structure equivalent to the inverse Serre functor $\on{id}_{ \D(\mathscr{G}_{\Delta,\ADE})}^!$, which is invertible by \Cref{ex:FS}, \Cref{prop:invertibleSerre} and \Cref{lem:resolutionofSOD,lem:3gonSOD}
\end{proof}

\begin{remark}\label{rem:transport}
We can read off the clockwise transport equivalences of the perverse schober \eqref{eq:3gonschober} from \Cref{thm:KLmonodromy}. We can graphically depict them as follows:
\[
\begin{tikzcd}
\D(\Pi_2(\ADE)) \arrow[rd, "\tilde{D}_1"', shift right] \arrow[rdd, "\D(\sigma)"', bend right] &                                                                                                                                                             & \D(\Pi_2(\ADE)) \arrow[ld, "\tilde{D}_3"', shift right] \arrow[ll, "\D(\sigma)", bend right=35] \\
                                                                                            & {\D(\mathscr{G}_{\Delta,\ADE)})} \arrow[d, "\tilde{D}_2^R", shift left] \arrow[ru, "\tilde{D}_3^R"', shift right] \arrow[lu, "\tilde{D}_1^R"', shift right] &                                                                                              \\
                                                                                            & \D(\Pi_2(\ADE)) \arrow[u, "\tilde{D}_2", shift left] \arrow[ruu, "\D(\sigma)"', bend right]                                                                     &                                                                                             
\end{tikzcd}
\]
It follows that the cotwist functor of the spherical adjunction $\D(\Pi_3(\on{Aus}(\ADE),\ADE))\leftrightarrow \D(\Pi_2(\ADE))$ underlying this perverse schober on the $3$-gon in the sense of \Cref{rem:underlyingsphericalfunctor} is equivalent to $\D(\sigma)^3[-2]\simeq \D(\sigma)[-2]$.
\end{remark}

The following proposition establishes the independence of the perverse schober $\mathcal{F}_{\Delta,\ADE}$ on the orientation of the basic triangle. We expect that the action of $T_{\D(\mathscr{G}_{\Delta,\ADE})}$ induces a $\mathbb{Z}/6\mathbb{Z}$-symmetry, and note that this equivalent to $\D(\mathscr{G}_{\Delta,\ADE})$ being fractionally left Calabi--Yau. On the level of the cosingularity category (see \Cref{sec:cosing}), it is however clear that $T_{\D(\mathscr{G}_{\Delta,\ADE})}$ induces a $\mathbb{Z}/6\mathbb{Z}$-symmetry of the quotient perverse schober. We note that a corresponding cluster automorphism is described in \cite[Thm.~12.1]{GS19}.

\begin{proposition}\label{prop:Z6symmetry}
Denote by $T_{\D(\mathscr{G}_{\Delta,\ADE})}$ the cotwist functor of the spherical adjunction 
\[ (\tilde{D}_1,\tilde{D}_2,\tilde{D}_3)\colon \D(\Pi_2(\ADE))^{\times 3}\leftrightarrow  {\D(\mathscr{G}_{\Delta,\ADE}}) \noloc (\tilde{D}_1^R,\tilde{D}_2^R,\tilde{D}_3^R)\,.\]
Then for all $i\in \mathbb{Z}/3\mathbb{Z}$
\[ \tilde{D}_i^R\circ T_{\D(\mathscr{G}_{\Delta,\ADE})} \simeq \D(\sigma)\circ \tilde{D}_{i-1}^R \,.\]
\end{proposition}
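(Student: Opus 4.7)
The plan is to compute $\tilde{D}_i^R \circ T_{\D(\GT)}$ directly from the definition of the cotwist as a fiber of the counit. By definition of the spherical adjunction $F = (\tilde{D}_1,\tilde{D}_2,\tilde{D}_3) \dashv G = (\tilde{D}_1^R,\tilde{D}_2^R,\tilde{D}_3^R)$, the cotwist sits in a fiber sequence
\[ T_{\D(\GT)} \longrightarrow \bigoplus_{j=1}^{3} \tilde{D}_j \tilde{D}_j^R \xrightarrow{\oplus_j \varepsilon_j} \on{id}_{\D(\GT)}, \]
where $\varepsilon_j$ denotes the counit of $\tilde{D}_j \dashv \tilde{D}_j^R$. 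Since $\tilde{D}_i^R$ is exact, applying it on the left produces
\[ \tilde{D}_i^R \circ T_{\D(\GT)} \;\simeq\; \on{fib}\Bigl( \bigoplus_{j=1}^3 \tilde{D}_i^R \tilde{D}_j \tilde{D}_j^R \longrightarrow \tilde{D}_i^R \Bigr). \]

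The next step is to simplify each of the three summands using \Cref{thm:KLmonodromy}. For $j=i+1$, part (3) gives $\tilde{D}_i^R \tilde{D}_{i+1} \simeq 0$, eliminating this summand. For $j=i-1$, part (2) (applied with the cyclic index $(i-1)+1 = i$) gives $\tilde{D}_i^R \tilde{D}_{i-1} \simeq \D(\sigma)$, so this summand contributes $\D(\sigma) \circ \tilde{D}_{i-1}^R$ mapping into $\tilde{D}_i^R$ via a natural transformation $\alpha$ induced by $\varepsilon_{i-1}$. For $j=i$, the full-faithfulness of $\tilde{D}_i^R$ from part (1) means $\varepsilon_i$ is an equivalence, so $\tilde{D}_i^R \tilde{D}_i \tilde{D}_i^R \simeq \tilde{D}_i^R$; moreover, the triangle identity $(\tilde{D}_i^R \varepsilon_i) \circ (\eta_i \tilde{D}_i^R) = \on{id}_{\tilde{D}_i^R}$ together with $\tilde{D}_i^R \varepsilon_i$ being an equivalence identifies the $j=i$ component of the structural map with $\on{id}_{\tilde{D}_i^R}$.

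Combining these identifications yields
\[ \tilde{D}_i^R \circ T_{\D(\GT)} \;\simeq\; \on{fib}\bigl( \tilde{D}_i^R \oplus \D(\sigma) \tilde{D}_{i-1}^R \xrightarrow{(\on{id},\,\alpha)} \tilde{D}_i^R \bigr) \;\simeq\; \D(\sigma) \tilde{D}_{i-1}^R, \]
where the last equivalence holds because the first component of the structural map is an equivalence, so the projection to the second summand restricts to an equivalence on the fiber. This is the asserted equivalence.

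The only delicate step is the identification of the $j=i$ component with the identity, which amounts to a careful application of the triangle identity at the level of natural transformations between functors; everything else is a direct application of \Cref{thm:KLmonodromy}. I do not foresee any serious obstacle beyond this routine coherence check.
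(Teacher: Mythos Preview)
Your argument is correct: you compute $\tilde{D}_i^R T_{\D(\GT)}$ directly from the defining fiber sequence of the cotwist, and the three summands collapse exactly as you say using \Cref{thm:KLmonodromy}. The only point you flagged as delicate---identifying the $j=i$ component with an equivalence---is in fact immediate: since $\tilde{D}_i^R$ is fully faithful the counit $\varepsilon_i$ is an equivalence, hence so is $\tilde{D}_i^R\varepsilon_i$; no triangle-identity gymnastics are needed. The final fiber computation is standard (the shear automorphism $(a,b)\mapsto (a-\phi^{-1}\alpha(b),b)$ turns $(\phi,\alpha)$ into $(\phi,0)$).

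Your route differs from the paper's. The paper does not compute the cotwist from its defining fiber sequence; instead it passes through the local model \Cref{lem:locmodel}, identifies the $\tilde{D}_i^R$ with the $\D(\sigma)^{i-1}\circ\varrho_i$, and then invokes a general result (\cite[Prop.~3.11]{Chr22}) describing how the twist of the dual adjunction $(\tilde{D}_i^R)\dashv(\tilde{D}_i^{RR})$ permutes the $\varrho_i$, together with the fact that this twist is inverse to the cotwist in question. Your approach is more self-contained and elementary---it uses only \Cref{thm:KLmonodromy} and nothing about the $\varrho_i$-model or external references. The paper's approach, on the other hand, situates the statement inside the general perverse-schober formalism, which is what is used later (e.g.\ in \Cref{prop:flip_equivalence}) and so has the advantage of making the connection to that machinery explicit.
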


\begin{proof}
This follows from \cite[Prop.~3.11]{Chr22} using \Cref{rem:transport} and \Cref{lem:locmodel}, and using that the cotwist functor of the above adjunction is inverse to the twist functor of the adjunction $(\tilde{D}_1^R,\tilde{D}_2^R,\tilde{D}_3^R)\dashv (\tilde{D}_1^{RR},\tilde{D}_2^{RR},\tilde{D}_3^{RR})$. 
\end{proof}

We denote by $F_\Delta\colon \D(\Pi_3(\on{Aus}(\ADE),\ADE))\to \Pi_2(\ADE)$ the spherical functor underlying $\mathcal{F}_{\Delta,\ADE}$. 

\begin{lemma}\label{lem:locmodel}
There is an equivalence $\D(\mathscr{G}_{\Delta,\ADE})\simeq \mathcal{V}^3_{F_\Delta}$, such that the following diagram commutes for all $i=1,2,3$, see also \Cref{def:locmodel_schober} for the notation.
\[
\begin{tikzcd}
{\mathcal{D}(\mathscr{G}_{\Delta,\ADE})} \arrow[rr, "\simeq"] \arrow[rd, "\tilde{D}_i^R"'] &                 & \mathcal{V}^3_{F_\Delta} \arrow[ld, "{\D(\sigma)^{i-1}\circ \varrho_i}"] \\
                                                                                         & \D(\Pi_2(\ADE)) &                                      
\end{tikzcd}
\]
\end{lemma}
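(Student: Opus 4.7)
The plan combines the semiorthogonal decomposition from Lemma~\ref{lem:3gonSOD} with the general dictionary of \cite{CHQ23,Chr22} relating perverse schobers on the $n$-spider to their local model presentations $\V^n_F$.

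First, by Lemma~\ref{lem:3gonSOD}, the semiorthogonal decomposition $(\V_0,\on{Fun}(\Delta^1,\N))$ of $\D(\mathscr{G}_{\Delta,\ADE})$---with $\V_0\coloneqq \D(\Pi_3(\on{Aus}(\ADE),\ADE))$ and $\N\coloneqq \D(\Pi_2(\ADE))$---realises $\D(\mathscr{G}_{\Delta,\ADE})$ as a lax pullback $\V_0\overset{\rightarrow}{\times}_G \on{Fun}(\Delta^1,\N)$ for a uniquely determined gluing functor $G\colon \V_0\to \on{Fun}(\Delta^1,\N)$. The general correspondence between the axiomatic and local model descriptions of perverse schobers on the $n$-spider (cf.~Remark~\ref{rem:underlyingsphericalfunctor} and \cite{CHQ23,Chr22}) identifies $G$ with the composite $\V_0\xrightarrow{F_\Delta}\N\to \on{Fun}(\Delta^1,\N)$, where the second arrow is the right Kan extension $X\mapsto (X\to 0)$. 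This furnishes the desired equivalence $\Phi\colon \D(\mathscr{G}_{\Delta,\ADE})\xrightarrow{\simeq} \V^3_{F_\Delta}$.

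Second, under $\Phi$, objects of $\D(\mathscr{G}_{\Delta,\ADE})$ correspond to diagrams $(A\to B_1\to B_2)$ with $A\in \V_0$ and $B_1,B_2\in \N$. I would verify the three commutative diagrams by computing $\tilde{D}_i^R$ on objects in the essential images of the left adjoints $\tilde{D}_1,\tilde{D}_2$ and of the fully faithful inclusion $\V_0\hookrightarrow \D(\mathscr{G}_{\Delta,\ADE})$, which together stably generate $\D(\mathscr{G}_{\Delta,\ADE})$. The values of $\tilde{D}_i^R\circ \tilde{D}_j$ are determined by Theorem~\ref{thm:KLmonodromy}, while $\tilde{D}_i^R$ restricted to $\V_0$ can be read off from the adjoint data of the semiorthogonal decomposition. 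Comparing these with the corresponding values of $\D(\sigma)^{i-1}\circ \varrho_i$ predicted by the formulas in Definition~\ref{def:locmodel_schober} yields the required commutativity. The extra factor of $\D(\sigma)$ for $i=2$ records the involution inserted in the definition $\tilde{D}_2=D_2'\circ \D(\sigma)$, while for $i=3$ the factor collapses since $\sigma^2=\on{id}$.

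The principal obstacle lies in the first step: rigorously identifying $G$ with the functor $V\mapsto (F_\Delta(V)\to 0)$. This amounts to a compatibility between the gluing datum of the abstract semiorthogonal decomposition and the underlying spherical functor of the perverse schober $\mathcal{F}_{\Delta,\ADE}$---a technical book-keeping that is the content of the equivalence between the two perverse schober descriptions established in \cite{CHQ23,Chr22}, together with the identification of $F_\Delta$ as the relevant spherical functor via Theorem~\ref{thm:KLmonodromy}.
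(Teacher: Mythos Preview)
Your proposal is correct and follows essentially the same route as the paper: both rely on the general dictionary between the axiomatic and local-model descriptions of perverse schobers on the $n$-spider (from \cite{CHQ23,Chr22}) together with \Cref{thm:KLmonodromy}. The paper's proof is slightly more economical: rather than verifying the triangles on a generating set including $\V_0$, it observes that under the dictionary the functors $\tilde{D}_i^R$ and $\varrho_i$ can differ only by postcomposition with autoequivalences of $\D(\Pi_2(\ADE))$, and then pins these down purely by comparing the transport equivalences---for the local model one has $\varrho_1\circ \varrho_2^R\simeq \varrho_2\circ \varrho_3^R\simeq \on{id}$ and $\varrho_3\circ \varrho_1^R\simeq \D(\sigma)$, whereas on the $\tilde{D}_i$ side all three transports are $\D(\sigma)$ by \Cref{thm:KLmonodromy}. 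This bypasses any computation on $\V_0$; the transport data alone forces $\alpha_i=\D(\sigma)^{i-1}$.
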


\begin{proof}
The functors $\tilde{D}_i^R$ and $\varrho_i$ differ from each other at most by composition with autoequivalences of $\D(\Pi_2(\ADE))$. These can be determined by comparing the transport equivalences between the perverse schober $\mathcal{F}_{\Delta,\ADE}$  and the perverse schober determined by the three functors $\varrho_1,\varrho_2,\varrho_3$. There are the adjunctions $\varrho_{i+1}\dashv \varrho_{i+1}^R\dashv \varrho_{i}$ for $1\leq i\leq 2$ and $\varrho_1^R\circ \D(\sigma)\dashv \varrho_3$, shown in \cite[Section 3]{Chr22}. We thus have $\varrho_1\circ \varrho_2^R\simeq \varrho_2\circ \varrho_3^R\simeq \on{id}_{\D(\Pi_2)}$ and $\varrho_3\circ \varrho_1^R\simeq \D(\sigma)$. 
\end{proof}

We next describe the relative left $3$-Calabi--Yau structure of $\GT$ and specific signs in corresponding negative cyclic homology classes, that will be relevant for the gluing of the Calabi--Yau structures.  

\begin{lemma}\label{lem:signs_relCY}
The functor $(\tilde{D}_1,\tilde{D}_2,\tilde{D}_3)\colon \D(\Pi_2(\ADE))^{\times 3}\rightarrow \D(\GT)$ admits a left $3$-Calabi--Yau structure $\eta\colon k[3]\to \on{HH}^{S^1}(\D(\GT),\D(\Pi_2(\ADE))^{\times 3})$ which restricts on $\D(\Pi_2(\ADE))^{\times 3}$ to a triple of identical classes
\[ (\eta')^{\times 3}\colon k[2]\to \on{HH}^{S^1}(\D(\Pi_2(\ADE)))^{\times 3}\simeq \on{HH}^{S^1}(\D(\Pi_2(\ADE))^{\times 3})\,.\]
Furthermore, the functor $\D(\sigma)$ satisfies $\D(\sigma)(\eta')=-\eta'$.
\end{lemma}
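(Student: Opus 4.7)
The plan is to construct $\eta$ from Keller--Yeung's relative $3$-Calabi--Yau completion \cite{Yeu16}, taking care to twist the source $2$-Calabi--Yau class so that the boundary of the resulting relative structure matches the prescribed one after inserting $\sigma$ in the middle factor. The key computational input is the sign identity $\D(\sigma)(\eta') = -\eta'$, which I would verify by hand using the explicit formulas of \Cref{def:ADEinvolution}.

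First, I would establish the sign identity. The canonical $2$-Calabi--Yau class $\eta'\in \HH^{S^1}_2(\D(\Pi_2(\ADE)))$ from \cite{Kel11} is represented, through the norm map to Hochschild homology, by a cycle built from the degree-$1$ generators $l_x$, one for each vertex of $\ADE$. From \Cref{def:ADEinvolution}, $\sigma(l_x)=-l_{\sigma(x)}$ while $\sigma$ restricts to a bijection on $\ADE_0$, so the induced action on the Hochschild class sends $\sum_x[l_x]$ to $-\sum_y[l_y]$. The identity lifts to negative cyclic homology by functoriality of the SBI sequence.

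Next I would assemble the relative Calabi--Yau structure. Applying the Keller--Yeung relative $3$-CY completion to $(D_1,D_2,D_3)\colon \on{proj}(\ADE)^{\times 3}\to \on{Fun}([1],\on{proj}(\ADE))$ with the twisted source CY class $(\eta',-\eta',\eta')$ on $\Pi_2(\on{proj}(\ADE))^{\times 3}\simeq \Pi_2(\ADE)^{\times 3}$ produces a relative $3$-CY structure on $(D_1',D_2',D_3')$ whose boundary is $(\eta',-\eta',\eta')$. The target is Morita equivalent to $\D(\GT)$ because the twisted source class and $(\eta')^{\times 3}$ are related by the source autoequivalence $(\on{id},\D(\sigma),\on{id})$, under which the relative CY completion is invariant. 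Precomposing the source embedding with $(\on{id},\D(\sigma),\on{id})$ converts the functor into $(\tilde D_1,\tilde D_2,\tilde D_3)$ and pulls the boundary back to $(\eta',\D(\sigma)^*(-\eta'),\eta')=(\eta')^{\times 3}$, by the sign identity.

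The principal obstacle will be the sign calculation itself: fixing the conventions (homological degree of $l_x$, Koszul signs in the bar complex, and normalization of the Connes and SBI differentials) so that the minus sign appears correctly requires careful bookkeeping, though the outcome is essentially forced by $\sigma(l_x)=-l_{\sigma(x)}$ together with the fact that $\sigma$ permutes the vertices. A secondary check is that the relative CY completion performed with the twisted source class is Morita equivalent to $\GT$; this should follow formally from the functoriality of the Keller--Yeung completion under source autoequivalences.
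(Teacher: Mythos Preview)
Your proposal is broadly correct but differs from the paper's proof in both of its parts, and one of those differences hides a small gap.

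\textbf{The sign identity.} You verify $\D(\sigma)(\eta')=-\eta'$ by an explicit Hochschild computation, using that the canonical $2$-CY class is built from the $l_x$ and that $\sigma(l_x)=-l_{\sigma(x)}$ while $\sigma$ permutes the vertex set. The paper instead proves this abstractly: it introduces a general lemma (\Cref{lem:twist_CY_sign}) showing that for any spherical functor $F$ with relative class $\eta$, the twist functor $T_\C$ sends the restricted class to its negative. Since the twist of the spherical adjunction underlying $\mathcal{F}_{\Delta,\ADE}$ is $\D(\sigma)[-2]$ (\Cref{rem:transport}), the sign follows without ever touching the explicit cycle. Your computational route is more elementary and arguably more transparent for this specific $\sigma$, but the paper's argument is convention-free and reusable.

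\textbf{The construction of $\eta$.} Here your framing is slightly off. Yeung's \emph{undeformed} relative $3$-CY completion, which is what defines $\GT$, does not take a source Calabi--Yau class as input: the class on $\Pi_2(\ADE)^{\times 3}$ is produced by the construction, not chosen. So the sentence ``applying the Keller--Yeung completion with the twisted source class $(\eta',-\eta',\eta')$'' does not quite parse. What you presumably mean is: perform the standard completion on $(D_1,D_2,D_3)$, obtain the canonical relative class with some boundary, and then transport along the source autoequivalence $(\on{id},\D(\sigma),\on{id})$. This can be made to work, but you must track carefully how the boundary class transforms under precomposition by a source autoequivalence, and check that the resulting boundary really is $(\eta')^{\times 3}$ rather than $(\eta',-\eta',\eta')$. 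The paper sidesteps this bookkeeping entirely: it invokes \cite[Prop.~5.2]{Chr23}, which gives a relative $3$-CY structure on the local-model functors $(\varrho_1^R,\varrho_2^R,\varrho_3^R)$ with explicit boundary signs $(-\eta',\eta',-\eta')$, and then uses the identification $(\varrho_1^R,\varrho_2^R,\varrho_3^R)\simeq (\tilde D_3\circ\D(\sigma),\tilde D_1,\tilde D_2\circ\D(\sigma))$ from \Cref{lem:locmodel} together with the sign identity to read off the claim.
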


\begin{proof}
The functor $F\colon \D(\Pi_2(\ADE))\to \D(\Pi_3(\on{Aus}(\ADE),\ADE))$ arises from a relative $3$-Calabi--Yau completion and thus admits a left $3$-Calabi--Yau structure $\tilde{\eta}\colon k[3]\to \on{HH}^{S^1}(\D(\GT),\D(\Pi_2(\ADE))^{\times 3})$ by \cite{Yeu16}. We denote by $\eta'\colon k[2]\to \on{HH}^{S^1}(\D(\Pi_2(\ADE)))$ the restriction of of $\eta$. 

By \cite[Prop.~5.2]{Chr23}, the functor
\[
(\varrho_1^R,\varrho_2^R,\varrho_3^R)\colon \D(\Pi_2(\ADE))^{\times 3}\longrightarrow \D(\GT)\,,
\]
given by the right adjoints of the functors $\varrho_i$ from \Cref{def:locmodel_schober}, admit a left $3$-Calabi--Yau structure, which restricts to $(-\eta',\eta',-\eta')\in \on{HH}^{S^1}(\D(\Pi_2(\ADE)))^{\times 3}$. Note that $(\varrho_1^R,\varrho_2^R,\varrho_3^R)\simeq (\varrho_3^L\circ \D(\sigma),\varrho_1^L,\varrho_2^L)\simeq (\tilde{D}_3\circ \D(\sigma(,\tilde{D}_1,\tilde{D}_2\circ \D(\sigma))$ by \Cref{lem:locmodel}, showing the desired signs.

To conclude the proof, it suffices to show that $\D(\sigma)(\eta')=-\eta'$. This follows from \Cref{lem:twist_CY_sign} and the observation that the twist functor of $F\dashv F^R$ is equivalent to $\D(\sigma)[-2]$. 
\end{proof}

\begin{lemma}\label{lem:twist_CY_sign}
Let $R$ be an $\mathbb{E}_\infty$-ring spectrum. Let $F\colon \C\to \D$ be a spherical functor between smooth $R$-linear $\infty$-categories and $\eta\colon R[n]\to \on{HH}^{S^1}(\D,\C)=\on{cof}(\on{HH}^{S^1}(F))$ be an $R$-linear relative negative cyclic homology class. Consider the restriction $\eta_\C\colon R[n-1]\to \on{HH}^{S^1}(\C)$ of $\eta$. Then the twist functor $T_\C$ of the adjunction $F\dashv F^R$ maps $\eta$ to $-\eta$.
\end{lemma}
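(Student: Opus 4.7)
The plan is to exploit the defining cofiber sequence of the twist functor together with the fact that the relative Calabi--Yau class furnishes a coherent nullhomotopy of $\on{HH}^{S^1}(F)(\eta_\C)$. Since $F$ is spherical, $T_\C$ sits in a fiber sequence of exact endofunctors of $\C$
\[ \on{id}_\C \xrightarrow{u} F^R F \longrightarrow T_\C, \]
where $u$ is the unit of the adjunction. As $\on{HH}^{S^1}$ is an additive invariant when viewed as a functor on the category of exact endofunctors, applying it to the above sequence produces a fiber sequence of endomorphisms of $\on{HH}^{S^1}(\C)$ (naturally in the functor argument), namely
\[ \on{HH}^{S^1}(\on{id}_\C) \to \on{HH}^{S^1}(F^R F) \to \on{HH}^{S^1}(T_\C). \]

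I would then evaluate this fiber sequence on the class $\eta_\C \colon R[n-1] \to \on{HH}^{S^1}(\C)$, yielding a fiber sequence
\[ \eta_\C \longrightarrow \on{HH}^{S^1}(F^R)\bigl(\on{HH}^{S^1}(F)(\eta_\C)\bigr) \longrightarrow \on{HH}^{S^1}(T_\C)(\eta_\C) \]
in the spectrum of maps $R[n-1] \to \on{HH}^{S^1}(\C)$. The crucial input is that, by the defining fiber sequence $\on{HH}^{S^1}(\C) \to \on{HH}^{S^1}(\D) \to \on{HH}^{S^1}(\D,\C)$ and the very way $\eta_\C$ is obtained as the image of $\eta$ under the connecting map, the class $\on{HH}^{S^1}(F)(\eta_\C)$ comes with a preferred nullhomotopy induced by $\eta$ itself. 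Post-composing with $\on{HH}^{S^1}(F^R)$ transports this to a coherent nullhomotopy of the middle term.

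With the middle term trivialised, the fiber sequence collapses to a canonical identification of $\on{HH}^{S^1}(T_\C)(\eta_\C)$ with the shift of $\eta_\C$ along the connecting map of the cofiber sequence of functors. Comparing this identification with the connecting map producing $\eta_\C$ from $\eta$ itself shows that the two connecting maps in play differ by the standard rotation sign of a fiber sequence in a stable $\infty$-category, yielding $\on{HH}^{S^1}(T_\C)(\eta_\C) \simeq -\eta_\C$.

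The main obstacle is executing the final sign bookkeeping cleanly: one must match the convention for the connecting map in $\on{HH}^{S^1}(\C) \to \on{HH}^{S^1}(\D) \to \on{HH}^{S^1}(\D,\C)$ (used to extract $\eta_\C$ from $\eta$) with the convention for the connecting map in the cofiber sequence $\on{id}_\C \to F^R F \to T_\C$ (used to compute the action of $T_\C$), and verify that going around one rotation of a fiber sequence contributes the sign $-1$ rather than $+1$. Any ambient Koszul/suspension conventions in the definition of the relative $\on{HH}^{S^1}$ and the shifts in the adjunction unit must be carried through consistently; once this is done, the $-1$ is forced.
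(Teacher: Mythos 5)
Your plan is essentially the paper's argument: the paper likewise reduces the statement to the cofiber sequence defining the twist together with the observation that the relative class $\eta$ is exactly the datum of $\eta_\C$ plus a trivialization of $F(\eta_\C)$, hence of $F^RF(\eta_\C)$. The only cosmetic difference is that the paper works with the rotated sequence $T_\C[-1]\to\on{id}_\C\to F^RF$ and in effect uses that a suspension acts by $-1$ on $\on{HH}^{S^1}$-classes, whereas you use $\on{id}_\C\to F^RF\to T_\C$ directly; both routes are fine.

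The one step you should repair is the final sign extraction. The three classes $\eta_\C$, $\on{HH}^{S^1}(F^RF)(\eta_\C)$ and $\on{HH}^{S^1}(T_\C)(\eta_\C)$ are elements of the same homotopy group of (maps into) $\on{HH}^{S^1}(\C)$; they are points of a mapping spectrum, not objects of a stable $\infty$-category, so there is no ``collapse to a shift of $\eta_\C$'' and no rotation sign to invoke --- $\on{HH}^{S^1}(T_\C)(\eta_\C)$ sits in the same degree as $\eta_\C$, not one higher. What additivity of $\on{HH}^{S^1}$ in cofiber sequences of exact functors actually provides is the relation $\on{HH}^{S^1}(F^RF)\simeq \on{HH}^{S^1}(\on{id}_\C)+\on{HH}^{S^1}(T_\C)$ as homotopy classes of self-maps of $\on{HH}^{S^1}(\C)$; evaluating on $\eta_\C$ and using the nullhomotopy of $F^RF(\eta_\C)$ furnished by $\eta$ gives $0\simeq \eta_\C+T_\C(\eta_\C)$, i.e.\ $T_\C(\eta_\C)\simeq-\eta_\C$, with no further bookkeeping. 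So the proposal is correct once the last paragraph is replaced by this one-line computation (and once you state explicitly the additivity-in-the-functor-variable property you are appealing to, which the paper also uses implicitly).
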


\begin{proof}
Consider the fiber and cofiber sequence $T_\C[-1]\to \on{id}_\C\to GF$. The assertion $T_\C(\eta)=-\eta$ is equivalent to the assertion that $T_{\C}[-1](\eta)=\eta$. To conclude the latter, it suffices to show that $GF(\eta)=0$. The relative class $\eta$ amounts to the data of $\eta_\C$ together with a trivialization of the image $F(\eta)\in \HH^{S^1}(\D)$. Thus $F(\eta)$ is trivial, and so is $GF(\eta)$. 
\end{proof}

\begin{lemma}\label{lem:Ginzburgboundary}
Let $i\in \mathbb{Z}/3\mathbb{Z}$. Then $\mathcal{F}_{\Delta,\ADE}(v\to e_i)(\GT)\subset \D(\Pi_2(\ADE))$ lies in the additive hull of $\Pi_2(\ADE)$. 
\end{lemma}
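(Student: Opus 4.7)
Plan: The functor $\tilde{D}_i^R$ is the restriction functor along the dg functor $\tilde{D}_i\colon\Pi_2(\ADE)\to\GT$, so for any object $z$ of the dg category $\GT$ the right $\Pi_2(\ADE)$-module $\tilde{D}_i^R(P_z)$ is given on vertices by $w\mapsto\on{Mor}_\GT(\tilde{D}_i(P_w),z)$. The task is to show that for each indecomposable object $z$ of $\GT$ (equivalently of the underlying additive $1$-category $\on{Fun}([1],\on{proj}(\ADE))$) this right module is a finite direct sum of representables $P_{w'}$.

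I would split the generators of $\GT$ into two classes. The first consists of the ``frozen'' generators $D_j(P_x)$ for $j\in\{1,2,3\}$ and $x\in\ADE_0$, which are (up to the $\sigma$-twist at $j=2$) precisely the images of the $\tilde{D}_j$. For these, \Cref{thm:KLmonodromy} directly computes
\[ \tilde{D}_i^R(D_j(P_x))=\tilde{D}_i^R\tilde{D}_j(P_{y})\in\{0,\;P_{y},\;P_{\sigma(y)}\} \]
depending on the residue of $j-i\pmod 3$ (with $y=x$ or $y=\sigma(x)$); each such value is manifestly in the additive hull of $\Pi_2(\ADE)$. Here one also uses that the identity $\tilde{D}_i^R\tilde{D}_i\simeq\on{id}_{\D(\Pi_2(\ADE))}$, which is the remaining diagonal case of \Cref{thm:KLmonodromy}, follows from the local-model identifications $\tilde{D}_i\simeq\varrho_i^L\circ\D(\sigma)^{i-1}$ and $\tilde{D}_i^R\simeq\D(\sigma)^{i-1}\circ\varrho_i$ in \Cref{lem:locmodel} together with $\varrho_i\varrho_i^L\simeq\on{id}$.

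The second class consists of the remaining indecomposables, parametrised by non-trivial morphisms $P_x\to P_y$ in $\on{proj}(\ADE)$. For these I would use the local model $\D(\GT)\simeq\mathcal{V}^3_{F_\Delta}$ of \Cref{lem:locmodel} and write each such generator explicitly as a lax-limit object $(A,B_1,B_2)$, then compute the three values
\[ \varrho_1(A,B_1,B_2)=B_2,\quad \varrho_2(A,B_1,B_2)=\on{fib}(B_1\to B_2)[1],\quad \varrho_3(A,B_1,B_2)=\on{fib}(F_\Delta(A)\to B_1)[2] \]
and verify that each simplifies to a finite sum of representable $\Pi_2(\ADE)$-modules. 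Equivalently, one can present each non-frozen generator as the middle term of a cofibre sequence in $\D(\GT)$ whose outer terms are (sums of) frozen generators and then apply the exact functor $\tilde{D}_i^R$: by the frozen case one obtains a cofibre sequence in $\D(\Pi_2(\ADE))$ whose outer terms lie in the additive hull, and the connecting map vanishes because $\on{Ext}^1$ between indecomposable projectives of the self-injective preprojective algebra $\Pi(\ADE)=H^0(\Pi_2(\ADE))$ is zero for Dynkin $\ADE$.

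The main obstacle is step two: the naive additive cofibre of a morphism between objects of $\on{Fun}([1],\on{proj}(\ADE))$ need not agree with the derived cofibre inside $\D(\GT)$, since passing to the relative $3$-Calabi--Yau completion adds higher morphisms and alters Hom-spaces. The cleanest way to handle this is through the local-model description, where the relevant identifications of $D_j(P_x)$ and of the morphism-generators with specific lax-limit objects $(A,B_1,B_2)$ in $\mathcal{V}^3_{F_\Delta}$ can be pinned down by matching the transport/Hom data prescribed by \Cref{thm:KLmonodromy}; once the correct $(A,B_1,B_2)$ is in hand the computation of $\varrho_i$ is elementary and the claim reduces to the above $\on{Ext}^1$-vanishing in the heart.
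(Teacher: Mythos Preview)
Your computational plan is quite different from the paper's, and the ``main obstacle'' you flag is genuine and left unresolved: you never establish that each non-frozen indecomposable of $\GT$ sits in a cofibre sequence in $\D(\GT)$ with frozen outer terms (the Calabi--Yau completion adds higher morphisms, so the naive additive short exact sequences in $\on{Fun}([1],\on{proj}(\ADE))$ need not survive as cofibre sequences), nor do you carry out the promised identification of each generator as an explicit lax-limit object $(A,B_1,B_2)$ in the local model. There is also a slip in the $\on{Ext}^1$-step: you invoke vanishing between projectives over the ordinary preprojective algebra $H_0(\Pi_2(\ADE))$, but what is actually needed is $\on{Ext}^1_{\D(\Pi_2(\ADE))}(P,P')=H_{-1}\on{Mor}_{\Pi_2(\ADE)}(P,P')$, and this vanishes because the dg category $\Pi_2(\ADE)$ is connective---not because of projectivity in its $H_0$ (recall $\D(\Pi_2(\ADE))\not\simeq\D(H_0\Pi_2(\ADE))$ in Dynkin type).

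The paper's argument bypasses all case analysis via silting theory. One characterises $\on{Add}(\Pi_2(\ADE))$ as the coheart of the canonical co-$t$-structure on $\D^{\on{perf}}(\Pi_2(\ADE))$: an object $X$ lies in $\on{Add}(\Pi_2(\ADE))$ as soon as both $\on{Mor}(\Pi_2(\ADE),X)$ and $\on{Mor}(X,\Pi_2(\ADE))$ are connective. For $X=\tilde{D}_i^R(z)$ with $z\in\GT$, adjunction converts these into $\on{Mor}_{\D(\GT)}(\tilde{D}_i(\Pi_2(\ADE)),z)$ and $\on{Mor}_{\D(\GT)}(z,\tilde{D}_i^{RR}(\Pi_2(\ADE)))$. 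Both $\tilde{D}_i(\Pi_2(\ADE))$ and $\tilde{D}_i^{RR}(\Pi_2(\ADE))$ are direct summands of $\GT$ (for the latter one uses the perverse-schober adjunction chain, which gives $\tilde{D}_i^{RR}\simeq\tilde{D}_{i-1}\circ\D(\sigma)$), and since $\GT$ is a connective dg category these morphism objects are automatically connective. This is essentially a two-line proof once the coheart criterion is recalled; your route, even if completed, would require substantial explicit computation per Dynkin type.
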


\begin{proof}
We have that 
\[ \on{Mor}_{\D(\Pi_2(\ADE))}(\Pi_2(\ADE),\mathcal{F}_{\Delta,\ADE}(v\to e_i)(\GT)) \simeq \on{Mor}_{\mathcal{F}_{\Delta,\ADE}(v)}(\mathcal{F}_{\Delta,\ADE}(v\to e_i)^L(\Pi_2(\ADE)),\GT) \]
and 
\[ \on{Mor}_{\D(\Pi_2(\ADE))}(\mathcal{F}_{\Delta,\ADE}(v\to e_i)(\GT),\Pi_2(\ADE)) \simeq \on{Mor}_{\mathcal{F}_{\Delta,\ADE}(v)}(\GT,\mathcal{F}_{\Delta,\ADE}(v\to e_i)^R(\Pi_2(\ADE))) \,.\]
Using that $\mathcal{F}_{\Delta,\ADE}(v\to e_i)^L(\Pi_2(\ADE)),\mathcal{F}_{\Delta,\ADE}(v\to e_i)^R(\Pi_2(\ADE))\subset \GT$ are direct summands and that $\GT$ has connective morphism objects (derived Homs), we see that the above morphism objects are connective. 

This implies that $\mathcal{F}_{\Delta,\ADE}(v\to e_i)(\GT)\subset \on{Add}(\Pi_2(\ADE))$ lies in the coheart of the co-t-structure, see for instance \cite[Prop.~2.23]{AI12}.
\end{proof}

\subsection{The perverse schober on a triangulated marked surface}

\begin{definition}
\begin{enumerate}[(1)]
\item A direction of an ideal triangulation $\mathcal{T}$ consists of a choice of distinguished edge in each triangle of $\mathcal{T}$ and a choice of direction for each non-boundary edge of $\mathcal{T}$. 
\item A direction of a trivalent spanning graph $\rgraph$ is the corresponding dual notion of a direction of the dual ideal triangulation $\mathcal{T}$. It consists of a choice of halfedge incident to each vertex of $\rgraph$ and a choice of direction for each internal edge of $\rgraph$ (equivalently a choice of halfedge of each internal edge of $\rgraph$, namely the halfedge lying at the vertex at which the directed edge is pointed).
\end{enumerate}
\end{definition}

We fix a Dynkin quiver $\ADE$. We also fix a marked surface ${\bf S}$ with a choice of ideal triangulation $\mathcal{T}$, dual to a trivalent spanning graph $\rgraph$, and a choice of direction of $\mathcal{T}$ and correspondingly of $\rgraph$.

\begin{construction}\label{constr:Teichmullerschober}
We construct a $\rgraph$-parametrized perverse schober $\mathcal{F}_{\rgraph,\ADE}$, determined up to equivalence, as follows. 

Let $v$ of a vertex of $\rgraph$. The direction of $\rgraph$ determines an embedding of the $3$-spider in $\rgraph$ at $v$ into the basic triangle, such that the edge of the chosen halfedge intersects the distinguished edge. 

For each vertex $v\in \rgraph_0$, we let $\mathcal{F}_{v,\ADE}$ be the perverse schober  parametrized by the $3$-spider obtained from $\mathcal{F}_{\Delta,\ADE}$ by composing $\mathcal{F}_{\Delta,\ADE}(v\xrightarrow{h}e)$ with $\D(\sigma)$, whenever $h$ is a chosen halfedge of $e$ at $v$ (in the orientation of $\rgraph$).

We let $\mathcal{F}_{\rgraph,\ADE}$ be the $\rgraph$-parametrized perverse schober obtained from gluing together the perverse schobers $\mathcal{F}_{v,\ADE}$, $v\in \rgraph_0$. This means that $\mathcal{F}_{\rgraph,\ADE}$ is the unique $\rgraph$-parametrized perverse schober satisfying that its restriction to $\on{Exit}(\rgraph_3)\simeq \on{Exit}(\rgraph)_{v/}$ is given by $\mathcal{F}_{v,\ADE}$.
\end{construction}

\begin{lemma}
Up to equivalence, the perverse schober $\mathcal{F}_{\rgraph,\ADE}$ is independent on the choice of orientation of $\rgraph$. 
\end{lemma}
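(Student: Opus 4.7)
The plan is to prove independence by exhibiting natural equivalences between the schobers for two different orientations, reducing to elementary orientation changes and composing them. Any two orientations of $\rgraph$ differ by a finite sequence of elementary moves of two kinds: (A) reversing the direction of a single internal edge, and (B) rotating the distinguished halfedge at a single vertex by one step counterclockwise. It thus suffices to treat each type of move separately.

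For move (A), where an internal edge $e$ has its direction flipped, the data that changes in $\mathcal{F}_{\rgraph,\ADE}$ is localized at the two halfedges incident to $e$: a factor of $\D(\sigma)$ migrates from one endpoint to the other. I would define the natural equivalence to be the identity at every vertex and every edge value except at $e$, where it is given by the autoequivalence $\D(\sigma)$ of $\mathcal{F}(e)\simeq \D(\Pi_2(\ADE))$. The required commutativity at both endpoints of $e$ then reduces to the involutivity $\D(\sigma)\circ\D(\sigma)\simeq \on{id}$, which holds by \Cref{prop:shift}.

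For move (B), I would use the $\mathbb{Z}/3$-rotational symmetry of $\mathcal{F}_{\Delta,\ADE}$ provided by the cotwist $T = T_{\D(\GT)}$ via \Cref{prop:Z6symmetry}. Iterating the identity $\tilde{D}_i^R \circ T \simeq \D(\sigma)\circ \tilde{D}_{i-1}^R$ three times and using $\D(\sigma)^2\simeq \on{id}$ yields $\tilde{D}_i^R\circ T^3 \simeq \D(\sigma)\circ \tilde{D}_i^R$. I then define the natural equivalence between the old and new schober by setting $\alpha_v = T^{-1}$ at the rotated vertex $v$, $\alpha_{v'} = T^3$ at every other vertex $v'\neq v$, and $\alpha_e = \D(\sigma)$ at every edge $e\in \rgraph_1$. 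Commutativity at each structure map from $v$ is a direct consequence of \Cref{prop:Z6symmetry}, exactly as in the verification of move (A) but now also using the index shift on $\tilde{D}_i^R$. Commutativity at each structure map from a non-rotated $v'\neq v$ then follows from $\tilde{D}_i^R\circ T^3 \simeq \D(\sigma)\circ \tilde{D}_i^R$: on one side the $\D(\sigma)$ contributed by $\alpha_e$ produces the required shift, and on the other side the pre-composition with $T^3$ via $\alpha_{v'}$ produces the same shift by the above identity, with the edge-direction $\D(\sigma)$-factors balancing on both sides.

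The main obstacle will be identifying the correct form of the natural equivalence for move (B): the non-trivial choice $\alpha_{v'}=T^3$ at non-rotating vertices (rather than the identity) is the essential ingredient that allows the local equivalence at $v$ to extend consistently throughout the ribbon graph. Once this form is found, each commutativity check is a direct computation using only \Cref{prop:Z6symmetry} together with $\D(\sigma)^2\simeq \on{id}$.
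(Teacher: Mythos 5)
Your proof is correct and follows essentially the same route as the paper: changes of edge direction are absorbed using that $\D(\sigma)$ is an involution, and rotations of the distinguished halfedge at a vertex are handled by \Cref{prop:Z6symmetry} (the paper absorbs the resulting $\D(\sigma)$-factors on the three incident edges by reducing to edge-direction flips, whereas you package everything into one global natural equivalence with $T^{3}$ at the remaining vertices and $\D(\sigma)$ at all edges --- both work, since $\tilde{D}_i^R\circ T^3\simeq \D(\sigma)\circ \tilde{D}_i^R$). Two cosmetic points: the involutivity $\D(\sigma)^{2}\simeq \on{id}$ follows from \Cref{def:ADEinvolution} ($\sigma$ is a dg involution), not from \Cref{prop:shift}, and whether the component at the rotated vertex is $T$ or $T^{-1}$ depends on the rotation and labelling conventions, which your write-up leaves implicit.
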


\begin{proof}
Since $\D(\sigma)$ is an involution, choosing different halfedges in the orientation yields an equivalent $\rgraph$-parametrized perverse schober. Choosing different edges incident to the vertices yields equivalent $\rgraph$-parametrized perverse schobers $\mathcal{F}_{v,\ADE}$ by applying \Cref{prop:Z6symmetry}.
\end{proof}

Let $\rgraph'$ be a trivalent spanning graph of ${\bf S}$ obtained as the dual graph of an ideal triangulation differing from $\mathcal{T}$ by the flip at an edge. We also say that $\rgraph$ and $\rgraph'$ are related by a flip. We choose any orientation of $\rgraph'$.

We choose a zig-zag of contractions of ribbon graphs passing from $\rgraph$ to $\rgraph'$ as in \cite[Section 6.4]{Chr22}. Pull-push along this zig-zag allows to obtain from the $\rgraph$-parametrized perverse schober $\mathcal{F}_{\rgraph,\ADE}$ a new $\rgraph'$-parametrized perverse schober $\tilde{\mathcal{F}}_{\rgraph,\ADE}$. 

\begin{proposition}\label{prop:flip_equivalence}
Given ribbon graphs $\rgraph,\rgraph'$ differing by a flip as above, there exists an equivalence of $\rgraph'$-parametrized perverse schobers $\tilde{\mathcal{F}}_{\rgraph,\ADE}\simeq \mathcal{F}_{\rgraph',\ADE}$, which induces an equivalence of global sections
\[
\glsec(\rgraph,\mathcal{F}_{\rgraph,\ADE})\simeq \glsec(\rgraph',\mathcal{F}_{\rgraph',\ADE})\,.
\]
\end{proposition}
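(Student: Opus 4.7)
The plan is to reduce the statement to a purely local claim about the quadrilateral in which the flip takes place. By construction, the pull-push along the chosen zig-zag of contractions linking $\rgraph$ to $\rgraph'$ only modifies the schober in the subgraph supported in a neighborhood of the flipped edge; outside this neighborhood, the $\rgraph'$-parametrized perverse schobers $\tilde{\mathcal{F}}_{\rgraph,\ADE}$ and $\mathcal{F}_{\rgraph',\ADE}$ coincide tautologically, as both are defined triangle-by-triangle from the unchanged triangles of the triangulation. Hence it suffices to prove the equivalence after restricting to the trivalent subgraph spanning the quadrilateral.

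On this quadrilateral, both $\tilde{\mathcal{F}}_{\rgraph,\ADE}$ and $\mathcal{F}_{\rgraph',\ADE}$ are, by \Cref{constr:Teichmullerschober}, assembled from two copies of $\mathcal{F}_{\Delta,\ADE}$ glued along a single internal edge (one diagonal of the quadrilateral, respectively the other). I would build the required equivalence of $\rgraph'$-parametrized perverse schobers by constructing explicit equivalences of $\infty$-categories at each internal vertex, compatible with the functors to the edges. The two key ingredients are: first, the $\mathbb{Z}/6\mathbb{Z}$-type symmetry of $\mathcal{F}_{\Delta,\ADE}$ established in \Cref{prop:Z6symmetry}, which allows one to freely change the choice of distinguished edge of each triangle without altering the underlying schober up to equivalence and conjugates the adjoint functors $\tilde{D}_i^R$ by $\D(\sigma)$; and second, the involutivity of $\D(\sigma)$ combined with the prescribed insertions of $\D(\sigma)$ along directed halfedges in \Cref{constr:Teichmullerschober}, which ensures that these insertions can be absorbed or cancelled coherently when the diagonal is swapped.

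Once the local equivalence $\tilde{\mathcal{F}}_{\rgraph,\ADE}|_{\text{quad}}\simeq \mathcal{F}_{\rgraph',\ADE}|_{\text{quad}}$ is in hand, it extends to a global equivalence of $\rgraph'$-parametrized perverse schobers by gluing with the identity on the complement, and the induced equivalence $\glsec(\rgraph',\tilde{\mathcal{F}}_{\rgraph,\ADE})\simeq \glsec(\rgraph',\mathcal{F}_{\rgraph',\ADE})$ is then automatic from the definition of global sections as a limit. The remaining identification $\glsec(\rgraph,\mathcal{F}_{\rgraph,\ADE})\simeq \glsec(\rgraph',\tilde{\mathcal{F}}_{\rgraph,\ADE})$ is the general invariance of global sections under pull-push along contractions of edges, already proven in \cite[Section 6.4]{Chr22}; this is precisely the reason for defining $\tilde{\mathcal{F}}_{\rgraph,\ADE}$ via that procedure.

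The main obstacle is the second step: one must carefully track, for each of the two triangulations, the pattern of $\D(\sigma)$ insertions and the choices of distinguished edges, and verify that the two amalgamated schobers on the quadrilateral are genuinely related by a canonical equivalence rather than merely abstractly equivalent. A cleaner, more conceptual route would be to identify both sides with a single universal object---for instance, the relative Ginzburg algebra of the amalgamated ice quiver with potential from \Cref{def:surface_ice_quiver} associated with the quadrilateral, using \Cref{thm:Ginzburg_amalgamation} and \Cref{prop:Ginzburg_alg}---and then deduce the flip invariance from the fact that the two resulting ice quivers with potential are related by a sequence of mutations at the internal vertices arising from the flipped edge, mirroring the classical combinatorial statement that different triangulations give mutation-equivalent cluster seeds. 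Both approaches must confront the signs appearing in \Cref{lem:signs_relCY}, and it is at this bookkeeping level that the bulk of the remaining work lies.
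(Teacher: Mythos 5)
There is a genuine gap, and it sits exactly where you locate the ``main obstacle'' and then defer it. Your reduction asserts that on the quadrilateral both $\tilde{\mathcal{F}}_{\rgraph,\ADE}$ and $\mathcal{F}_{\rgraph',\ADE}$ are ``by \Cref{constr:Teichmullerschober}, assembled from two copies of $\mathcal{F}_{\Delta,\ADE}$ glued along a single internal edge.'' This is true for $\mathcal{F}_{\rgraph',\ADE}$ but not, a priori, for $\tilde{\mathcal{F}}_{\rgraph,\ADE}$: the latter is \emph{not} produced by \Cref{constr:Teichmullerschober}, it is produced by pull-push of $\mathcal{F}_{\rgraph,\ADE}$ along the zig-zag of contractions, so its values at the two new trivalent vertices are obtained by contracting the old diagonal to a $4$-valent vertex and re-expanding along the new one. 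Showing that the result is again locally of the form $\mathcal{F}_{\Delta,\ADE}$ with the prescribed $\D(\sigma)$-insertions is precisely the content of the proposition, and it is not ``bookkeeping of insertions and distinguished edges'': it requires re-decomposing the perverse schober on the $4$-spider the other way, which is an honest statement about the local models $\V^n_F$ of \Cref{def:locmodel_schober}. The paper does this by running the $\ADE=A_1$ argument of \cite[Section 6.4]{Chr22} with $T=\D(\sigma)$, using \Cref{lem:locmodel} to identify $\tilde{D}_i^R\simeq \D(\sigma)^{i-1}\circ\varrho_i$ and then \cite[Prop.~3.11]{Chr22} together with involutivity of $\D(\sigma)$ to establish the needed equivalence of local schobers on the $3$-spider. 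Your two proposed ingredients do not supply this: \Cref{prop:Z6symmetry} only lets you change the distinguished edge of a single basic triangle (it is what makes \Cref{constr:Teichmullerschober} independent of the direction), and involutivity of $\D(\sigma)$ only cancels insertions along a fixed edge; neither addresses the contraction/expansion across the quadrilateral.

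Your outer framing is fine and matches the paper: the flip is supported near the quadrilateral, $\glsec(\rgraph,\mathcal{F}_{\rgraph,\ADE})\simeq\glsec(\rgraph',\tilde{\mathcal{F}}_{\rgraph,\ADE})$ is the invariance of global sections under pull-push from \cite[Section 6.4]{Chr22}, and the global equivalence follows once the local one is established. But the alternative ``conceptual'' route you sketch is not available with the tools in this paper: identifying both sides with relative Ginzburg dg categories via \Cref{prop:Ginzburg_alg} and \Cref{thm:Ginzburg_amalgamation} and then invoking ``mutation equivalence'' would require (i) a derived invariance result for relative Ginzburg dg categories under mutation of ice quivers with potential, (ii) an identification of the flip with an explicit mutation sequence of the amalgamated quiver with potential in all Dynkin types, and (iii) control of non-degeneracy of the potential -- none of which is established here (the last is explicitly left open in the introduction). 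So as written the proposal leaves the central step unproved and, in its stated reduction, implicitly assumes the conclusion for $\tilde{\mathcal{F}}_{\rgraph,\ADE}$.
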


\begin{proof}
This follows from essentially the same proof of the statement given in the case $\ADE=A_1$ in \cite[Section 6.4]{Chr22}, by setting $T=\D(\sigma)$. Note that by \Cref{lem:locmodel}, in the local model for perverse schobers on the $3$-spider, the functors $\tilde{D}_i^R$ are identified with $\D(\sigma)^{i-1}\circ \varrho_i$. The argument also uses the equivalence of perverse schobers (in the notation of loc.~cit.) between
\[
\begin{tikzcd}
                                             & {} \arrow[d, "\D(\sigma)\circ \varrho_1", no head, maps to] &                                              \\
                                             & f^*                                                & {} \arrow[l, "\varrho_3"', no head, maps to] \\
{} \arrow[ru, "\varrho_2", no head, maps to] &                                                    &                                             
\end{tikzcd}
\]
and
\[
\begin{tikzcd}
                                             & {} \arrow[d, "\varrho_2", no head, maps to] &                                              \\
                                             & f^*                                         & {} \arrow[l, "\varrho_1"', no head, maps to] \\
{} \arrow[ru, "\D(\sigma)\circ \varrho_3", no head, maps to] &                                             &                                             
\end{tikzcd}
\]
which uses \cite[Prop.~3.11]{Chr22} and the fact that $\D(\sigma)$ is an involution. 
\end{proof}

Since any two ideal triangulations can be connected by a sequence of flips, we obtain the following:

\begin{corollary}\label{cor:global_sections_independence}
Up to equivalence, the $\infty$-category of global section $\glsec(\rgraph,\mathcal{F}_{\rgraph, \ADE})$ is independent on the choice of ideal triangulation and dual ribbon graph $\rgraph$.
\end{corollary}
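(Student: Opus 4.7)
The plan is to invoke the classical fact that the flip graph of ideal triangulations of $\bf S$ is connected, and then iterate the single-flip equivalence of \Cref{prop:flip_equivalence}.

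First, I would recall that for a marked surface $\bf S$ with non-empty boundary and no punctures (the standing hypothesis), any two ideal triangulations are related by a finite sequence of flips at interior (non-boundary) edges. This is a classical statement going back to Harer and Penner and can be quoted as such. Passing to dual trivalent spanning graphs, this produces, for any two choices $\rgraph$ and $\rgraph'$ of spanning graphs dual to ideal triangulations of $\bf S$, a finite sequence
\[
\rgraph = \rgraph_0,\ \rgraph_1,\ \ldots,\ \rgraph_n = \rgraph'
\]
in which consecutive graphs differ by a single flip at an internal edge.

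Next, I would equip each intermediate graph $\rgraph_i$ with an arbitrary direction in the sense of \Cref{constr:Teichmullerschober}; the lemma preceding \Cref{prop:flip_equivalence} ensures that the resulting $\rgraph_i$-parametrized perverse schober $\mathcal{F}_{\rgraph_i,\ADE}$ depends on this choice only up to equivalence, so these intermediate choices are harmless. Applying \Cref{prop:flip_equivalence} to each consecutive pair $(\rgraph_i,\rgraph_{i+1})$ yields a chain of equivalences
\[
\glsec(\rgraph, \mathcal{F}_{\rgraph,\ADE}) \simeq \glsec(\rgraph_1, \mathcal{F}_{\rgraph_1,\ADE}) \simeq \cdots \simeq \glsec(\rgraph', \mathcal{F}_{\rgraph',\ADE}),
\]
whose composition provides the required equivalence of $\infty$-categories of global sections.

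I do not expect any serious obstacle: the flip equivalence is already doing the heavy lifting in \Cref{prop:flip_equivalence}, and the independence of orientations was handled in the preceding lemma. The only external input is the connectivity of the flip graph of ideal triangulations of a bordered marked surface without punctures, which should be cited as classical rather than reproved. One small point worth flagging in the write-up is that the equivalence obtained this way a priori depends on the chosen sequence of flips; the statement asserts only existence of an equivalence, so this ambiguity is not part of the claim.
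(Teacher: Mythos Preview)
Your proposal is correct and matches the paper's approach exactly: the paper simply notes that any two ideal triangulations are connected by a sequence of flips and then invokes \Cref{prop:flip_equivalence}, which is precisely what you do (with a bit more detail about intermediate orientations and the caveat on non-canonicity of the composite equivalence).
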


Finally, we describe $\glsec(\rgraph,\mathcal{F}_{\rgraph,\ADE})$ as the derived $\infty$-category of a dg category $\GS$. We defer to \Cref{subsec:icequivers_HTT} the discussion of the corresponding ice quivers with potential.

\begin{construction}\label{constr:GSfunctor}
Choose an orientation of $\rgraph$. We define a functor $\underline{\GS}\colon \on{Exit}(\rgraph)^{\on{op}}\to \on{dgCat}$ as follows:
\begin{itemize}
\item For each edge $e$ of $\rgraph$, we set $\underline{\GS}(e)=\Pi_2(\on{proj}(\ADE))$.
\item For each vertex $v$ of $\rgraph$, we set $\underline{\GS}(v)=\GT$
\item The orientation of $\rgraph$ determines a total order on the edges $e_1,e_2,e_3$ incident to any vertex $v$. We denote the three components of the dg functor \eqref{eq:CYcompletion} by $((D_1')^{\on{dg}},(D_2')^{\on{dg}},(D_3')^{\on{dg}})$. Let $\sigma^{\on{dg}}\colon \Pi_2(\on{proj}(\ADE))\to \Pi_2(\on{proj}(\ADE))$ be the dg functor corresponding to the involution $\sigma$ from \Cref{def:ADEinvolution}. 
We set 
\[ (\tilde{D}_1^{\on{dg}},\tilde{D}_2^{\on{dg}},\tilde{D}_3^{\on{dg}})=  ((D_1')^{\on{dg}},(D_2')^{\on{dg}}\circ \sigma^{\on{dg}},(D_3')^{\on{dg}})\,.\]
We set \[
\underline{\GS}(e_i\to v)=\begin{cases} \tilde{D}_i^{\on{dg}} & e_i\text{ points away from }v\\
\tilde{D}_i^{\on{dg}}\circ \sigma^{\on{dg}} & e_i\text{ points to  }v\,.  \end{cases}
\]
\end{itemize}
\end{construction}

\begin{definition}\label{def:GS}~
\begin{enumerate}[(1)]
\item The dg category $\GS$ is defined as the homotopy colimit\footnote{With respect to the quasi-equivalence model structure.} of the functor 
\[ \underline{\GS}\colon  \on{Exit}(\rgraph)^{\on{op}}\to \on{dgCat}\] 
from \Cref{constr:GSfunctor}.
\item From the colimit diagram arises the dg functor 
\[ f_{\GS}\colon \Pi_2(\on{proj}(I))^{\amalg \rgraph_1^\partial}\to \GS\,,\] 
with $\rgraph_1^\partial$ the set of external edges of $\rgraph$. 
\end{enumerate}
\end{definition}

\begin{proposition}\label{prop:rel3CY}
The dg category $\GS$ is smooth, connective and has finitely many equivalence classes of indecomposable objects. Furthermore, the dg functor $f_{\GS}$ admits a left $3$-Calabi--Yau structure.
\end{proposition}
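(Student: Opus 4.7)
My plan is to prove the four assertions by reducing everything to a gluing analysis over the iterated pushout presentation of $\GS$. Since $\on{Exit}(\rgraph)^{\op}$ has height one, the homotopy colimit defining $\GS$ can be rewritten, after choosing any spanning tree of internal edges of $\rgraph$, as an iterated pushout of the $\GT$'s (one per vertex of $\rgraph$) along copies of $\Pi_2(\on{proj}(I))$ (one per internal edge). For each internal edge $e$ joining vertices $v,v'$, the two legs of the pushout are the relevant components of $\underline{\GS}(e\to v)$ and $\underline{\GS}(e\to v')$, and the orientation of $e$ determines on exactly one of the two sides the insertion of $\sigma^{\on{dg}}$, as prescribed in \Cref{constr:GSfunctor}. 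This is completely parallel to the reformulation used in the case $I=A_1$ in \cite{Chr22b}, and allows to reduce each of the four claims to a statement that is either local at a vertex or a statement about a single pushout square.

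Connectivity and the finiteness of the set of indecomposables follow directly from the pushout description: $\GT$ is connective with finitely many indecomposables (being the relative $3$-Calabi--Yau completion of the additive dg functor \eqref{eq:CYcompletion} with finite-dimensional $\on{Fun}([1],\on{proj}(I))$), and $\Pi_2(\on{proj}(I))$ is connective additive with finitely many indecomposables; a pushout of such dg categories along a fully faithful additive functor only identifies indecomposables and preserves connectivity of morphism complexes. Smoothness of $\GS$ can be obtained either by invoking that smoothness is preserved under homotopy pushouts of smooth dg categories along dg functors admitting left adjoints (which is the case here since each $\tilde{D}_i^{\on{dg}}$ has a left adjoint up to Morita equivalence, the relevant component of $\varrho_i$), or equivalently by observing that the dualizability of $\D(\GS)\simeq \glsec(\rgraph,\mathcal{F}_{\rgraph,I})$ follows from the dualizability of $\D(\GT)$ together with the closure of dualizable $k$-linear $\infty$-categories under limits along adjointable diagrams, analogous to \cite[Cor.~3.11]{Chr23}.

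The main step, and the only nontrivial one, is the construction of the relative left $3$-Calabi--Yau structure on $f_{\GS}$. I will invoke a gluing theorem for relative left Calabi--Yau structures along pushouts of dg categories: given a pushout square with vertices $A\oplus B \rightarrow \mathcal{G}_1$, $A\oplus B \rightarrow \mathcal{G}_2$ (where $A$ and $B$ are the pieces that become external, respectively internal, in the pushout), and relative left $n$-CY classes $\eta_1,\eta_2$ whose restrictions to $B$ are negatives of each other, the pushout $\mathcal{G}_1\amalg_B \mathcal{G}_2$ inherits a relative left $n$-CY structure over $A$. This is the content of \cite[Thm.~6.2]{BCS20} in the formulation we need (and is also encoded by the compatibility of pushouts with negative cyclic homology fiber sequences). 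The essential input is then supplied by \Cref{lem:signs_relCY}: the restriction of the relative class on each $\GT$ to every boundary factor $\Pi_2(\on{proj}(I))$ is the same class $\eta'$, and the involution $\sigma$ acts on this class by a sign, $\D(\sigma)(\eta')=-\eta'$. Since by \Cref{constr:GSfunctor} the two legs of the pushout at an internal edge differ precisely by a composition with $\sigma^{\on{dg}}$ on one side, the restrictions of the two local CY classes to the internal $\Pi_2(\on{proj}(I))$ are exact negatives of each other, so the gluing hypothesis is satisfied. Iterating over all internal edges yields the desired relative left $3$-Calabi--Yau structure on $f_{\GS}$, with the surviving restrictions sitting exactly on the external edges indexed by $\rgraph_1^\partial$. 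The main subtlety I anticipate is keeping track of the signs carefully enough to verify this cancellation at every internal edge simultaneously; this is exactly where the specific choice of insertion of $\sigma^{\on{dg}}$ in \Cref{constr:GSfunctor} is forced.
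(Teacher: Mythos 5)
Your proposal is correct and follows essentially the same route as the paper: smoothness and connectivity are preserved under the homotopy colimit (computed as iterated pushouts along the internal edges), finiteness of indecomposables comes from the local finiteness at the vertices, and the relative left $3$-Calabi--Yau structure is obtained by gluing the local relative structures via the Brav--Dyckerhoff gluing theorem (the paper's reference for the gluing statement you invoke), with compatibility of the restricted classes guaranteed exactly by the sign identity $\D(\sigma)(\eta')=-\eta'$ from \Cref{lem:signs_relCY} and the insertion of $\sigma^{\on{dg}}$ at each internal edge. The only cosmetic deviations are your alternative justification of smoothness via adjointability/dualizability of global sections and the superfluous mention of a spanning tree (one simply glues one internal edge at a time), neither of which changes the argument.
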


\begin{proof}
Smoothness and connectivity are preserved under homotopy colimits with respect to the quasi-equivalence model structure. The homotopy colimit $\GS$ can be computed as the additive closure of the homotopy colimit of the subfunctor on the subcategories of indecomposable objects. Their homotopy colimit has finitely isomorphisms classes of objects. The left $3$-Calabi--Yau structure on $f_{\GS}$ can be obtained via the gluing of relative left $3$-Calabi--Yau structures as in \cite[Thm.~6.2]{BD19}. The local relative Calabi--Yau structures of $\underline{\GS}$ are compatible by the sign discussion in \Cref{lem:signs_relCY}. 
\end{proof}

\begin{proposition}\label{prop:global_sections_GS}
There exists an equivalence of $k$-linear $\infty$-categories
\[
\glsec(\rgraph,\mathcal{F}_{\rgraph,\ADE})\simeq \D(\GS)\,.
\]
\end{proposition}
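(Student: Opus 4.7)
The plan is to convert the limit defining $\glsec(\rgraph,\mathcal{F}_{\rgraph,\ADE})$ into a colimit by passing to left adjoints, and then to identify this colimit with $\D(\GS)$ via the colimit-preservation of $\D(\mhyphen)\colon \on{dgCat}\to \on{LinCat}_k$.

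First, I will set up the limit-to-colimit exchange. By \Cref{constr:Teichmullerschober} and \Cref{thm:3gonschober}, each transition functor $\mathcal{F}_{\rgraph,\ADE}(v\to e)$ is either $\tilde{D}_i^R$ or $\D(\sigma)\circ \tilde{D}_i^R$ for some $i$. Since $\tilde{D}_i$ preserves compact objects (coming from a dg functor between smooth connective dg categories), its right adjoint $\tilde{D}_i^R$ preserves filtered colimits; being exact between stable presentable $\infty$-categories it is in fact colimit-preserving, and since $\D(\sigma)$ is an autoequivalence, $\mathcal{F}_{\rgraph,\ADE}(v\to e)$ is a morphism in $\on{LinCat}_k$ that is itself a right adjoint with a colimit-preserving left adjoint. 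The equivalence $\mathcal{P}r^L\simeq (\mathcal{P}r^R)^{\on{op}}$, in its $k$-linear enhancement, then yields
\[
\glsec(\rgraph,\mathcal{F}_{\rgraph,\ADE}) \;=\; \lim_{\on{Exit}(\rgraph)}\mathcal{F}_{\rgraph,\ADE} \;\simeq\; \on{colim}_{\on{Exit}(\rgraph)^{\on{op}}}\mathcal{F}_{\rgraph,\ADE}^L
\]
in $\on{LinCat}_k$, where $\mathcal{F}_{\rgraph,\ADE}^L(e\to v)$ denotes the left adjoint of $\mathcal{F}_{\rgraph,\ADE}(v\to e)$.

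Second, I will identify $\mathcal{F}_{\rgraph,\ADE}^L$ with $\D(\underline{\GS})$. Using $\sigma^2=\on{id}$, the left adjoint of $\D(\sigma)\circ\tilde{D}_i^R$ is $\tilde{D}_i\circ\D(\sigma)$; hence $\mathcal{F}_{\rgraph,\ADE}^L(e_i\to v)$ equals $\tilde{D}_i$ when $e_i$ points away from $v$ and $\tilde{D}_i\circ\D(\sigma)$ when $e_i$ points toward $v$, matching $\D(\underline{\GS}(e_i\to v))$ on the nose by \Cref{constr:GSfunctor}. Since $\D(\mhyphen)$ takes homotopy colimits with respect to the quasi-equivalence model structure to $\infty$-categorical colimits, combining the previous step with $\GS=\on{colim}_{\on{Exit}(\rgraph)^{\on{op}}}\underline{\GS}$ from \Cref{def:GS} gives
\[
\on{colim}_{\on{Exit}(\rgraph)^{\on{op}}}\mathcal{F}_{\rgraph,\ADE}^L \;\simeq\; \on{colim}_{\on{Exit}(\rgraph)^{\on{op}}}\D(\underline{\GS}) \;\simeq\; \D(\GS),
\]
finishing the chain of equivalences.

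The main obstacle is the first step, the limit-to-colimit exchange in $\on{LinCat}_k$, which must respect the $k$-linear structure and be valid for non-small presentable $\infty$-categories; this is essentially formal but requires the input diagram to live in the subcategory of right adjoints, and uses stability crucially to upgrade preservation of filtered colimits to preservation of all colimits. The remaining bookkeeping — ensuring that the post-composition convention for $\D(\sigma)$ in \Cref{constr:Teichmullerschober} translates, under adjoint passage, into the pre-composition convention used for $\sigma^{\on{dg}}$ in \Cref{constr:GSfunctor} — is straightforward given that $\sigma$ is involutive.
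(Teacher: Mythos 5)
Your proposal is correct and follows essentially the same route as the paper: pass to the left adjoint diagram to convert the limit defining $\glsec(\rgraph,\mathcal{F}_{\rgraph,\ADE})$ into a colimit in $\on{LinCat}_k$, identify that diagram with $\D(\underline{\GS})$ (including the $\D(\sigma)$ bookkeeping), and use that $\D(\mhyphen)$ sends homotopy colimits of dg categories to colimits in $\on{LinCat}_k$. The only difference is that you spell out the adjointability and convention-matching details that the paper's proof leaves implicit.
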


\begin{proof}
Passing to derived $\infty$-categories, the diagram $\underline{\GS}$ is mapped to the left adjoint diagram of $\mathcal{F}_{\rgraph,\ADE}$. The desired equivalence follows from the facts that the passage to derived $\infty$-categories maps homotopy colimits to colimits in $\on{LinCat}_k$ and that the colimit of the diagram in $\on{LinCat}_k$ is equivalent to the limit in $\on{LinCat}_k$ of the right adjoint diagram.
\end{proof}

\begin{remark}\label{rem:projectivesof3CY}
Given a vertex $v\in \rgraph_0$, evaluation of global sections at $v\in \on{Exit}(\rgraph)$ defines a functor $\on{ev}_e\colon \glsec(\rgraph,\mathcal{F}_\ADE)\to \mathcal{F}(v)= \D(\GT)$. The functor $\on{ev}_e$ admits a left adjoint, which we call left induction, and denote by $\on{ind}_v^L$, see also \cite{Chr25b}. Under the equivalence of $\infty$-categories from $\D(\GS)\simeq \glsec(\rgraph,\mathcal{F}_{\ADE})$, the additive subcategory $\GS$ is mapped to the additive hull of $\bigcup_{v\in \rgraph_0} \on{ind}_v^L(\GT)$. This follows from the observation that in the construction of $\GS$ as a homotopy colimit, the appearing functors give rise to the left adjoints of the functors appearing in the limit diagram of $\mathcal{F}_\ADE$ (i.e.~the evaluation functors) when passing to derived $\infty$-categories.
\end{remark}

\subsection{Ice quivers with potential}\label{subsec:icequivers_HTT}

We begin by associating an ice quiver with potential $(Q_{\Delta,\ADE},F_{\Delta,\ADE},W)$ with the basic triangle, following \cite{KL25}. Consider the Auslander--Reiten quiver $\tilde{Q}$ of the category $\on{Fun}([1],\on{proj}(\ADE))$. Recall that its vertices are representatives of the equivalences classes of the indecomposable objects and the arrows are the irreducible morphisms between these. The Auslander--Reiten quiver further comes with the mesh relations (one for each Auslander--Reiten translation), which are finite sums $\sum_i \epsilon_i\alpha_i'\alpha_i=0$, with $\epsilon_i=\pm 1$ and $\alpha_i',\alpha_i$ arrows in the Auslander--Reiten quiver. 
We define $Q_{\Delta,\ADE}$ as the quiver obtained from the Auslander--Reiten quiver $\tilde{Q}$ by adding 
\begin{enumerate}[(1)]
\item for each Auslander--Reiten translation $\tau\colon i\to j$ a dual arrow $\beta\colon j\to i$, and
\item for each arrow $i\to j$ in $\on{proj}(\ADE)$ mapped to a composite of two arrows in $\tilde{Q}$ a dual arrow $\beta \colon j\to i$. 
\end{enumerate}
The frozen subquiver $F_{\Delta,\ADE}$ of $Q_{\Delta,\ADE}$ consists of the two full subquivers $\ADE$ of $Q_{\Delta,\ADE}$ on the objects $\{0\to P\}_{P\in \on{proj}(\ADE)}$ and $\{P\to 0\}_{P\in \on{proj}(\ADE)}$, as well as the subquiver $\ADE^{\on{op}}$ composed of the dual arrows from (2).

The potential $W$ is obtained by adding 
\begin{itemize}
\item for each mesh relation $\sum_i \epsilon_i\alpha_i'\alpha_i=0$ the term $\beta \sum_i \epsilon_i\alpha_i'\alpha_i=0$, with $\beta$ the corresponding dual arrow and 
\item for each arrow $i\to j$ in $\on{proj}(\ADE)$ mapped to a composite of two arrows $\alpha'\alpha$ in $\tilde{Q}$ the $3$-cycle $-\beta\alpha'\alpha$ with $\beta$ the corresponding dual arrow. 
\end{itemize}

\begin{example}
In the case $\ADE=A_3$, let $P_1,P_2,P_3\in \on{proj}(A_3)$ denote the three projective indecomposable objects. The Auslander--Reiten quiver is given as follows:
\begin{center}
\begin{tikzcd}[column sep=tiny]
                                                               &                                                                & 0\shortrightarrow P_3 \arrow[rd] \arrow[rr, dotted]              &                                                                  & P_1\shortrightarrow 0 \arrow[rd]                                 &                                    &                       \\
                                                               & 0\shortrightarrow P_2 \arrow[rd] \arrow[ru] \arrow[rr, dotted] &                                                                  & P_1\shortrightarrow P_3 \arrow[ru] \arrow[rd] \arrow[rr, dotted] &                                                                  & P_2\shortrightarrow 0 \arrow[rd]   &                       \\
0\shortrightarrow P_1 \arrow[ru] \arrow[rd] \arrow[rr, dotted] &                                                                & P_1\shortrightarrow P_2 \arrow[rd] \arrow[ru] \arrow[rr, dotted] &                                                                  & P_2\shortrightarrow P_3 \arrow[rd] \arrow[ru] \arrow[rr, dotted] &                                    & P_3\shortrightarrow 0 \\
                                                               & P_1\shortrightarrow P_1 \arrow[ru]                             &                                                                  & P_2\shortrightarrow P_2 \arrow[ru]                               &                                                                  & P_3\shortrightarrow P_3 \arrow[ru] &                      
\end{tikzcd}
\end{center}

The dotted arrows indicate the Auslander--Reiten translation. The corresponding ice quiver with potential $(Q_{\Delta,\ADE},F_{\Delta,\ADE})$ is depicted in \Cref{fig:A3_quiver}. The potential $W$ is given by the sum of the counterclockwise triangles, minus the sum of the clockwise triangles. 

The ice quiver $(Q_{\Delta,A_3},F_{\Delta,A_3})$ has an apparent $\mathbb{Z}/3\mathbb{Z}$ rotational symmetry. For most other orientations in type $A$, and in types $D$ and $E$, the ice quivers $(Q_{\Delta,\ADE})$ do not have such a symmetry, since $\ADE^{\on{op}}\not \simeq \ADE$. 
\end{example}

\begin{remark}\label{rem:Ginzburg_identification}
This ice quiver with potential was described in \cite[Rem.~3.3.1]{KL25}. It is also noted there (though the proof is only sketched) that there exist a Morita equivalence
\[ \mathscr{G}_{(Q_{\Delta,\ADE},F_{\Delta,\ADE},W)}\simeq \GT \] 
with the relative Ginzburg dg category in the sense of \Cref{def:rel_Ginzburg}. We further note without proof that the boundary functor of the relative $3$-Calabi--Yau completion \eqref{eq:CYcompletion} identifies with the boundary functor of the relative Ginzburg dg category, a model for which is given in \Cref{lem:dg_cofibration}. We will not use these facts beyond this section of the paper.
\end{remark}

\begin{remark}
The ice quiver $(Q_{\Delta,\ADE},F_{\Delta,\ADE})$ is expected to recover an ice quiver constructed in \cite[Section 11]{GS19} (used for the cluster seeds of the triangle). In type $A_n$ with the linear orientation, it is however clear that $(Q_{\Delta,\ADE},F_{\Delta,\ADE})$ arises from the construction of loc.~cit.~for the reduced expression $w_0=s_1s_2\dots s_n s_1s_2\dots s_{n-1}\dots s_1s_2s_1$.

Variants of the quiver $\QT$ previously also appeared in \cite{Fei17,Abr18,Le19}. 
\end{remark}

\begin{definition}\label{def:surface_ice_quiver}
Let ${\bf S}$ be a marked surface equipped with an ideal triangulation with dual trivalent spanning ribbon graph $\rgraph$. The ice quiver with potential $(Q_{\rgraph,\ADE},F_{\rgraph,\ADE},W)$ is obtained from $(Q_{\Delta,\ADE},F_{\Delta,\ADE},W)$ via amalgamation along the triangulation as in \Cref{def:amalgamation}. For this, we specify how the glued frozen quivers are considered as coinciding up to their orientations. When gluing two ice quivers $\ADE,\ADE$ along an edge (or analogously for $\ADE^{\on{op}},\ADE^{\on{op}}$), we use the involution $\sigma\colon \ADE_0\simeq \ADE_0$ from \Cref{def:ADEinvolution} (this determines the bijection $\ADE_1\simeq \ADE_1$). When gluing two ice quivers $\ADE,\ADE^{\on{op}}$ along an edge, we use the trivial identification $\ADE_0=\ADE_0^{\on{op}}$ such that $\ADE_1\cap (\ADE_1)^{\on{op}}=\emptyset$. 
\end{definition}

See \Cref{fig:A3_square_quiver} for an example of the amalgamation ice quiver from \Cref{def:surface_ice_quiver}.

The ice quivers in \cite{GS19} arise via the same kind of amalgamation, see \cite[Thm.~9.7]{GS19}. Note that in the amalgamation, the same involution $*=\sigma$ is used, see \Cref{rem:involution_longest_element}, and for instance \cite[Section 13.1.3]{GS19}.

\begin{proposition}\label{prop:Ginzburg_alg}
There exists a Morita equivalence 
\[
\GS\simeq \mathscr{G}_{(Q_{\rgraph,\ADE},F_{\rgraph,\ADE},W)}\,,
\]
and thus 
\[ \glsec(\rgraph,\mathcal{F}_{\rgraph,\ADE})\simeq \D(\mathscr{G}_{(Q_{\rgraph,\ADE},F_{\rgraph,\ADE},W)})\,.\]
\end{proposition}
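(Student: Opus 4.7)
The plan is to apply the general amalgamation theorem for relative Ginzburg dg categories \Cref{thm:Ginzburg_amalgamation} inductively over the internal edges of $\rgraph$, using the local identification at each triangle as the base case. First I would invoke the local Morita equivalence $\GT \simeq \mathscr{G}_{(Q_{\Delta,\ADE},F_{\Delta,\ADE},W)}$ of \Cref{rem:Ginzburg_identification}, under which the three dg functors $\tilde{D}_i^{\on{dg}}$ appearing in \Cref{constr:GSfunctor} (and their possible precomposition with $\sigma^{\on{dg}}$ dictated by the orientation of $\rgraph$) correspond to the three boundary functors $\Pi_2(\on{proj}(\ADE)) \to \mathscr{G}_{(Q_{\Delta,\ADE},F_{\Delta,\ADE},W)}$ associated with the three frozen copies of $\ADE$ or $\ADE^{\on{op}}$ contained in $F_{\Delta,\ADE}$. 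This replaces the diagram $\underline{\GS}$ by a quasi-equivalent diagram valued in relative Ginzburg dg categories whose edges are the natural frozen inclusions.

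Next, the homotopy colimit defining $\GS$ in \Cref{def:GS} decomposes as an iterated sequence of homotopy pushouts along the internal edges of $\rgraph$: at each such edge $e$ one forms the pushout of the two local pieces over their common frozen subcategory $\Pi_2(\on{proj}(\ADE))$. The identification used in this pushout is prescribed by the two edge-to-vertex morphisms of $\underline{\GS}$ incident to $e$; depending on the orientation of $\rgraph$, the corresponding frozen pieces are glued either by the trivial identification or via $\sigma^{\on{dg}}$. This is precisely the identification rule used in \Cref{def:surface_ice_quiver} for the amalgamation of the ice quivers with potential at the corresponding edge of the triangulation. Applying \Cref{thm:Ginzburg_amalgamation} at each step, the iterated homotopy pushout yields a Morita equivalence $\GS \simeq \mathscr{G}_{(Q_{\rgraph,\ADE},F_{\rgraph,\ADE},W)}$. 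The second equivalence asserted in the proposition then follows by passing to derived $\infty$-categories and invoking \Cref{prop:global_sections_GS}.

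The main obstacle is the bookkeeping of the $\sigma^{\on{dg}}$-twists: one must check that the two sources of $\sigma$ in the setup — the insertion of $\sigma^{\on{dg}}$ in $\tilde{D}_2^{\on{dg}}$ in \Cref{constr:GSfunctor} together with the halfedge-dependent twists encoded by the orientation of $\rgraph$, and the use of $\sigma$ on vertex sets when matching two frozen $\ADE$-pieces (resp.\ two $\ADE^{\on{op}}$-pieces) in \Cref{def:surface_ice_quiver} — produce identical identifications of the shared frozen $\Pi_2(\on{proj}(\ADE))$-subcategory. Once it is checked that the local identification of \Cref{rem:Ginzburg_identification} sends the canonical frozen $\ADE$- or $\ADE^{\on{op}}$-piece of $\mathscr{G}_{(Q_{\Delta,\ADE},F_{\Delta,\ADE},W)}$ to the image of the corresponding component $\tilde{D}_i^{\on{dg}}$, the combinatorics of orientations and $\sigma$-twists collapses to the rule already encoded in \Cref{def:surface_ice_quiver}, and the inductive argument goes through.
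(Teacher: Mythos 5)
Your proposal is correct and takes essentially the same route as the paper: both start from the local Morita equivalence of \Cref{rem:Ginzburg_identification}, decompose the homotopy colimit defining $\GS$ into iterated pushouts (the paper glues one triangle at a time, you glue along internal edges), and apply \Cref{thm:Ginzburg_amalgamation} at each step before passing to derived $\infty$-categories via \Cref{prop:global_sections_GS}. The $\sigma$-bookkeeping that you leave as the remaining check is exactly what the paper settles by invoking \Cref{ex:Dynkininvolution}: the twist $\sigma^{\on{dg}}$ factors as $\xi\circ\psi$, where $\psi$ is the identification built into \Cref{thm:Ginzburg_amalgamation} and the quiver automorphism $\xi$ supplies precisely the vertex matching of the frozen parts prescribed in \Cref{def:surface_ice_quiver}.
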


\begin{proof}
Using \Cref{rem:Ginzburg_identification}, this follows from \Cref{thm:Ginzburg_amalgamation} as follows. We consider the functor $\underline{\GS}\colon  \on{Exit}(\rgraph)^{\on{op}}\to \on{dgCat}$ from \Cref{constr:GSfunctor}. By \Cref{ex:Dynkininvolution}, the involution $\sigma^{\on{dg}}$ of $\Pi_2(\on{proj}(\ADE))$ corresponds under the Morita equivalence $\Pi_2(\on{proj}(\ADE))\simeq \Pi_2(\ADE)$ to the composite of the involution $\psi$ with a dg isomorphism $\xi$ arising from a quiver automorphism $\ADE\simeq \ADE$. Gluing in one triangle at a time corresponds to decomposing the colimit over $\on{Exit}(\rgraph)^{\on{op}}$ via a sequence of pushouts of subdiagrams (using for instance \cite[Cor.~4.2.3.10]{HTT}). Each time a triangle is glued, we apply \Cref{thm:Ginzburg_amalgamation}, noting that the role of $\xi$ is to provide the identification of the frozen parts yielding after gluing the amalgamation ice quiver from \Cref{def:surface_ice_quiver}.
\end{proof}

\begin{remark}
The unfrozen part of the ice quiver with potential $(Q_{\rgraph,\ADE},F_{\rgraph,\ADE},W)$ appears in type $A_n$ in \cite{Abr18,Smi21} in relation with Fukaya categories of $3$-folds with a Lefschetz fibration to the surface and generic fiber the $A_n$-Milnor fibre. We expect $\D(\GS)$ to describe the derived $\infty$-category of a corresponding partially wrapped Fukaya category, and that the cosheaf version of $\mathcal{F}_{\rgraph,\ADE}$ encodes the descent for the partially wrapped Fukaya categories as in \cite{GPS18}.
\end{remark}

\section{The cosingularity category as a topological Fukaya category} \label{sec:cosing}

\subsection{Recollections on \texorpdfstring{$2$}{2}-periodic topological Fukaya categories}\label{subsec:2_periodic_top_Fukaya}

Let $C$ be a $2$-periodic dg category and ${\bf S}$ an oriented marked surface with non-empty boundary. Dyckerhoff--Kapranov give in \cite{DK18} a remarkable construction of the $C$-valued topological Fukaya dg category $\on{Fuk}({\bf S},C)$ of ${\bf S}$, which we summarize in the following. A central feature is that the resulting pretriangulated dg category $\on{Perf}(\on{Fuk}({\bf S},C))$ is determined uniquely up to a contractible space of choices. We note also that the construction of \cite{DK18} has been generalized to $2$-periodic stable $\infty$-categories in \cite{LurWaldhausen}.

The construction of \cite{DK18} involves the following steps: first they show that the Waldhausen $S_\bullet$-construction of $C$ defines a cyclic $2$-Segal object in the category of $2$-periodic dg categories, whose $n$-simplices are Morita equivalent to the dg category of representation of the $A_n$-quiver in $C$, see \cite[Thm.~5.0.1]{DK18}. By evaluating this cyclic object, they obtain a constructible cosheaf of dg categories on every spanning ribbon graph $\rgraph$ of ${\bf S}$, which assigns to an $n$-valent vertex, up to Morita equivalence, the dg category of representations of $A_{n-1}$ in $C$. We remark that this construction is stated in \cite[Section 3.4.2]{DK18} in a slightly different way, but can be interpreted as above. The properties of cyclic $2$-Segal objects guarantee that any contraction of spanning ribbon graphs induces a Morita equivalence on the homotopy colimits of these cosheaves. The space of spanning graphs of ${\bf S}$ and contractions is contractible. Thus $\on{Fuk}({\bf S},C)$ can be defined for any choice of spanning graph of ${\bf S}$ as the homotopy colimit of the corresponding cosheaf. 

As a consequence of their construction, they obtain in \cite[Cor.~3.4.7]{DK18} a (homotopy coherent) action on $\on{Fuk}({\bf S};C)$ of the mapping class group of ${\bf S}$ of homotopy classes of diffeomorphisms preserving the marked points.

The construction of \cite{DK18} of $\on{Fuk}({\bf S};C)$ has an interpretation in terms of the formalism of parametrized perverse schobers. First choose a spanning graph $\rgraph$ of ${\bf S}$, take the cosheaf of \cite{DK18} and pass to perfect derived $\infty$-categories. This yields a constructible cosheaf of stable $\infty$-categories, i.e.~a functor $\on{Exit}(\rgraph)^{\on{op}}\to \on{St}$. Passing to the right adjoint functors in this diagram yields a functor $\mathcal{F}\colon \on{Exit}(\rgraph)\to \on{St}$, which describes a constructible sheaf on $\rgraph$ and further a perverse schober in the sense of \Cref{def:schober}. The generic stalk of $\mathcal{F}$ is $\D^{\on{perf}}(C)$ and $\mathcal{F}$ has no singularities. Furthermore, the monodromy local system of $\mathcal{F}$ on ${\bf S}$ is trivial, and by \cite[Prop.~4.34]{Chr23}, $\mathcal{F}$ is uniquely characterized by these properties up to equivalence. The $\infty$-category of global sections $\glsec({\bf G},\mathcal{F})$ is equivalent to the perfect derived $\infty$-category of $\on{Fuk}({\bf S};C)$, as well as to the colimit of the cosheaf dual of $\mathcal{F}$\footnote{The fact that the global sections of the sheaf and cosheaf are equivalent is not automatic, as these are (co)sheaves of small stable $\infty$-categories, which are not presentable. One can show that the global sections of the cosheaf (the topological Fukaya category) and the global sections of the sheaf (the topological co-Fukaya category) agree by virtue of the assumption that each boundary component contains a marked point.}.

Justified by the above, we thus define:

\begin{definition}
Let $\ADE$ be a Dynkin diagram. We call the $\C_\ADE$-valued topological Fukaya category $\on{Fuk}({\bf S},\C_\ADE)$ the stable $\infty$-category of global sections of any choice of perverse schober with generic stalk $\C_\ADE$ and trivial monodromy, parametrized by any spanning graph of ${\bf S}$.
\end{definition}

\subsection{Recollections on semiorthogonal decompositions of perverse schobers}

We discuss the notion of a semiorthogonal decomposition of perverse schobers, introduced in \cite{Chr22b}.

\begin{definition}\label{def:schoberSOD}
Let $\mathcal{F},\mathcal{G}$ be $\rgraph$-parametrized perverse schobers taking values in presentable $\infty$-categories. We call a natural transformation $\eta\colon \mathcal{F}\to \mathcal{G}$ in $\on{Fun}(\on{Exit}(\rgraph),\mathcal{P}r^L_{\on{St}})$ a morphism of perverse schobers if $\eta$ is locally right adjointable, by which we mean that for every morphism $v\xrightarrow{h} e$ in $\on{Exit}(\rgraph)$ with corresponding diagram
\begin{equation}\label{eq:square}
\begin{tikzcd}
\mathcal{F}(v) \arrow[r, "\eta_v"] \arrow[d, "\mathcal{F}(v\xrightarrow{h} e)"'] & \mathcal{G}(v) \arrow[d, "\mathcal{G}(v\xrightarrow{h} e)"] \\
\mathcal{F}(e) \arrow[r, "\eta_e"]                                   & \mathcal{G}(e)                                 
\end{tikzcd}
\end{equation}
the mate 
\begin{align*}
\eta_v \circ \mathcal{F}(v\shortrightarrow e)^R \xrightarrow{\on{unit}} \ & \mathcal{G}(v\shortrightarrow e)^R\circ \mathcal{G}(v\shortrightarrow e)\circ \eta_v\circ \mathcal{F}(v\shortrightarrow e)^R\\
 \simeq \ & \mathcal{G}(v\shortrightarrow e)^R\circ \eta_e\circ \mathcal{F}(v\shortrightarrow e)\circ \mathcal{F}(v\shortrightarrow e)^R\\
 \xrightarrow{\on{counit}} \ & \mathcal{G}(v\shortrightarrow e)^R\circ \eta_e
\end{align*}
is an equivalence.
\end{definition}

The adjointability condition on the square \eqref{eq:square} may be called vertical right adjointability. There are three further similar conditions, called horizontal/vertical left/right adjointability. If all functors in \eqref{eq:square} also admit left adjoints, then
\begin{itemize}
\item right horizontal adjointability is equivalent to left vertical adjointability and
\item right vertical adjointability is equivalent to left horizontal adjointability.
\end{itemize}

Furthermore, \Cref{prop:adjointability} shows that the two right adjointability conditions are also equivalent. Hence, \Cref{def:schoberSOD} is equivalent to \cite[Def.~3.16]{Chr22b} (requiring horizontal right adjointability).

\begin{proposition}[$\!\!${\cite[Prop.~4.5.6]{CDW23}}]\label{prop:adjointability}
Consider a commutative diagram of stable $\infty$-categories
\[
\begin{tikzcd}
\V \arrow[d, "F_\V"] \arrow[r, "G"] & \N \arrow[d, "F_\N"] \\
\V' \arrow[r, "G'"]                 & \N'                 
\end{tikzcd}
\]
where $G$ and $G'$ are spherical functors. Suppose that $F_\V$ and $F_\N$ admit right adjoints $F_\V^R$ and $F_\N^R$. Then the square is square is horizontally right adjointable if and only if it is vertically right adjointable.
\end{proposition}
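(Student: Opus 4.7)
My plan is to bridge horizontal right adjointability and vertical right adjointability through horizontal left adjointability: I will show that horizontal right adjointability is equivalent to horizontal left adjointability using sphericality of $G,G'$, and separately that horizontal left adjointability is equivalent to vertical right adjointability via the classical $\infty$-categorical mate calculus. The latter equivalence is a formal Beck--Chevalley statement, valid because the horizontal left adjoints $G^L,G'^L$ exist by sphericality and the vertical right adjoints $F_\V^R,F_\N^R$ exist by hypothesis.

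For the spherical bridge I will first extract from horizontal right adjointability an intertwiner between the twist autoequivalences. Right-whiskering the horizontal right mate $F_\V G^R \xrightarrow{\sim} G'^R F_\N$ by $G$ and using the commutativity $F_\N G \simeq G' F_\V$ produces an equivalence $F_\V G^R G \simeq G'^R G' F_\V$ compatible with the units $\on{id}_\V \to G^R G$ and $\on{id}_{\V'} \to G'^R G'$; taking cofibres yields $F_\V T_\V \simeq T_{\V'} F_\V$. Sphericality of $G$ and $G'$ then provides, following \cite{AL17}, autoequivalences $\phi_\V$ of $\V$ and $\phi_{\V'}$ of $\V'$, built from $T_\V$ and $T_{\V'}$ up to a shift, with $G^L \simeq \phi_\V \circ G^R$ and $G'^L \simeq \phi_{\V'} \circ G'^R$. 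The twist intertwiner $F_\V T_\V \simeq T_{\V'} F_\V$ upgrades to $F_\V \phi_\V \simeq \phi_{\V'} F_\V$, and the chain
\[ F_\V G^L \simeq F_\V \phi_\V G^R \simeq \phi_{\V'} F_\V G^R \simeq \phi_{\V'} G'^R F_\N \simeq G'^L F_\N \]
then produces the desired equivalence, which after unwinding is the canonical horizontal left mate. The converse direction is symmetric, using $\phi_\V^{-1}$ in place of $\phi_\V$ and the analogous cotwist identity derived from horizontal left adjointability.

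The second leg is classical mate calculus: the horizontal left mate $G'^L F_\N \to F_\V G^L$ and the vertical right mate $G F_\V^R \to F_\N^R G'$ correspond to one another under the adjunction equivalences $\on{Mor}_{\V'}(G'^L F_\N X, Y) \simeq \on{Mor}_\N(X, F_\N^R G' Y)$ and $\on{Mor}_{\V'}(F_\V G^L X, Y) \simeq \on{Mor}_\N(X, G F_\V^R Y)$, so one is an equivalence if and only if the other is. I expect the main obstacle to lie in the spherical bridge, specifically in verifying that the composite functor-level equivalence produced above actually coincides with the canonical horizontal left mate natural transformation, rather than being merely an abstractly equivalent map. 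This requires carefully tracking how the twist intertwiner interacts with the units and counits of $G^L \dashv G$ and $G'^L \dashv G'$; in the $\infty$-categorical setting this amounts to a coherence check rather than a strict computation, and will be the most delicate piece of the argument.
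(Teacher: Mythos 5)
The paper offers no argument of its own here: the proof is a one-line deferral to the proof of \cite[Prop.~4.5.6]{CDW23}, so there is no in-text argument to compare your route against in detail. Your second leg is unproblematic: the equivalence of horizontal left adjointability with vertical right adjointability via the transpositions $\on{Mor}_{\V'}(G'^L F_\N X,Y)\simeq \on{Mor}_{\N}(X,F_\N^R G'Y)$ and $\on{Mor}_{\V'}(F_\V G^L X,Y)\simeq \on{Mor}_{\N}(X,G F_\V^R Y)$ is exactly the standard mate-calculus fact that the paper records, without proof, in the remark directly above the proposition; it only needs $G^L,G'^L$ (which exist by sphericality) and $F_\V^R,F_\N^R$ (assumed), plus the routine compatibility that the Yoneda transpose of the canonical left mate is the canonical vertical right mate.

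The substance of the proposition therefore lives entirely in your first leg, and there the proposal stops at the decisive step. The ingredients are correct: assuming the horizontal right mate $\mu\colon F_\V G^R\to G'^R F_\N$ is invertible, the unit compatibility of mates gives a commuting square over the units $\on{id}_\V\to G^RG$ and $\on{id}_{\V'}\to G'^RG'$ whose cofibre is an equivalence $F_\V T_\V\simeq T_{\V'}F_\V$, and sphericality does supply equivalences $G^L\simeq \phi_\V\circ G^R$ and $G'^L\simeq \phi_{\V'}\circ G'^R$ with $\phi$ built from the twists (in the paper's conventions one may take $\phi_\V=T_\V^{-1}$), so your chain produces \emph{some} equivalence between $G'^LF_\N$ and $F_\V G^L$. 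But adjointability asserts the invertibility of one specific natural transformation, the canonical left mate of the given commutativity datum, and an abstract equivalence between its source and target proves nothing; your clause ``which after unwinding is the canonical horizontal left mate'' is precisely the statement that must be established, and it is the actual content of the cited proof in \cite{CDW23}. As written, the proposal is a correct reduction of the proposition to this unproven compatibility, not a proof of it. To close the gap, work in the homotopy $2$-category (invertibility of the mate is detected there), write the equivalence $G^L\simeq \phi_\V G^R$ explicitly in terms of the units and counits of $G^L\dashv G\dashv G^R$ and the twist triangle, and verify via the triangle identities that the left mate of the square factors, up to invertible $2$-cells, as your composite; the converse direction requires the same verification with the roles of the two mates exchanged.
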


\begin{proof}
This follows directly from the proof of \cite[Prop.~4.5.6]{CDW23}. 
\end{proof}

\begin{definition}
An inclusion $\alpha\colon \mathcal{F}\hookrightarrow \mathcal{G}$ of $\rgraph$-parametrized perverse schobers consists of a morphism of perverse schobers such that $\alpha_x\colon \mathcal{F}(x)\to \mathcal{G}(x)$ is fully faithful for all $x\in \on{Exit}(\rgraph)$.
\end{definition}

\begin{definition}
Let $\mathcal{G}$ be a $\rgraph$-parametrized perverse schober. A a semiorthogonal decomposition $(\mathcal{F}_1,\mathcal{F}_2)$  of $\mathcal{G}$ consists of $\rgraph$-parametrized perverse schobers $\mathcal{F}_1,\mathcal{F}_2$, together with an inclusion of perverse schobers $\iota\colon \mathcal{F}_2\hookrightarrow \mathcal{G}$ and a pushout diagram in $\on{Fun}(\on{Exit}(\rgraph),\on{LinCat}_R)$ 
\[
\begin{tikzcd}
\mathcal{F}_2 \arrow[d] \arrow[r, "i", hook] \arrow[rd, "\ulcorner", phantom] & \mathcal{G} \arrow[d, "\pi"] \\
0 \arrow[r]                                                                   & \mathcal{F}_1               
\end{tikzcd}
\]
exhibiting $\mathcal{F}_1$ as the cofiber of $i$. 
\end{definition}

\begin{lemma}[$\!\!${\cite[Rem.~3.17]{Chr22b}}]\label{lem:SOD}
Let $(\mathcal{F}_1,\mathcal{F}_2)$ be a semiorthogonal decomposition of a $\rgraph$-parametrized perverse schober $\mathcal{G}$. Then there exists a semiorthogonal decomposition $(\glsec(\rgraph,\mathcal{F}_1),\glsec(\rgraph,\mathcal{F}_2))$ of $\glsec(\rgraph,\mathcal{G})$. 
\end{lemma}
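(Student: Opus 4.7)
The plan is to exploit the fact that in $\on{LinCat}_R$ (and $\mathcal{P}r^L_{\on{St}}$) a Verdier sequence---i.e., a cofiber sequence $\A \hookrightarrow \B \to \C$ whose first map is fully faithful---is automatically also a fiber sequence, and conversely a fiber sequence whose first map is fully faithful is also a cofiber sequence. This symmetry is what allows a semiorthogonal decomposition, which is defined via a \emph{pushout}, to be preserved by the global sections functor $\glsec(\rgraph,-) = \lim_{\on{Exit}(\rgraph)}$, which \emph{a priori} is only a right adjoint.

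First I would recall that colimits and limits in the functor $\infty$-category $\on{Fun}(\on{Exit}(\rgraph),\on{LinCat}_R)$ are computed pointwise. Hence the defining pushout square for $(\mathcal{F}_1,\mathcal{F}_2)$ is at each $x \in \on{Exit}(\rgraph)$ a pushout square in $\on{LinCat}_R$ whose left vertical edge is $0$ and whose top edge $\mathcal{F}_2(x) \hookrightarrow \mathcal{G}(x)$ is fully faithful. By the Verdier equivalence above, each such pointwise square is also a pullback. Applying $\lim_{\on{Exit}(\rgraph)}$, which commutes with all limits, therefore yields a pullback square
\[
\begin{tikzcd}
\glsec(\rgraph,\mathcal{F}_2) \arrow[r] \arrow[d] & \glsec(\rgraph,\mathcal{G}) \arrow[d] \\
0 \arrow[r] & \glsec(\rgraph,\mathcal{F}_1)
\end{tikzcd}
\]
in $\on{LinCat}_R$.

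Next I would verify that the top morphism $\glsec(\rgraph,\mathcal{F}_2) \to \glsec(\rgraph,\mathcal{G})$ is fully faithful. Morphism objects in a limit of $R$-linear $\infty$-categories are computed as limits componentwise, so pointwise fully faithful natural transformations induce fully faithful functors on limits. The square above is thus a pullback square whose top map is fully faithful, and by the reverse direction of the Verdier equivalence it is also a pushout. This identifies $\glsec(\rgraph,\mathcal{F}_1)$ with the cofiber in $\on{LinCat}_R$ of the fully faithful inclusion $\glsec(\rgraph,\mathcal{F}_2) \hookrightarrow \glsec(\rgraph,\mathcal{G})$, giving the required semiorthogonal decomposition.

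The only non-formal input is the Verdier sequence equivalence in $\on{LinCat}_R$. This is not really an obstacle: it follows from the standard statement in $\mathcal{P}r^L_{\on{St}}$ because the forgetful functor $\on{LinCat}_R \to \mathcal{P}r^L_{\on{St}}$ preserves both limits and colimits, so the coincidence of pushout and pullback for sequences with fully faithful first map transfers from $\mathcal{P}r^L_{\on{St}}$ to $\on{LinCat}_R$. Everything else is formal manipulation of pointwise limits and colimits in the functor category.
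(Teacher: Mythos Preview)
Your argument has a genuine gap in the ``converse'' direction of your Verdier equivalence. It is true in $\mathcal{P}r^L_{\on{St}}$ that a cofiber sequence with fully faithful first map is also a fiber sequence, but the converse fails: take $\B=\on{Sp}$, $\C=\on{Sp}\times\on{Sp}$, and $\pi\colon X\mapsto (X,0)$. Then $\ker(\pi)=0$, so $0\to\on{Sp}\to\on{Sp}\times\on{Sp}$ is a fiber sequence whose first map is (trivially) fully faithful, yet it is certainly not a cofiber sequence. Thus your final step---deducing that the limit square is a pushout from the facts that it is a pullback and the top map is fully faithful---does not go through. Concretely, you have shown $\glsec(\rgraph,\mathcal{F}_2)=\ker\bigl(\glsec(\rgraph,\mathcal{G})\to\glsec(\rgraph,\mathcal{F}_1)\bigr)$, but not that the latter map is a localization, so you cannot yet identify $\glsec(\rgraph,\mathcal{F}_1)$ with the cofiber.

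What is missing is precisely the adjointability hypothesis built into the definition of an inclusion of perverse schobers, which you never invoke. Because $i\colon\mathcal{F}_2\hookrightarrow\mathcal{G}$ is a morphism of perverse schobers, the squares \eqref{eq:square} are right adjointable, and by \Cref{prop:adjointability} horizontally so: the pointwise right adjoints $i_x^R\colon\mathcal{G}(x)\to\mathcal{F}_2(x)$ assemble into a natural transformation compatible with the structure maps of $\rgraph$. Passing to limits then yields a functor $\glsec(\rgraph,\mathcal{G})\to\glsec(\rgraph,\mathcal{F}_2)$ which is right adjoint to $\glsec(\rgraph,i)$, with counit an equivalence since it is pointwise so. This gives the semiorthogonal decomposition directly, and the identification of the complement with $\glsec(\rgraph,\mathcal{F}_1)$ then follows from your pullback square. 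The paper itself simply cites \cite[Rem.~3.17]{Chr22b} rather than reproving this, but the adjointability condition is exactly what makes the argument work.
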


\begin{lemma}\label{lem:cofiberschober}
Let $(\mathcal{F}_1,\mathcal{F}_2)$ be a semiorthogonal decomposition of a $\rgraph$-parametrized perverse schober $\mathcal{G}$. Then the cofiber map $\pi\colon \mathcal{G}\to \mathcal{F}_1$ is a morphism of perverse schobers.
\end{lemma}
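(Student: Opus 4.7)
To show that $\pi$ is a morphism of perverse schobers, I need to verify that for every morphism $v\xrightarrow{h}e$ in $\on{Exit}(\rgraph)$, the Beck--Chevalley mate
\[
\alpha \colon \pi_v\circ \mathcal{G}(v\to e)^R\longrightarrow \mathcal{F}_1(v\to e)^R\circ \pi_e
\]
associated with the naturality equivalence $\mathcal{F}_1(v\to e)\circ \pi_v\simeq \pi_e\circ \mathcal{G}(v\to e)$ is an equivalence. All right adjoints appearing exist by the adjoint functor theorem in $\on{LinCat}_R$.

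The first step is to set up the pointwise structure. Since cofibers in $\on{Fun}(\on{Exit}(\rgraph),\on{LinCat}_R)$ are computed pointwise, each $\pi_x\colon \mathcal{G}(x)\to \mathcal{F}_1(x)$ is a Bousfield localization whose kernel is the essential image of $i_x\colon \mathcal{F}_2(x)\hookrightarrow \mathcal{G}(x)$. Consequently $\pi_x$ admits a fully faithful right adjoint $\pi_x^R$, and there is a natural fiber--cofiber sequence of endofunctors of $\mathcal{G}(x)$
\[
i_x\circ i_x^R\longrightarrow \on{id}_{\mathcal{G}(x)}\longrightarrow \pi_x^R\circ \pi_x,
\]
whose second map is the unit of $\pi_x\dashv \pi_x^R$. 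Taking right adjoints of the naturality square for $\pi$ yields $\mathcal{G}(v\to e)^R\circ \pi_e^R\simeq \pi_v^R\circ \mathcal{F}_1(v\to e)^R$. By the standard mate calculus, this identifies $\alpha$ with the image under $\pi_v$ of the natural map $\mathcal{G}(v\to e)^R\to \mathcal{G}(v\to e)^R\circ \pi_e^R\circ \pi_e$ induced by the unit of $\pi_e\dashv \pi_e^R$.

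Applying $\pi_v\circ \mathcal{G}(v\to e)^R$ to the fiber sequence at $e$ then presents $\alpha$ as a map whose fiber is $\pi_v\circ \mathcal{G}(v\to e)^R\circ i_e\circ i_e^R$, so it suffices to show this fiber vanishes. Here the hypothesis that $i$ is a morphism of perverse schobers enters: its local right adjointability yields the Beck--Chevalley equivalence $\mathcal{G}(v\to e)^R\circ i_e\simeq i_v\circ \mathcal{F}_2(v\to e)^R$, and postcomposing with $\pi_v$ annihilates this, since $\pi_v\circ i_v\simeq 0$ by the defining pushout. The main obstacle is not conceptual but one of coherence: all the identifications above, including the comparison of the mate with the unit-induced map and the naturality of the localization fiber sequence, must be carried out as equivalences in $\on{Fun}(\on{Exit}(\rgraph),\on{LinCat}_R)$ rather than pointwise. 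This reduces to the observation that both the fiber sequence and the mate construction arise functorially from the universal property of the pointwise cofiber and the adjunctions in play.
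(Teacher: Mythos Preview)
Your argument is correct, and it takes a slightly more direct route than the paper's. Both proofs hinge on the same key input, namely the Beck--Chevalley equivalence $\mathcal{G}(v\to e)^R\circ i_e\simeq i_v\circ \mathcal{F}_2(v\to e)^R$ coming from the hypothesis that the inclusion $i$ is a morphism of perverse schobers. The paper, however, first invokes \Cref{prop:adjointability} (which uses the perverse-schober structure) to trade the required \emph{vertical} right adjointability of the square for $\pi$ for \emph{horizontal} right adjointability, and then proves the latter by showing that $\mathcal{G}(h)\circ \pi_v^R$ lands in the kernel of $i_e^R$ and hence factors through $\pi_e^R$. Your approach bypasses this detour: by rewriting the mate $\alpha$ as the unit-induced map followed by the (invertible) counit of $\pi_v\dashv\pi_v^R$, and then applying the localization fiber sequence $i_e i_e^R\to\on{id}\to\pi_e^R\pi_e$, you reduce directly to the vanishing of $\pi_v\circ\mathcal{G}(v\to e)^R\circ i_e$. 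This is cleaner in that it does not need \Cref{prop:adjointability} at all.

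One small remark: your closing concern about coherence is unnecessary. The condition defining a morphism of perverse schobers in \Cref{def:schoberSOD} is imposed separately at each halfedge $v\xrightarrow{h} e$, so the pointwise argument you give is already sufficient; no global functoriality in $\on{Fun}(\on{Exit}(\rgraph),\on{LinCat}_R)$ beyond $\pi$ being a natural transformation is required.
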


\begin{proof}
Let $v\in \rgraph_0$ be a vertex with an incident halfedge $h\in e\in \rgraph_1$. Consider the following diagram of $R$-linear $\infty$-categories
\[
\begin{tikzcd}
\mathcal{F}_1(v) \arrow[r, "\iota_v", hook, shift left] \arrow[d, "\mathcal{F}_1(h)"'] & \mathcal{G}(v) \arrow[d, "\mathcal{G}(h)"] \arrow[r, "\pi_v", two heads, shift left] \arrow[l, "\iota_v^R", two heads, shift left] & \mathcal{F}_2(v) \arrow[d, "\mathcal{F}_2(h)"] \arrow[l, "\pi_v^R", hook', shift left] \\
\mathcal{F}_1(e) \arrow[r, "\iota_e", hook, shift left]                              & \mathcal{G}(e) \arrow[r, "\pi_e", shift left] \arrow[l, "\iota_e^R", two heads, shift left]                                        & \mathcal{F}_2(e) \arrow[l, "\pi_e^R", hook', shift left]                                 
\end{tikzcd}
\]
where the horizontal rightpointed morphisms define cofiber sequences and the superscript $R$ denotes right adjoints. The left square commutes in both directions, by the right adjointability of the square. The right square commutes in the horizontal right direction and by \Cref{prop:adjointability} it suffices to show that it is horizontally right adjointable. 

The image of the fully faithful functor $\pi_v^R$ is given by the kernel of $\iota_v^R$. Therefore, the image of $\mathcal{G}(h)\circ \pi_v^R$ lies in the kernel of $\iota_e^R$, and thus factors uniquely through the inclusion $\pi_e^R\colon \mathcal{F}_2(e)\hookrightarrow \mathcal{G}(e)$ via a functor $F\colon \mathcal{F}_2(v)\to \mathcal{F}_2(e)$. The equivalences
\begin{align*}
F&\simeq \pi_e \circ\pi_e^R \circ F\\
& \simeq \pi_e\circ \mathcal{G}(h)\circ \pi_v^R\\
& \simeq \mathcal{F}_2(h)\circ \pi_v\circ \pi_v^R \\
& \simeq \mathcal{F}_2(h)
\end{align*}
thus show that the right square commutes in the right vertical direction. We use this to prove that it is also right adjointable:

We must show that the natural transformation 
\begin{align*}
\mathcal{G}(h)\circ \pi_v^R& \rightarrow \pi_e^R\circ \pi_e\circ \mathcal{G}(h)\circ \pi_v^R\\
&\simeq \pi_e^R\circ \mathcal{F}_2(h)\circ \pi_v \circ \pi_v^R\\
& \rightarrow \pi_e^R\circ \mathcal{F}_2(h)
\end{align*}
is a natural equivalence. The third natural transformation above, given by the counit of $\pi_v\dashv \pi_v^R$, is an equivalence by the fully faithfulness of $\pi_v^R$. Using that $\mathcal{G}(h)\circ \pi_v^R\simeq \pi_e^R\circ \mathcal{F}_2(h)$, we see that the first natural transformation applies the unit of $\pi_e\dashv \pi_e^R$ to a functor with image contained in the image of $\pi_e^R$. Restricted to the image of $\pi_e^R$, the unit is an equivalence. 
\end{proof}

\subsection{The equivalence between the cosingularity category and the \texorpdfstring{$\C_\ADE$}{1-cluster category}-valued topological Fukaya category}

We fix a marked surface ${\bf S}$ with trivalent spanning graph $\rgraph$. We also fix a Dynkin quiver $\ADE$. The goal of this section is to describe the cosingularity category of $\D^{\on{perf}}(\GS)\simeq \glsec(\rgraph,\mathcal{F}_{\rgraph,\ADE})^{\on{c}}$, see \Cref{prop:global_sections_GS}, showing that it is equivalent to the topological Fukaya category of ${\bf S}$ valued in the $2$-periodic cosingularity category $\C_\ADE=\on{CoSing}(\Pi_2(\ADE))$. 

We begin by constructing a subschober $\mathcal{F}_{\rgraph,\ADE}^{\on{Ind-fin}}$ of $\mathcal{F}_{\rgraph,\ADE}$ whose global sections describe the Ind-completion of the subcateory of finite objects in $\D^{\on{perf}}(\GS)\simeq \glsec(\rgraph,\mathcal{F}_{\rgraph,\ADE})^{\on{c}}$.

\begin{construction}
Let $v$ be a vertex of $\rgraph$ with an incident edge $e$. The functor 
\[ \mathcal{F}_{\rgraph,\ADE}(v\to e)\colon \mathcal{F}_{\rgraph,\ADE}(v)= \D(\GT)\to \mathcal{F}_{\rgraph,\ADE}(e)=\D(\Pi_2(\ADE))\] maps ($\on{Ind}$-)finite objects to ($\on{Ind}$-)finite objects and thus restricts to a functor 
\[
\mathcal{F}^{\on{Ind-fin}}_{\rgraph,\ADE}(v\to e)\colon \on{Ind}\D^{\on{fin}}(\GT)\to \on{Ind}\D^{\on{fin}}(\Pi_2(\ADE))
\]
These functors assemble into a diagram $\mathcal{F}^{\on{Ind-fin}}_{\rgraph,\ADE}\colon \on{Exit}(\rgraph)\to \on{LinCat}_k$, together with a natural transformation $\alpha\colon \mathcal{F}^{\on{Ind-fin}}_{\rgraph,\ADE}\hookrightarrow \mathcal{F}_{\rgraph,\ADE}$, given by pointwise inclusion.
\end{construction}

\begin{lemma}
The diagram $\mathcal{F}^{\on{Ind-fin}}_{\rgraph,\ADE}$ defines a $\rgraph$-parametrized perverse schober.
\end{lemma}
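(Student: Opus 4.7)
The plan is to verify the conditions of \Cref{def:schobernspider} locally at each vertex $v$ of $\rgraph$. Since $\D(\sigma)$ is induced by a dg isomorphism of $\Pi_2(\ADE)$, it preserves $\D^{\on{perf}}(\Pi_2(\ADE))$, $\D^{\on{fin}}(\Pi_2(\ADE))$, and hence $\on{Ind}\D^{\on{fin}}(\Pi_2(\ADE))$. The restriction of $\mathcal{F}^{\on{Ind-fin}}_{\rgraph,\ADE}$ at $v$ differs from $\mathcal{F}^{\on{Ind-fin}}_{\Delta,\ADE}$ only by compositions with $\D(\sigma)$, so it is enough to treat the latter. The restricted functors $F_i^{\on{fin}} \coloneqq \tilde{D}_i^R|_{\on{Ind}\D^{\on{fin}}(\GT)}$ take values in $\on{Ind}\D^{\on{fin}}(\Pi_2(\ADE))$ by the construction preceding the lemma.

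The core of the argument is the claim that the right adjoints $G_i \coloneqq \tilde{D}_i^{RR}$ of the schober functors $\tilde{D}_i^R$ themselves preserve Ind-finite subcategories. Granted this, $G_i^{\on{fin}} \coloneqq G_i|_{\on{Ind-fin}}$ is a right adjoint of $F_i^{\on{fin}}$, and conditions (a)--(c) of \Cref{def:schobernspider} for $\mathcal{F}^{\on{Ind-fin}}_{\Delta,\ADE}$ follow by direct restriction from the equivalences of \Cref{thm:KLmonodromy}; conditions (d) and (e) follow by applying the same preservation argument to the further right adjoints in the spherical chain (which exist by \Cref{thm:3gonschober}) and to the left adjoint $\tilde{D}_i$. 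To prove the preservation claim, I observe that $G_i$ itself admits a right adjoint $\tilde{D}_i^{RRR}$ by the spherical structure of $\mathcal{F}_{\Delta,\ADE}$, so $G_i$ preserves colimits and it suffices to check finiteness on the compact generator $\Pi_2(\ADE)$. By adjunction,
\[
\on{Mor}_{\D(\GT)}(\GT, \tilde{D}_i^{RR}(\Pi_2(\ADE))) \simeq \on{Mor}_{\D(\Pi_2(\ADE))}(\tilde{D}_i^R(\GT), \Pi_2(\ADE)),
\]
and \Cref{lem:Ginzburgboundary} shows that $\tilde{D}_i^R(\GT)$ lies in the additive hull of $\Pi_2(\ADE)$, so the right-hand side is a finite direct sum of shifts of $\on{Mor}(e_x\Pi_2(\ADE), \Pi_2(\ADE)) \simeq e_x\Pi_2(\ADE)$, each of which is finite-dimensional over $k$ since $\Pi_2(\ADE)$ is proper in Dynkin type. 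The left-hand side computes the underlying $k$-complex, so $\tilde{D}_i^{RR}(\Pi_2(\ADE))$ is finite; combined with the preservation of compact objects by $\tilde{D}_i^{RR}$, this gives the claim after passing to filtered colimits.

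The main obstacle I anticipate is precisely this preservation of the Ind-finite subcategory by the iterated right adjoints, since a priori the right adjoint of the restricted functor $F_i^{\on{fin}}$ need not agree with the restriction of the right adjoint $G_i$ of $F_i$. The concrete calculation above, combining \Cref{lem:Ginzburgboundary} with the finite-dimensionality of $\Pi_2(\ADE)$, resolves this point for $G_i$, and the argument iterates to handle the higher adjoints needed for conditions (d) and (e). Once the preservation is established, the perverse schober conditions transfer formally from $\mathcal{F}_{\Delta,\ADE}$ to $\mathcal{F}^{\on{Ind-fin}}_{\Delta,\ADE}$ by restriction.
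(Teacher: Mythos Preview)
Your argument contains a genuine error at its core step. You claim that $\Pi_2(\ADE)$ is proper in Dynkin type, so that $\on{Mor}(e_x\Pi_2(\ADE),\Pi_2(\ADE))$ is finite-dimensional over $k$. This is false: already for $\ADE=A_1$ one has $\Pi_2(A_1)\simeq k[l]$ with $|l|=1$, which is infinite-dimensional. More conceptually, if $\Pi_2(\ADE)$ were proper then $\D^{\on{fin}}(\Pi_2(\ADE))=\D^{\on{perf}}(\Pi_2(\ADE))$ and the cosingularity category $\C_\ADE$ would vanish, contradicting \Cref{lem:indecomposables}. The same mistake underlies your choice of test object: $\Pi_2(\ADE)$ is a compact generator of $\D(\Pi_2(\ADE))$, not of $\on{Ind}\D^{\on{fin}}(\Pi_2(\ADE))$, so checking $\tilde{D}_i^{RR}(\Pi_2(\ADE))$ is irrelevant to whether $\tilde{D}_i^{RR}$ preserves the Ind-finite subcategory. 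Your strategy can be repaired by testing instead on the simple modules $S_x$, which do generate $\D^{\on{fin}}(\Pi_2(\ADE))$: then $\on{Mor}_{\D(\GT)}(\GT,\tilde{D}_i^{RR}(S_x))\simeq \on{Mor}_{\D(\Pi_2(\ADE))}(\tilde{D}_i^R(\GT),S_x)$ is finite-dimensional since $\tilde{D}_i^R(\GT)\in\on{Add}(\Pi_2(\ADE))$ by \Cref{lem:Ginzburgboundary} and $S_x$ is finite, yielding $\tilde{D}_i^{RR}(S_x)\in\D^{\on{fin}}(\GT)$.

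The paper takes a different and more structural route: rather than verifying the axioms of \Cref{def:schobernspider} one by one, it uses the lax-limit model of \Cref{def:locmodel_schober}. The spherical functor $F\colon \D(\Pi_3(\on{Aus}(\ADE),\ADE))\to \D(\Pi_2(\ADE))$ underlying $\mathcal{F}_{\Delta,\ADE}$ factors through $\on{Ind}\D^{\on{fin}}(\Pi_2(\ADE))$ because its source is the derived category of a finite-dimensional algebra (\Cref{lem:3gonSOD} and the remark following it). One then checks that under the lax-limit description $\mathcal{F}_{\Delta,\ADE}(v)\simeq \V^3_F$, the subcategory $\on{Ind}\D^{\on{fin}}(\GT)$ is exactly $\V^3_{F'}$ for the restricted spherical functor $F'$, so $\mathcal{F}^{\on{Ind-fin}}_{\Delta,\ADE}$ is automatically in the local-model form and hence a perverse schober. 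This avoids any direct adjoint-preservation bookkeeping.
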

 
\begin{proof}
Denote by
\[ F\colon \D(\Pi_3(\on{Aus}(kQ),kQ))\to \D(\Pi_2(\ADE))\]
the spherical functor underlying the perverse schober $\mathcal{F}_{\Delta,\ADE}$ on the $3$-spider. Then 
\[ \mathcal{F}_{\Delta,\ADE}(v)\simeq \D(\Pi_3(\on{Aus}(kQ),kQ))\times_F^{\rightarrow} \D(\Pi_2(\ADE))\times^\rightarrow_{\on{id}_{\D(\Pi_2(\ADE))}} \D(\Pi_2(\ADE))\] is equivalent to the lax limit. We note that $F$ takes values in $\on{Ind}\D^{\on{fin}}(\Pi_2(\ADE))\subset  \D(\Pi_2(\ADE))$ and denote by $F'\colon \D(\Pi_3(\on{Aus}(kQ),kQ))\to \on{Ind}\D^{\on{fin}}(\Pi_2(\ADE))$ the restriction of $F$. There is thus an embedding
\[
\D(\Pi_3(\on{Aus}(kQ),kQ))\times_{F'}^{\rightarrow} \on{Ind}\D^{\on{fin}}(\Pi_2(\ADE))\times^\rightarrow_{\on{id}} \on{Ind}\D^{\on{fin}}(\Pi_2(\ADE))\subset\mathcal{F}_{\Delta,\ADE}(v)\,. 
\]
Its image consists of the full subcategory 
\[ \on{Ind}\D^{\on{fin}}(\GT)\subset \D(\GT)\simeq \D(\Pi_3(\on{Aus}(kQ),kQ))\times_F^{\rightarrow} \D(\Pi_2(\ADE))\times^\rightarrow_{\on{id}_{ \D(\Pi_2(\ADE))}} \D(\Pi_2(\ADE))\]
which follows from the observation that an object in $\D(\GT)$ is finite if and only if it is finite in each component of the above lax limit and that all objects in $\D(\Pi_3(\on{Aus}(kQ),kQ))$ are finite.

Since $\mathcal{F}^{\on{Ind-fin}}_{\rgraph,\Delta}$ is given by restricting $\mathcal{F}_{\rgraph,\Delta}$ to the subcategories of Ind-finite objects, we thus see that $\mathcal{F}^{\on{Ind-fin}}_{\rgraph,\Delta}$ is locally described by the local model for a perverse schober on the $3$-spider of \Cref{def:locmodel_schober} arising from the spherical functor $F'$ and thus indeed defines a perverse schober. 
\end{proof}

\begin{lemma}
The natural transformation $\alpha\colon \mathcal{F}^{\on{Ind-fin}}_{\rgraph,\ADE}\to \mathcal{F}_{\rgraph,\ADE}$ defines a morphism of perverse schobers. 
\end{lemma}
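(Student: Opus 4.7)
The plan is to verify directly the local right adjointability condition of \Cref{def:schoberSOD}. For each morphism $v \to e$ in $\on{Exit}(\rgraph)$, setting $F = \mathcal{F}_{\rgraph,\ADE}(v\to e)$ and $F' = \mathcal{F}^{\on{Ind-fin}}_{\rgraph,\ADE}(v\to e)$, the condition amounts to the mate $\iota_v \circ F'^R \to F^R \circ \iota_e$ being an equivalence. The first step is to observe that, since $\iota_v$ is fully faithful with $F\iota_v \simeq \iota_e F'$ by construction, this condition is equivalent to the assertion that $F^R$ preserves the full subcategory of Ind-finite objects. Indeed, if $F^R \iota_e X \in \on{im}(\iota_v)$ for all $X$, then a factorization $F^R\iota_e \simeq \iota_v \bar{F}$ together with the adjunction calculation
\[
\on{Mor}(Y,\bar{F}X) \simeq \on{Mor}(\iota_v Y, F^R\iota_e X) \simeq \on{Mor}(F\iota_v Y, \iota_e X) \simeq \on{Mor}(\iota_e F' Y, \iota_e X) \simeq \on{Mor}(F'Y,X)
\]
identifies $\bar{F}$ with $F'^R$, and the resulting equivalence $\iota_v F'^R \simeq F^R \iota_e$ is canonically given by the mate.

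Next, since perverse schobers are local on $\on{Exit}(\rgraph)$, I would reduce the verification of this preservation property to each vertex of $\rgraph$, i.e.\ to the basic triangle. There, the local model of \Cref{def:locmodel_schober} identifies $\mathcal{F}_{\rgraph,\ADE}(v)$ with the lax limit $\V \times^{\rightarrow}_{F}\on{Fun}(\Delta^1,\N)$, and the previous lemma identifies $\mathcal{F}^{\on{Ind-fin}}_{\rgraph,\ADE}(v)$ with the analogous lax limit built from the restricted spherical functor $\V \to \on{Ind}\D^{\on{fin}}(\N)$. By \Cref{lem:locmodel}, each restriction functor $\mathcal{F}_{\rgraph,\ADE}(v\to e_i)$ coincides, up to composition with the autoequivalence $\D(\sigma)$ (which preserves Ind-finite objects since it is an equivalence of dg categories), with the functor $\varrho_i$. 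It thus remains to show that the right adjoints $\varrho_i^R$ map $\on{Ind}\D^{\on{fin}}(\N)$ into the Ind-finite subcategory of the lax limit.

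Using the adjunction chain $\varrho_1 \dashv \varrho_1^R \dashv \varrho_2 \dashv \varrho_2^R \dashv \varrho_3 \dashv \varrho_3^R$ from \cite[Section 3]{Chr22}, the functors $\varrho_i^R$ admit explicit descriptions on objects. A direct computation using the universal property of the lax limit gives $\varrho_1^R(X) \simeq (0, X \xrightarrow{\on{id}} X)$ with tautological glue, and analogous (if more involved) formulas for $\varrho_2^R$ and $\varrho_3^R$ produce tuples whose $\N$-components are built from $X$, the functor $F$, finite (co)limits, and shifts. Since $F\colon \V \to \N$ factors through $\on{Ind}\D^{\on{fin}}(\N)$ by \Cref{thm:KLmonodromy}, and since the Ind-finite subcategory is closed under finite (co)limits and shifts, the tuple $\varrho_i^R(X)$ lies in the Ind-finite subcategory of the lax limit, as characterized in the proof of the preceding lemma, whenever $X \in \on{Ind}\D^{\on{fin}}(\N)$. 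The main technical obstacle will be the explicit case-by-case verification of the formulas for $\varrho_2^R$ and $\varrho_3^R$; this is routine but relies on carefully tracking the adjunctions and the coherences in the lax limit.
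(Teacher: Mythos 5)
Your strategy is sound and is in substance the argument that the paper delegates to the prequel (\cite[Lem.~4.1]{Chr22b}): reduce vertical right adjointability at each $v\to e$ to the statement that the right adjoints of the restriction functors of $\mathcal{F}_{\rgraph,\ADE}$ carry Ind-finite objects to Ind-finite objects, then verify this vertexwise in the local model of \Cref{def:locmodel_schober}, where by \Cref{lem:locmodel} (and the fact that $\D(\sigma)$, being induced by a dg automorphism of $\Pi_2(\ADE)$, preserves finite modules) it comes down to the functors $\varrho_i^R$. Your first reduction is correct, but add one sentence checking that the mate itself, not merely some abstract comparison of functors, is the equivalence: once $F^R\iota_e X$ lies in the image of $\iota_v$, both source and target of the mate do, and applying $\on{Mor}(\iota_v W,-)$ turns the mate into the adjunction identification with $\on{Mor}(F'W,X)$ via the triangle identities, so full faithfulness of $\iota_v$ and Yoneda conclude.

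Your explicit formula for $\varrho_1^R$ is, however, wrong: the right adjoint of evaluation at the last slot is the relative right Kan extension $X\mapsto (F^R X, X\xrightarrow{\on{id}} X)$ with gluing datum the counit $FF^RX\to X$; your candidate $(0,X\xrightarrow{\on{id}}X)$ corepresents $\on{Mor}(\on{cof}(F(A)\to B_{2}),X)$ rather than $\on{Mor}(B_{2},X)$. The slip is harmless for the conclusion, because by the identification of $\mathcal{F}^{\on{Ind-fin}}_{\rgraph,\ADE}(v)$ in the preceding lemma only the $\N$-components need to be Ind-finite (every object of $\V\simeq\D(\Pi_3(\on{Aus}(\ADE),\ADE))$ is finite), and for the correct formulas these components are $X,X$ for $\varrho_1^R$, while $\varrho_2^R(X)\simeq (0,0\to X)$ and $\varrho_3^R(X)\simeq (0,X[-1]\to 0)$; so the case-by-case check you defer is in fact immediate. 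Note also that the factorization of $F$ through $\on{Ind}\D^{\on{fin}}(\Pi_2(\ADE))$ is recorded in the proof of the preceding lemma rather than in \Cref{thm:KLmonodromy}, and it is not what carries this step: the $\N$-components of the $\varrho_i^R(X)$ involve only $X$ and shifts.
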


\begin{proof}
The proof of the corresponding statement in type $\ADE=A_1$ in \cite[Lem.~4.1]{Chr22b} directly generalizes.
\end{proof}

\begin{proposition}\label{prop:Ind_finite_global_sections}
The image of the fully faithful functor 
\[ \glsec(\rgraph,\mathcal{F}^{\on{Ind-fin}}_{\rgraph,\ADE})\xlongrightarrow{\glsec(\rgraph,\alpha)} \glsec(\rgraph,\mathcal{F}_{\rgraph,\ADE})\simeq \D(\GS)\]
is given by $\on{Ind}\D^{\on{fin}}(\GS)$. 
\end{proposition}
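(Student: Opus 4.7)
The plan is to exhibit the two subcategories $\on{Im}(\glsec(\rgraph,\alpha))$ and $\on{Ind}\D^{\on{fin}}(\GS)$ of $\D(\GS)$ as agreeing by matching a common set of compact generators. Both are closed under colimits inside $\D(\GS)$: the former because $\glsec(\rgraph,\alpha)$ is a morphism in $\on{LinCat}_k$ and hence colimit-preserving, the latter by construction as an Ind-completion sitting inside $\on{Ind}\D^{\on{perf}}(\GS)=\D(\GS)$. Reducing to compact generators will suffice because an inclusion of presentable stable subcategories that agrees on compact generators is an equivalence.

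The strategy will follow the argument used for the case $\ADE=A_1$ in the prequel \cite[Section~4]{Chr22b}. First, under the equivalence $\glsec(\rgraph,\mathcal{F}_{\rgraph,\ADE})\simeq \D(\GS)$, the evaluation $\on{ev}_v$ corresponds to restriction $\phi_v^*$ along the canonical dg functor $\phi_v\colon\GT\to\GS$ coming from the homotopy colimit definition of $\GS$ in \Cref{constr:GSfunctor}, and its left adjoint $\on{ind}_v^L$ corresponds to extension of scalars $\phi_{v,!}$. Second, the theory of parametrized perverse schobers with compactly generated stalks (see \Cref{rem:projectivesof3CY}) shows that $\glsec(\rgraph,\mathcal{F}^{\on{Ind-fin}}_{\rgraph,\ADE})$ is compactly generated by the collection $\{\on{ind}_v^L(S_x)\}_{v,x}$, where the $S_x$ are the simple $\GT$-modules (one for each indecomposable $x$ of $\GT$) and $v$ ranges over vertices of $\rgraph$; these compactly generate the stalks $\on{Ind}\D^{\on{fin}}(\GT)$. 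Under $\glsec(\rgraph,\alpha)$, they map to $\{\phi_{v,!}(S_x)\}_{v,x}$ inside $\D(\GS)$.

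The proof then concludes by verifying that $\phi_{v,!}(S_x)$ is equivalent to the simple $\GS$-module at the amalgamated indecomposable $\phi_v(x)\in\GS$. Granting this identification, the amalgamation description of $\GS$ (\Cref{def:surface_ice_quiver}) ensures that as $v$ and $x$ vary, the images $\phi_v(x)$ exhaust the indecomposables of $\GS$, so the family $\{\phi_{v,!}(S_x)\}$ realizes precisely the collection of simple $\GS$-modules. These lie in $\D^{\on{fin}}(\GS)$ and compactly generate $\on{Ind}\D^{\on{fin}}(\GS)$, yielding the desired identification of both subcategories.

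The hardest step is the identification $\phi_{v,!}(S_x)\simeq S_{\phi_v(x)}$. A priori, since $\GS$ has arrows at $\phi_v(x)$ that come from triangles other than $v$, extension of scalars might produce nontrivial contributions at indecomposables away from $\phi_v(x)$. The cancellation of these contributions should be verified using the relative Ginzburg dg category model of $\GS$ (\Cref{rem:Ginzburg_identification} and \Cref{prop:Ginzburg_alg}): the additional arrows incident to $\phi_v(x)$ that appear only after amalgamation are paired with their duals in the amalgamated ice quiver with potential, producing cancellations in a projective resolution of $S_{\phi_v(x)}$ that match the pullback via $\phi_{v,!}$ of a resolution of $S_x$. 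If this explicit identification proves cumbersome, a viable alternative is to verify the weaker two-part claim that each $\phi_{v,!}(S_x)$ lies in $\D^{\on{fin}}(\GS)$ and that the colimit closure of $\{\phi_{v,!}(S_x)\}_{v,x}$ contains every simple $\GS$-module, which already suffices for the equality of the two cocomplete subcategories.
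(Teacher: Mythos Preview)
Your approach via compact generators is workable in principle, but the paper takes a much more direct route, and your key step contains a genuine gap.

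The identity $\phi_{v,!}(S_x)\simeq S_{\phi_v(x)}$ is generally \emph{false}. Consider a vertex $x$ that is frozen in the triangle at $v$ but becomes unfrozen after amalgamation (i.e.\ it lies on an internal edge of $\rgraph$). The simple $S_{\phi_v(x)}$ in $\D(\GS)$ sees arrows incident to $\phi_v(x)$ coming from \emph{both} adjacent triangles, whereas $\phi_{v,!}(S_x)=S_x\otimes_{\GT}\GS$ only records the arrows from the $v$-side; the contributions from the neighbouring triangle do not cancel. So extension of scalars of a simple is not simple here. Your fallback (showing each $\phi_{v,!}(S_x)$ is finite and that together they generate all simples) is not obviously easier: verifying finiteness of $\phi_{v,!}(S_x)$ amounts to checking that $\on{Mor}_{\GS}(P,\phi_{v,!}(S_x))$ is finite-dimensional for all indecomposable projectives $P$, which by adjunction unwinds to exactly the kind of local finiteness statement the paper proves directly.

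The paper's argument avoids all of this. Using \Cref{rem:projectivesof3CY} and the adjunction $\on{ind}_v^L\dashv\on{ev}_v$, one has
\[
\on{Mor}_{\D(\GS)}(\GS,X)\;\simeq\;\prod_{v\in\rgraph_0}\on{Mor}_{\D(\GT)}(\GT,\on{ev}_v(X)),
\]
so $X\in\D^{\on{fin}}(\GS)$ if and only if $\on{ev}_v(X)\in\D^{\on{fin}}(\GT)$ for every vertex $v$. But the image of $\glsec(\rgraph,\alpha)$ is by construction the full subcategory of global sections whose evaluations at all vertices lie in $\on{Ind}\D^{\on{fin}}(\GT)$. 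Passing to Ind-completions on both sides gives the result in one step. This replaces your induction-of-simples computation with a single adjunction identity; I would recommend adopting it.
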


\begin{proof}
Let $X\in \glsec(\rgraph,\mathcal{F}_{\rgraph,\ADE})$. Then by \Cref{rem:projectivesof3CY}
\[
\on{Mor}(\GS,X)\simeq \on{Mor}(\prod_{v\in \rgraph_0}\on{ind}^L_v(\GT),X)\simeq \prod_{v\in \rgraph_0}\on{Mor}(\GT,\on{ev}_v(X))\,.
\]
Thus a global section is finite if and only if its restrictions to all vertices of $\rgraph$ is finite.
\end{proof}

We let $\mathcal{F}^{\on{clst}}_{\rgraph,\ADE}$ be the cofiber of $\alpha\colon \mathcal{F}^{\on{Ind-fin}}_{\rgraph,\ADE}\to \mathcal{F}_{\rgraph,\ADE}$ in $\on{Fun}(\on{Exit}(\rgraph),\on{LinCat}_k)$.

\begin{proposition}\label{prop:SODofschobers}
$\mathcal{F}^{\on{clst}}_{\rgraph,\ADE}$ is a $\rgraph$-parametrized perverse schober and there is thus a semiorthogonal decomposition $(\mathcal{F}^{\on{clst}}_{\rgraph,\ADE},\mathcal{F}^{\on{Ind-fin}}_{\rgraph,\ADE})$ of the perverse schober $\mathcal{F}_{\rgraph,\ADE}$. In particular, there exists an equivalence $\glsec(\rgraph,\mathcal{F}^{\on{clst}}_{\rgraph,\ADE})\simeq \on{Ind}\on{CoSing}(\GS)$. 
\end{proposition}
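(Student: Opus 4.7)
The plan is to verify the perverse schober axioms for $\mathcal{F}^{\on{clst}}_{\rgraph,\ADE}$ locally at each vertex of $\rgraph$, and then apply \Cref{lem:SOD} and \Cref{prop:Ind_finite_global_sections} to identify the global sections. For the local verification, fix a vertex $v$ of $\rgraph$. Recall from \Cref{lem:3gonSOD} the semiorthogonal decomposition $(\D(\Pi_3(\on{Aus}(\ADE),\ADE)),\on{Fun}(\Delta^1,\D(\Pi_2(\ADE))))$ of $\D(\GT) \simeq \mathcal{F}_{\Delta,\ADE}(v)$: the first component is the derived $\infty$-category of a finite-dimensional algebra and hence consists entirely of Ind-finite objects, while the Ind-finite part of $\on{Fun}(\Delta^1,\D(\Pi_2(\ADE)))$ is $\on{Fun}(\Delta^1,\on{Ind}\D^{\on{fin}}(\Pi_2(\ADE)))$. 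Quotienting in $\on{LinCat}_k$ thus yields
\[ \mathcal{F}^{\on{clst}}_{\rgraph,\ADE}(v) \simeq \on{Ind}\on{CoSing}(\GT) \simeq \on{Fun}(\Delta^1,\on{Ind}\C_\ADE), \]
which is precisely the non-singular local model of \Cref{def:locmodel_schober} for a perverse schober on the $3$-spider with trivial spherical functor $0 \to \on{Ind}\C_\ADE$ and generic stalk $\on{Ind}\C_\ADE$. Together with the edge values $\mathcal{F}^{\on{clst}}_{\rgraph,\ADE}(e) \simeq \on{Ind}\C_\ADE$, this shows that $\mathcal{F}^{\on{clst}}_{\rgraph,\ADE}$ is a non-singular $\rgraph$-parametrized perverse schober.

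Once $\mathcal{F}^{\on{clst}}_{\rgraph,\ADE}$ is known to be a perverse schober, the triple $(\mathcal{F}^{\on{clst}}_{\rgraph,\ADE},\mathcal{F}^{\on{Ind-fin}}_{\rgraph,\ADE})$ together with the inclusion $\alpha$ and the cofiber map is by construction a semiorthogonal decomposition of $\mathcal{F}_{\rgraph,\ADE}$. Applying \Cref{lem:SOD} produces a semiorthogonal decomposition $(\glsec(\rgraph,\mathcal{F}^{\on{clst}}_{\rgraph,\ADE}),\glsec(\rgraph,\mathcal{F}^{\on{Ind-fin}}_{\rgraph,\ADE}))$ of $\glsec(\rgraph,\mathcal{F}_{\rgraph,\ADE}) \simeq \D(\GS)$. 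Combining with \Cref{prop:Ind_finite_global_sections}, which identifies $\glsec(\rgraph,\mathcal{F}^{\on{Ind-fin}}_{\rgraph,\ADE}) \simeq \on{Ind}\D^{\on{fin}}(\GS)$, the cofiber yields
\[ \glsec(\rgraph,\mathcal{F}^{\on{clst}}_{\rgraph,\ADE}) \simeq \D(\GS)/\on{Ind}\D^{\on{fin}}(\GS) \simeq \on{Ind}\on{CoSing}(\GS). \]

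The main obstacle is the local verification at each vertex. The core computation reduces, via \Cref{lem:3gonSOD}, to the observation that the source $\D(\Pi_3(\on{Aus}(\ADE),\ADE))$ of the underlying spherical functor is entirely Ind-finite and hence dies in the cosingularity quotient, so the local model $\V^3_F$ collapses to the non-singular one $\on{Fun}(\Delta^1,\on{Ind}\C_\ADE)$. One also has to check that this identification is compatible with the halfedge functors at $v$, which follows from the fact that these functors are the $\on{LinCat}_k$-quotients of the corresponding $\tilde{D}_i^R$ by their restrictions to the Ind-finite subcategories, so that they match the evaluation functors $\on{Fun}(\Delta^1,\on{Ind}\C_\ADE)\to\on{Ind}\C_\ADE$ of the non-singular local model up to known identifications.
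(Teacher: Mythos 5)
Your proposal is correct and follows essentially the same route as the paper: compute the edge values as $\on{Ind}\C_\ADE$, observe that at each vertex the quotient collapses the local model because $\D(\Pi_3(\on{Aus}(\ADE),\ADE))$ consists entirely of (Ind-)finite objects, identify the result with the non-singular local model $\on{Fun}(\Delta^1,\on{Ind}\C_\ADE)$, check halfedge compatibility via the universal property of the Verdier quotient, and conclude with \Cref{lem:SOD} and \Cref{prop:Ind_finite_global_sections}. The only cosmetic difference is that you phrase the vertex computation through the semiorthogonal decomposition of \Cref{lem:3gonSOD}, whereas the paper works directly with the lax-limit descriptions of $\mathcal{F}_{\rgraph,\ADE}(v)$ and $\mathcal{F}^{\on{Ind-fin}}_{\rgraph,\ADE}(v)$ established in the preceding lemmas — the same underlying observation either way.
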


\begin{proof}
Let $e$ be an edge of $\rgraph$. Then there is an equivalence $\mathcal{F}^{\on{clst}}_{\rgraph,\ADE}(e)\simeq \mathcal{F}_{\rgraph,\ADE}(e)/\mathcal{F}^{\on{Ind-fin}}_{\rgraph,\ADE}(e)\simeq \on{Ind}\C_\ADE$.

Let $v\in \rgraph_0$ be a vertex. Using the equivalences 
\[\mathcal{F}_{\rgraph,\ADE}(v)\simeq \D(\Pi_3(\on{Aus}(kQ),kQ))\times_F^{\rightarrow} \D(\Pi_2(\ADE))\times^\rightarrow_{\on{id}_{\D(\Pi_2(\ADE))}} \D(\Pi_2(\ADE))\]
and 
\[
\mathcal{F}^{\on{Ind-fin}}_{\rgraph,\ADE}(v)\simeq \D(\Pi_3(\on{Aus}(kQ),kQ))\times_{F'}^{\rightarrow} \on{Ind}\D^{\on{fin}}(\Pi_2(\ADE))\times^\rightarrow_{\on{id}_{\on{Ind}\D^{\on{fin}}}} \on{Ind}\D^{\on{fin}}(\Pi_2(\ADE))
\]
we find 
\begin{align*}
\mathcal{F}^{\on{clst}}_{\rgraph,\ADE}(v)\simeq \mathcal{F}_{\rgraph,\ADE}(v)/\mathcal{F}^{\on{Ind-fin}}_{\rgraph,\ADE}(v) \simeq 0\times^{\rightarrow} \on{Ind}\C_\ADE\times^\rightarrow_{\on{id}_{\on{Ind}\C_\ADE}}\on{Ind}\C_\ADE\simeq \on{Fun}(\Delta^1,\on{Ind}\C_\ADE)\,.
\end{align*}
For each halfedge $h$ of an edge $e$ incident to a vertex $v$, the functor $\mathcal{F}^{\on{clst}}_{\rgraph,\ADE}(v\xrightarrow{h} e)$ arises from the functor $\mathcal{F}_{\rgraph,\ADE}(v\xrightarrow{h} e)$ via the universal property of the Verdier quotient. Using this, it is straightforward to verify that $\mathcal{F}^{\on{clst}}_{\rgraph,\ADE}$ is locally equivalent to the model from \Cref{def:locmodel_schober}, and thus a $\rgraph$-parametrized perverse schober.

The latter part follows from \Cref{lem:SOD} and \Cref{prop:Ind_finite_global_sections}.
\end{proof}

\begin{theorem}\label{thm:global_sections_=_Fukaya}
The perverse schober $\mathcal{F}^{\on{clst}}_{\rgraph,\ADE}$ has generic stalk $\on{Ind}\C_\ADE$, no singularities and its monodromy local system on ${\bf S}$ is trivial. Hence there exists an equivalence of $k$-linear $\infty$-categories
\[\glsec(\rgraph,\mathcal{F}^{\on{clst}}_{\rgraph,\ADE})\simeq \on{Ind}\on{Fuk}({\bf S},\C_\ADE)\,.
\]
\end{theorem}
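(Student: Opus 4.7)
The plan is to verify the three asserted properties of $\mathcal{F}^{\on{clst}}_{\rgraph,\ADE}$ — generic stalk, non-singularity, and trivial monodromy — and then invoke the characterization of the $\C_\ADE$-valued topological Fukaya category from \Cref{subsec:2_periodic_top_Fukaya}.

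The generic stalk $\on{Ind}\C_\ADE$ and the absence of singularities both follow directly from \Cref{prop:SODofschobers}, which computed every edge stalk to be $\on{Ind}\C_\ADE$ and every vertex stalk to be $\on{Fun}(\Delta^1,\on{Ind}\C_\ADE)$. The latter is precisely the lax limit $\V^3_{F'}$ of \Cref{def:locmodel_schober} associated with the trivial spherical functor $F'\colon 0\to \on{Ind}\C_\ADE$, so the underlying spherical functor at each vertex of $\rgraph$ is the zero functor. This confirms non-singularity; as a consistency check, the raw clockwise loop around such a vertex may be computed on the one hand from the formulas for $\varrho_1,\varrho_2,\varrho_3$ in \Cref{def:locmodel_schober} to be $[1]$, and on the other hand from \Cref{rem:transport}, \Cref{constr:Teichmullerschober}, and the involutivity $\D(\sigma)^2\simeq \on{id}$ to be $\D(\sigma)^3\simeq \D(\sigma)$, which by \Cref{prop:shift} likewise descends to $[1]$ on the quotient.

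For the monodromy, the $2$-periodicity of $\on{Ind}\C_\ADE$ makes its value framing-independent. By \Cref{lem:cofiberschober} the cofiber $\mathcal{F}_{\rgraph,\ADE}\twoheadrightarrow \mathcal{F}^{\on{clst}}_{\rgraph,\ADE}$ is a morphism of perverse schobers, so the raw transport of the quotient is obtained by applying the cofiber projections to that of $\mathcal{F}_{\rgraph,\ADE}$. Combining \Cref{rem:transport} with \Cref{constr:Teichmullerschober}, the raw one-step clockwise transport at a vertex $v$ from an incoming halfedge $h^{\text{in}}$ to an outgoing halfedge $h^{\text{out}}$ in $\mathcal{F}_{\rgraph,\ADE}$ equals $\D(\sigma)^{1+\chi(h^{\text{in}})+\chi(h^{\text{out}})}$, where $\chi(h)=1$ iff $h$ is the halfedge of an internal edge chosen by the direction of $\rgraph$. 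By \Cref{prop:shift} each instance of $\D(\sigma)$ descends to the suspension $[1]$ on the quotient, and since $[2]\simeq\on{id}$, the monodromy takes values in the subgroup $\mathbb{Z}/2\subset \pi_0\on{Aut}(\on{Ind}\C_\ADE)$ generated by $[1]$.

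The main step that remains is to show this $\mathbb{Z}/2$-valued monodromy local system on ${\bf S}$ is trivial, and this is the crux of the proof. Since $\rgraph$ is a spanning graph, $\pi_1({\bf S})\cong\pi_1(\rgraph)$; as every external edge of $\rgraph$ is a pendant, every class in $\pi_1(\rgraph)$ is represented by a closed walk $\gamma$ using only internal edges. Summing the per-vertex exponents along the $n$ vertices traversed by $\gamma$, the total exponent of $\D(\sigma)$ is
\[n+\sum_{i=1}^n \bigl(\chi(h^{\text{in}}_i)+\chi(h^{\text{out}}_i)\bigr)=n+\sum_{e\in\gamma}\bigl(\chi(h_e)+\chi(h'_e)\bigr)=n+n=2n\equiv 0\pmod 2,\]
where the first reindexing uses that each traversed edge $e$ contributes its two halfedges $h_e,h'_e$ exactly once to the sum (once as outgoing at one endpoint, once as incoming at the other), and the second equality uses that for every internal edge precisely one halfedge is chosen by the direction (the halfedge at the target of the directed edge), so $\chi(h_e)+\chi(h'_e)=1$. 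Hence the monodromy local system is trivial, and by \cite[Prop.~4.34]{Chr23} combined with the description of the $\C_\ADE$-valued topological Fukaya category in \Cref{subsec:2_periodic_top_Fukaya}, $\mathcal{F}^{\on{clst}}_{\rgraph,\ADE}$ is equivalent to the unique (up to equivalence) non-singular perverse schober with generic stalk $\on{Ind}\C_\ADE$ and trivial monodromy, whose global sections compute $\on{Ind}\on{Fuk}({\bf S},\C_\ADE)$.
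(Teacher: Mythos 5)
Your proposal is correct and takes essentially the same route as the paper: stalks and non-singularity are read off from \Cref{prop:SODofschobers}, the monodromy is shown trivial by expressing the one-step transports near each vertex as powers of $\D(\sigma)$ (descending to $[1]$ by \Cref{prop:shift}) along curves that follow the ribbon graph and turn one step at each vertex, and one then invokes the characterization of non-singular schobers with trivial monodromy via \cite[Prop.~4.34]{Chr23}. The only difference is cosmetic: you spell out the parity bookkeeping (each internal edge carries exactly one chosen halfedge, so a closed walk through $n$ vertices accumulates total exponent $2n$) that the paper's proof compresses into ``one readily sees''.
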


\begin{proof}
The generic stalk of $\mathcal{F}^{\on{clst}}_{\rgraph,\ADE}$ is given by its value at any edge $e$ of $\rgraph$, which is given by $\on{Ind}\C_\ADE$. 

Since for each vertex $v$ of $\rgraph$ there is an equivalence
\[\mathcal{F}^{\on{clst}}_{\rgraph,\ADE}(v)\simeq \on{Fun}(\Delta^1,\on{Ind}\C_\ADE)\]
we find that $\mathcal{F}^{\on{clst}}_{\rgraph,\ADE}$ has no singularities.

As $\mathcal{F}^{\on{clst}}_{\rgraph,\ADE}$ is non-singular, its monodromy local systems extends from ${\bf S}\backslash \rgraph_0$ to ${\bf S}$, see \cite[Prop.~4.28]{Chr23}. Fix a closed curve $\gamma$ in ${\bf S}$. Then $\gamma$ is homotopic to the composite of segments, embedded into the ideal triangles of ${\bf S}$ and which start at a boundary edge of the triangle and end at a different boundary edge of the triangle. We can choose these segments such that they do not hit $\rgraph_0$ and turn exactly one step clockwise or counterclockwise in the triangle punctured by the vertex of $\rgraph$. Inspecting \Cref{rem:transport} and the construction of $\mathcal{F}_{\rgraph,\ADE}$, one readily sees that the transport of $\mathcal{F}_{\rgraph,\ADE}$ along the composite of these segments is trivial. It follows that the monodromy of $\mathcal{F}^{\on{clst}}_{\rgraph,\ADE}$ along the closed curve $\gamma$ is also trivial. 
\end{proof}

\begin{theorem}\label{thm:Cosing_Fukaya}
There exists an equivalence of $k$-linear $\infty$-categories
\[
\on{CoSing}(\GS)\simeq \on{Fuk}({\bf S},\C_\ADE) \,.
\]
\end{theorem}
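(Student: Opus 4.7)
The plan is to chain together the two results established earlier in this subsection. Theorem \ref{thm:global_sections_=_Fukaya} identifies $\glsec(\rgraph,\mathcal{F}^{\on{clst}}_{\rgraph,\ADE})$ with $\on{Ind}\on{Fuk}({\bf S},\C_\ADE)$, while Proposition \ref{prop:SODofschobers} identifies this same $\infty$-category with $\on{Ind}\on{CoSing}(\GS)$. Composing these two equivalences gives
\[
\on{Ind}\on{CoSing}(\GS)\simeq \on{Ind}\on{Fuk}({\bf S},\C_\ADE)
\]
in the $\infty$-category $\on{LinCat}_k$ of presentable $k$-linear $\infty$-categories.

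To descend to the small stable $\infty$-categories appearing in the theorem statement, I would then restrict to the subcategories of compact objects. Any equivalence in $\on{LinCat}_k$ preserves compact objects, and the compact objects of $\on{Ind}(\mathcal{E})$ for a small stable $\infty$-category $\mathcal{E}$ recover its idempotent completion. Both of the categories under consideration are already idempotent complete: the Fukaya category $\on{Fuk}({\bf S},\C_\ADE)$ by virtue of the Dyckerhoff--Kapranov construction reviewed in \Cref{subsec:2_periodic_top_Fukaya} via colimits of small stable dg categories with idempotent complete stalks in $\C_\ADE$, and the cosingularity category $\on{CoSing}(\GS)=\D^{\on{perf}}(\GS)/\D^{\on{fin}}(\GS)$ by inheritance from the idempotent completeness of $\D^{\on{perf}}(\GS)$, together with the standard observation that the Verdier quotient admits no new idempotents because $\D^{\on{fin}}(\GS)$ is a thick subcategory of the compactly generated $\on{Ind}\D^{\on{perf}}(\GS)$.

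No serious obstacle is expected, as the theorem is essentially a corollary packaging the two previous results together. The only minor technical point is the identification of the compact objects with the small categories themselves (rather than with their a priori potentially larger idempotent completions); this is standard and does not affect the statement.
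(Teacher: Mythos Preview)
Your proposal is essentially the same as the paper's proof: both chain together \Cref{prop:SODofschobers} (or equivalently \Cref{prop:Ind_finite_global_sections}) with \Cref{thm:global_sections_=_Fukaya} to get the Ind-completed equivalence, then pass to compact objects. The paper is terser and leaves the last step implicit.

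One small caution: your justification that $\on{CoSing}(\GS)$ is idempotent complete is not valid as written. Verdier quotients of idempotent complete stable $\infty$-categories by thick subcategories are \emph{not} idempotent complete in general (singularity categories of Gorenstein rings are a standard source of counterexamples), so ``inheritance'' and thickness of $\D^{\on{fin}}(\GS)$ alone do not suffice. The paper does not address this point explicitly either; in practice one either works with the idempotent completion from the outset or argues idempotent completeness by other means specific to the situation.
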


\begin{proof}
We note that 
\[ \on{Ind}\on{CoSing}(\GS)\simeq \glsec(\rgraph,\mathcal{F}_{\rgraph,\ADE})/\glsec(\rgraph,\mathcal{F}^{\on{Ind-fin}}_{\rgraph,\ADE})\simeq \glsec(\rgraph,\mathcal{F}^{\on{clst}}_{\rgraph,\ADE})\] by \Cref{prop:Ind_finite_global_sections}. The equivalence thus follows from \Cref{thm:global_sections_=_Fukaya}.
\end{proof}

The article \cite{Chr25b} discusses the gluing of cluster tilting subcategories along perverse schobers. Theorem 1 in \cite{Chr25b} shows that, given a cluster tilting subcategory $\mathcal{T}_v\subset \mathcal{F}_{\rgraph,\ADE}^{\on{clst}}(v)^{\on{c}}\simeq \on{CoSing}(\GT)\simeq \on{Fun}(\Delta^1,\C_\ADE)$ for every vertex $v$ of $\rgraph$, then the additive subcategory
\[
\on{Add}(\bigcup_{v\in \rgraph_0} \on{ind}^L_v(\mathcal{T}_v))\subset \glsec(\rgraph,\mathcal{F}^{\on{clst}}_{\rgraph,\ADE})^{\on{c}}\simeq \on{CoSing}(\GS)
\]
is cluster tilting as well. Here $ \on{ind}^L_v\colon \mathcal{F}^{\on{Ind-fin}}_{\rgraph,\ADE}(v)^c\to \glsec(\rgraph,\mathcal{F}^{\on{clst}}_{\rgraph,\ADE})^{\on{c}}$ denotes the left induction functor, see \Cref{rem:projectivesof3CY}. Further note that $\glsec(\rgraph,\mathcal{F}^{\on{clst}}_{\rgraph,\ADE})^{\on{c}}$ is understood to be equipped with the relative Frobenius $\infty$-categorical exact structure arising from the spherical functor 
\[\prod_{e\in \rgraph_1^\partial}\on{ev}_{e}\colon \glsec(\rgraph,\mathcal{F}^{\on{clst}}_{\rgraph,\ADE})^{\on{c}}\to \prod_{e\in \rgraph_1^\partial}\on{ev}_{e}\mathcal{F}^{\on{clst}}(e)\,.\]

The equivalence between $\on{CoSing}(\GT)$ and the Higgs category of $\GT$ proven in \cite[Section 7.3]{KL25} shows that the image of $\GT$ in $\on{CoSing}(\GT)$ defines a cluster tilting subcategory (recall that we consider $\GT$ and $\GS$ as additive subcategories of their derived categories). Combining this with the gluing of cluster tilting objects, we obtain:
 
\begin{theorem}\label{thm:GS_cluster_tilting}
The image of the additive subcategory $\GS\subset \D(\GS)$ in $\on{CoSing}(\GS)\simeq \on{Fuk}({\bf S},\C_\ADE)$ is a cluster tilting subcategory.
\end{theorem}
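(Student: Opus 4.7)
The strategy follows the three-line outline sketched just before the statement: the local Keller--Liu result, the gluing theorem of \cite{Chr25b}, and the identification of $\GS$ as the additive hull of induced projectives.

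First, for each vertex $v\in \rgraph_0$, let $\mathcal{T}_v\subset \mathcal{F}^{\on{clst}}_{\rgraph,\ADE}(v)^{\on{c}}\simeq \on{CoSing}(\GT)$ be the image of $\GT\subset \D(\GT)$ under the quotient functor. By the equivalence between $\on{CoSing}(\GT)$ and the Higgs category of $\GT$ established in \cite[Section 7.3]{KL25}, together with the existence of the canonical cluster tilting object in the latter, the subcategory $\mathcal{T}_v$ is a cluster tilting subcategory of $\on{CoSing}(\GT)$ with respect to the relative exact structure determined by the boundary evaluation functors.

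Second, I will feed the family $\{\mathcal{T}_v\}_{v\in \rgraph_0}$ into the gluing theorem for cluster tilting subcategories along parametrized perverse schobers from \cite{Chr25b}, applied to the perverse schober $\mathcal{F}^{\on{clst}}_{\rgraph,\ADE}$. This is exactly the setup indicated in the paragraph immediately preceding the theorem, so Theorem~1 of \cite{Chr25b} immediately yields that
\[
\mathcal{T}\coloneqq \on{Add}\Bigl(\bigcup_{v\in \rgraph_0}\on{ind}^L_v(\mathcal{T}_v)\Bigr)\subset \glsec(\rgraph,\mathcal{F}^{\on{clst}}_{\rgraph,\ADE})^{\on{c}}\simeq \on{CoSing}(\GS)
\]
is a cluster tilting subcategory for the relative Frobenius exact structure from the spherical boundary evaluation functor.

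Third, it remains to identify $\mathcal{T}$ with the image of $\GS$ in $\on{CoSing}(\GS)$. By \Cref{rem:projectivesof3CY}, under the equivalence $\D(\GS)\simeq \glsec(\rgraph,\mathcal{F}_{\rgraph,\ADE})$ the subcategory $\GS\subset \D(\GS)$ is the additive hull of $\bigcup_{v\in \rgraph_0}\on{ind}^{L,\mathcal{F}}_v(\GT)$, where $\on{ind}^{L,\mathcal{F}}_v$ denotes left induction for $\mathcal{F}_{\rgraph,\ADE}$. The cofiber map $\mathcal{F}_{\rgraph,\ADE}\to \mathcal{F}^{\on{clst}}_{\rgraph,\ADE}$ is a morphism of perverse schobers (\Cref{lem:cofiberschober}), so at each vertex the pointwise quotient square is left adjointable. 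Passing to left adjoints, this means that $\on{ind}^{L}_v\circ \pi_v\simeq \pi\circ \on{ind}^{L,\mathcal{F}}_v$, where $\pi,\pi_v$ are the quotient functors to the cosingularity categories. Applying this to the generator $\GT$ gives $\pi(\on{ind}^{L,\mathcal{F}}_v(\GT))=\on{ind}^L_v(\mathcal{T}_v)$, so the image of $\GS$ in $\on{CoSing}(\GS)$ is exactly $\mathcal{T}$, which is cluster tilting.

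The only non-formal step is the compatibility of left induction with the cosingularity quotient in the third paragraph, i.e.~the left adjointability of the cofiber morphism of perverse schobers at the level of global sections. This is the step I expect to require the most care, but it is essentially already supplied by \Cref{lem:cofiberschober} combined with \Cref{prop:adjointability} and the fact that left adjoints to the evaluation functors exist on both sides of the semiorthogonal decomposition; the rest of the argument is a direct bookkeeping assembly of already-proven ingredients.
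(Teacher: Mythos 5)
Your proposal is correct and follows essentially the same route as the paper: the local cluster tilting statement from Keller--Liu via the Higgs category, the gluing theorem of \cite{Chr25b} applied to $\mathcal{F}^{\on{clst}}_{\rgraph,\ADE}$, and the identification of the image of $\GS$ with the glued subcategory via \Cref{rem:projectivesof3CY}. The compatibility step you flag at the end is exactly what the paper dispatches as ``the observation that left induction commutes with morphisms of perverse schobers,'' and your justification via \Cref{lem:cofiberschober} and the adjointability discussion is the intended content of that observation.
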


\begin{proof}
In the case that ${\bf S}=\Delta$ is the triangle, this is proved in \cite{KL25}. For ${\bf S}$ arbitrary, this follows from \cite[Thm.~1]{Chr25b} using \Cref{rem:projectivesof3CY} and the observation that left induction commutes with morphisms of perverse schobers. 
\end{proof}

\begin{lemma}\label{lem:endomorphismalgebra}
Denote by $\pi\colon\D(\GS)\to \on{CoSing}(\GS)$ the quotient functor. The functor $\pi$ induces an isomorphism
\[
\pi\colon \on{Ext}^{-i}_{\D(\GS)}(P,P')\xlongrightarrow{\simeq} \on{Ext}^{-i}_{\on{CoSing}(\GS)}(\pi(P),\pi(P'))
\]
for all $P,P'\in \GS$ and all $i\geq 0$.
\end{lemma}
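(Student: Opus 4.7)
The plan is to analyze the morphism spaces via the Bousfield localization associated with the Verdier quotient $\pi$ and reduce the statement to a vanishing of low-degree Ext groups, which will then follow from the relative $3$-Calabi--Yau structure on $\GS$.

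Denote by $\iota\colon \on{Ind}\D^{\on{fin}}(\GS)\hookrightarrow \D(\GS)$ the inclusion with right adjoint $R$, and by $\pi^R$ the fully faithful right adjoint to $\pi$ (identifying $\on{Ind}\on{CoSing}(\GS)$ with the full subcategory of objects right-orthogonal to $\on{Ind}\D^{\on{fin}}(\GS)$). The standard Bousfield localization sequence produces a fiber sequence of endofunctors $\iota R\to \on{id}_{\D(\GS)}\to \pi^R\pi$. Applying $\on{Mor}_{\D(\GS)}(P,-)$ to the resulting fiber sequence $\iota R(P')\to P'\to \pi^R\pi(P')$ gives a long exact sequence relating $\on{Ext}^\ast_{\D(\GS)}(P,P')$, $\on{Ext}^\ast_{\D(\GS)}(P,\iota R(P'))$ and $\on{Ext}^\ast_{\on{Ind}\on{CoSing}(\GS)}(\pi(P),\pi(P'))\simeq \on{Ext}^\ast_{\D(\GS)}(P,\pi^R\pi(P'))$. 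The claim of the lemma thus reduces to the vanishing
\[ \on{Ext}^{j}_{\D(\GS)}(P,\iota R(P'))=0 \quad \text{for all } j\leq 1 \text{ and all } P\in \GS. \]

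Since $\iota R(P')$ lies in $\on{Ind}\D^{\on{fin}}(\GS)$, it suffices to prove the same vanishing with $\iota R(P')$ replaced by an arbitrary compact object $X\in \D^{\on{fin}}(\GS)$. Here I would invoke the relative left $3$-Calabi--Yau structure on $f_{\GS}\colon B\to \GS$ from \Cref{prop:rel3CY}, which produces a Serre-type duality expressing $\on{Mor}_{\D(\GS)}(P,X)$ as a fiber involving $\on{Mor}_{\D(\GS)}(X,P)^\vee[-3]$ together with a correction term computed on the boundary $B\simeq \Pi_2(\on{proj}(\ADE))^{\amalg \rgraph^{\partial}_1}$ involving the $2$-Calabi--Yau duality on $B$. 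The connectivity of $\GS$ and of $B$ forces $\on{Mor}_{\D(\GS)}(X,P)$ and the boundary correction to lie in homologically nonnegative (hence cohomologically nonpositive) degrees, so dualizing and shifting by $[-3]$ places $\on{Mor}_{\D(\GS)}(P,X)$ in cohomological degrees $\geq 2$, which is the required vanishing range.

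The main obstacle is the careful formulation and application of the relative Calabi--Yau duality, in particular tracking the boundary correction term and verifying that it only contributes in degrees $\geq 2$. The argument directly generalises the corresponding step in the $\ADE=A_1$ case of the prequel \cite{Chr22b}, and the relative $3$-Calabi--Yau structure from \Cref{prop:rel3CY}, obtained by gluing the local relative structures on the triangles, provides exactly the duality needed to carry out this argument uniformly in Dynkin type.
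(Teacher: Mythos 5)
There is a genuine gap. Your first step (the Bousfield localization fiber sequence $\iota R(P')\to P'\to \pi^R\pi(P')$ and the resulting long exact sequence) is fine, but it only restates the lemma: the required vanishing $\on{Ext}^{j}_{\D(\GS)}(P,\iota R(P'))=0$ for $j\leq 1$ concerns the specific colocalization $R(P')$ and is essentially equivalent to the statement being proven. The problem is your second step, where you replace $\iota R(P')$ by an \emph{arbitrary} compact object $X\in \D^{\on{fin}}(\GS)$. That strengthened vanishing is false: take $P=P_v$ the representable at a vertex $v$ and $X=S_v$ the simple module concentrated in degree $0$ (which lies in $\D^{\on{fin}}(\GS)$ since $\GS$ is smooth); then $\on{Mor}_{\D(\GS)}(P_v,S_v)\simeq S_v(v)\simeq k$ sits in cohomological degree $0$, so $\on{Ext}^{0}(P,X)\neq 0$. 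The duality bookkeeping in your third paragraph is also reversed: connectivity of $\GS$ controls $\on{Mor}(P,X)$ (maps \emph{out of} projectives), not $\on{Mor}(X,P)$; in the example above the relative $3$-Calabi--Yau duality gives $\on{Mor}(S_v,P_v)\simeq k[-3]$, concentrated in cohomological degree $3$, contradicting your claim that it is homologically nonnegative, and dualizing then does not place $\on{Mor}(P,X)$ in degrees $\geq 2$. Moreover finite objects need not be connective, so no uniform degree bound of the kind you invoke is available. In short, the information that actually makes the lemma true is about how $P'$ interacts with the quotient, not a blanket orthogonality of $\GS$ against all finite modules, and your argument never accesses it.

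For comparison, the paper proves the lemma by a local-to-global argument rather than by duality: Keller--Liu establish the triangle case in the sharp form $\tau_{\geq 0}\on{Mor}_{\on{CoSing}(\GT)}(\pi(P),\pi(P'))\simeq \on{Mor}_{\D(\GT)}(P,P')$, and the global case follows because morphism objects of global sections of the schober are computed by iterated pullbacks of the local morphism objects, the connective truncation $\tau_{\geq 0}$ in $\D(k)$ commutes with pullbacks, and \Cref{lem:Ginzburgboundary} guarantees that the edge restrictions of objects of $\GS$ lie in $\on{Add}(\Pi_2(\ADE))$, so the analogous truncation statement holds on the edges (via $\C_\ADE=\on{CoSing}(\Pi_2(\ADE))$). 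If you want to salvage a localization-sequence approach, you would need to analyze $R(P')$ itself, e.g.\ via a t-structure argument in the style of Amiot/Wu showing that the finite part of $P'$ is concentrated in sufficiently negative homological degrees relative to $P$; as written, your proposal does not do this.
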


\begin{proof}
In the case ${\bf S}=\Delta$ a triangle, \cite{KL25} show that 
\[ \tau_{\geq 0}\on{Mor}_{\on{CoSing}(\GS)}(\pi(P),\pi(P'))\simeq \on{Mor}_{\D(\GS)}(\pi(P),\pi(P'))\]
 (where the truncation is taken with respect to the homological grading convention). The general case follows from gluing using the following two observations and \Cref{lem:Ginzburgboundary}:
\begin{itemize}
\item The truncation $\tau_{\geq 0}$  in $\D(k)$ commutes with pullbacks. 
\item Consider a pullback diagram in $\on{LinCat}_k$
\[
\begin{tikzcd}
\C \arrow[r, "F_2"] \arrow[d, "F_1"] & \B_2 \arrow[d, "G_2"] \\
\B_1 \arrow[r, "G_1"]                & \A                    
\end{tikzcd}
\]
Let $X,Y\in \C$. The arising square in $\D(k)$
\[
\begin{tikzcd}
{\on{Mor}_{\C}(X,Y)} \arrow[r] \arrow[d]   & {\on{Mor}_{\B_2}(F_2(X),F_2(Y))} \arrow[d] \\
{\on{Mor}_{\B_1}(F_1(X),F_1(Y))} \arrow[r] & {\on{Mor}_{\A}(G_1F_1(X),G_1F_1(Y))}      
\end{tikzcd}
\]
is pullback. This follows for instance from the proof of \cite[Prop.~3.12]{Chr23}.
\end{itemize}
\end{proof}

\begin{remark}
Combining \Cref{prop:Ginzburg_alg} and \Cref{lem:endomorphismalgebra}, or alternatively applying \cite[Prop.~5.10]{Chr22b}, shows that the endomorphism algebra of the cluster tilting object in $\on{Fuk}({\bf S},\C_\ADE)$ arising from the cluster tilting subcategory in \Cref{thm:GS_cluster_tilting} is described by the amalgamation ice quiver $(Q_{\rgraph,\ADE},F_{\rgraph,\ADE})$ from \Cref{def:surface_ice_quiver}.
\end{remark}

\section{The cosingularity category as a Higgs category}

In this section, we prove that the Higgs category $\mathcal{H}_{\GS}$ is equivalent to the cosingularity category of the relative $3$-Calabi--Yau dg category $\GS$. We do so by a minor modification of the argument for the case $\ADE=A_1$ given in the prequel \cite{Chr22b}. The argument consists of two steps: One first observes that there is a canonical exact functor from the Higgs category $\mathcal{H}_{\GS}$ to the cosingularity category of $\GS$. Secondly, one shows that this functor maps a cluster tilting object to a cluster tilting object, preserving the endomorphisms, and deduces that the functor is an equivalence of exact $\infty$-categories.

\subsection{Recollections on Higgs categories}

In this following, we describe an $\infty$-categorical version of the Higgs dg category of \cite{Wu21}, which generalizes Amiot's construction of the cosingularity category \cite{Ami09} to the relative $3$-Calabi--Yau setting. As input for this construction serves a dg functor between smooth idempotent complete dg categories $f\colon B\to A$  such that
\begin{itemize}
\item $A$ and $B$ have finitely many isomorphisms classes of indecomposable objects,
\item all morphisms complexes in $A$ are connective,
\item $H_0(A)$ has finite dimensional Homs and
\item $f$ admits a left $3$-Calabi--Yau structure.  
\end{itemize}
Note that $A,B$ are thus Morita equivalent to dg algebras and the dg functor $f$ corresponds to a non-unital dg algebra morphism. 

A typical instance of the above arises from an ice quiver with potential $(Q,F,W)$ by setting $A=\mathscr{G}_{(Q,F,W)}$ to be the relative $3$-Calabi--Yau Ginzburg dg category and $f$ to be the Ginzburg dg functor \cite{Wu21}
\[ {\bf Gi}\colon \Pi_2(F)\to \mathscr{G}_{(Q,F,W)}\,.\]
This satisfies the above conditions if $H_0(\mathscr{G}_{(Q,F,W)})$ has finite dimensional Homs. 

Let $\D(f)\colon \D(B)\to \D(A)$ be the $k$-linear functor induced on the derived $\infty$-categories and $\D(f)^R$ its right adjoint. 

We denote by $\D^{\on{fin}}_{B}(A)\subset \D^{\on{fin}}(A)$ the full subcategory consisting of all elements in the kernel of the functor $\D(f)^R$. The relative cluster ($\infty$-)category of \cite{Wu21} may be defined as the Verdier quotient
\[ \C^{\on{rel}}_A\coloneqq \D^{\on{perf}}(A)/\D^{\on{fin}}_B(A)\,.\]
We note that $\C^{\on{rel}}_A$ is equivalent to the perfect derived $\infty$-category of the relative cluster dg category of \cite{Wu21}. The Higgs category $\mathcal{H}_{A}$ is a certain extension closed, full subcategory of $\C^{\on{rel}}_A$, which thus inherits the structure of an exact $\infty$-category. To state its definition, we first define the so-called relative fundamental domain $\mathcal{F}^{\on{rel}}\subset \D^{\on{perf}}(A)$.  Consider the set $\mathcal{P}$ of objects of $A$ lying the image of $f$.

\begin{definition}\label{def:fundamentaldom}
The relative fundamental domain $\mathcal{F}^{\on{rel}}\subset \D^{\on{perf}}(A)$ is the full subcategory spanned by objects $X$ satisfying that 
\begin{enumerate}[1)]
\item $X$ fits into a fiber and cofiber sequence $M_1\to M_0\to X$ with $M_0,M_1$ lying in the additive closure $\on{Add}(A)$ and
\item $\on{Ext}^i(P,X)\simeq \on{Ext}^i(X,P)\simeq 0$ for all $P\in \mathcal{P}$ and $i>0$.
\end{enumerate}
\end{definition}

In favorable situations, the definition of the fundamental domain can be simplified as follows:

\begin{lemma}[$\!\!${\cite[Lem.~8.2]{Chr22b}}]\label{lem:fundamentaldomain}
Suppose that the functor $\D(f)^R$ admits a colimit preserving right adjoint $\D(f)^{RR}$ satisfying that $\on{Add}(\mathcal{P})=\D(f)^{RR}(\on{Add}(B))$. Then condition 2) in \Cref{def:fundamentaldom} is equivalent to $X$ satisfying $\D(f)^R(X)\in \on{Add}(B)\subset \D^{\on{perf}}(B)$. 
\end{lemma}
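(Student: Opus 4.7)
The plan is to use the adjoint chain $\D(f)\dashv \D(f)^R\dashv \D(f)^{RR}$ to translate the two Ext-vanishing conditions comprising 2) into Ext-vanishing conditions on $Y:=\D(f)^R(X)\in \D^{\on{perf}}(B)$, and then to recognize these as precisely the defining conditions for $Y$ to lie in $\on{Add}(B)$.

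First I would reformulate condition 2). Since $\mathcal{P}$ is by definition the image of $f$ on objects, we have $\on{Add}(\mathcal{P})=\D(f)(\on{Add}(B))$, and the adjunction $\D(f)\dashv \D(f)^R$ converts the vanishing $\on{Ext}^i_A(P,X)=0$ for all $P\in \on{Add}(\mathcal{P})$ and $i>0$ into the vanishing $\on{Ext}^i_B(B,Y)=0$ for $i>0$ (using that $B$ is an additive generator of $\on{Add}(B)$). Combining the hypothesis $\on{Add}(\mathcal{P})=\D(f)^{RR}(\on{Add}(B))$ with the adjunction $\D(f)^R\dashv \D(f)^{RR}$, the dual vanishing $\on{Ext}^i_A(X,P)=0$ becomes $\on{Ext}^i_B(Y,B)=0$ for $i>0$.

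It then remains to show that the combined condition $\on{Ext}^{>0}_B(B,Y)=\on{Ext}^{>0}_B(Y,B)=0$ is equivalent to $Y\in \on{Add}(B)$. The forward direction is immediate: if $Y$ is a summand of $B^{\oplus n}$, both groups are summands of $H^{>0}\on{End}_B(B)$, which vanishes by connectivity of the morphism complexes of $B$ (satisfied in all cases of interest, notably $B=\Pi_2(F)$). For the converse, I would invoke that $B$ is a silting object in $\D^{\on{perf}}(B)$ whose associated co-$t$-structure has coheart precisely $\on{Add}(B)$: the vanishing $\on{Ext}^{>0}_B(B,Y)=0$ places $Y$ in the aisle, while $\on{Ext}^{>0}_B(Y,B)=0$ places it in the co-aisle, so $Y$ lies in the coheart, and idempotent completeness of $B$ guarantees that the coheart is $\on{Add}(B)$ itself rather than a larger additive hull.

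The main obstacle is the converse direction in the second step: extracting membership in $\on{Add}(B)$ from the two Ext-vanishings. This relies on the silting/co-$t$-structure interpretation of $\on{Add}(B)$ inside $\D^{\on{perf}}(B)$, which in turn requires connectivity of $\on{End}_B(B)$; without this, the combined Ext-vanishing would not cut out $\on{Add}(B)$.
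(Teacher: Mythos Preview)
Your argument is correct and follows the same strategy as the cited proof from \cite{Chr22b}: translate both Ext-vanishing conditions via the adjunctions $\D(f)\dashv \D(f)^R\dashv \D(f)^{RR}$ into conditions on $Y=\D(f)^R(X)$ in $\D^{\on{perf}}(B)$, and then identify $\on{Add}(B)$ as the coheart of the canonical co-$t$-structure. The paper itself does not reproduce the proof here but uses exactly this silting/coheart argument elsewhere (see the proof of \Cref{lem:Ginzburgboundary}, invoking \cite[Prop.~2.23]{AI12}), so your route matches the intended one. Two small remarks: first, you should note that $Y\in \D^{\on{perf}}(B)$ because the hypothesis that $\D(f)^{RR}$ is colimit preserving forces $\D(f)^R$ to preserve compact objects; second, your hedge about connectivity of $B$ is unnecessary in the paper's setting, since $B=\Pi_2(\on{proj}(\ADE))^{\amalg \rgraph_1^\partial}$ is manifestly connective.
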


The Higgs category $\mathcal{H}_A\subset \C^{\on{rel}}_A$ is defined as the image of $\mathcal{F}^{\on{rel}}$ under the quotient functor $\D^{\on{perf}}(A)\to \C^{\on{rel}}_A$. The arising functor $\mathcal{F}^{\on{rel}}\to \mathcal{H}_A$ is furthermore fully faithful on the level of homotopy categories \cite[Prop.~5.20]{Wu21}. The image of $A$ in $\C^{\on{rel}}_A$ lies in $\mathcal{H}_A$ and defines a cluster tilting subcategory in $\mathcal{H}_A$ with respect to its $\infty$-categorical exact structure, see \cite{Wu21}.

\subsection{The equivalence}

Consider the connective dg category $\GS$ associated with a marked surface ${\bf S}$ with trivalent spanning graph $\rgraph$ and Dynkin quiver $\ADE$ from \Cref{def:GS} and the corresponding relative left $3$-Calabi--Yau functor 
\[f_{\GS}\colon \Pi_2(\on{proj}(I))^{\amalg \rgraph_1^\partial}\to \GS\,.\]

Let $\mathcal{H}_{\GS}$ be the associated Higgs category. We define the functor $\tau\colon \mathcal{H}_{\GS}\to \on{CoSing}(\GS)$ as the composite 
\begin{align*}
\mathcal{H}_{\GS}&\subset \C^{\on{rel}}_{\GS}= \D^{\on{perf}}(\GS)/\D^{\on{fin}}_{\Pi_2(\on{proj}(I))^{\amalg \rgraph_1^\partial}}(\GS)\\
& \twoheadrightarrow  \on{CoSing}(\GS)=\D^{\on{perf}}(\GS)/\D^{\on{fin}}(\GS)\,.
\end{align*}

Using the equivalence $\on{CoSing}(\GS)\simeq \glsec(\rgraph,\mathcal{F}^{\on{clst}}_{\rgraph,\ADE})$ from \Cref{prop:Ind_finite_global_sections}, the stable $\infty$-category $\on{CoSing}(\GS)$ inherits a relative Frobenius exact $\infty$-structure arising from boundary restriction, see \Cref{ex:schober_inducedexstr}.

The main result of this section is the following:

\begin{theorem}\label{thm:equivHiggsCosing}
The functor 
\[
\tau\colon \mathcal{H}_{\GS}\to \on{CoSing}(\GS)
\]
is an equivalence of exact $\infty$-categories.
\end{theorem}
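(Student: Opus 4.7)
The strategy, following the prequel \cite{Chr22b}, is to reduce the claim to a recognition principle for exact functors: if $F\colon \C\to \D$ is an exact functor between Frobenius exact $\infty$-categories admitting cluster tilting objects $T_{\C},T_{\D}$ with $F(T_{\C})\simeq T_{\D}$, and if $F$ induces an isomorphism of endomorphism algebras $\on{End}(T_{\C})\simeq \on{End}(T_{\D})$, then $F$ is an equivalence of exact $\infty$-categories. Let $T$ denote the image of $\GS$ in $\mathcal{H}_{\GS}$ and $T'=\tau(T)$ its image in $\on{CoSing}(\GS)$. By Wu's theory \cite{Wu21}, $T$ is a cluster tilting object in the Frobenius exact $\infty$-category $\mathcal{H}_{\GS}$, and by \Cref{thm:GS_cluster_tilting}, $T'$ is a cluster tilting object in $\on{CoSing}(\GS)$. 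Thus it suffices to verify that $\tau$ is exact and induces a $\on{Hom}$-isomorphism on $T$.

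For the exactness, both sides carry relative Frobenius exact structures arising from a spherical boundary functor. On $\on{CoSing}(\GS)\simeq \glsec(\rgraph,\mathcal{F}^{\on{clst}}_{\rgraph,\ADE})^{\on{c}}$ this is the structure of \Cref{ex:schober_inducedexstr} associated with $\prod_{e\in \rgraph_1^\partial}\on{ev}_e$ on the quotient perverse schober; on $\mathcal{H}_{\GS}$ it is inherited via $\mathcal{H}_{\GS}\subset \C^{\on{rel}}_{\GS}$ from the boundary spherical functor attached to $f_{\GS}$. The cofiber morphism $\mathcal{F}_{\rgraph,\ADE}\to \mathcal{F}^{\on{clst}}_{\rgraph,\ADE}$ is a morphism of perverse schobers by \Cref{lem:cofiberschober}, so it intertwines the two boundary functors up to the localization, and a fiber and cofiber sequence in $\mathcal{H}_{\GS}$ whose image splits after restriction to the boundary of $\mathcal{F}_{\rgraph,\ADE}$ also splits after restriction to the boundary of $\mathcal{F}^{\on{clst}}_{\rgraph,\ADE}$. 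This is the same argument used for $\ADE=A_1$ in \cite[Section~8]{Chr22b} and goes through essentially verbatim.

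The endomorphism isomorphism follows from \Cref{lem:endomorphismalgebra}: the quotient $\pi\colon \D(\GS)\to \on{CoSing}(\GS)$ induces an isomorphism on $\on{Ext}^{-i}$ for $i\geq 0$ between objects of $\GS\subset \D(\GS)$, in particular on $\on{Hom}$ spaces. Since $\pi$ factors through $\mathcal{H}_{\GS}\subset \C^{\on{rel}}_{\GS}$ via $\tau$, and since the canonical quotient $\D(\GS)\to \mathcal{H}_{\GS}$ is itself a $\on{Hom}$-isomorphism on the image of $\GS$ by Wu's fundamental domain property, the functor $\tau$ induces an isomorphism $\on{End}_{\mathcal{H}_{\GS}}(T)\xrightarrow{\simeq}\on{End}_{\on{CoSing}(\GS)}(T')$ on the $0$-th cohomology algebras. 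The recognition principle then yields the desired equivalence.

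The main obstacle is the careful verification that the two Frobenius exact structures really do match under $\tau$ at the $\infty$-categorical level, which boils down to checking that $\mathcal{F}^{\on{clst}}_{\rgraph,\ADE}$ carries the boundary-induced exact structure transported from $\mathcal{H}_{\GS}$ via $\tau$; this is a matter of pasting together the right adjointability data already exploited in \Cref{lem:cofiberschober} and \Cref{prop:SODofschobers}. Beyond this compatibility check, the proof consists of assembling ingredients already established in the paper: \Cref{thm:GS_cluster_tilting} provides the cluster tilting object on the cosingularity side, \Cref{lem:endomorphismalgebra} provides the identification of endomorphism algebras, and the perverse-schober gluing machinery governs the exact-structure compatibility.
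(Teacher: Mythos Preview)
Your overall strategy matches the paper's: reduce to a recognition principle for exact functors mapping a cluster tilting object to a cluster tilting object with matching endomorphisms, and verify the hypotheses using \Cref{thm:GS_cluster_tilting} and \Cref{lem:endomorphismalgebra}. However, there are two places where your formulation is looser than what the argument actually needs.

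First, the recognition principle as you state it is not quite strong enough. Knowing that $\tau$ is exact and induces an isomorphism $\on{End}(T)\simeq\on{End}(T')$ does not by itself give essential surjectivity: given $X\in\on{CoSing}(\GS)$ with a $2$-term resolution $T_1'\to T_0'\to X$, the morphism $T_1'\to T_0'$ lifts to $\mathcal{H}_{\GS}$ via the Hom-isomorphism, but you must know that this lift is again an \emph{inflation} so that its cokernel lies in $\mathcal{H}_{\GS}$. The paper's version of the recognition principle (\Cref{lem:detectextriangulatedequiv}) therefore includes an explicit fourth condition: $\tau$ restricts to a bijection on inflations between objects of $\T$. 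This is then verified using \Cref{lem:fundamentaldomain}: a morphism $\alpha$ in $\T$ is an inflation in $\mathcal{H}_{\GS}$ iff $\D(f_{\GS})^R(\alpha)$ is a split inclusion, which is exactly the inflation condition on the $\on{CoSing}$ side. Your discussion of ``exact structures matching'' gestures at this, but you only argue one direction (inflations go to inflations), not the converse.

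Second, and relatedly, your claim that the Higgs exact structure ``is inherited via $\mathcal{H}_{\GS}\subset \C^{\on{rel}}_{\GS}$ from the boundary spherical functor'' is not the definition: $\mathcal{H}_{\GS}$ is by construction an extension-closed subcategory of the triangulated $\C^{\on{rel}}_{\GS}$, and identifying that extriangulated structure with a boundary-relative one is precisely the content of \Cref{lem:fundamentaldomain}, whose hypothesis the paper verifies via \Cref{lem:AddPi2} (that the cotwist of the boundary adjunction preserves $\on{Add}(\Pi_2(\ADE))$). So your ``perverse-schober cofiber morphism intertwines boundary functors'' shortcut hides exactly the nontrivial input. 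Finally, the paper separates the homotopy-category equivalence from the $\infty$-categorical upgrade, the latter requiring the negative-$\on{Ext}$ isomorphisms of \Cref{lem:endomorphismalgebra} (not just $H_0$) together with \cite[Lem.~6.70]{Che23} on the Higgs side; you cite the lemma but do not isolate this step.
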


\begin{lemma}
The functor $\tau$ is an exact functor between exact $\infty$-categories.
\end{lemma}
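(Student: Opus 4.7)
My plan is to decompose $\tau$ as the composite
\begin{equation*}
\mathcal{H}_{\GS} \hookrightarrow \C^{\on{rel}}_{\GS} \xlongrightarrow{q} \on{CoSing}(\GS),
\end{equation*}
where $q$ is the canonical Verdier quotient. Since $q$ is a localization of stable $\infty$-categories, it preserves fiber and cofiber sequences. The exact sequences of $\mathcal{H}_{\GS}$, inherited from the extriangulated structure on $\C^{\on{rel}}_{\GS}$ via the extension-closed embedding, are by definition fiber and cofiber sequences in $\C^{\on{rel}}_{\GS}$ with all three terms in $\mathcal{H}_{\GS}$. Consequently $\tau$ automatically maps such exact sequences to fiber and cofiber sequences in $\on{CoSing}(\GS)$.

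It remains to verify that such an image is exact for the Frobenius structure on $\on{CoSing}(\GS)$ induced via \Cref{rem:inducedexstr} by $\prod_{e \in \rgraph_1^\partial} \on{ev}_e \colon \on{CoSing}(\GS) \to \prod_e \C_\ADE$; equivalently, that the image of $\tau(X) \to \tau(Y) \to \tau(Z)$ under this boundary functor splits. My plan is to first lift $X \to Y \to Z$ to a fiber and cofiber sequence $\tilde{X} \to \tilde{Y} \to \tilde{Z}$ in $\D^{\on{perf}}(\GS)$ with all terms in the fundamental domain $\mathcal{F}^{\on{rel}}$, using the essential surjectivity of $\mathcal{F}^{\on{rel}} \to \mathcal{H}_{\GS}$ together with the lifting results for conflations in \cite{Wu21}.

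Applying $\on{ev}_e$ then yields a fiber and cofiber sequence in $\D^{\on{perf}}(\Pi_2(\ADE))$ whose terms, by \Cref{lem:Ginzburgboundary} combined with the defining property of $\mathcal{F}^{\on{rel}}$ (\Cref{def:fundamentaldom}(2), reformulated in \Cref{lem:fundamentaldomain}), all lie in $\on{Add}(\Pi_2(\ADE))$. Connectivity of $\Pi_2(\ADE)$ forces $\on{Ext}^1_{\D^{\on{perf}}(\Pi_2(\ADE))}(\Pi_2(\ADE), \Pi_2(\ADE)) = 0$, so any such fiber and cofiber sequence must split. The quotient $\pi \colon \D^{\on{perf}}(\Pi_2(\ADE)) \to \C_\ADE$ carries split sequences to split sequences, and the commutativity $\on{ev}_e \circ q \simeq \pi \circ \on{ev}_e$ identifies the resulting split sequence with the image of $\tau(X) \to \tau(Y) \to \tau(Z)$ under $\on{ev}_e$.

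The main obstacle I anticipate is the lifting step: although $\mathcal{F}^{\on{rel}} \to \mathcal{H}_{\GS}$ is essentially surjective on objects, lifting an entire conflation---in particular ensuring that the middle term $\tilde{Y}$ of the lift remains in $\mathcal{F}^{\on{rel}}$---is a nontrivial input drawn from Wu's construction of the extriangulated structure. Once the lift is in hand, the remaining argument reduces to the connectivity of $\Pi_2(\ADE)$, from which the splitting assertion is immediate.
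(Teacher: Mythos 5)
Your overall skeleton (compose the extension-closed inclusion with the Verdier quotient, lift a conflation to the fundamental domain, and show the boundary restriction splits because all terms land in $\on{Add}(\Pi_2(\ADE))$) is reasonable, and in fact it essentially re-derives what the paper delegates to \cite[Prop.~8.3]{Chr22b}. The problem is that the step you treat as bookkeeping is exactly where the content lies. You justify ``the terms lie in $\on{Add}(\Pi_2(\ADE))$'' by reading \Cref{def:fundamentaldom}(2) ``as reformulated in \Cref{lem:fundamentaldomain}''. But \Cref{lem:fundamentaldomain} is conditional: it only applies once one knows that $\D(f_{\GS})^R$ admits a \emph{colimit-preserving} double right adjoint $\D(f_{\GS})^{RR}$ with $\on{Add}(\mathcal{P})=\D(f_{\GS})^{RR}(\on{Add}(\Pi_2(\on{proj}(\ADE))^{\amalg \rgraph_1^\partial}))$. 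Without that hypothesis, the vanishing $\on{Ext}^{>0}(X,P)\simeq 0$ in \Cref{def:fundamentaldom}(2) does not transfer across any adjunction to a statement about $\D(f_{\GS})^R(X)=\prod_e\on{ev}_e(X)$, so you have no way to place the boundary restrictions of your lifted conflation in $\on{Add}(\Pi_2(\ADE))$. \Cref{lem:Ginzburgboundary} does not fill this hole either: it is a statement about the local evaluations of the silting subcategory $\GT$, not about arbitrary objects of the fundamental domain, and even for the objects $M_0,M_1$ in a $2$-term resolution the cofiber of a map between objects of $\on{Add}(\Pi_2(\ADE))$ need not stay in $\on{Add}(\Pi_2(\ADE))$.

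The verification of that hypothesis is precisely what the paper's proof consists of: one identifies $\D(f_{\GS})^R$ with the spherical boundary evaluation $\prod_{e\in\rgraph_1^\partial}\on{ev}_e$ via \Cref{prop:global_sections_GS}, deduces the existence of a colimit-preserving $\D(f_{\GS})^{RR}\simeq \D(f_{\GS})\circ T^{-1}_{\D(\Pi_2(\ADE))^{\times\rgraph_1^\partial}}$ from \cite{DKSS21}, and then proves (\Cref{lem:AddPi2}, via the transport computation showing the cotwist acts by powers of $\D(\sigma)$) that this inverse cotwist preserves $\on{Add}(\Pi_2(\ADE)^{\times\rgraph_1^\partial})$. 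None of this appears in your proposal, so as written the argument is circular at its key point. Your secondary concern---lifting a conflation of $\mathcal{H}_{\GS}$ to a fiber--cofiber sequence in $\D^{\on{perf}}(\GS)$ with all three terms in $\mathcal{F}^{\on{rel}}$---is a genuine input from \cite{Wu21}, which you at least flag; but note that once the adjointability hypothesis above is established, the splitting you aim for follows from \cite[Prop.~8.3]{Chr22b} directly, making the lifting detour unnecessary. To repair your route, insert the verification of the hypothesis of \Cref{lem:fundamentaldomain} (spherical functor identification plus \Cref{lem:AddPi2}) before invoking it.
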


\begin{proof}
By Proposition 8.3 in \cite{Chr22b}, which bases on \Cref{lem:fundamentaldomain}, it suffices to show that $\D(f_{\GS})^R$ admits a colimit preserving right adjoint $\D(f_{\GS})^{RR}$ satisfying that 
\[ \on{Add}(\mathcal{P})=\D(f_{\GS})^{RR}(\on{Add}(\Pi_2(\on{proj}(I))^{\amalg \rgraph_1^\partial}))\,.\]
Firstly, we note that the functor $\D(f_{\GS})^R\colon \D(\GS)\to \D(\Pi_2(\on{proj}(I))^{\amalg \rgraph_1^\partial}))\simeq \D(\Pi_2(\ADE))^{\times \rgraph_1^\partial}$ corresponds via the equivalence from \Cref{prop:global_sections_GS} to the spherical functor $\prod_{e\in {\bf G}_1^\partial}\on{ev}_e$, and hence admits a colimit preserving right adjoint $\D(f_{\GS})^{RR}$. The functor $\D(f_{\GS})^{RR}$ is equivalent to the composite of $\D(f_{\GS})$ with the inverse cotwist functor $T_{\D(\Pi_2(\ADE))^{\times \rgraph_1^\partial}}^{-1}$, see \cite[Cor.~2.5.16]{DKSS21}. The inverse twist functor preserves the additive hull of the projectives, see \Cref{lem:AddPi2}.
\end{proof}

\begin{lemma}\label{lem:AddPi2}
The cotwist functor $T_{\D(\Pi_2(\ADE))^{\times \rgraph_1^\partial}}$ of the spherical adjunction 
\[
\prod_{e\in {\bf G}_1^{\partial}}\on{ev}_e\colon  \glsec({\bf G},\mathcal{F}_{\rgraph,\ADE}) \longleftrightarrow \D(\Pi_2(\ADE))^{\times \rgraph_1^\partial}\noloc \prod_{e\in {\bf G}_1^{\partial}}\on{ev}_e^R
\]
preserves the additive subcategory $\on{Add}(\Pi_2(\ADE)^{\times \rgraph_1^\partial})$. 
\end{lemma}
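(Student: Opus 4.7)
The plan is to compute the cotwist $T = T_{\D(\Pi_2(\ADE))^{\times \rgraph_1^\partial}}$ explicitly and show that on each boundary factor it is given by a cyclic permutation composed with a power of the involution $\D(\sigma)$, both of which preserve $\on{Add}(\Pi_2(\ADE))$.

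I would first treat the case $\mathbf{S} = \Delta$ with $\rgraph = \rgraph_3$, where $\glsec(\rgraph,\mathcal{F}_{\rgraph,\ADE}) \simeq \D(\GT)$. The spherical functor is $F = (\tilde{D}_1^R, \tilde{D}_2^R, \tilde{D}_3^R)$, whose right adjoint is $F^R(X_1,X_2,X_3) \simeq \prod_i \tilde{D}_i^{RR}(X_i)$, so $(FF^R(X))_j \simeq \bigoplus_i \tilde{D}_j^R\tilde{D}_i^{RR}(X_i)$. By \Cref{thm:3gonschober} combined with \Cref{def:schobernspider}(a), we have $\tilde{D}_i^R\tilde{D}_i^{RR} \simeq \on{id}$. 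Passing to right adjoints in the remaining statements of \Cref{thm:KLmonodromy}, we obtain $\tilde{D}_j^R\tilde{D}_i^{RR} \simeq \D(\sigma)$ when $i = j+1 \pmod 3$ and $\simeq 0$ otherwise. Hence $(FF^R(X))_j \simeq X_j \oplus \D(\sigma)(X_{j+1})$, with the counit projecting onto the first summand, so $T(X)_j \simeq \D(\sigma)(X_{j+1})$. Since $\sigma$ is a dg automorphism of $\Pi_2(\ADE)$, $\D(\sigma)$ preserves $\on{Add}(\Pi_2(\ADE))$, proving the lemma in this case.

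For general $\rgraph$, the same product structure of $F^R$ gives $(FF^R(X))_e \simeq \bigoplus_{e' \in \rgraph_1^\partial} \on{ev}_e\circ\on{ev}_{e'}^R(X_{e'})$. To show each summand preserves $\on{Add}(\Pi_2(\ADE))$, I would unfold $\on{ev}_{e'}^R$ using the local model of the perverse schober (\Cref{def:locmodel_schober}) as an iterated right Kan extension along the edges of $\rgraph$, expressing $\on{ev}_e\circ\on{ev}_{e'}^R$ as a composite of local transports at the vertices traversed along a connecting path in $\rgraph$. By \Cref{rem:transport} and the triangle computation above, each such local transport is $0$, $\on{id}$, or equivalent to $\D(\sigma)$. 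Hence each $\on{ev}_e\circ\on{ev}_{e'}^R$ is either $0$ or a power of $\D(\sigma)$; the $e' = e$ summand is $\on{id}$ by fully-faithfulness of $\on{ev}_e^R$ (which follows from the local schober structure at the boundary vertex incident to $e$). The cotwist $T(X)_e$ is thus a direct sum of objects of the form $\D(\sigma)^k(X_{e'})$, each in $\on{Add}(\Pi_2(\ADE))$ whenever $X_{e'}$ is.

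The main obstacle is the explicit local-to-global decomposition of $\on{ev}_e\circ\on{ev}_{e'}^R$ for general spanning graphs. A clean approach is by induction on the number of vertices of $\rgraph$: the base case is the triangle computation above, and each inductive step glues in one triangle via a pushout in $\on{LinCat}_k$ compatible with the construction of $\GS$ in \Cref{def:GS}, from which one can track how the boundary spherical adjunction and its cotwist transform under amalgamation. Alternatively, the decomposition can be derived directly from the lax limit presentation of each $\mathcal{F}_{\rgraph,\ADE}(v)$ by explicitly computing right adjoints of evaluation functors in this model.
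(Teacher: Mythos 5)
Your intended mechanism is the same as the paper's (the cotwist is given by transports of $\mathcal{F}_{\rgraph,\ADE}$, these are powers of $\D(\sigma)$, and $\D(\sigma)$ preserves $\on{Add}(\Pi_2(\ADE))$), and your computation for the basic triangle is correct and consistent with \Cref{rem:transport} and \Cref{prop:Z6symmetry}. The gap is in the passage to a general $\rgraph$: the paper does not attempt to decompose $\on{ev}_e\circ\on{ev}_{e'}^R$ by hand, but instead invokes \cite[Prop.~4.10]{Chr25b}, which is precisely the statement that the cotwist of the boundary-restriction spherical adjunction acts via transport along boundary arcs. That is the entire content of the hard step, and your proposal only gestures at it ("induction on the number of vertices\ldots from which one can track how the cotwist transforms under amalgamation"), which is essentially a proposal to reprove that proposition rather than a proof.

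Moreover, the intermediate claims you use to bridge the gap are false in general. The assertion that each $\on{ev}_e\circ\on{ev}_{e'}^R$ is $0$ or a power of $\D(\sigma)$, and in particular that $\on{ev}_e^R$ is fully faithful "by the local schober structure at the boundary vertex incident to $e$", fails already for the annulus with one marked point on each boundary circle: there each boundary circle carries a single boundary arc, so the boundary transport returns $e$ to itself along a noncontractible loop, and the fiber sequence $T\to FF^R\to \on{id}$ shows that $\on{ev}_{e}\on{ev}_{e}^R$ is a nontrivial extension of $\on{id}$ by that (nonzero) transport; in particular the counit is not an equivalence and $FF^R$ does not split into transports as it did for the triangle. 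Fully faithfulness of $\on{ev}_e^R$ is a global, not a local, property. A further issue with composing arbitrary "local transports" along paths in $\rgraph$ is that transports winding around a vertex involve shifts (cf.\ the $\D(\sigma)[-2]$ in \Cref{rem:transport}), and a shift does not preserve the additive subcategory $\on{Add}(\Pi_2(\ADE))$; it is exactly the boundary-arc form of the cotwist from \cite{Chr25b} that guarantees no shifts occur. So the statement you are after is true, but your argument for general surfaces does not go through as written; either cite the external result as the paper does, or genuinely carry out the amalgamation induction, keeping track of the counit (not just the underlying functors) at each gluing step.
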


\begin{proof}
Applying \cite[Prop.~4.10]{Chr25b}, we find that the cotwist acts via transport equivalences of $\mathcal{F}_{\rgraph,\ADE}$ along boundary arcs. These transports are given by powers of the involution $\D(\sigma)$, as follows from \Cref{rem:transport} and the construction of $\mathcal{F}_{\rgraph,\ADE}$. The involution $\D(\sigma)$ preserves the additive subcategory $\on{Add}(\Pi_2(\ADE)$.
\end{proof}

The following is a variant of \cite[Lem.~8.6]{Chr22b}:

\begin{lemma}\label{lem:detectextriangulatedequiv}
Let $\tau\colon (C,\mathbb{E},\mathfrak{s})\to (C',\mathbb{E}',\mathfrak{s}')$ be an extriangulated functor, such that
\begin{enumerate}[1)]
\item there are cluster tilting subcategories $\T\subset C$ and $\T'\subset C'$,
\item $\tau(\T)=\T'$,
\item $\tau\colon \on{Hom}_C(T,T)\to \on{Hom}_{C'}(\tau(T),\tau(T))$ is an isomorphism for every $T\in \T$ and
\item $\tau\colon \on{Hom}_C(T,T)\to \on{Hom}_{C'}(\tau(T'),\tau(T'))$ restricts to a bijection on the subsets of inflations for every $T\in \T$.
\end{enumerate}
Then $\tau$ is an extriangulated equivalence.
\end{lemma}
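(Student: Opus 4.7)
The plan is to follow a standard cluster-tilting bootstrapping argument: first upgrade the data to an equivalence between the cluster tilting subcategories, then propagate this equivalence to all of $C$ using 2-term $\T$-resolutions and the long exact sequences they produce.

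The first step will be to show that the restriction $\tau|_\T \colon \T \to \T'$ is an equivalence of additive categories. Essential surjectivity on objects is immediate from condition 2. For fully faithfulness between $T_1,T_2 \in \T$, set $T = T_1 \oplus T_2 \in \T$; then $\on{Hom}_C(T_1, T_2)$ is a direct summand of the endomorphism ring $\on{End}_C(T)$, and this summand is preserved by $\tau$, so condition 3 forces $\tau\colon \on{Hom}_C(T_1, T_2) \to \on{Hom}_{C'}(\tau T_1, \tau T_2)$ to be an isomorphism. The same trick applied to condition 4 shows that $\tau$ induces a bijection between inflations $T_1 \to T_2$ in $C$ and inflations $\tau T_1 \to \tau T_2$ in $C'$ (an inflation $f\colon T_1 \to T_2$ corresponds to the endomorphism of $T_1 \oplus T_2$ with matrix having $f$ as its sole nonzero entry, which is itself an inflation iff $f$ is). Essential surjectivity of $\tau$ on all of $C'$ follows next: for $X' \in C'$, pick a left 2-term $\T'$-resolution $T_1' \to T_0' \to X'$, lift $T_i'$ to $T_i \in \T$, lift the inflation $T_1' \to T_0'$ to an inflation $T_1 \to T_0$ in $C$ via the inflation bijection just established, complete to a conflation $T_1 \to T_0 \to X$ in $C$, and then apply $\tau$ to obtain a conflation $T_1' \to T_0' \to \tau(X)$ sharing its first map with the original; by the uniqueness of the cofiber of an inflation in an extriangulated category, $\tau(X) \cong X'$.

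The next step is full faithfulness. I will first show that for any $X \in C$ and any $T \in \T$ the maps
\[
\tau\colon \on{Hom}_C(X,T) \to \on{Hom}_{C'}(\tau X, \tau T), \qquad \tau\colon \mathbb{E}(X,T) \to \mathbb{E}(\tau X, \tau T)
\]
are isomorphisms. For this, take a left 2-term resolution $T_1^X \to T_0^X \to X$, apply $\on{Hom}_C(-,T)$ to obtain the long exact sequence
\[
0 \to \on{Hom}_C(X,T) \to \on{Hom}_C(T_0^X, T) \to \on{Hom}_C(T_1^X, T) \to \mathbb{E}(X,T) \to \mathbb{E}(T_0^X, T),
\]
and note that $\mathbb{E}(T_i^X, T) = 0$ by rigidity of $\T$. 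The analogous sequence in $C'$ admits $\tau$ as a comparison map, which is an isomorphism on $\on{Hom}(T_i^X, T)$ by the first step; by the short five-lemma, $\tau$ is an isomorphism on the kernel $\on{Hom}_C(X,T)$ and the cokernel $\mathbb{E}(X,T)$. For general $Y \in C$, now take a right 2-term resolution $Y \to T_0^Y \to T_1^Y$ and apply $\on{Hom}_C(X,-)$; comparing with the corresponding sequence in $C'$ and invoking the previous claim at $T_0^Y, T_1^Y \in \T$, another short five-lemma gives $\on{Hom}_C(X,Y) \cong \on{Hom}_{C'}(\tau X, \tau Y)$, and the same argument yields $\mathbb{E}(X,Y) \cong \mathbb{E}(\tau X, \tau Y)$.

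The final step is to upgrade the equivalence of underlying additive categories to an equivalence of extriangulated structures, i.e.\ to check that every conflation in $C'$ is isomorphic to the $\tau$-image of some conflation in $C$. Given a conflation $A' \to B' \to C'$ in $C'$ with realization class $\delta' \in \mathbb{E}(C', A')$, use the isomorphism $\tau\colon \mathbb{E}(C,A) \xrightarrow{\sim} \mathbb{E}(\tau C, \tau A)$ just established (where $\tau A = A'$, $\tau C = C'$ by essential surjectivity) to lift $\delta'$ to a class $\delta \in \mathbb{E}(C,A)$, realize $\delta$ as a conflation $A \to B \to C$ in $C$, and observe that $\tau$ sends it to a conflation with realization class $\delta'$, hence isomorphic to the original. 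I expect the main obstacle to be bookkeeping the five-lemma arguments cleanly in the extriangulated setting and correctly interpreting condition 4 via the $T_1 \oplus T_2$ trick; everything else is a formal consequence of rigidity, the two-sided 2-term resolution property, and the exactness of $\tau$.
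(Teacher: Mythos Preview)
Your overall strategy is exactly the paper's: prove essential surjectivity by lifting a left $2$-term $\T'$-resolution along $\tau$ (this is verbatim the paper's argument), and then prove full faithfulness by the standard two-step five-lemma computation using left and right $2$-term resolutions (the paper defers this to \cite[Lem.~8.6]{Chr22b}, which is precisely the computation you wrote out). Your final paragraph, lifting extension classes via the bijection on $\mathbb{E}$, is a correct way to finish.

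There is, however, one genuine gap. Your reduction of condition~4) to arbitrary pairs $T_1,T_2\in\T$ via the matrix trick is wrong as stated: the endomorphism of $T_1\oplus T_2$ whose only nonzero entry is $f$ in the $(2,1)$-slot annihilates the summand $T_2$, so in any extriangulated category arising from an exact structure (such as the Frobenius structures in this paper, where inflations are exactly the morphisms whose boundary restriction is split monic) it is \emph{never} an inflation when $T_2\neq 0$, regardless of whether $f$ is. The direct-sum trick works perfectly for condition~3) on Hom-sets, but inflations do not pass to off-diagonal matrix entries in this way. Note that the paper's own proof also invokes condition~4) for a morphism $T_1'\to T_0'$ between distinct objects without further comment; in the application (proof of \Cref{thm:equivHiggsCosing}) this causes no trouble because condition~4) is verified there via the criterion ``$\alpha$ is an inflation iff $\D(f_{\GS})^R(\alpha)$ is a split inclusion'', which manifestly applies to all morphisms in $\T$, not just endomorphisms. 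For the abstract lemma you should either strengthen hypothesis~4) to all pairs $T_1,T_2\in\T$ (this is what is actually used and verified), or find a correct reduction---I do not believe a clean matrix trick exists.
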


\begin{proof}
We first show that $\tau$ is essentially surjective. Let $X\in C'$ and choose an exact $2$-term resolution $T_1'\to T_0'\to X$ with $T_0',T_1'\in \T'$. All objects of $\T'$ lift by 2) and 3) uniquely (up to equivalence) along $\tau$ to objects of $\T$. The morphism $T_1'\to T_0'$ is an inflation and thus lifts by 4) along $\tau$ to an inflation $T_1\to T_0$ with $T_0,T_1\in \T$. The third term in the arising exact sequence $T_1\to T_0\to Y$ in $C$ gives the desired lift of $X$ along $\tau$. 

It remains to show that $\tau$ is fully faithful. This follows by the same computation as in the proof of \cite[Lem.~8.6]{Chr22b}.
\end{proof}

\begin{proof}[Proof of \Cref{thm:equivHiggsCosing}]
We first prove that the functor $\on{ho}\tau$ between the extriangulated homotopy categories is an equivalence. We apply \Cref{lem:detectextriangulatedequiv} to show this. Conditions $1)$ and $2)$ follow from the commutativity of the diagram
\[
\begin{tikzcd}
                                         & \D^{\on{perf}}(\GS) \arrow[ld] \arrow[rd] &                   \\
\C^{\on{rel}}_{\GS} \arrow[rr, two heads] &                                & \on{CoSing}(\GS)
\end{tikzcd}
\]
as the cluster tilting subcategories arise as the images of $\GS$, see also \Cref{thm:GS_cluster_tilting}. Denote by $\T\subset \mathcal{H}_{\GS}$ and $\T'\subset \on{CoSing}(\GS)$ the images of $\GS$. Condition $3)$ follows from \Cref{lem:endomorphismalgebra} (with $i=0$) and the fact that the functor $\on{ho}
\GS\subset \on{ho}\mathcal{F}^{\on{rel}}\to \on{ho}\mathcal{H}_{\GS}$ is fully faithful.

An inflation in the Higgs category $\mathcal{H}_{\GS}$ from $T$ to $T$ is by definition a morphism $\alpha\colon T\to T$ with the property that its cofiber in $\mathcal{C}^{\on{rel}}_{\GS}$ again lies in $\mathcal{H}_{\GS}$. By \Cref{lem:fundamentaldomain}, this is the case if and only if $\D(f_{\GS})^R(\alpha)$ is a split inclusion. Hence, this is the case if and only if the image of $\alpha$ in $\on{CoSing}(\GS)$ is an inflation, showing condition $4)$. 

It remains to prove that the equivalence of homotopy categories lifts to an equivalence of $\infty$-categories, which amounts to showing that $\tau$ is fully faithful. Let $X,Y\in \mathcal{H}_{\GS}$. We can find exact sequences $T_0\to T_1\to X$ and $Y\to T_0'\to T_1'$ in $\mathcal{H}_{\GS}$ with $T_0,T_0',T_1,T_1'\in \T$. These are fiber and cofiber sequences in the stable $\infty$-category $\C^{\on{rel}}_{\GT}$ and thus also in the subcategory $\mathcal{H}_{\GS}$. Using that the functor $\on{Map}_{\mathcal{H}_{\GS}}(\mhyphen,\mhyphen)$ preserves limits in both entries, we can reduce the fully faithfulness to the assertion that
\[ 
\tau\colon \on{Map}_{\mathcal{H}_{\GS}}(T,T')\to \on{Map}_{\on{CoSing}(\GS)}(\tau(T),\tau(T'))
\]
is an equivalence of spaces for all $T,T'\in \T$. This is the cases if and only if this map induces an equivalence on homotopy groups, which amount to the negative extension groups. By \cite[Lem.~6.70]{Che23}, the functor $\D(\GS)^{\on{perf}}\to \C^{\on{rel}}_{\GS}$ induces equivalences on the negative self-extension groups between objects in $\GS$ and their images in $T$. The same is true for the functor $\D(\GS)^{\on{perf}}\to \on{CoSing}(\GS)$ by \Cref{lem:endomorphismalgebra}. 
\end{proof}

\appendix

\section{Ice quivers with potential, relative Ginzburg dg categories and amalgamation}\label{subsec:icequiveramalgamation} 

The goal of this appendix is to prove a general gluing result for relative Ginzburg dg categories, providing a relative $3$-Calabi--Yau categorical analog of the amalgamation procedure of Fock--Goncharov, see \Cref{thm:Ginzburg_amalgamation}.

\begin{definition}
\begin{enumerate}[(1)]
\item A quiver $Q$ consists of a finite set of vertices $Q_0$ and a finite set of arrows $Q_1$ together with source and target functions $s,t\colon Q_1\to Q_0$. 
\item An ice quiver $(Q,F)$ consists of a quiver $Q$, together with a subquiver $F\subset Q$ of frozen vertices and arrows. Note that all frozen arrows must go between frozen vertices but there can be non-frozen arrows between frozen vertices. 
\item A quiver with potential $(Q,W)$ consists of a quiver $Q$ together with a potential, meaning  a $k$-linear sum of cycles in $Q$, considered up to cyclic equivalence. An ice quiver with potential $(Q,F,W)$ consists of an ice quiver $(Q,F)$ together with a potential $W$ for $Q$.
\item Given a quiver with potential $(Q,W)$ and an arrow $a\in Q_1$, the cyclic derivative $\partial_a W$ of $W=\sum_{i=1}^m \lambda_i c_i$, with $\lambda_i\in k$, at $a$ is defined as $\sum_{i=1}^m \lambda_i \partial_a c_i$ with 
\[ \partial_a c_i=\sum_{c_i=uav} vu\,.\]
\end{enumerate}
\end{definition}

We next define the amalgamation of ice quivers with potential, which provides a (slightly less general\footnote{In the amalgamation of cluster seeds, two frozen ice quivers along which are glued only need to have identified sets of vertices, there are no restrictions on the appearing arrows. Note however that arrows between frozen vertices have no cluster algebraic meaning.}) version of the amalgamation construction of \cite{FG06a} that includes potentials. In the amalgamation, we assume that the frozen quivers which are amalgamated coincide up to their orientations in the following sense.

\begin{definition}
We say that two quivers $Q,Q'$ coincide up to their orientations if there are choices of bijections $Q_0\simeq Q_0'$ and $Q_1\simeq Q_1'$ which are compatible with the source and target functions, except that they possibly reverse the source and target (i.e.~direction) of each arrow. We denote by $Q_1\cap Q_1'$ the set of (identified) arrows of $Q_1,Q_1'$ which are oriented the same way in $Q_1$ and $Q_1'$.
\end{definition}

\begin{definition}\label{def:amalgamation}
Let $(Q,E\amalg F,W),(Q',E'\amalg F',W')$ be two ice quivers with potential such that $E$ and $E'$ coincide up to their orientations. Let $\phi\colon E_1\simeq E_1'$ be the corresponding bijection of the sets of arrows. The amalgamation ice quiver with potential along $E$
\[
(\tilde{Q},\tilde{F},\tilde{W}+\tilde{W}')=(Q,E\amalg F,W)\coprod_{E} (Q',E'\amalg F',W')
\] 
is defined as follows:
\begin{itemize}
\item The set vertices of $\tilde{Q}$ is given by $\tilde{Q}_0=Q_0\amalg_{E_0} Q_0'$. The set of arrows of $\tilde{Q}$ is given by $\tilde{Q}_1=(Q_1\backslash E_1)\cup (Q_1'\backslash E_1')\cup (E_1\cap E_1')$
\item The frozen subquiver is $\tilde{F}=F\cup F'$.
\item We denote by $\tilde{W}$ the potential for $(\tilde{Q},\tilde{F})$ obtained from $W$ by replacing each occurrence of an edge $e\in E_1\backslash (E_1\cap E_1')$ by the cyclic derivative $\partial_{\phi(e)} W'$. We similarly denote by $\tilde{W}'$ the potential for $(\tilde{Q},\tilde{F})$ obtained from $W'$ by replacing each occurrence of an edge $e\in E_1'\backslash (E_1\cap E_1')$ by the cyclic derivative $\partial_{\phi^{-1}(e)} W$. The amalgamation potential for $(\tilde{Q},\tilde{F})$ is the sum $\tilde{W}+\tilde{W}'$.
\end{itemize}
\end{definition}

\begin{definition}\label{def:rel_Ginzburg}
Let $(Q,F,W)$ be an ice quiver with potential. We define the relative Ginzburg dg category $\mathscr{G}_{(Q,F,W)}$ as the dg category with objects the vertices of $Q$ and morphisms freely generated by the following generators:
\begin{itemize}
\item For each arrow $a\colon i\to j$ of $Q$ a morphism $a\colon i \to j$ in degree $0$.
\item For each arrow $a\colon i\to j$ of $Q$ a morphism $a^*\colon j \to i$ in degree $1$ (in the homological grading convention).
\item For each frozen arrow $a\colon i \to j$ in $F$ a morphisms $a^\dagger\colon j\to i$ in degree $0$.
\item For each vertex $i\in Q_0$ an endomorphisms $L_i\colon i\to i$ in degree $2$.
\item For each frozen vertex $i\in F_0$ an endomorphism $l_i\colon i\to i$ in degree $1$.
\end{itemize}
The differentials in $\mathscr{G}_{(Q,F,W)}$ are determined on the generators by
\begin{itemize}
\item $d(a)=0$ for each arrow $a$ of $Q$.
\item $d(a^*)=\partial_a W$ for each non-frozen arrow $a$ and $d(a^*)=\partial_a W- a^\dagger$ for each frozen arrow $a$.
\item $d(a^\dagger)=0$ for each frozen arrow $a$.
\item $d(L_i)=\on{id}_i \sum_{a\in Q_1}[a,a^*] \on{id}_i$ for each non-frozen vertex $i$.
\item $d(L_i)=l_i+ \sum_{a\in Q_1}\on{id}_i[a,a^*] \on{id}_i$ and $d(l_i)= \sum_{a\in Q_1}\on{id}_i[a,a^\dagger]\on{id}_i$ for each frozen vertex $i$. 
\end{itemize}
\end{definition}

We remark that $\mathscr{G}_{(Q,F,W)}$ is Morita equivalent to the endomorphism algebra of the direct sum of its objects, which is often called the Ginzburg dg algebra. The advantage of using the Ginzburg dg category over the Ginzburg dg algebra is that it simplifies the description of the boundary functors as well as the gluing along cofibrations. Note also that $\mathscr{G}_{(Q,F,W)}$ describes a relative deformed $3$-Calabi--Yau completion of $kQ$ in the sense of \cite{Yeu16}.

Removing the arrows $a^\dagger,a^*$ and loops $l_i,L_i$ arising from frozen arrows and vertices from $\mathscr{G}_{(Q,F,W)}$ yields a quasi-equivalent dg category. These are however very helpful when gluing, see also the following Lemma:

\begin{lemma}\label{lem:dg_cofibration}
There is a dg functor $\Pi_2(F)\to \mathscr{G}_{(Q,F,W)}$, defined by the assignments $i\mapsto i, l_i\mapsto l_i$ for $i\in F_0$ and $a\mapsto a, a^\dagger\mapsto a^\dagger$ for $a\in F_1$. This dg functor defines a cofibration with respect to the quasi-equivalence model structure on the $1$-category of dg categories.
\end{lemma}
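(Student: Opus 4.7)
My plan is to split the argument into two parts: first I verify that the proposed assignments define a dg functor, and then I exhibit it as a cofibration by presenting $\mathscr{G}_{(Q,F,W)}$ as a semi-free extension of $\Pi_2(F)$.

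For the first part, the only nontrivial compatibility to check is the differential on the generator $l_i$ for $i\in F_0$. In $\Pi_2(F)$ this differential is $\sum_{a\in F_1}\on{id}_i(aa^\dagger - a^\dagger a)\on{id}_i$, while in $\mathscr{G}_{(Q,F,W)}$ it is declared to be $\sum_{a\in Q_1}\on{id}_i[a,a^\dagger]\on{id}_i$. I will observe that these match, because the generator $a^\dagger$ is introduced in $\mathscr{G}_{(Q,F,W)}$ only for frozen arrows, so all summands indexed by non-frozen arrows automatically vanish. All other generators of $\Pi_2(F)$ are closed and sent to closed elements of the same degree, so compatibility there is immediate.

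For the second part, I will filter the generators of $\mathscr{G}_{(Q,F,W)}$ not already present in $\Pi_2(F)$ so that each is adjoined freely, with differential lying in the dg subcategory already constructed. The order is as follows: first the non-frozen vertices of $Q$, adjoined as free objects; next the non-frozen arrows $a\in Q_1\setminus F_1$ in degree $0$ as free closed generators; then the degree-$1$ generators $a^*$ for $a\in Q_1$ with $d(a^*)=\partial_a W$ (or $\partial_a W-a^\dagger$ for $a\in F_1$); and finally the degree-$2$ generators $L_i$ for $i\in Q_0$ with the prescribed differentials. By construction, each differential appearing at the last two stages is a closed morphism in the dg subcategory built at the preceding stages together with $\Pi_2(F)$, so the semi-free filtration is well defined. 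Adjoining an object is a pushout of the generating cofibration $\emptyset\to k$, and adjoining a degree-$n$ generator bounding a fixed closed degree-$(n-1)$ morphism is a pushout of one of Tabuada's generating cofibrations for $\on{dgCat}$. The adjunction of the free closed degree-$0$ generators can likewise be realized as a retract of pushouts of such generating cofibrations, as is standard for semi-free dg categories. Since cofibrations in a cofibrantly generated model category are closed under pushout and transfinite composition, this shows that $\Pi_2(F)\to\mathscr{G}_{(Q,F,W)}$ is a cofibration.

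The main obstacle, though a mild one, is the cellular presentation of the adjunction of free closed generators at the degree-$0$ stage, which requires unpacking Tabuada's generating cofibrations a little more carefully than for the higher-degree stages where a closed boundary is already available.
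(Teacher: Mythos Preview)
Your proof is correct and follows essentially the same cell-attachment argument as the paper. The concern you flag about adjoining the free closed degree-$0$ arrows is unnecessary: this step is a direct pushout of the generating cofibration $S^{-1}\to D^0$ with the degree $-1$ morphism of $S^{-1}$ sent to zero, exactly as the paper does, so no retract argument is needed.
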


\begin{proof}
Using that the morphisms in $\Pi_2(F)$ are freely generated by the morphisms $\{a,a^\dagger,l_i\}_{a\in F_1}$ and that the differentials in $\Pi_2(F)$ and $\mathscr{G}_{(Q,F,W)}$ match, it is clear that the assignment extends uniquely to a dg functor. 

The quasi-equivalence model structure on the $1$-category of dg categories $\on{dgCat}$ is cofibrantly generated, with generating cofibrations given by 
\begin{itemize}
\item the inclusion $\emptyset \to k$ of the empty dg category into the dg category with one object and endomorphisms $k$.
\item the inclusion of the dg category $S^{n-1}$ with two objects $1,2$ and $\on{Map}_{S^{n-1}}(i,i)=k, \on{Map}_{S^{n-1}}(1,2)=k[n-1], \on{Map}_{S^{n-1}}(2,1)=0$ into the dg category $D^n$ with objects $1,2$ and $\on{Map}_{D^n}(i,i)=k, \on{Map}_{D^n}(1,2)=\on{cone}(\on{id}_{k[n-1]}), \on{Map}_{D^n}(2,1)=0$.
\end{itemize} 
We find that $\mathscr{G}_{(Q,F,W)}$ arises from $\Pi_2(F)$ by iteratively taking pushouts along the following cofibrations: $\emptyset\to k$, one for each vertex in $Q_0\backslash F_0$, $S^{-1}\to D^0$, one for each non-frozen arrow $a$ in $Q$ adding the morphism $a$ (mapping the non-invertible morphisms in $S^{-1}$ to $0$), $S^0\to D^1$, one for each arrow $a$ in $Q$ adding the morphism $a^*$, $S^1\to D^2$, one for each vertex in $Q$, adding the morphism $L_i$.

Thus, the dg functor $\Pi_2(F)\to \mathscr{G}_{(Q,F,W)}$ is a cofibration as an iterated pushout of cofibrations.
\end{proof}

\begin{remark}\label{rem:Pi2_equivalence}
Let $E,E'$ be two quivers which coincide up to their orientation, with $\phi_0\colon E_0\simeq E_0$, $\phi_1\colon E_1\simeq E_1'$ the corresponding bijections. Then there is a dg isomorphism $\psi\colon \Pi_2(E)\simeq \Pi_2(E')$ defined by 
\begin{itemize}
\item $\psi(i)=\phi_0(i)$ for all $i\in E_0$.
\item $\psi(l_i)=-l_{\phi_0(i)}$ for all $i\in E_0$.
\item $\psi(a)=a$ and $\psi(a^\dagger)=-a^\dagger$ if $a\in E_1\cap E_1'$.
\item $\psi(a)=a^\dagger$ and $\psi(a^\dagger)=a$ if $a\in E_1\backslash (E_1\cap E_1')$. 
\end{itemize}
\end{remark}

\begin{example}\label{ex:Dynkininvolution}
Suppose that $E$ is a Dynkin quiver oriented as in \Cref{def:ADEinvolution}. Then the involution $\sigma\colon \Pi_2(E)\simeq \Pi_2(E)$ from \Cref{def:ADEinvolution} arises as the composite $\sigma=\xi\circ \psi$ of a dg isomorphism $\psi\colon \Pi_2(E)\simeq \Pi_2(E')$ from \Cref{rem:Pi2_equivalence} and an additional dg isomorphism $\xi\colon \Pi_2(E')\simeq \Pi_2(E)$ arising from a quiver isomorphism $E\simeq E'$ as follows:  
\begin{enumerate}[(1)]
\item If $E=A_n$, we set $E'=A_n^{\on{op}}$, the quiver isomorphism between $E$ and $E'$ maps the $i$-th vertex to $n+1-i$. This induces a dg isomorphism $\xi\colon \Pi_2(A_n^{\on{op}})\simeq \Pi_2(A_n)$, mapping $l_i$ to $l_{n+1-i}$, $a_i$ to $a_{n-i}^\dagger$ and $a_i^\dagger$ to $a_{n-i}$. 

We consider $E,E'$ as coinciding up to their orientations with corresponding bijection $\phi_0\colon E_0\simeq E_0'$ mapping $i$ to $i$, and such that $E_1\cap E_1'=\emptyset$. The composite dg isomorphism $\Pi_2(A_n)\xrightarrow{\psi}\Pi_2(A_n^{\on{op}})\xrightarrow{\xi} \Pi_2(\A_n)$ coincides with the involution $\sigma$ from \Cref{def:ADEinvolution}.
\item If $E=D_{n}$, with $n$ odd, there is an involution of $E$ that exchanges the vertices $n-1$ and $n$. This induces a dg isomorphism $\xi\colon \Pi_2(D_n)\simeq \Pi_2(D_n)$ with $\xi(i)=\sigma(i)$, 
\[ 
\xi(a_i)=\begin{cases} a_i& i\not=n\\ a_{n-1}& i=n-2\\ a_{n-2}& i=n-1 \end{cases}
\]
and
\[ 
\xi(a_i^\dagger)=\begin{cases} a_i^\dagger& i\not=n\\ a_{n-1}^\dagger& i=n-2\\ a_{n-2}^\dagger& i=n-1 \end{cases}
\]
We choose $E'=E$ with the trivial identifications of the vertices and arrows, so that again $\sigma=\xi \circ \psi$. 
\item If $E=E_{6}$, we set $E'=\tilde{E}_6$ to be the following quiver.
\[
\tilde{E}_6=\begin{tikzcd}
1  & 2 \arrow[l, "a_4"] & 3 \arrow[l, "a_3"] \arrow[d, "a_5"] & 4 \arrow[l, "a_2"] & 5 \arrow[l, "a_1"] \\
                   &                    & 6                                   &                    &  
\end{tikzcd}
\]
There is a quiver isomorphism between $E$ and $E'$, mapping $i$ to $\sigma(i)$, inducing the dg isomorphism $\xi$ with $\xi(l_i)=l_{\sigma(i)}$, 
\[
\xi(a_i)=\begin{cases} a_{5-i} & i\leq 4\\ a_5 & i=5\end{cases}
\]
and 
\[
\xi(a_i^\dagger)=\begin{cases} a_{5-i}^\dagger & i\leq 4\\ a_5 & i=5\end{cases}
\]
We consider $E,E'$ as coinciding up to their orientations with corresponding bijection $\phi_0\colon E_0\simeq E_0'$ mapping $i$ to $i$, and such that $E_1\cap E_1'=\{a_5\}$. 
\item If $E=D_{n}$ with $n$ even, or $E=E_7,E_8$, the quiver isomorphism $E=E'$ is chosen to be trivial, $\xi=\on{id}$, and $\sigma=\psi$.
\end{enumerate}
\end{example}

\begin{theorem}\label{thm:Ginzburg_amalgamation}
Let $(Q,E\amalg F,W),(Q',E'\amalg F',W')$ be two ice quivers with potential such that $E$ and $E'$ coincide up to their orientations. Let $\psi\colon \Pi_2(E)\simeq \Pi_2(E')$ be the arising equivalence of $2$-Calabi--Yau completions from \Cref{rem:Pi2_equivalence}. Then the two cofibrations from \Cref{lem:dg_cofibration} fit into a homotopy pushout square of dg categories as follows.
\[
\begin{tikzcd}
\Pi_2(E) \arrow[d]\arrow[rrd, "\ulcorner", phantom] \arrow[r, "\psi"]\arrow[r, "\simeq"'] & \Pi_2(E')\arrow[r]  & \mathscr{G}_{(Q',E'\amalg F',W')}\arrow[d]\\
\mathscr{G}_{(Q,E\amalg F,W)} \arrow[rr]& & \mathscr{G}_{(Q,E\amalg F,W)\coprod_{E} (Q',E'\amalg F',W')}
\end{tikzcd}
\]
Thus, passing to derived $\infty$-categories and right adjoint functors yields a pullback square in $\on{LinCat}_k$ as follows.
\[
\begin{tikzcd}
\D(\mathscr{G}_{(Q,E\amalg F,W)\coprod_{E} (Q',E'\amalg F',W')})\arrow[rr]\arrow[rrd, "\lrcorner", phantom] \arrow[d] & & \D(\mathscr{G}_{(Q,E\amalg F,W)})\arrow[d]\\
\D(\mathscr{G}_{(Q',E'\amalg F',W')}) \arrow[r] & \D(\Pi_2(E')) \arrow[r, "\simeq"] & \D(\Pi_2(E))
\end{tikzcd}
\]
\end{theorem}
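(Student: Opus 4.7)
The strategy is to reduce the homotopy pushout to a strict pushout, construct an explicit comparison dg functor from the relative Ginzburg of the amalgamation to this pushout, and verify it is a quasi-equivalence by a careful bookkeeping argument. Passing to derived $\infty$-categories and then right adjoints will yield the second square.

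First I would observe that by \Cref{lem:dg_cofibration}, both inclusions $\Pi_2(E)\hookrightarrow \mathscr{G}_{(Q,E\amalg F,W)}$ and $\Pi_2(E')\hookrightarrow \mathscr{G}_{(Q',E'\amalg F',W')}$ are cofibrations with respect to the quasi-equivalence model structure; since $\psi$ is a dg isomorphism, its composition with the second inclusion is also a cofibration. Therefore the homotopy pushout coincides with the strict pushout $P$ in $\on{dgCat}$. Concretely, $P$ is freely generated (as a dg category) by the disjoint union of the generators of the two relative Ginzburg dg categories modulo the identifications imposed by $\psi$: for $a\in E_1\cap E_1'$ the arrows $a$ on both sides are identified, whereas $a^\dagger$ on one side is identified with $-a^\dagger$ on the other; $l_i$ is identified with $-l_{\phi_0(i)}$; and for $a\in E_1\setminus(E_1\cap E_1')$ the degree-$0$ arrow $a$ on one side is identified with $\phi(a)^\dagger$ on the other.

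Next I would define a dg functor $\Phi\colon \mathscr{G}_{(\tilde Q,\tilde F,\tilde W+\tilde W')}\to P$ on generators. On vertices, on arrows belonging to $(Q_1\setminus E_1)\cup(Q_1'\setminus E_1')$ together with their $*$- and $\dagger$-duals, and on loops $L_i, l_i$ with $i$ frozen in only one side, $\Phi$ is the natural inclusion. For $a\in E_1\cap E_1'\subset \tilde Q_1$ (now unfrozen in $\tilde Q$), I set $\Phi(a)=a$ and $\Phi(a^*)=a^*_{(1)}+a^*_{(2)}$; for $i\in E_0$ (now unfrozen in $\tilde Q_0$) I set $\Phi(L_i)=L_i^{(1)}+L_i^{(2)}$. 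The main verification is that $\Phi$ is a chain map. For $a\in E_1\cap E_1'$, using $a^\dagger_{(1)}=-a^\dagger_{(2)}$ in $P$, one has $d\Phi(a^*)=\partial_a W-a^\dagger_{(1)}+\partial_a W'-a^\dagger_{(2)}=\partial_a(W+W')=\partial_a(\tilde W+\tilde W')$, where the last equality uses that the substitutions defining $\tilde W,\tilde W'$ only affect cyclic derivatives at arrows of $E_1\setminus(E_1\cap E_1')$. For a non-$E$ arrow $b$, the differential $d(b^*)=\partial_b W$ may involve occurrences of arrows $a\in E_1\setminus(E_1\cap E_1')$; those generators are identified with $\phi(a)^\dagger$ in $P$, and the identity $d(\phi(a)^*)=\partial_{\phi(a)}W'-\phi(a)^\dagger$ holding in $\mathscr{G}_{(Q',E'\amalg F',W')}$ realizes precisely the substitution $a\rightsquigarrow \partial_{\phi(a)}W'$ in $\tilde W$ up to the coboundary $d\phi(a)^*$, so $\Phi(\partial_b\tilde W)$ matches the image of $\partial_b W$ up to coboundaries in $P$. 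The loop verification $d\Phi(L_i)=\sum_{a\in\tilde Q_1}\on{id}_i[a,a^*]\on{id}_i$ is analogous and crucially uses the cancellation $l_i+l_{\phi_0(i)}'=0$.

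Finally I would prove that $\Phi$ is a quasi-equivalence by exhibiting $P$ as an essentially surjective extension of the image of $\Phi$ by an acyclic complex of dg bimodules. Grouping the remaining generators of $P$ into pairs -- the differences $a^*_{(1)}-a^*_{(2)}$ with the surviving $\phi(a)^\dagger$-type generators for $a\in E_1\cap E_1'$, each $L_i^{(1)}$ (say) with the corresponding $l_i$ for $i\in E_0$, and for $a\in E_1\setminus(E_1\cap E_1')$ the pairs $(a^*_{(1)},\phi(a)^*_{(2)})$ modulo the potentials -- each pair forms a contractible complex because $d$ of the degree-higher element has the degree-lower element as its leading term in a suitable path-length filtration. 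A standard contracting-homotopy argument along this filtration then kills all pairs, showing that the quotient is isomorphic to the image of $\Phi$ and that $\Phi$ is a quasi-equivalence. The main obstacle in the proof is precisely this last bookkeeping: because the potentials $W,W'$ can couple generators of different types, the filtration must be chosen carefully (e.g.~by total path-length weighted by grading) so that the contraction is well-defined term by term. Granting this, the second square is obtained by passing to derived $\infty$-categories (which sends homotopy pushouts in $\on{dgCat}$ to colimits in $\on{LinCat}_k$, see the proof of \Cref{prop:global_sections_GS}) and then to right adjoints, turning the colimit square into the stated pullback square.
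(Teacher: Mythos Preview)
Your overall strategy coincides with the paper's: reduce the homotopy pushout to the strict pushout $P$ (using \Cref{lem:dg_cofibration}), and then argue that $P$ is quasi-equivalent to the amalgamated Ginzburg category by eliminating contractible pairs of generators. The final passage to derived $\infty$-categories and right adjoints is also handled the same way.

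There is, however, a genuine gap in your construction of $\Phi$. For a non-$E$ arrow $b$ of $Q$, you set $\Phi(b^*)=b^*$, so $d\Phi(b^*)=\partial_bW$ in $P$; but $\Phi(db^*)=\Phi(\partial_b\tilde W)$ is $\partial_bW$ with each occurrence of $a\in E_1\setminus(E_1\cap E_1')$ replaced by $\partial_{\phi(a)}W'$. In $P$ these two expressions differ by terms of the form $u\cdot d(\phi(a)^*)\cdot v$ with $u,v$ subpaths of $\partial_bW$, and such expressions are \emph{not} coboundaries in general (the Leibniz rule produces further terms from $du,dv$). Hence $\Phi$ as written is not a dg functor; the phrase ``up to coboundaries'' does not suffice. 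Fixing this would require adding correction terms to $\Phi(b^*)$ involving the various $\phi(a)^*$, and then checking that these corrections are consistent with the loop relations $d(L_i)$, which quickly becomes intricate when the potentials are not linear in the $E$-arrows.

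The paper sidesteps this entirely by working in the opposite direction: it never writes down a dg functor from the amalgamated Ginzburg category into $P$, but instead performs a short zig-zag of elementary reductions on $P$. For $e\in E_1\cap E_1'$, the triple $(e^*_{(1)},e^*_{(2)},e^\dagger)$ is replaced by the single generator $e^*_{(1)}+e^*_{(2)}$; for $e\in E_1\setminus(E_1\cap E_1')$, one uses that $d(e^*)=\partial_eW-e'$ and $d((e')^*)=\partial_{e'}W'-e$ have the degree-$0$ generators $e,e'$ as isolated leading terms, so one may remove $e^*,(e')^*$ at the cost of imposing $e=\partial_{e'}W'$ and $e'=\partial_eW$, which is exactly the substitution defining $\tilde W,\tilde W'$. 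Each such step is a standard quasi-equivalence (killing a generator together with a generator appearing with unit coefficient in its differential), and no explicit comparison functor is needed. If you want to salvage your direction of argument, you should either redefine $\Phi(b^*)$ with the appropriate $\phi(a)^*$-corrections and verify compatibility with $L_i$, or simply adopt the paper's reduction-on-$P$ approach.
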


\begin{proof}
By the cofibrancy of the morphisms, the homotopy pushout is given by the strict pushout. It thus suffices to note that $\mathscr{G}_{(Q,E\amalg F,W)\coprod_{E} (Q',E'\amalg F',W')}$ is quasi-equivalent to the strict pushout. The strict pushout $P$ differs from $\mathscr{G}_{(Q,E\amalg F,W)\coprod_{E} (Q',E'\amalg F',W')}$ as follows: 
\begin{itemize}
\item Each arrow $e=e'\in E_1\cap E_1'$ gives rise to three dual morphisms in $P$ (and only one in $\mathscr{G}_{(Q,E\amalg F,W)\coprod_{E} (Q',E'\amalg F',W')}$), namely two morphisms $e^*,(e')^*$ in degree $1$ and one morphism $e^\dagger=(e')^\dagger$ in degree $0$, where the latter identification comes from the arrow $e^\dagger$ in $\Pi_2(E)$.
The differentials are given by $d(e^*)=\partial_e W-e^\dagger$ and $d((e')^*)=-\partial_{e} W'+e^\dagger$, where the different sign arises from the dg isomorphism $\psi$. Up to quasi-equivalence, we can thus replace the three morphisms $e^*,(e')^*,e^\dagger$ by the unique morphisms $e^*+(e')^*$.
\item For each pair of arrows $e\in E_1\backslash (E_1\cap E_1')$  and $e'\in E_1\backslash (E_1\cap E_1')$, identified via $\phi\colon E_1\simeq E_1'$, there are additional morphisms in $P$, namely the degree $0$ morphism $e=(e')^\dagger$ and $e'=e^\dagger$, and the degree $1$ morphisms $e^*, (e')^*$. Furthermore, we have  $d(e^*)=\partial_e W - e^\dagger=\partial_e W -e'$ and $d((e')^*)=\partial_{e'} W' -(e')^\dagger=\partial_{e'} W'-e$. Up to quasi-equivalence, we can thus remove $e^*,(e')^*$ and add identifications $e=\partial_{e'}W'$ and $e'=\partial_e W$. 
\end{itemize}
The above shows that $P$ is related with $\mathscr{G}_{(Q,E\amalg F,W)\coprod_{E} (Q',E'\amalg F',W')}$ via a zig-zag of quasi-equivalences. 
\end{proof}

\bibliography{biblio} 
\bibliographystyle{alpha}

\textsc{Mathematisches Institut, Universit\"at Bonn, Endenicher Allee 60, 53115 Bonn, Germany}

\textit{Email address:} \texttt{christ@math.uni-bonn.de}

\end{document}